%% file: arxiv_version.tex
\documentclass[10pt]{article}
\usepackage[left=1in,right=1in,top=1in,bottom=1in]{geometry}

\usepackage{xr}
\usepackage[utf8]{inputenc}

\newif\ifanonymize
\anonymizefalse

\usepackage{amsmath,amssymb,amsthm,mathabx,mathtools,bm,booktabs,bbm}
\mathtoolsset{showonlyrefs,showmanualtags}
\usepackage{enumitem}
\usepackage[round]{natbib}
\usepackage[colorlinks=true,linkcolor=blue,citecolor=blue,urlcolor=blue]{hyperref}

\makeatletter
\newcommand{\numcite}[1]{%
  [\begingroup
   \renewcommand{\NAT@sep}{,}%
   \citenum{#1}%
   \endgroup]}
\makeatother

\usepackage{algorithm}
\usepackage[noend]{algpseudocode}

\algnewcommand\algorithmicoutput{\textbf{Output:}}
\algnewcommand\Output{\item[\algorithmicoutput]}

\newtheorem{theorem}{Theorem}

\newtheorem{lemma}{Lemma}

\newtheorem{remark}{Remark}

\usepackage{etoolbox}

\apptocmd{\appendix}{%
  \setcounter{theorem}{0}%
  \renewcommand{\thetheorem}{S.\arabic{theorem}}%
}{}{}

\usepackage[dvipsnames]{xcolor}

\newcommand{\R}{\mathbb R}
\newcommand{\Pbb}{\mathbb P}
\newcommand{\Ebb}{\mathbb E}
\newcommand{\op}{\mathrm{op}}
\newcommand{\sgn}{\operatorname{sgn}}
\newcommand{\diag}{\operatorname{diag}}
\newcommand{\ip}[2]{\langle #1,#2\rangle}
\newcommand{\norm}[1]{\lVert #1\rVert}
\newcommand{\convp}{\xrightarrow{\mathrm{p}}}
\newcommand{\col}{\mathrm{col}}
\newcommand{\row}{\mathrm{row}}

\newcommand{\simiid}{\overset{\mathrm{i.i.d}}{\sim}}
\newcommand{\Id}{\mathbb I}
\newcommand{\sphere}[1]{\mathbb S^{#1}}
\newcommand{\wh}{\widehat}

\newcommand{\wt}{\widetilde}
\newcommand{\calL}{\mathcal L}

\newcommand{\KL}{\mathrm{KL}}
\newcommand{\TV}{\mathrm{TV}}

\DeclareMathOperator{\argmin}{\arg\min}
\DeclareMathOperator{\dnorm}{\mathcal N}
\renewcommand{\sp}{\mathrm{span}}

\newcommand{\calT}{\mathcal T}                  
\newcommand{\trg}{T}                   
\newcommand{\thetaT}{\theta_{\trg}}             
\newcommand{\nT}{n_{\trg}}
\newcommand{\sigmaT}{\sigma_{\trg}}

\newcommand{\Xtar}{X^{(\trg)}}
\newcommand{\ztar}{z^{(\trg)}}
\newcommand{\Ztar}{Z^{(\trg)}}
\newcommand{\thetamcT}[1]{\theta_{\trg,#1}}
\newcommand{\ThetaT}{\Theta_{\trg}}
\newcommand{\ept}{\varepsilon^{(\trg)}}
\newcommand{\hatztar}{\wh{z}^{(\trg)}}
\newcommand{\DeltaT}{\Delta_{\trg}}

\newcommand{\hatXtar}{\wh{X}^{(\trg)}}
\newcommand{\hatZtar}{\wh{Z}^{(\trg)}}

\newcommand{\rmqv}{\mathrm{v}^{(\trg)}}
\newcommand{\rmqg}{\mathrm{g}^{(\trg)}}

\newcommand{\rmT}{\mathsf{T}}

\newcommand{\calS}{\mathcal S}
\newcommand{\src}{S}
\newcommand{\thetaS}{\theta_{\src}}
\newcommand{\nS}{n_{\src}}
\newcommand{\nSm}[1]{n_{\src_{#1}}}
\newcommand{\sigmaS}{\sigma_{\src}}
\newcommand{\sigmaSm}[1]{\sigma_{\src_{#1}}}

\newcommand{\Xsrc}{X^{(\src)}}
\newcommand{\Xsrcm}[1]{X^{(\src_{#1})}}
\newcommand{\zsrc}{z^{(\src)}}
\newcommand{\zsrcm}[1]{z^{(\src_{#1})}}

\newcommand{\Zsrcm}[1]{Z^{(\src_{#1})}}
\newcommand{\thetamS}[1]{\theta_{\src_{#1}}}
\newcommand{\thetamSS}[2]{\theta_{\src_{#1},#2}}

\newcommand{\ThetamS}[1]{\Theta_{\src_{#1}}}
\newcommand{\eps}{\varepsilon^{(\src)}}
\newcommand{\epsm}[1]{\varepsilon^{(\src_{#1})}}

\newcommand{\DeltaS}{\Delta_{\src}}
\newcommand{\DeltaSm}[1]{\Delta_{\src_{#1}}}
\newcommand{\hatthetaS}{\wh{\theta}_S}
\newcommand{\hatthetaSm}[1]{\wh{\theta}_{S_{#1}}}

\newcommand{\hatThetaS}[1]{\wh{\Theta}_{S_{#1}}}

\newcommand{\Pest}{\wh{\mathrm P}}
\newcommand{\spclust}{\wh{\mathrm{S}}_{\mathrm{clust}}}
\newcommand{\clustmodule}{\mathsf{TSClust}}
\newcommand{\matchmodule}{\mathsf{HMatch}}

\title{Transfer Learning in High-Dimensional Clustering: Minimax Thresholds and Applications in Single-Cell Data}
\ifanonymize
\author{Anonymous Authors}
\else
\author{
  Abhinav Chakraborty \\
  \small Columbia University \\
  \small \texttt{ac4662@columbia.edu}
  \and
  Sagnik Nandy \\
  \small Ohio State University \\
  \small \texttt{nandy.15@osu.edu}
}
\fi
\date{}

\begin{document}
\maketitle

\begin{abstract}
Clustering is a fundamental problem in statistics, with applications across many scientific disciplines. In many modern applications involving clustering, the primary dataset (the target data) is accompanied by related datasets (the source data). Transferring information from such sources may improve clustering accuracy in the target, making transfer learning for clustering practically important. Despite recent progress, the conditions under which source data improve target clustering remain unclear in high-dimensional settings, even for the canonical Gaussian mixture model. In this paper, we study the clustering problem in a two-community Gaussian mixture model where relatedness is captured by the geometric alignment of the target and source cluster means. We develop a minimax-optimal transfer-assisted clustering procedure and characterize, up to logarithmic factors, the phase transition for consistent target clustering in terms of the signal-to-noise ratios, sample sizes, ambient dimension, and degree of alignment between the datasets.  
The technique is also extended to adaptively choose between the target-only or the source assisted clustering depending on the target signal strength. Furthermore, we also extend our techniques to accommodate multiple communities and and multiple source datasets. 
Extensive simulations and an analysis of a human lung single-cell RNA-sequencing atlas demonstrate the practical effectiveness of our methods.
\end{abstract}

\section{Introduction}

Clustering is an integral component of diverse pipelines in modern data analysis, with applications ranging from single-cell genomics \citep{Kiselev2017,Stuart2019} and image segmentation \citep{Balafar2011-vh,peng2021medical} to spatial transcriptomics \citep{zhao2021bayesspace}. For example, clustering the gene-expression profiles of sequenced cells is an important intermediate step in the clustering based cell-type annotation pipeline implemented in \texttt{Seurat} \citep{Stuart2019}. Similarly, neuroimaging studies use clustering to group subjects with neurological or neurodevelopmental disorders according to high-dimensional
voxel-level measurements or functional-connectivity profiles. In these and other applications, the feature dimension is often comparable to, or larger than, the number of observations. This high-dimensional regime is substantially more challenging since the noise accumulates across coordinates, pairwise distances concentrate, and consequently stronger signal may be required to distinguish
genuine cluster separation from random variation
\citep{CaiZhang2018,LofflerZhangZhou2021,Ndaoud2022}.

Fortunately, the primary dataset of interest (referred as the \emph{target data}) is often not the only relevant dataset available. In many applications, one observes auxiliary \emph{source} datasets whence information can be transferred to assist in clustering the target data. Although the cluster centers in such datasets may not be identical to the target data, a source with a sufficiently strong and well-aligned cluster centers can effectively supplement the limited target sample size, thereby alleviating the concerns arising from the
adverse dimension-to-sample-size ratio in the target dataset. But the utility of these additional information depends on the source sample size and signal strength, as well as the strength of alignment between the target and source cluster centers. Using a poorly aligned or low quality source might deteriorate the performance of the clustering algorithm leading to what is called \emph{negative transfer} \citep{zhang2022survey}.

\begin{figure}[tb]
    \centering
    \includegraphics[width=\textwidth]
    {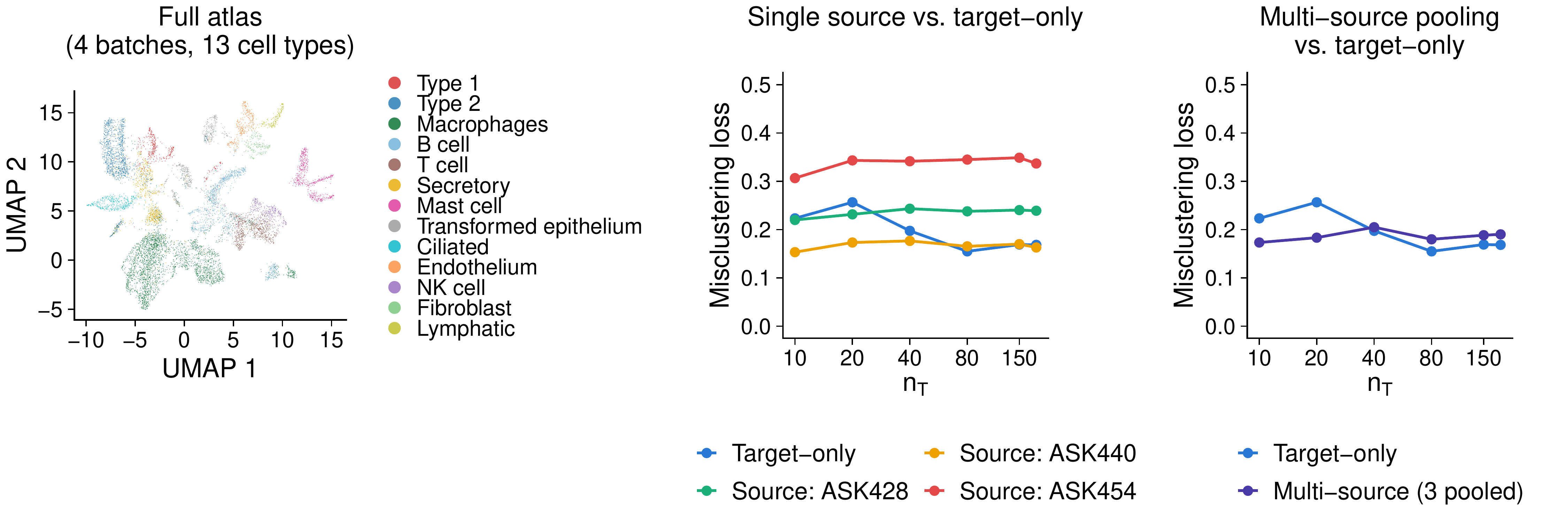}
    \caption{%
    Transfer-assisted clustering on the human lung atlas data from \cite{Vieira_Braga2019-nl},
    illustrated using the NK-cell versus T-cell contrast.
    \textbf{Left:} UMAP representation of the full atlas, containing $9{,}941$
    cells from four Dropseq batches and $13$ annotated cell types.
    \textbf{Center and right:} misclustering loss of our spectral procedures depending on target, and source datasets (to be described in Section~\ref{sec:oracle_two_cluster}) as the
    \texttt{ASK452} target batch is proportionally subsampled from
    $n_T=10$ to its full NK/T-cell sample size of $184$. Each point is
    averaged over $30$ independent replications of the experiment. The candidate source batches used for clustering
    are the patients \texttt{ASK428}, \texttt{ASK440}, and
    \texttt{ASK454}. The figure illustrates that the benefit of transfer
    depends jointly on the target sample size and on the choice and combination of source datasets.}
\label{fig:intro_lung_motivation}
\end{figure}
 
To illustrate the tension between the potential benefits of borrowing information and the risk of worsening performance when using bad source datasets, we consider the problem of separating NK cells from T cells in a log-normalized scRNA-seq dataset of human lung tissue from \citet{Vieira_Braga2019-nl}. The dataset contains the gene expression of $9{,}941$ cells measured over $5{,}000$ highly variable genes and is divided into four batches corresponding to four patients (namely, \texttt{ASK428}, \texttt{ASK440}, \texttt{ASK452}, \texttt{ASK454}). We use \texttt{ASK452} as the target and the remaining patients as candidate source datasets. 
As shown in the second panel of Figure~\ref{fig:intro_lung_motivation}, the benefit of transfer in clustering the target cells depends both on the source dataset used and on the number of available target observations. Some sources (\texttt{ASK440}) substantially improve upon target-only clustering, whereas others provide considerably smaller gains (\texttt{ASK428}), and the relative performance of the methods changes as the target sample size varies. Incorporating information from some sources like \texttt{ASK454} might even have detrimental effect on the clustering performance illustrating negative transfer. Combining multiple sources can also improve upon the use of a target-only estimator when the target sample size is small as seen in panel 3. This dataset is further studied in Section~\ref{sec:lung_atlas}.

Transfer learning has achieved broad success in supervised learning and domain
adaptation \citep{PanYang2010,Weiss2016,BenDavid2010}, and
transfer-assisted clustering has recently begun to receive attention \citep{wang2023transfergmm,tian2025robust}. The existing theoretical works are primarily concerned with a \emph{low-dimensional} regime in which the target
sample size is at least of the order of the ambient dimension and the signal-to-noise ratio is bounded below and above by a constant. Consequently, their
guarantees do not reveal how the target and source signal-to-noise ratios and the target-to-source alignment govern the success or failure of transfer in the regime when the feature dimension grows faster than the sample size,, leaving a gap in the literature. In this paper, we address this question in a high dimensional two community Gaussian mixture model setting, providing a fixed-threshold characterization of necessary conditions for successful transfer clustering. An algorithmic extension to a multi-cluster framework is also proposed and analyzed.  A detailed comparison with the existing literature on high dimensional clustering is deferred to Section~\ref{sec:related_literature}.

In the following, we outline our major contributions.

\begin{enumerate}
\item {\bf A spectral algorithm for transfer-assited clustering in two community GMMs.} In Section~\ref{sec:two_comm_model}, we consider a setting with one target and one source dataset, both generated from two-community Gaussian mixture models whose cluster directions are aligned but not necessarily identical. In Section~\ref{sec:cons_clust_one_source}, we propose a spectral procedure to borrow information from the source dataset to cluster the target observations when the target signal is weak and characterize sufficient conditions for consistent clustering. Our analysis reveals interesting interplay between the feature dimension, target and source sample sizes, their respective signal strengths and the alignment between the two datasets. In Section~\ref{sec:oracle_adaptive_two_cluster}, we further develop an adaptive procedure that switches from target-based to source-based clustering when the target signal is weak and demonstrate that we pay no additional cost for this adaptation. On the technical front, our theoretical analysis involves an interesting Gaussian anti-concentration argument which is non-standard in the analysis of clustering algorithms.

\item {\bf Necessary condition for transfer assisted clustering.} In Section~\ref{sec:necessary_cond}, we characterize necessary conditions for consistent clustering in a two-community GMM when one source dataset is observed in addition to the target data. The lower bound identifies the same target, source, alignment, and product scales that govern the sufficient conditions for the spectral procedure developed in Section~\ref{sec:oracle_two_cluster}. The necessary spectral and transfer thresholds are fixed-constant requirements, whereas our sufficient conditions impose divergence (and, in one regime, logarithmic factors). In the supplement, we extend the spectral algorithm and the corresponding lower-bound to two-community, multiple source settings (cf. Supplement~\ref{sec:two_clust_mult_source}).

\item {\bf Transfer based clustering in multi community framework.} In Section~\ref{sec:mult_clust_mult_source}, we consider a multi-community GMM framework with multiple independent source datasets in addition to the target dataset such that at least one source dataset is sufficiently aligned with the target dataset. We propose a procedure that estimates the source subspace, projects the target observations onto this estimated subspace and clusters the projected embeddings using a variant of relaxed \texttt{K-means} algorithm followed by Lloyd iterations. Under the assumption that at least one source dataset with strong signal strength is sufficiently aligned with the target data, we show that our procedure can help in consistent clustering even when the target signal falls below the information theoretic threshold for consistent clustering using target-only procedures.

\item {\bf Numerical experiments and analysis of the human lung atlas.} We investigate the finite-sample performance of our procedures through a range of synthetic experiments and compare them with the existing methods of \citet{tian2025robust} and \citet{wang_zhou}. To assess their performance in a real application, we apply our multi-community, multi-source methodology to an scRNA-seq data sourced from \citet{Vieira_Braga2019-nl}, comprising gene expression measurements in lung cells corresponding to four independently sequenced patients. Our experiments show that certain variants of our proposed procedure are competitive with the single-cell-specific methods from \citet{gdec_paper} and \citet{nmf_paper} in terms of clustering the sequenced cells into different celltypes. The analysis also reveals several salient features of the dataset, including substantial heterogeneity in the informativeness of the different source batches, sensitivity of transfer performance to the target sample size, and the possibility of comparatively inefficient clustering when the source structure is poorly aligned with the target. At the same time, the adaptive and pooled variants of our procedure provides effective safeguards against relying on an uninformative source and remain competitive across the four target batches.
\end{enumerate}

\subsection{Overview of main theoretical findings}
\label{sec:overview_main_regimes}
We now give an informal summary of our main theoretical findings. Focusing on clustering observations in a stylized two-community Gaussian mixture model with one auxiliary source dataset, we delineate four statistical regimes depending the model parameters where either no method succeeds in consistent clustering, target-only method succeeds, source-assisted method succeeds, or both methods succeed consistently recovering the true cluster labels. The formal statistical model and the loss function considered in this analysis are described in Section~\ref{sec:two_comm_model}. The regimes are determined by the target and source sample sizes $\nT$ and $\nS$, respectively, the corresponding signal strengths $\DeltaT$ and $\DeltaS$, respectively, and the absolute cosine similarity $\mu\in[0,1]$ between the target and source cluster mean directions governing the alignment between the datasets. We refer the readers to Section~\ref{sec:two_comm_model} for formal definitions of these parameters.
The target data alone are sufficient for consistent clustering in the
high-dimensional two-community GMM when the target signal satisfies \citep{Ndaoud2022}
\begin{align}
\label{eq:lower_bound_2_clust_delt}
    \DeltaT \gg
    \max\left\{1,\left(\frac{d}{\nT}\right)^{1/4}\right\}.
\end{align}
In this regime, no source information is needed. The more interesting case is
when the target signal lies below this threshold. We provide an algorithm that uses transfer assisted clustering to consistently recover the target labels provided the source is strong enough and
sufficiently aligned with the target. In other words, if $\DeltaS$ and $\mu$ satisfy
\begin{align}
\label{eq:overview_transfer_regime}
    \DeltaS \gg \left(\frac{d}{\nS}\right)^{1/4},
    \qquad
    \mu\cdot\DeltaT \gg 1,
    \qquad
    \mu\cdot\DeltaS\cdot\DeltaT \gg \sqrt{\frac{d}{\nS}},
\end{align}
up to logarithmic factors, our algorithm can leverage the source information to cluster the target observations. 
The first condition ensures that the source cluster direction can be learned
reliably. The second condition ensures that after projecting the target observations onto the space of estimated source signal the target clusters remain separable. The third condition captures the
additional high-dimensional cost of estimating the source direction before
using it for target clustering.

\begin{figure}[tb]
    \centering
    \includegraphics[scale=0.9]{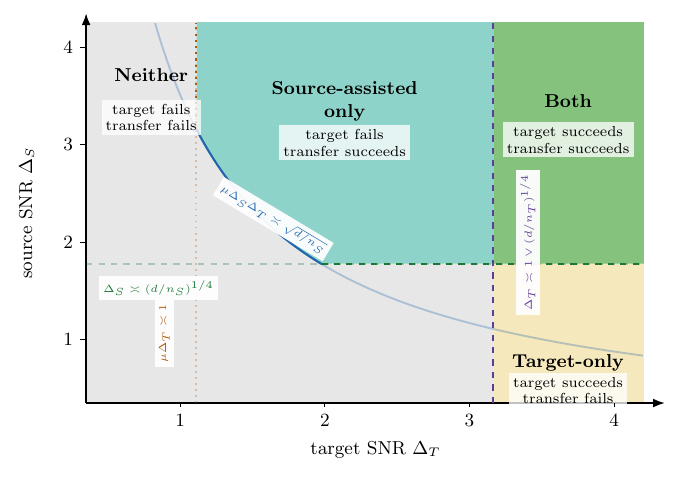}
    \caption{Schematic phase diagram in the two-community, one-source setting,
    shown as a function of the target signal-to-noise ratio $\DeltaT$ and the
    source signal-to-noise ratio $\DeltaS$ for fixed $d$, $\nT$, $\nS$, and
    $\mu$.  The regions indicate whether the target-only route, the
    source-assisted route, both routes, or neither route achieve consistent
    target clustering.  The diagram is intended as an asymptotic regime map and
    suppresses constants and logarithmic factors.}
    \label{fig:overview_transfer_phase_diagram}
\end{figure}

These inequalities reveal a transfer phase diagram in the
$(\DeltaT,\DeltaS)$ plane (cf. Figure~\ref{fig:overview_transfer_phase_diagram}). When \eqref{eq:lower_bound_2_clust_delt} holds (yellow and green regions in the diagram),
target-only clustering succeeds.  When \eqref{eq:lower_bound_2_clust_delt} fails
but \eqref{eq:overview_transfer_regime} holds, source information enables
consistent target clustering even though the target data alone are
insufficient (the blue region). The grey region represents the subcritical
regime in which our lower bound rules out consistent clustering.
The lower bound in Section~\ref{sec:necessary_cond} shows that any consistent procedure must cross fixed universal thresholds at the same target, source, alignment, and product scales.

This regime structure also explains the need for adaptivity. As mentioned before, the adaptive procedure developed in Section~\ref{sec:oracle_adaptive_two_cluster}
attains the same asymptotic success regimes represented in Figure~\ref{fig:overview_transfer_phase_diagram}, while avoiding reliance on a source when the target data already contain sufficient clustering signal. Similar results are also obtained for the multi-community, multi-source setting in Section~\ref{sec:mult_clust_mult_source}.

\subsection{Related literature}
\label{sec:related_literature}
The problem of clustering noisy data into groups of similar observations dates back to Pearson's study of finite mixtures of Gaussian distributions \citep{pearson1893contributions}. Subsequently, numerous model-free procedures were developed, including distance-based methods \citep{Steinhaus1956,Forgy1965,Lloyd1982} and hierarchical clustering algorithms \citep{Ward1963}. Another popular procedure for clustering in Gaussian mixture models is the EM algorithm proposed in \cite{dempster1977maximum}. These classical approaches largely focused on low-dimensional settings in which the feature dimension is much smaller than the sample size. Theoretical guarantees for the consistency of distance-based procedures such as Lloyd's algorithm \citep{lu2016statistical,gao2022iterative,chen2021optimal}, as well as for the EM algorithm \citep{dasgupta2007probabilistic,daskalakis2017ten,xu2016global}, have been extensively studied in this regime. Alternative approaches include spectral clustering \citep{von2007tutorial,vempala2004spectral} and methods based on semidefinite relaxations of the \texttt{K-means} objective \citep{giraud2019partial}.

In the high-dimensional regime, where the feature dimension is comparable to or much larger than the sample size, the problem becomes more nuanced and typically requires substantially stronger signal strength in the form of larger inter-cluster separation. The minimax thresholds for consistent clustering in high-dimensional Gaussian mixture models have been characterized in \citep{CaiZhang2018,LofflerZhangZhou2021,Ndaoud2022,fei2018hidden,giraud2019partial}. Information-theoretically optimal procedures in this setting are typically based on an iterative procedure with spectral initialization followed by Lloyd-type refinement, or on semidefinite relaxations of the \texttt{K-means} procedure. However, none of this literature considers the setting in which auxiliary source datasets containing related but non-identical clustering signals are available in addition to the primary target dataset. Therefore, the precise statistical costs and benefits of transferring information from related source datasets while clustering high dimensional observations remain unexplored.

The problem of transfer assisted clustering with high-dimensional data is practically relevant and has been explored in several scientific disciplines, including molecular genomics, where analysts routinely use \emph{label transfer} procedures \citep{Stuart2019} to annotate new query datasets using large, high-signal, expert-annotated atlases. These procedures typically project the target and source observations into the corresponding leading singular spaces and perform analysis using these transformed observations. It is also unclear whether such reduction is statistically optimal. Furthermore, such procedures also assume that the reference atlas is informative for the query dataset and do not explicitly account for the possibility of negative transfer. Nevertheless, transfer-assisted clustering has begun to receive attention in the statistical literature. In the low-dimensional regime, this problem was studied in \cite{gu2026adaptive}. In \cite{ma2026optimal} and \cite{baharav2025stacked}, the authors consider related problems involving shared subspace estimation. In the context of stochastic block models, \cite{chen2022global} and \cite{yang2025fundamental} study clustering in multilayer networks when a fraction of the labels are shared across layers. These problems are related to, but not directly comparable with, the setting studied here. In particular, their formulations are symmetric across layers and seek to recover the labels in all datasets, whereas our inferential target consists only of the labels in the target dataset. Moreover, their methods are specifically designed for square matrices and effectively operate in proportional asymptotic regimes in which the feature dimension is comparable to the sample size. They therefore do not directly address the high-dimensional regime considered here, where the feature dimension may be substantially larger than the sample size.

\citet{wang_zhou} provides an EM algorithm for transfer assisted clustering but provides limited theoretical analysis. \citet{wang2023transfergmm} study transfer learning for GMM clustering in a distributed peer-to-peer network. Their problem is related but their objective is different from ours.

\citet{tian2025robust} provide a closely related theoretical analysis of unsupervised transfer learning for GMMs. Their TL-GMM uses source tasks to estimate a common discriminant direction, then shrinks the target direction toward it via penalized EM, while tolerating a fraction of arbitrary outlier sources. Their bounds concern target-parameter estimation and excess out-of-sample misclustering risk relative to the Bayes classifier. Our objective instead is recovery of the observed target labels, which may be possible without estimating the full mixture parameters accurately enough for out-of-sample classification.
The theory of \citet{tian2025robust} also operates in a different statistical regime. It assumes that the target sample size is at least of the order of the
ambient dimension, that the target and informative sources have Mahalanobis separation bounded below by a constant, and that the EM iterates are suitably initialized. Its similarity condition controls the Euclidean distance between
the source and target discriminant directions. Thus, although its rates show how dimension, sample size, source closeness, and contamination affect estimation and excess risk in that regime, they do not cover a dimension-larger-than-target-sample, or weak-target-signal setting. Furthermore, they do not
give necessary and sufficient conditions for observed-sample label recovery in
terms of source signal strength and directional alignment. Our results address
these open questions. Our empirical analysis is also motivated by the study of
rate-optimal batch integration in \cite{cao2025modah}. Although the primary
objective there is to estimate cluster means and use them to correct for batch
effects, the analysis also relies on transferring information across batches
which motivated our problem formulation.

To the best of our knowledge, this is the first work to systematically study high-dimensional transfer-assisted clustering and to establish information-theoretic lower bounds characterizing when successful transfer is possible. We expect our analysis to provide theoretical justification for a broad class of transfer-assisted methods in unsupervised learning, including reference-based procedures in the analysis of scRNA-seq data such as \emph{label transfer}, and to offer a general statistical foundation for borrowing information across related datasets widely applied across different scientific disciplines.

\subsection{Notations}
We shall denote the set of natural numbers by $\mathbb N$ and the set of real numbers by $\R$. The set of $k$-dimensional vectors with real entries will be denoted by $\R^k$. The $k$-dimensional unit sphere will be denoted by $\mathbb S^{k-1}$. For any natural number $n \in \mathbb N$, the set $\{1,\ldots,n\}$ will be denoted by $[n]$. For two sequences $\{a_n\}$ and $\{b_n\}$, the notation $a_n \gg b_n$ (or, $a_n \ll b_n$) will imply $a_n/b_n \to \infty$ (or, $a_n/b_n \to 0$), as $n \to \infty$. Similarly, $a_n \gtrsim b_n$ (or, $a_n \lesssim b_n$) will imply there exists an absolute constants $C_0>0$ such that $a_n \ge C_0\,b_n$ (or, $a_n \le C_0\,b_n$) for all $n \ge n_0$ where $n_0 \in \mathbb N$ is sufficiently large. Finally, $a_n \asymp b_n$ implies that there exist constants $C_0,C_1>0$ such that $C_0\,b_n \le a_n \le C_1\,b_n$ for all $n \ge n_0$ where $n_0 \in \mathbb N$ is sufficiently large. By $X_n\convp X$, we mean that $X_n$ converges in probability to $X$. By $X_n\convp\infty$, we mean that, for every fixed $\mathrm M>0$, $\Pbb\bigl(|X_n|>\mathrm M\bigr)\to 1$. We write $X_n=o_p(1)$ if $X_n\convp 0$, and $X_n=O_p(1)$ if the sequence ${X_n}$ is asymptotically tight.

\section{Clustering in two-community Gaussian mixture models}
\label{sec:oracle_two_cluster}
To investigate the fundamental limits of transfer learning for clustering
in high-dimensional Gaussian mixture models, we begin with a stylized
two-community setting, which has been extensively studied in the clustering literature \citep{li2017minimax,Ndaoud2022}.

\subsection{Statistical model and problem objective}
\label{sec:two_comm_model}

We observe a target dataset $\calT := \{\Xtar_1, \ldots, \Xtar_{\nT}\} \subseteq \R^d$ and a source dataset $\calS := \{\Xsrc_1, \ldots, \Xsrc_{\nS}\} \subseteq \R^d$. The target observations follow the signal-plus-noise model
\begin{align}
    \label{eq:target_sample}
    \Xtar_j = \ztar_j \thetaT + \ept_j, \qquad j = 1, \ldots, \nT,
\end{align}
where $\ztar_j \in \{-1, 1\}$ are the unknown target labels and
$\ept_j \simiid \dnorm_d(0_d, \sigmaT^2 \Id_d)$.
The source observations satisfy an analogous model,
\begin{align}
    \label{eq:source_sample}
    \Xsrc_j = \zsrc_j \thetaS + \eps_j, \qquad j = 1, \ldots, \nS,
\end{align}
where $\zsrc_j \in \{-1, 1\}$ are the source labels and
$\eps_j \simiid \dnorm_d(0_d, \sigmaS^2 \Id_d)$.
Throughout, labels are treated as deterministic, and target and source
noise vectors are mutually independent.

Transferring information across datasets is useful only if the cluster means point in sufficiently similar directions. We quantify this alignment by the cosine similarity
\begin{align}
    \label{eq:alignment}
    \frac{|\ip{\thetaT}{\thetaS}|}{\norm{\thetaT}_2 \norm{\thetaS}_2} \geq \mu,
\end{align}
where $\mu \in [0, 1]$; the value $\mu = 1$ corresponds to perfect
alignment up to sign, and smaller values of $\mu$ permit greater
angular deviation between the target and source directions.
The target and source signal-to-noise ratios are defined as
\begin{align}
    \label{eq:def_delta_snr}
    \DeltaT := \frac{\norm{\thetaT}_2}{\sigmaT},
    \qquad
    \DeltaS := \frac{\norm{\thetaS}_2}{\sigmaS}.
\end{align}

While the aforementioned formulation appears stylized, yet Gaussian mixture models of similar flavor (sometimes with more than two components) are routinely used to model data in diverse disciplines including single cell genomics \citep{zhong2022empirical,cao2025modah}, neuroscience \citep{Nezhadmoghadam2021-wl}, and astronomy \citep{10.1093/mnras/stx1024}. Therefore, the models described in \eqref{eq:target_sample}-\eqref{eq:source_sample}, provide reasonable settings for exploring the statistical trade-offs involved in transfer assisted clustering.

Our goal is to recover the target cluster labels $\ztar :=
(\ztar_1, \ldots, \ztar_{\nT})^\top \in \{-1, 1\}^{\nT}$ from the combined data $(\calT, \calS)$, up to a global sign flip. This sign ambiguity is unavoidable since in a two-community Gaussian mixture model, labels are identifiable only up to a global sign flip \citep{li2017minimax,Ndaoud2022}. For an estimator $\hatztar := \wh{z}(\calT, \calS) \in \{-1, 1\}^{\nT}$, define the misclustering loss $\calL : \R^{\nT} \times \R^{\nT} \to [0,1]$ by
\begin{align}
    \label{eq:loss_func_2}
    \calL(\hatztar, \ztar) :=
    \frac{1}{\nT} \min_{s \in \{-1, 1\}}
    \sum_{j=1}^{\nT} \mathbbm{1}\{\hatztar_j \neq s \ztar_j\}.
\end{align}
We aim to characterize conditions on $\DeltaT,\DeltaS$ and $\mu$ involving $\nT,\nS$ and $d$, such that we can ensure $\calL(\hatztar, \ztar) \to 0$, as $\nT,\nS \to \infty$. 
To make the problem precise, we fix $\mu \in [0, 1]$ and $\DeltaT,\DeltaS > 0$, and consider the parameter class
\begin{align}
    \label{eq:param_class}
    \Omega = \Omega(\DeltaT, \DeltaS, \mu)
    := \biggl\{(\thetaT, \thetaS, \ztar, \zsrc) :
    &\; \thetaT, \thetaS \in \R^d,\;
    \ztar \in \{-1,1\}^{\nT},\;
    \zsrc \in \{-1,1\}^{\nS}, \\
    &\; \frac{\norm{\thetaT}_2}{\sigmaT} = \DeltaT,\;
    \frac{\norm{\thetaS}_2}{\sigmaS} = \DeltaS,\;
    \frac{|\ip{\thetaT}{\thetaS}|}{\norm{\thetaT}_2 \norm{\thetaS}_2}
    \geq \mu \biggr\}. \nonumber
\end{align}
We write a generic element of $\Omega(\DeltaT, \DeltaS, \mu)$ as
$(\theta_{all},z_{all}) := (\thetaT, \thetaS, \ztar, \zsrc)$. 
The minimax clustering risk can therefore be defined as
\begin{align}
    \label{eq:minimax_risk}
    \mathfrak{R}(\DeltaT, \DeltaS, \mu)
    := \inf_{\hatztar} \sup_{(\theta_{all},z_{all}) \in \Omega}
    \Ebb\bigl[\calL(\hatztar, \ztar)\bigr],
\end{align}
where the expectation is taken under the joint law of $(\calT, \calS)$
generated by \eqref{eq:target_sample}-\eqref{eq:source_sample}, and the
infimum is over all estimators of the cluster labels depending on $(\calT,
\calS)$. We say that \emph{consistent clustering of the target} is
achievable if
\begin{align}
\label{eq:consistent_clustering_definition}
    \mathfrak{R}(\DeltaT, \DeltaS, \mu) \to 0
    \quad \text{as } \nT, \nS, d \to \infty,
\end{align}
Therefore, our objective can be reduced to characterizing the set of $(\Delta_T,\Delta_S,\mu)$ where the above convergence holds.

\subsection{Consistent clustering with one source}
\label{sec:cons_clust_one_source}
To construct a consistent estimator of the cluster labels, we begin by proposing a meta algorithm that requires a target based estimator of the cluster labels of the target data (referred as \texttt{TARGET-CLUSTERER}), an estimator $\wh{\theta}_S$ of $\thetaS$ and an oracle that informs the analyst if \eqref{eq:lower_bound_2_clust_delt} holds.
\begin{algorithm}[tb]
\caption{Meta Algorithm for Transfer-Assisted Clustering with One Source}
\label{alg:meta_oracle_transfer_clustering_two_community}
\begin{algorithmic}[1]
\Require Target data $\calT=\{\Xtar_j:j\in\calT\}$; source data
$\calS=\{\Xsrc_j:j\in\calS\}$; a clustering procedure
\texttt{GOOD-CLUSTERER}; oracle indicator
$\mathfrak o\in\{\trg,\src\}$.
\If{$\mathfrak o=\trg$}
    \State Set
    $\hatztar\gets\texttt{GOOD-CLUSTERER}(\calT)$.
\Else
 \If{$d\gg\nS$}
   \State Get $\wh{z}^{(\src)}$ using $\texttt{GOOD-CLUSTERER}(\calS)$.
   \State Set $\hatthetaS \gets n^{-1}_\src \cdot \sum_{j=1}^{\nS}\wh z^{(\src)}_j\Xsrc_j$.
 \Else 
 \State Set $\hatthetaS \gets \wh v_\src$, where $\wh v_\src$ is the leading right singular vector of the source data matrix.
 \EndIf
    \For{$j\in\calT$}
        \State Set
        \[
            \hatztar_j
            \gets
            \sgn\!\left(
            \frac{\langle\hatthetaS,\Xtar_j\rangle}
            {\|\hatthetaS\|_2}
            \right).
        \]
    \EndFor
\EndIf
\Output $\hatztar\in\{-1,+1\}^{\nT}$.
\end{algorithmic}
\end{algorithm}
\paragraph{A meta algorithm.}
The meta algorithm is composed of two ingredients: an efficient clustering module \texttt{GOOD-CLUSTERER} and a estimator $\wh \theta_S$ of the source direction $\thetaS$. It has already been established in the literature that one can achieve consistent clustering of $\calT$ using target-only procedures if \eqref{eq:lower_bound_2_clust_delt} holds. The clustering module \texttt{GOOD-CLUSTERER} is chosen to ensure that the estimated cluster labels 
\begin{align}
    \label{eq:oracle_target_estimate}
    \hatztar:=\texttt{GOOD-CLUSTERER}~(\calT) \in \{-1,+1\}^{\nT},
\end{align}
are consistent for the true labels $\ztar$ when \eqref{eq:lower_bound_2_clust_delt} holds.
However, when the oracle suggests that \eqref{eq:lower_bound_2_clust_delt} does not hold but the following condition holds
\begin{align}
    \label{eq:lower_bound_3_clust_delt}
    \DeltaS \gg \left(\frac{d\,(\log \nS)^2}{\nS}\right)^{1/4},
    \qquad
    \mu\cdot\DeltaT \gg 1,
    \qquad
    \mu\cdot\DeltaS\cdot\DeltaT \gg \sqrt{\frac{d}{\nS}},
\end{align}
we avoid directly clustering the target data and instead use the estimated source direction $\wh\theta_S$ to reconstruct $\ztar$ in the following way.
\begin{align}
    \label{eq:oracle_source_estimate}
    \hatztar_j := \sgn\!\left(
    \frac{\langle \hatthetaS, \Xtar_j \rangle}{\|\hatthetaS\|_2}
    \right),
    \qquad j \in \calT.
\end{align}
The basic idea behind the source based procedure is to project the target data onto $\wh \theta_S$ and cluster them by the sign of the projection. 

\paragraph{A spectral choice for \texttt{GOOD-CLUSTERER}.} One may have multiple choices for \texttt{GOOD-CLUSTERER}, including spectral methods from \cite{LofflerZhangZhou2021} or \cite{Ndaoud2022}, and the SDP relaxation based procedure from \cite{giraud2019partial}. However, henceforth in our analysis and implementation, we shall use the spectral procedure outlined in \cite{Ndaoud2022} as \texttt{GOOD-CLUSTERER}. For the sake of completeness, we outline the procedure below. 

Define the hollowed target Gram matrix
\begin{equation}
\label{eq:define_b_cal_t}
    B_{\calT} = X_{\calT} X_{\calT}^\top
    - \diag(X_{\calT} X_{\calT}^\top),
\end{equation}
where $X_{\calT} = \ztar \thetaT^\top + E_{\calT} \in \R^{\nT \times d}$,
with $\ztar = (\ztar_1, \ldots, \ztar_{\nT}) \in \{-1,1\}^{\nT}$ and
$E_{\calT} \in \R^{\nT \times d}$ the matrix of noise rows
$\varepsilon_j^{(\trg)}$.
Let $\wh{u}^{(\trg)}$ be the leading eigenvector of $B_{\calT}$.
Initialize $\wh{\eta}^{(\trg)}_0 := \sgn(\wh{u}^{(\trg)})$ and iterate
\[
    \wh{\eta}^{(\trg)}_{t+1} = \sgn\!\left(B_{\calT}\, \wh{\eta}^{(\trg)}_t\right),
    \qquad t \geq 0.
\]
Then \texttt{GOOD-CLUSTERER} will cluster the observations as
\begin{align}
    \hatztar := \wh{\eta}^{(\trg)}_{\lfloor 3 \log \nT \rfloor}
    \in \{-1, 1\}^{\nT}.
\end{align}
The fundamental advantage of hollowing before computing the eigenvector is that it removes the order $(d/\nT)^{1/2}$ fluctuations of the diagonal entries of $E_{\calT}E_{\calT}^{\top}$. This allows the estimation of the source labels at the sharper signal scale
$\max\{1,(d/\nT)^{1/4}\}$ which is the minimax optimal rate for high-dimensional clustering.

\paragraph{A spectral estimator for $\wh\theta_S$.}
Consider the concatenated source data matrix 
\[
X_{\cal S}=\zsrc\thetaS^\top+E_{\cal S}\in\mathbb R^{\nS\times d},
\]
where the rows of $E_{\cal S} \in \R^{\nS \times d}$ are independent Gaussian noise vectors. Since the source signal matrix $\mathbb E[X_{\cal S}]=\zsrc\thetaS^\top,$ has rank one, with column space spanned by $\zsrc$ and row space spanned by $\thetaS$, a natural estimator of the source signal direction is the leading normalized right singular vector of $X_{\cal S}$. 
When $d\lesssim \nS$, we use this natural estimator as $\wh \theta_S$. In other words,
\begin{align}
    \label{eq:hat_q_i_ld}
    \hatthetaS := \wh{v}_{\calS} \in \R^d,
\end{align}
where $\wh{v}_{\calS}$ is the leading normalized right singular vector of $X_{\calS}$.

When $d\gg \nS$, however, directly estimating $\thetaS$ becomes difficult because noise accumulates across the $d$ coordinates. In this regime, estimating the source labels first can be more effective than directly estimating the source direction. We therefore apply the spectral clustering method \texttt{GOOD-CLUSTERER} to the hollowed Gram matrix
\[
B_{\cal S}=X_{\cal S}X_{\cal S}^{\top}-\diag\left(X_{\cal S}X_{\cal S}^{\top}\right)
\]
to obtain an estimate $\wh z^{(\src)}$ of the source labels, and then construct
\begin{align}
\label{eq:hat_q_i_hd}
\wh\theta_S=\frac{1}{\nS}\sum_{j=1}^{\nS}\wh z^{(\src)}_j\Xsrc_j \in \R^d.
\end{align}
Thus, we estimate the source direction directly when $d\gg \nS$ we estimate $\thetaS$ using an intermediate source clustering step. A natural alternative would be to use $\wh v_{\calS}$ as $\wh\theta_S$. Our current analysis of the resulting clustering procedure, however, yields the stronger sufficient condition $\DeltaS\gg(d/\nS)^{1/2}$, rather than the optimal clustering threshold $(d/\nS)^{1/4}$. It remains unclear whether this stronger requirement reflects an intrinsic limitation of the singular-vector-based estimator or merely a limitation of the present analysis. We therefore adopt the label-first construction above, for which the optimal source signal threshold can be established. However, using the estimator \eqref{eq:hat_q_i_hd} when $d \ll \nS$ is suboptimal as it requires the stronger source signal condition $\DeltaS=O(1)$ for consistent cluster recovery which can be avoided using $\wh v_{\calS}$. When $d \asymp \nS$, both the estimated source label based approach and the approach based on $\wh{v}_{\calS}$ achieve comparable performance. 

On the technical side, the consistency proof for the clustering procedure when $d\lesssim \nS$, with \eqref{eq:hat_q_i_ld} used as the estimator
$\wh\theta_S$, relies on an anti-concentration property of Haar-uniform vectors on the unit sphere. In contrast, the corresponding proof in the regime $d\gg \nS$, using \eqref{eq:hat_q_i_hd}, exploits a Gaussian
anti-concentration argument (cf. Lemma~\ref{lem:gaussian_anti_conc}). The use of such anti-concentration techniques appears to be uncommon in the theoretical analysis of clustering algorithms. The entire meta algorithm with the specific choices of the source direction estimators is summarized in
Algorithm~\ref{alg:meta_oracle_transfer_clustering_two_community}.

 Consider the following theorem characterizing the consistency of the resulting estimator.
\begin{theorem}
\label{thm:correct_choose}
Suppose $d \ge 4$ and we choose the clustering module \texttt{GOOD-CLUSTERER} in Algorithm~\ref{alg:meta_oracle_transfer_clustering_two_community} to be the estimate defined in \eqref{eq:oracle_target_estimate}.
Then if either of the following conditions hold 
\begin{enumerate}[label=\emph{(\Alph*)}]
    \item $\DeltaT\gg\max\!\left\{1,\left(\dfrac{d}{\nT}\right)^{1/4}\right\}$, \quad\emph{or}
    \item $\DeltaS \gg \left(\frac{d\,(\log \nS)^2}{\nS}\right)^{1/4}$,\quad
          $\mu\cdot\DeltaT\gg 1$,\quad\emph{and}\quad
          $\mu\cdot\DeltaS\cdot\DeltaT\gg\!\sqrt{\dfrac{d}{\nS}}$,
\end{enumerate}
the estimator $\hatztar \in \{-1,1\}^{\nT}$ obtained from Algorithm~\ref{alg:meta_oracle_transfer_clustering_two_community} satisfies $\Ebb\bigl[\calL(\hatztar, \ztar)\bigr] \to 0$.
\end{theorem}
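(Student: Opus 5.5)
Under (A) the oracle returns $\mathfrak o=\trg$, and consistency is exactly the guarantee of \cite{Ndaoud2022} for the hollowed spectral estimator with $\lfloor 3\log\nT\rfloor$ power iterations. That result gives $\Ebb[\calL(\hatztar,\ztar)]\le \exp(-c\DeltaT^2)+o(1)\to0$ whenever $\DeltaT\gg\max\{1,(d/\nT)^{1/4}\}$. So case (A) follows by direct citation, and the work is in case (B).

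\textbf{Reduction for case (B).} The estimate $\hatthetaS$ depends only on $\calS$, so it is independent of the target noise. Conditionally on $\calS$, write $w=\hatthetaS/\|\hatthetaS\|_2$. Then $\ip{w}{\Xtar_j}=\ztar_j\ip{w}{\thetaT}+\sigmaT\xi_j$ with $\xi_j\simiid\dnorm(0,1)$. Up to the global sign, the expected loss is therefore
\[
\Ebb\bigl[\calL\bigr]\le \Ebb\bigl[\bar\Phi\bigl(|\ip{w}{\thetaT}|/\sigmaT\bigr)\bigr].
\]
It suffices to show $|\ip{w}{\thetaT}|/\sigmaT\convp\infty$. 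Decompose $w=a\,\thetaS/\|\thetaS\|_2+b\,\zeta$, where $\zeta\perp\thetaS$ is a unit vector. Then
\[
\ip{w}{\thetaT}/\sigmaT=a\ip{\thetaS/\|\thetaS\|}{\thetaT}/\sigmaT+b\ip{\zeta}{\thetaT}/\sigmaT .
\]
The first term has magnitude at least $|a|\mu\DeltaT$. The second term is a perturbation that might cancel the first, and controlling it is the main obstacle.

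\textbf{Regime $d\gg\nS$.} Apply \cite{Ndaoud2022} to $\calS$: since $\DeltaS\gg(d(\log\nS)^2/\nS)^{1/4}$, the fraction of misclustered source points satisfies $\calL(\wh z^{(\src)},\zsrc)\le\exp(-c\DeltaS^2)$, which is very small. Then
\[
\hatthetaS=(1-2\calL)\,s\,\thetaS+\nS^{-1}\sum_j \wh z^{(\src)}_j\eps_j .
\]
If the labels were independent of the noise, the noise sum would be $\dnorm(0,\sigmaS^2\Id_d/\nS)$. Its component orthogonal to $\thetaS$ in the direction of $\thetaT$ would then be Gaussian with standard deviation $\lesssim\sigmaS\|\thetaT\|/\sqrt{\nS}$. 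Its norm would be $\approx\sigmaS\sqrt{d/\nS}$.

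To handle the dependence of $\wh z^{(\src)}$ on the noise, I would write $\sum_j\wh z_j\eps_j=\sum_j z_j\eps_j-2\sum_{j\in M}z_j\eps_j$, where $M$ is the set of misclustered indices. I would bound the second sum uniformly over sets of size $|M|\le \nS e^{-c\DeltaS^2}$ by a union bound, giving an error of order $\sigmaS\sqrt{|M|(d+|M|\log \nS)}$. The $\log\nS$ factors in the condition on $\DeltaS$ serve to make this term negligible.

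The inner product then becomes
\[
\ip{\hatthetaS}{\thetaT}\approx \pm\|\thetaS\|\|\thetaT\|\mu'+\sigmaS\|\thetaT\|G/\sqrt{\nS}, \qquad \mu'\ge\mu,\ G\sim\dnorm(0,1).
\]
Dividing by $\|\hatthetaS\|\lesssim\|\thetaS\|+\sigmaS\sqrt{d/\nS}$ and by $\sigmaT$ gives a ratio of order
\[
\frac{\mu\DeltaS\DeltaT}{\DeltaS+\sqrt{d/\nS}}+\frac{\DeltaT G}{\sqrt{\nS}(\DeltaS+\sqrt{d/\nS})}.
\]
The first term diverges by $\mu\DeltaT\gg1$ together with $\mu\DeltaS\DeltaT\gg\sqrt{d/\nS}$. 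The Gaussian term need not be small, but by Gaussian anti-concentration (Lemma~\ref{lem:gaussian_anti_conc}) a signal plus an independent Gaussian exceeds any fixed $M$ in absolute value with probability tending to one. Cancellation is impossible because $G$ has bounded density, so the event $|\text{signal}+\text{noise}|\le M$ has probability $O(M/\text{scale})$. The scale is the larger of the two terms, which diverges in either case.

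\textbf{Regime $d\lesssim\nS$.} Here $\hatthetaS=\wh v_\calS$. By Wedin's theorem the angle satisfies $\sin\angle(\wh v_\calS,\thetaS)\lesssim(\sqrt{d/\nS}\DeltaS+d/\nS)/\DeltaS^2$, which is small in this regime under the condition on $\DeltaS$, so $|a|\to1$. By rotational invariance of the noise in the orthogonal complement of $\thetaS$, the direction $\zeta$ is Haar-uniform on the sphere of that complement and independent of $|b|$. Hence $\ip{\zeta}{\thetaT}\approx\|\thetaT\|\sqrt{1-\mu'^2}\,U$, where $U$ is a coordinate of a uniform point on $\sphere{d-2}$; this is where $d\ge4$ is used. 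Spherical anti-concentration of $U$ plus an independent shift then yields divergence exactly as in the Gaussian case, using $\mu\DeltaT\gg1$.

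Finally, dominated convergence gives $\Ebb[\calL]\to0$. The step I expect to be hardest is the uniform control of the label-error contribution to $\hatthetaS$ in the regime $d\gg\nS$.
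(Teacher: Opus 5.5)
Your architecture matches the paper's. Case (A) is handled by citing \citet{Ndaoud2022}, and case (B) is reduced to showing $|\ip{w}{\thetaT}|/\sigmaT\convp\infty$. Your conditional bound $\Ebb[\calL]\le\Ebb[\bar\Phi(|\ip{w}{\thetaT}|/\sigmaT)]$ is a slightly cleaner finish than the paper's Hoeffding step. For $d\gg\nS$ you use ``signal plus independent Gaussian'' anti-concentration, with $\norm{\hatthetaS}_2\lesssim\norm{\thetaS}_2+\sigmaS\sqrt{d/\nS}$ in the denominator. For $d\lesssim\nS$ you use the Haar decomposition of $\wh v_{\calS}$ from \citet{LofflerZhangZhou2021} and the density of a spherical coordinate. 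Two sub-steps need fixing.

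\textbf{The step you flag as hardest is vacuous, and the rate you quote for it is off.} For $d\gg\nS$, the almost-exact-recovery rate of \citet{Ndaoud2022} is of order $\exp\{-c\,\DeltaS^4/(\DeltaS^2+d/\nS)\}$, not $\exp(-c\DeltaS^2)$. The two differ when $\DeltaS^2\ll d/\nS$, which the hypothesis allows.
\begin{itemize}
\item However, $\DeltaS^4\gg d(\log\nS)^2/\nS$ together with $d\gg\nS$ gives both $\DeltaS^2\gg\log\nS$ and $\nS\DeltaS^4/d\gg(\log\nS)^2$.
\item Hence the expected number of misclustered source points, $\nS$ times that rate, tends to zero.
\item By Markov's inequality, $\wh z^{(\src)}=\pm\zsrc$ exactly with probability tending to one.
\end{itemize}
This is what the $(\log\nS)^2$ factor is for, and it is how the paper argues (Theorem~8 of \citet{Ndaoud2022}). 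On this event $\hatthetaS=\pm\nS^{-1}\sum_j\zsrc_j\Xsrc_j$. That vector is exactly $\dnorm_d(\thetaS,\sigmaS^2\Id_d/\nS)$ because the labels are deterministic. So $M=\emptyset$, and no union bound over misclustered sets is needed.

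\textbf{In the regime $d\lesssim\nS$, Wedin's theorem does not deliver your angle bound.} When $d\ll\nS$ the hypothesis permits $\DeltaS\to0$, e.g.\ $d$ fixed and $\DeltaS=\nS^{-1/8}$. Then the signal singular value $\sqrt{\nS}\,\sigmaS\DeltaS$ falls below $\norm{E_{\calS}}_{\op}\approx\sigmaS\sqrt{\nS}$. Any two-sided bound of the form $\norm{E_{\calS}}_{\op}/(\text{signal singular value})$ is then vacuous.

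What works is a one-sided bound that uses the concentration of $E_{\calS}^\top E_{\calS}$ around $\nS\sigmaS^2\Id_d$. You can apply Davis--Kahan to $X_{\calS}^\top X_{\calS}-\nS\sigmaS^2\Id_d$, or use Theorem~3 of \citet{CaiZhang2018} as the paper does. Either gives $\sin\angle(\wh v_{\calS},\thetaS)\lesssim\sqrt{d/\nS}/\DeltaS+\sqrt{d/\nS}/\DeltaS^2$.

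Your second term $(d/\nS)/\DeltaS^2$ is too optimistic, since the normalized Wishart fluctuation is of order $\sigmaS^2\sqrt{d/\nS}$. Under $\DeltaS^2\gg\sqrt{d/\nS}\,\log\nS$ both correct terms still vanish. So $|a|\convp1$, and your Haar anti-concentration step then goes through as in the paper.
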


\begin{algorithm}[tb]
\caption{Adaptive Transfer-Assisted Clustering for Two Communities}
\label{alg:adaptive_transfer_clustering_two_community}
\begin{algorithmic}[1]
\Require Target data $\calT$; source data $\calS$; target noise variance
$\sigmaT^2$ or its estimate $\wh\sigmaT^2$; threshold constant $C_0>0$.
\State Construct the target-based candidate
$\hatztar_{\trg}$ according to
\eqref{eq:oracle_target_estimate}.
\State Compute the validation statistic
$\wh{\rmT}(\hatztar_{\trg})$ according to
\eqref{eq:validation_statistic}.
\State Compute the threshold $\tau_n$ according to
\eqref{eq:validation_threshold}.
\If{$|\wh{\rmT}(\hatztar_{\trg})|>\tau_n$}
    \State Set $\hatztar\gets\hatztar_{\trg}$.
\Else
    \State Construct $\hatthetaS$ according to
    \eqref{eq:hat_q_i_hd} when $d\gg\nS$, and according to
    \eqref{eq:hat_q_i_ld} when $d\lesssim\nS$.
    \State Construct the source-based candidate
    $\hatztar_{\src}$ according to
    \eqref{eq:oracle_source_estimate}.
    \State Set $\hatztar\gets\hatztar_{\src}$.
\EndIf
\Output $\hatztar\in\{-1,1\}^{\nT}$.
\end{algorithmic}
\end{algorithm}

\begin{remark}
    Observe that the proof of the above theorem does not rely on the particular construction from \cite{Ndaoud2022}. It suffices to use any estimator $\wh z^{(\src)}$ of the source cluster labels as long as the estimator satisfies $\mathbb P\left(\exists\;s \in \{\pm 1\},\;\wh z^{(\src)}=s\zsrc\right) \to 1$, if the first condition of (B) in the foregoing theorem is satisfied. The rest of the proof leverages this exact recovery property and does not rely on specific properties of the estimator $\wh z^{(\src)}$.
\end{remark}

\subsection{Adaptively choosing between the target and the source}
\label{sec:oracle_adaptive_two_cluster}
In this section, we focus on constructing an estimator of $\ztar$ that adaptively chooses between the target based construction in \eqref{eq:oracle_target_estimate} and the source based construction in \eqref{eq:oracle_source_estimate} without relying on the oracle knowledge of whether \eqref{eq:lower_bound_2_clust_delt} holds. 

In that direction, consider the following validation statistic $\wh{\rmT}:\{-1,1\}^{\nT} \to \mathbb R$.
\begin{align}
    \label{eq:validation_statistic}
   \wh{\rmT}(\wt z):=\left\|\frac{1}{\nT}\sum_{j \in \nT}\wt z_j\Xtar_j\right\|^2_2-\frac{d}{\nT}\,\sigmaT^2.
\end{align}
Observe that in the foregoing statistic, the estimated cluster labels $\wt z$ might depend on $\calT$ and hence this validation statistics is different from conventional cross validation that depends on sample splitting. 

To simplify exposition, we shall slightly abuse notation to denote the estimator constructed in \eqref{eq:oracle_target_estimate} by $\hatztar_{\trg}$ and the estimator constructed in \eqref{eq:oracle_source_estimate} by $\hatztar_{\src}$.
We shall show that there exists an absolute constant $C_0>0$ such that the value of $|\wh{\rmT}(\hatztar_T)|$ exceeds the threshold $\tau_n$ defined as
\begin{align}
    \label{eq:validation_threshold}
   \tau_n:=C_0\cdot\sigmaT^2\cdot\left(1+\sqrt{\frac{d}{\nT}}\right)
\end{align}
with probability 1 when \eqref{eq:oracle_target_estimate} holds. On the contrary, if \eqref{eq:oracle_target_estimate} does not hold, the value of $|\wh{\rmT}(\hatztar_T)|$ is less than $\tau_n$ with probability 1. Therefore, the data adaptive selection procedure is given by
\begin{align}
\label{eq:grand_def_adaptive}
    \hatztar:=\begin{cases}
        \hatztar_{\trg} & \mbox{if $|\wh{\rmT}(\hatztar_{\trg})|>\tau_n$,}\\
        \hatztar_{\src} & \mbox{if $|\wh{\rmT}(\hatztar_{\trg})|\le\tau_n$.}
    \end{cases}
\end{align}

Consider the following theorem that states that the target candidate is chosen only if the target signal is strong. In other words, we show that the target candidate is chosen only if \eqref{eq:lower_bound_2_clust_delt} holds.

\begin{theorem}
\label{thm:choose_correct_adaptive}
Assume that $d \ge 4$. Then if \eqref{eq:lower_bound_2_clust_delt} holds, we have $ \smash{\liminf_{\nT \to \infty} \Pbb[\hatztar=\hatztar_{\trg}]=1}$,
for any choice of $C_0>0$. Furthermore, if \eqref{eq:lower_bound_2_clust_delt} does not hold, that is there exists $\mathrm{D}_0>0$ such that
\(
\smash{\DeltaT \le \mathrm{D}_0 \cdot (1+\left(d/n\right)^{1/4})}, 
\)
then one can appropriately choose $C_0>0$ depending on $\mathrm{D}_0$ in \eqref{eq:validation_threshold} to ensure $\smash{\liminf_{\nT \to \infty} \Pbb[\hatztar=\hatztar_{\src}]=1}.$
Furthermore, $\hatztar$ defined by \eqref{eq:grand_def_adaptive} satisfies $\Ebb\bigl[\calL(\hatztar, \ztar)\bigr] \to 0$.
\end{theorem}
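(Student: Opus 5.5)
The plan is to analyse the validation statistic by expanding $\frac1{\nT}\sum_j \wt z_j \Xtar_j$ for a generic (possibly data-dependent) $\wt z$. Write $a(\wt z):=\frac1{\nT}\sum_j \wt z_j\ztar_j\in[-1,1]$ and $g(\wt z):=\frac1{\nT}\sum_j\wt z_j\ept_j$. Then
\begin{align}
\wh{\rmT}(\wt z)=a(\wt z)^2\norm{\thetaT}_2^2+2a(\wt z)\ip{\thetaT}{g(\wt z)}+\norm{g(\wt z)}_2^2-\frac{d}{\nT}\sigmaT^2 .
\end{align}
Since $\wt z$ depends on the data, I would control the noise terms uniformly over $\wt z\in\{-1,1\}^{\nT}$, writing $g(\wt z)=\nT^{-1}E_{\calT}^\top\wt z$. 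Then $\norm{g(\wt z)}_2^2=\nT^{-2}\wt z^\top E_{\calT}E_{\calT}^\top\wt z=\nT^{-2}\sum_j\norm{\ept_j}^2+\nT^{-2}\wt z^\top(E_{\calT}E_{\calT}^\top-\diag)\wt z$. The diagonal part equals $\frac d{\nT}\sigmaT^2+O_p(\sigmaT^2\sqrt d/\nT)$, which cancels the centering. The hollowed part is bounded by $\nT^{-1}\norm{E_{\calT}E_{\calT}^\top-\diag(E_{\calT}E_{\calT}^\top)}_{\op}\lesssim\sigmaT^2(\sqrt{d/\nT}+1)$ with high probability, using the standard operator norm bound for hollowed Gaussian Gram matrices of \cite{Ndaoud2022}. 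The cross term is bounded by $\norm{\thetaT}\cdot\nT^{-1}\norm{E_{\calT}\thetaT/\norm{\thetaT}}_2\sqrt{\nT}\lesssim\sigmaT\norm{\thetaT}$. Altogether, uniformly in $\wt z$,
\begin{align}
\bigl|\wh{\rmT}(\wt z)-a(\wt z)^2\sigmaT^2\DeltaT^2\bigr|\le C\sigmaT^2\Bigl(1+\DeltaT+\sqrt{d/\nT}\Bigr)
\end{align}
with probability tending to one, for an absolute constant $C$.

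\textbf{Strong target signal.} Suppose \eqref{eq:lower_bound_2_clust_delt} holds. By the consistency of \texttt{GOOD-CLUSTERER} (Theorem~\ref{thm:correct_choose}(A) and \cite{Ndaoud2022}), $\calL(\hatztar_{\trg},\ztar)=o_p(1)$, so $|a(\hatztar_{\trg})|\ge 1/2$ with probability tending to one. Then $\wh{\rmT}\ge\sigmaT^2\DeltaT^2/4-C\sigmaT^2(1+\DeltaT+\sqrt{d/\nT})$. Since $\DeltaT^2\gg1+\sqrt{d/\nT}$ and $\DeltaT^2\gg\DeltaT$, this exceeds $\tau_n$ for every fixed $C_0$. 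Hence $\Pbb[\hatztar=\hatztar_{\trg}]\to1$.

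\textbf{Weak target signal.} Suppose $\DeltaT\le \mathrm D_0(1+(d/\nT)^{1/4})$. Using $|a|\le1$, we get $|\wh{\rmT}|\le\sigmaT^2\bigl(\DeltaT^2+C(1+\DeltaT+\sqrt{d/\nT})\bigr)\le C'(\mathrm D_0)\,\sigmaT^2(1+\sqrt{d/\nT})$, because $\DeltaT^2\le 2\mathrm D_0^2(1+\sqrt{d/\nT})$ and $\DeltaT\le 1+\DeltaT^2$. Choosing $C_0>C'(\mathrm D_0)$ gives $\Pbb[\hatztar=\hatztar_{\src}]\to1$.

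\textbf{Consistency of the combined estimator.} I would argue along subsequences. Write $\Ebb\calL(\hatztar,\ztar)\le\Ebb\calL(\hatztar_{\trg},\ztar)\mathbbm 1\{\text{strong}\}+\Ebb\calL(\hatztar_{\src},\ztar)+\Pbb[\text{wrong choice}]$, and split into cases.
\begin{enumerate}
\item If (A) holds, the target candidate is selected with probability tending to one and is consistent.
\item If (A) fails, pass to a subsequence along which $\DeltaT/\max\{1,(d/\nT)^{1/4}\}$ is bounded, say by $\mathrm D_0$. With $C_0$ chosen as above, the source candidate is selected, and under (B) it is consistent by Theorem~\ref{thm:correct_choose}, since $\calL(\hatztar_{\src},\ztar)$ does not depend on the oracle.
\end{enumerate}
Intermediate regimes, where the ratio is unbounded but not divergent, are handled by the subsequence argument, because every subsequence has a further subsequence falling into one of the two cases.

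\textbf{Main obstacle.} The main difficulty is the uniform control of $\wt z^\top(E_{\calT}E_{\calT}^\top-\diag)\wt z/\nT^2$ over data-dependent $\wt z$ at the sharp scale $\sigmaT^2(1+\sqrt{d/\nT})$. A union bound over $2^{\nT}$ sign vectors would lose a factor $\nT$, so I would instead use the operator-norm route: $|\wt z^\top M\wt z|\le\nT\norm{M}_{\op}$, together with the Gaussian hollowed-Gram bound $\norm{M}_{\op}\lesssim\sigmaT^2(\sqrt{d\nT}+\nT)$. I would also verify that $C_0$ independent of $\mathrm D_0$ suffices in the first part, which holds because there the signal term diverges relative to $\tau_n$.
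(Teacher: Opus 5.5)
Your proof follows the paper's route. You use the same expansion of $\wh{\rmT}(\wt z)$ into $a(\wt z)^2\norm{\thetaT}_2^2$, a cross term and the centred noise norm. You control the noise norm uniformly in the same way, by splitting off the diagonal and bounding the hollowed Gram matrix in operator norm. The two case analyses are the same, and your $|a|\ge 1/2$ and $\DeltaT\le 1+\DeltaT^2$ are harmless variants of the paper's $\mathrm R(\hatztar_{\trg})\convp 1$ and $s+1\le\tfrac32(s^2+1)$. The only local difference is the cross term. You bound it by Cauchy--Schwarz and $\norm{E_{\calT}\thetaT}_2\approx\sigmaT\norm{\thetaT}_2\sqrt{\nT}$, whereas the paper uses $\nT^{-1}\sum_j|\ip{\thetaT}{\ept_j}|$ plus Gaussian Lipschitz concentration. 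Both give $O(\sigmaT\norm{\thetaT}_2)$ uniformly in $\wt z$, so the two strong/weak selection claims and consistency under (A), or under (B) with a given $\mathrm D_0$, are correctly established.

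The one step that does not work is your claim that ``intermediate'' regimes are handled by subsequences. The constant $C_0$ is part of the estimator and is fixed before any subsequence is extracted. A sub-subsequence on which $\DeltaT/\max\{1,(d/\nT)^{1/4}\}$ stays bounded only comes with \emph{some} bound $D$, and nothing guarantees $C_0>C'(D)$. On such a sub-subsequence the selector can pick $\hatztar_{\trg}$ while its loss is of constant order, even though (B) holds and $\hatztar_{\src}$ would be consistent. Hence nothing forces $\Ebb[\calL(\hatztar,\ztar)]\to 0$ there. The consistency conclusion is justified only under (A), or under (B) together with a bound $\mathrm D_0$ fixed in advance and $C_0$ chosen from it. That is how the theorem is phrased, and it is all the paper proves, so you should simply drop the extension.

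A minor inaccuracy: a union bound over the $2^{\nT}$ sign vectors would \emph{not} lose a factor $\nT$. For fixed $\wt z$ the hollowed form is a Gaussian chaos with matrix $(\wt z\wt z^\top-\Id_{\nT})\otimes\Id_d$. This matrix has Frobenius norm $\asymp\nT\sqrt d$ and operator norm $\asymp\nT$. Hanson--Wright at level $\nT\log 2$ therefore gives deviation $\sigmaT^2(\nT^{3/2}\sqrt d+\nT^2)$, which is the same scale $\sigmaT^2(1+\sqrt{d/\nT})$ after dividing by $\nT^2$. The paper uses this kind of union bound for the $K$-cluster validation statistic. The operator-norm route you chose is nonetheless fine.
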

The above theorem shows that the validation based procedure from Algorithm~\ref{alg:adaptive_transfer_clustering_two_community} adaptively chooses between the target and the source branches of Algorithm~\ref{alg:meta_oracle_transfer_clustering_two_community} depending on the strength of target signal without paying an extra cost in terms of signal strength. 
The proof of this theorem is presented in Supplement~\ref{sec:proof_thm_2} and depends on an uniform concentration of the validation statistic over all $\wt z \in \{-1,1\}^{\nT}$. This allows for handling data dependent $\wt z$ and hence we can use the estimated $\hatztar$ in validation step. 

It is worthwhile to mention that this adaptive selection procedure requires the knowledge of $\sigmaT^2$. In several applications, it is common to estimate the variance of the baseline noise $\sigmaT^2$ from the data and standardize the entire datasets before analysis (for example, see \cite{zhong2022empirical}). In (2.2) of \citet{zhong2022empirical}, the authors propose an estimate of $\sigmaT^2$ constructed as follows: Suppose $\wh{X} \in \R^{\nT \times d}$ be defined as
\begin{align}
    \wh{X}:=\argmin_{\mathrm X \in \R^{\nT \times d}:\,\operatorname{rank}(\mathrm X) \le 1}\|X_\calT-\mathrm X\|^2_F,
\end{align}
where $X_\calT$ is defined following \eqref{eq:define_b_cal_t}. Then we define
\begin{align}
\label{eq:consistent_plug_in}
        \wh \sigma^2_T:=\frac{1}{\nT \cdot d}\|X_\calT-\wh{X}\|^2_F.
\end{align}
We can show that $\wh \sigma^2_T$ satisfies
\begin{equation}
\label{eq:consistency_sigma}
    \left|\frac{\wh \sigma^2_T}{\sigmaT^2}-1\right| \le \mathrm R_0,
\end{equation}
for some absolute constant $\mathrm R_0>0$ (cf. Theorem~\ref{thm:sigmaT_hat_consistency}). The conclusions of Theorem~\ref{thm:choose_correct_adaptive} continues to hold if $\sigmaT^2$ is replaced in \eqref{eq:validation_statistic} and \eqref{eq:validation_threshold} by $\wh \sigma^2_T$.
This establishes that we can achieve completely data dependent selection between the target and source based estimators and yet achieve consistency in the sense of \eqref{eq:consistent_clustering_definition}. The entire procedure is summarized in Algorithm~\ref{alg:adaptive_transfer_clustering_two_community}.
    
One also needs to choose the constant $C_0>0$ in \eqref{eq:validation_threshold} appropriately to ensure that the adaptive branch chooses the correct estimator. We propose a heuristic parametric bootstrap based selection procedure for $C_0$ in Supplement~\ref{sec:bootstrap_calibration}.  

\begin{remark}
    The analysis in this section can be extended to handle the setting when there are more than one but finite number of independent sources. We develop the theoretical characterization of the regime where consistent clustering is achievable in the two community multi-source setting in Supplement~\ref{sec:two_clust_mult_source}.
\end{remark}

\section{Necessary condition for consistent clustering}
\label{sec:necessary_cond}
The preceding analysis identifies two routes to consistent target clustering:
the target data may be sufficiently informative on their own, or a sufficiently
strong and well-aligned source may compensate for a weak target signal. We now
investigate whether the requirements underlying these two routes are intrinsic
to the problem. To this end, we construct adversarial instances in
$\Omega(\DeltaT,\DeltaS,\mu)$ and show that the minimax risk in
\eqref{eq:minimax_risk} remains bounded away from zero when both the target-only
and source-assisted routes are below their respective fixed thresholds.
Recall the two-community model from section \ref{sec:two_comm_model}, specifically that
 \(
     v_{\trg}:=\thetaT/\|\thetaT\|_2,
     v_{\src}:=\thetaS/\|\thetaS\|_2,
 \)
 so that the alignment condition is $|\ip{v_{\trg}}{v_{\src}}|\ge \mu$ as in \eqref{eq:alignment}.
We first construct a pair of random direction vectors $v_{\trg},v_{\src}\in\R^d$
whose overlap satisfies the required alignment condition with high probability.
To this end, let
\[
\xi=(\xi_1,\ldots,\xi_d)\sim\operatorname{Unif}(\{\pm1\}^d),\qquad
\kappa_1,\ldots,\kappa_d \overset{\mathrm{iid}}{\sim}
    \operatorname{Rad}\left(\frac{1+\wt\mu}{2}\right),
\]
where $\operatorname{Rad}(p)$ denotes a random variable taking values in
$\{+1,-1\}$ with probability $p$ assigned to $+1$. Define
\begin{align}
\label{eq:construct_signal_adv}
v_{\trg}:=\frac{1}{\sqrt d}\xi, \qquad
v_{\src}:=\frac{1}{\sqrt d}(\xi_1\kappa_1,\ldots,\xi_d\kappa_d).
\end{align}
Here $\wt\mu\ge\mu$ is a surrogate alignment parameter, whose explicit
choice is given in Appendix~\ref{app:proof-necessary-cond}. Then
$\|v_{\trg}\|_2=\|v_{\src}\|_2=1$ almost surely, and the overlap between the
source and target directions is $\langle v_{\trg},v_{\src}\rangle:=d^{-1}\,\sum_{r=1}^d \kappa_r$,
which concentrates around $\wt\mu$. Since the parameter class
\eqref{eq:param_class} imposes the pointwise constraint
$|\ip{v_{\trg}}{v_{\src}}|\ge \mu$, we condition on the high-probability event $\mathcal E_{\mu}:=\left\{|\ip{v_{\trg}}{v_{\src}}|\ge \mu\right\}$
in the formal reduction. The surrogate bias is chosen so that the complement
of this event has a prescribed small universal probability, allowing the
surrogate Bayes lower bound to transfer to the deterministic parameter class.

Although the prior couples the target and source directions, the coupling is coordinatewise. Conditional on the latent variables, the $d$ ambient coordinates remain independent; after averaging, the resulting laws are mixtures, which are handled by the likelihood-ratio and replica calculations below. The formal reduction from the surrogate prior to the deterministic alignment class is stated and proved in Appendix~\ref{app:proof-necessary-cond} (cf. Lemma~\ref{lem:sec3_conditioning}).
Assume first that $\nT=2k$; the odd case is handled by discarding one target observation. We reduce our original parameter of interest $\ztar \in \{-1,+1\}^{\nT}$ to the following pairwise product vector 
$\tau=(\tau_1,\ldots,\tau_k)\in\{-1,+1\}^k$, where
\begin{equation}
    \label{eq:sec3_tau}
    \tau_j:=\ztar_{2j-1}\ztar_{2j},
    \qquad j=1,\ldots,k.
\end{equation}
One can construct an instance of $\ztar \in \{-1,+1\}^{\nT}$ and $\zsrc \in \{-1,+1\}^{\nS}$ using $\tau$ through the following construction.
\begin{align}
\label{eq:sec3_eta_param}
&\ztar_{2j-1}=\eta_j,\; \ztar_{2j}=\tau_j\eta_j,
\; \text{for}\, j\in [k],\quad \mbox{and} \quad\zsrc_1,\ldots,\zsrc_{\nS} \simiid \mathrm{Unif}\{-1,+1\}
\end{align}
where $\eta_1,\ldots,\eta_k \simiid \mathrm{Unif}\{-1,+1\}$. Observe that the pair $(v_\trg,v_\src,\ztar,\zsrc)$ generated according to \eqref{eq:construct_signal_adv} and \eqref{eq:sec3_eta_param} constitutes a valid parameter vector contained in $\Omega$ when $\mathcal E_{\mu}$ holds. Through the choice of $\wt \mu$ and conditioning described above, this construction yields the fixed-threshold obstructions stated in Theorem~\ref{thm:necessary_condition}.

Next, we consider the distributions of the joint law $P_\tau$ of the target and source observations $\calT$ and $\calS$, respectively, after marginalizing the randomness from $\zsrc,\xi,\eta$ and $\kappa$. 
Observe that $P_\tau$ is uniquely characterized by the parameter $\tau \in \{-1,+1\}^k$ and hence the family $\{P_\tau:\tau \in \{-1,+1\}^{k}\}$ constitutes a family of distributions indexed by the $k$-dimensional hypercube.

For any estimator $\hatztar\in\{-1,+1\}^{\nT}$ of $\ztar$, define the
corresponding empirical pairwise products by $\wh{\tau}_j:=\hatztar_{2j-1}\hatztar_{2j}$ where $j \in [k]$.
The following lemma shows that the clustering loss $\calL(\hatztar,\ztar)$
defined in \eqref{eq:loss_func_2} dominates the Hamming loss for the induced
pairwise products.
\begin{lemma}
\label{lem:sec3_loss_domination}
For any estimator $\hatztar\in\{-1,+1\}^{\nT}$ of $\ztar$, we have
\[
    \calL(\hatztar,\ztar)
    \ge
    \frac{1}{\nT}\sum_{j=1}^k
    \mathbbm{1}\{\wh{\tau}_j\neq\tau_j\}.
\]
\end{lemma}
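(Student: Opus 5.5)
The proof is a pointwise, deterministic counting argument, and it needs no probability. Fix realizations of $\hatztar$ and $\ztar$. Let $s^\star\in\{-1,1\}$ attain the minimum in \eqref{eq:loss_func_2}, so that
\[
\calL(\hatztar,\ztar)=\frac{1}{\nT}\sum_{j=1}^{\nT}\mathbbm{1}\{\hatztar_j\neq s^\star\ztar_j\}.
\]
For the odd case, where the last observation is discarded, the sum over all $\nT$ indices is at least the sum over the first $2k$, so it suffices to treat the first $2k$ indices.

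\textbf{Step 1: reduce to blocks.} Group the indices into the $k$ disjoint pairs $\{2j-1,2j\}$, $j\in[k]$. Since the summands are nonnegative,
\[
\sum_{j=1}^{\nT}\mathbbm{1}\{\hatztar_j\neq s^\star\ztar_j\}\ \ge\ \sum_{j=1}^k\Bigl(\mathbbm{1}\{\hatztar_{2j-1}\neq s^\star\ztar_{2j-1}\}+\mathbbm{1}\{\hatztar_{2j}\neq s^\star\ztar_{2j}\}\Bigr).
\]
It is therefore enough to show that for each $j$ the bracket is at least $\mathbbm{1}\{\wh\tau_j\neq\tau_j\}$.

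\textbf{Step 2: the per-pair inequality.} Suppose $\wh\tau_j\neq\tau_j$. If both $\hatztar_{2j-1}=s^\star\ztar_{2j-1}$ and $\hatztar_{2j}=s^\star\ztar_{2j}$ held, then
\[
\wh\tau_j=\hatztar_{2j-1}\hatztar_{2j}=(s^\star)^2\ztar_{2j-1}\ztar_{2j}=\tau_j,
\]
which is a contradiction. Hence at least one of the two indicators equals $1$. The key point is that $\tau_j$ is invariant under the global sign flip, since $(s^\star)^2=1$. This is exactly why the sign-invariant minimum in $\calL$ causes no difficulty: whichever sign attains the minimum, the same bound holds.

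\textbf{Step 3: conclude.} Summing over $j$ and dividing by $\nT$ gives the claim. I expect no real obstacle here. The only point needing care is that the minimizing sign $s^\star$ is common to all pairs, and this is harmless because the pairwise products are sign-invariant. The odd case adds a trivial extra nonnegative term.
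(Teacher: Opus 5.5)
Your proof is correct and is essentially the paper's argument: fix the minimizing sign $s^\star$, note that $\wh\tau_j\neq\tau_j$ forces at least one mislabeled index in the pair $\{2j-1,2j\}$ because $(s^\star)^2=1$, and sum over the disjoint pairs. Your remark on the odd case is a harmless addition that the paper leaves implicit: the discarded index only contributes a nonnegative term to the loss.
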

This reduction allows us to apply the classical Assouad lemma
\citep{assouad1983deux,Yu1997} to lower bound the worst-case risk
$\mathfrak R(\DeltaT,\DeltaS,\mu)$ defined in \eqref{eq:minimax_risk}.
\begin{remark}
The loss $\calL(\hatztar,\ztar)$ is not lower bounded by the Hamming distance
between $\hatztar$ and $\ztar$, since $\calL$ is invariant under the global sign
flip $\hatztar\mapsto-\hatztar$. This is precisely why we pass to the pairwise
product parametrization: $\tau_j=\ztar_{2j-1}\ztar_{2j}$ is itself invariant
under a global sign flip, and hence the Hamming loss for $\tau$ is compatible
with the clustering loss. The proof of Lemma~\ref{lem:sec3_loss_domination} is
given in Appendix~\ref{app:proof-sec3}.
\end{remark}

In Assouad's lemma, one proceeds by considering the neighboring Assouad marginals defined as follows.
\begin{equation}
    \label{eq:sec3_assouad_marginals}
    P_{+j}:=2^{-(k-1)}\sum_{\tau:\tau_j=+1} P_\tau,
    \qquad
    P_{-j}:=2^{-(k-1)}\sum_{\tau:\tau_j=-1} P_\tau, \quad \mbox{for all $j \in [k]$.}
\end{equation}
By Lemma~\ref{lem:sec3_loss_domination} and the loss-domination form of
Assouad's lemma stated in Lemma~\ref{lem:sec3_loss_domination_assouad}
\citep{assouad1983deux,Yu1997}, 
\begin{align}
\label{eq:sec3_assouad_display}
\inf_{\hatztar}\sup_{\tau\in\{-1,+1\}^k}\Ebb_{P_\tau}\bigl[\calL(\hatztar,\ztar)\bigr]&\ge \inf_{\hatztar}\sup_{\tau\in\{-1,+1\}^k}\Ebb_{P_\tau}\left[\frac{1}{\nT}\sum_{j=1}^k\mathbbm{1}\{\wh{\tau}_j\neq\tau_j\}\right]\\
&\ge \frac{k}{2\nT}\Bigl(1-\max_{1\le j\le k}\TV(P_{+j},P_{-j})\Bigr)=
\frac{1}{4}\Bigl(1-\max_{1\le j\le k}\TV(P_{+j},P_{-j})\Bigr).
\end{align}
Here, $\TV(P,Q)$ denotes the total variation distance of the distributions $P$ and $Q$. Therefore, to lower bound $\inf_{\hatztar}\sup_{\tau\in\{-1,+1\}^k}\Ebb_{P_\tau}\bigl[\calL(\hatztar,\ztar)\bigr]$, it suffices to show that the total variation distances $\TV(P_{+j},P_{-j})$ are uniformly bounded away from 1 over $j \in [k]$.

The remaining task is to control the total variation distances between
the neighboring laws $P_{+j}$ and $P_{-j}$. This is the main technical step:
these laws are high-dimensional mixtures of Gaussian experiments rather than
single Gaussian measures, so their total variation distance must be controlled
through the structure of the mixture.

The reason this TV bound is sufficient is exactly the Assouad reduction in
\eqref{eq:sec3_assouad_display}. If, for some constant $\delta>0$,
\[
    \max_{1\le j\le k}\TV(P_{+j},P_{-j})\le 1-\delta,\quad\mbox{then, by \eqref{eq:sec3_assouad_display}, it implies}\quad
    \inf_{\hatztar}\sup_{\tau\in\{\pm1\}^k}
    \Ebb_{P_\tau}\bigl[\calL(\hatztar,\ztar)\bigr]
    \ge \frac{\delta}{4}.
\]
Thus a uniform TV bound away from one yields a constant lower bound on the
pair-product Hamming risk, and Lemma~\ref{lem:sec3_loss_domination} transfers
that lower bound to the original clustering loss.

\subsection{Bounding the total variation distance}
The remaining task is to control $\TV(P_{+j},P_{-j})$ for a fixed pair
index $j$. We use the following standard identity: if $P$ and $P'$ are both
dominated by a common measure $\nu$, then
\begin{equation}
    \label{eq:tv-density-identity}
    \TV(P,P')
    =
    \frac12\int\left|\frac{dP}{d\nu}-\frac{dP'}{d\nu}\right|\,d\nu
    =
    \frac12\Ebb_{\nu}\left|
    \frac{dP}{d\nu}-\frac{dP'}{d\nu}
    \right|.
\end{equation}
Thus the problem is reduced to choosing a useful dominating measure and
computing likelihood ratios.

For $a\in\{+1,-1\}$, write $P_{aj}$ for the neighboring Assouad marginal
obtained by fixing $\tau_j=a$ and averaging uniformly over
$\tau_{-j}$ and the latent variables $(\eta,\xi,\kappa,\zsrc)$; thus
$P_{+j}$ and $P_{-j}$ are the two cases $a=+1$ and $a=-1$, respectively.
Also let $z_a(\tau_{-j},\eta)$ denote the target label vector obtained by
fixing $\tau_j=a$ and using \eqref{eq:sec3_eta_param} on all pairs.

We take the dominating measure to be the pure-noise Gaussian law $Q$ on
$\R^{d\times \nT}\times\R^{d\times \nS}$. For each coordinate $r\in[d]$, write $x_r=(\Xtar_{1,r},\ldots,\Xtar_{\nT,r})\in\R^{\nT},$ and $y_r=(\Xsrc_{1,r},\ldots,\Xsrc_{\nS,r})\in\R^{\nS}$
for the target and source observations along that coordinate. Under $Q$, the
pairs $(x_r,y_r)$ are independent across $r\in[d]$ and satisfy $x_r\sim\dnorm(0,\sigmaT^2 \Id_{\nT})$, $y_r\sim\dnorm(0,\sigmaS^2 \Id_{\nS}),$ and $x_r\perp y_r.$
This choice removes all signal terms from the reference law. Conditional on the latent variables, the model differs from $Q$ only through Gaussian shifts, so the likelihood ratio factors over coordinates. The next lemma records the
resulting density.

\begin{lemma}
\label{lem:sec3_exact_tv_form}
For each fixed pair index $j$ and $a\in\{+1,-1\}$, the neighboring Assouad
marginal $P_{aj}$, viewed as a law on
$\R^{d\times \nT}\times\R^{d\times \nS}$, has density with respect to $Q$
given by
\begin{align}
    \frac{dP_{aj}}{dQ}
    =e^{-(\nT\DeltaT^2+\nS\DeltaS^2)/2}
    \Ebb_{\tau_{-j},\eta,\xi,\kappa,\zsrc}
    \Biggl[
        \prod_{r=1}^d
        \exp\Bigl\{
            \frac{\DeltaT}{\sigmaT\sqrt d}\xi_r
            \ip{z_a(\tau_{-j},\eta)}{x_r}
            +
            \frac{\DeltaS}{\sigmaS\sqrt d}\kappa_r\xi_r
            \ip{\zsrc}{y_r}
        \Bigr\}
    \Biggr]. \nonumber\\
    \label{eq:sec3_exact_density}
\end{align}
\end{lemma}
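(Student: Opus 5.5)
The plan is to compute the density conditionally on all latent variables and then average; Fubini/Tonelli justifies exchanging the average with the density, since everything is nonnegative. Fix $j$, $a$, and a realization of $(\tau_{-j},\eta,\xi,\kappa,\zsrc)$. Then the data are exactly Gaussian with known shifts:
\[
\Xtar_i = z_{a,i}\,\thetaT + \ept_i, \qquad \Xsrc_i=\zsrc_i\thetaS+\eps_i,
\]
where $z_a=z_a(\tau_{-j},\eta)$ and
\[
\thetaT=\sigmaT\DeltaT v_{\trg}=\frac{\sigmaT\DeltaT}{\sqrt d}\,\xi,
\qquad
\thetaS=\frac{\sigmaS\DeltaS}{\sqrt d}\,(\xi_r\kappa_r)_r .
\]
This uses $\|v_{\trg}\|_2=\|v_{\src}\|_2=1$ and the definitions \eqref{eq:def_delta_snr} and \eqref{eq:construct_signal_adv}. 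The law $P_{aj}$ is the uniform mixture of these conditional laws, with $\tau_{-j}$ uniform on $\{\pm1\}^{k-1}$ and the other latents drawn as in \eqref{eq:construct_signal_adv}--\eqref{eq:sec3_eta_param}. Hence
\[
\frac{dP_{aj}}{dQ}=\Ebb\!\left[\frac{dP_{aj}(\cdot\mid\text{latents})}{dQ}\right].
\]

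Next I compute the conditional likelihood ratio coordinatewise. Under both the conditional law and $Q$, the coordinates $r\in[d]$ are independent. In coordinate $r$ the conditional law is $x_r\sim\dnorm(\mu_r,\sigmaT^2\Id_{\nT})$ and $y_r\sim\dnorm(\nu_r,\sigmaS^2\Id_{\nS})$, independent, with
\[
\mu_r=\frac{\sigmaT\DeltaT}{\sqrt d}\,\xi_r z_a,
\qquad
\nu_r=\frac{\sigmaS\DeltaS}{\sqrt d}\,\xi_r\kappa_r\zsrc .
\]
The Gaussian shift likelihood ratio $\exp\{\ip{\mu}{x}/\sigma^2-\|\mu\|^2/(2\sigma^2)\}$ then gives the exponent
\[
\frac{\DeltaT}{\sigmaT\sqrt d}\xi_r\ip{z_a}{x_r}+\frac{\DeltaS}{\sigmaS\sqrt d}\kappa_r\xi_r\ip{\zsrc}{y_r}
\]
together with the quadratic terms
\[
-\frac{\DeltaT^2\|z_a\|^2}{2d}-\frac{\DeltaS^2\|\zsrc\|^2}{2d}.
\]
Since $\xi_r^2=\kappa_r^2=1$, $\|z_a\|_2^2=\nT$ and $\|\zsrc\|_2^2=\nS$, these quadratic terms do not depend on the latents. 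Summed over the $d$ coordinates they give $-(\nT\DeltaT^2+\nS\DeltaS^2)/2$, so the normalization factor can be pulled outside the expectation.

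Taking the product over $r$ and then the expectation over the latents yields \eqref{eq:sec3_exact_density}. The step that needs the most care is confirming that the quadratic normalization is genuinely latent-free, which is what allows it to come out of the mixture; this rests entirely on the $\pm1$ structure of $\xi$, $\kappa$, and the labels. I expect no real obstacle. One point to state: the lemma concerns the unconditioned surrogate prior, not the version conditioned on $\mathcal E_\mu$, so no conditioning correction enters here.
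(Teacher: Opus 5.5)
Your proposal is correct and follows essentially the same route as the paper. Both arguments condition on the latents $(\tau_{-j},\eta,\xi,\kappa,\zsrc)$, apply the coordinatewise Gaussian-shift likelihood ratio, note that the quadratic normalization equals $-(\nT\DeltaT^2+\nS\DeltaS^2)/2$ regardless of the latents because all labels and signs are $\pm1$, and then average over the latents; your explicit appeal to Tonelli for exchanging the mixture with the density is a small rigor point the paper leaves implicit.
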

Combining the preceding lemma with \eqref{eq:tv-density-identity} gives
\begin{equation}
    \label{eq:sec3_tv_exact}
    \TV(P_{+j},P_{-j})
    =
    \frac12
    \Ebb_Q\left|
        \frac{dP_{+j}}{dQ}
        -
        \frac{dP_{-j}}{dQ}
    \right|.
\end{equation}

The density formula shows why the comparison is not a direct calculation
between two Gaussian laws: the expectation in \eqref{eq:sec3_exact_density}
averages over several unobserved random quantities. Directly bounding the TV
distance between these mixtures is difficult. In two of the bounds below we
upper bound this TV distance by comparing enlarged experiments that include
additional variables from the construction. If $\widetilde P_{+j}$ and
$\widetilde P_{-j}$ denote such enlarged laws and their data marginals are
$P_{+j}$ and $P_{-j}$, then data processing gives
\[
    \TV(P_{+j},P_{-j})
    \le
    \TV(\widetilde P_{+j},\widetilde P_{-j}).
\]
Different enlargements, or no enlargement at all, are useful in different
regimes. We use four bounds. The \emph{target-direction-revealed} route adds
$v_{\trg}$ and reduces the comparison to the tested target pair. The
\emph{revealed-source-direction} route adds the source direction, the source
labels, and the target signs outside the tested pair, giving a conditional
comparison suited to the regime controlled by $\tilde\mu\DeltaT$. The
\emph{doubly subcritical} route applies when
$\nT\DeltaT^4+\nS\DeltaS^4$ is small and controls the original mixture directly
by a second-moment calculation. Finally, the \emph{product-scale} route
conditions on the label variables and uses a row-wise KL bound, which keeps
the term $\tilde\mu^2\nS\DeltaT^2\DeltaS^2/d$. The next theorem records the
corresponding total-variation bounds needed for the Assouad step.

\begin{theorem}[Upper bounds for the relevant total variation distances]
\label{thm:sec3_tv_three_routes}
Fix a pair index $j$. The following total variation bounds hold.
\begin{enumerate}
    \item The target-direction-revealed route yields
    \begin{equation}
        \label{eq:sec3_tv_oracle1}
        \TV(P_{+j},P_{-j})^2
        \le
        C\,\DeltaT^4.
    \end{equation}

    \item For every fixed $\delta_0\in(0,1)$, whenever
    $\tilde\mu\in[0,1-\delta_0]$, the revealed-source-direction route gives
    \begin{equation}
        \label{eq:sec3_tv_oracle}
        \TV(P_{+j},P_{-j})^2
        \le
        C_{\delta_0}\left(
            \tilde\mu^2\DeltaT^2
            +
            \frac{\nT\DeltaT^4}{d}
        \right).
    \end{equation}

    \item There exist universal constants $c,C>0$ such that whenever
    $
        \nT\DeltaT^4+\nS\DeltaS^4\le c\,d,
    $
    the doubly subcritical route gives
    \begin{equation}
        \label{eq:sec3_tv_doubly}
        \TV(P_{+j},P_{-j})^2
        \le
        C\,\frac{\DeltaT^4}{d}.
    \end{equation}

    \item Whenever
    $\nT\DeltaT^2/d\le c^2$ and
    $\tilde\mu^2\nS\DeltaS^2/d\le c^2$, the product-scale route gives
    \begin{equation}
        \label{eq:sec3_tv_product}
        \TV(P_{+j},P_{-j})^2
        \le
        C\,\frac{\nT\DeltaT^4+\tilde\mu^2\nS\DeltaT^2\DeltaS^2}{d}.
    \end{equation}
\end{enumerate}
\end{theorem}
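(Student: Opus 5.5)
The plan is to prove the four bounds separately. In each, the first move is to pass from total variation to a divergence that tensorizes: Hellinger or KL via Pinsker for the enlarged experiments in routes 1, 2 and 4, and $\chi^2$ via Cauchy--Schwarz, $\TV(P_{+j},P_{-j})\le \tfrac12\sqrt{\chi^2(P_{+j},Q)}+\tfrac12\sqrt{\chi^2(P_{-j},Q)}$ or more sharply $\TV^2\le \tfrac14\Ebb_Q[(dP_{+j}/dQ-dP_{-j}/dQ)^2]$, for route 3. For routes 1, 2 and 4 we apply data processing to an enlarged experiment, as announced before the theorem, and then handle the tested pair $(\Xtar_{2j-1},\Xtar_{2j})$ conditionally. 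The point is that under a sign-symmetric prior on $\eta_j$ the first moments of the two hypotheses coincide. The difference is therefore of second order in the signal, which is why each bound is quadratic in $\DeltaT^2$ or in the products.

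\emph{Route 1.} Reveal $v_\trg$ (and hence $\thetaT$); the source data then no longer carry information about $\tau_j$. Only the two rows $\Xtar_{2j-1},\Xtar_{2j}$ matter, and we project them onto $\thetaT$, since the orthogonal parts are pure noise independent of $\tau_j$. We are left with comparing two-dimensional laws. Under $\tau_j=+1$ the projections are $\pm(\DeltaT,\DeltaT)+$noise, and under $\tau_j=-1$ they are $\pm(\DeltaT,-\DeltaT)+$noise, each with a uniform sign. Both mixtures have mean zero, and their covariances differ by $O(\DeltaT^2)$. A direct Hellinger computation (expanding $\cosh$ in the signal) then gives $H^2\lesssim \DeltaT^4$, and hence \eqref{eq:sec3_tv_oracle1}.

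\emph{Route 2.} Reveal $v_\src$, $\zsrc$ and $\eta_{-j}$ (and $\tau_{-j}$). Conditional on $v_\src$, the coordinates of $\xi$ are still random: $\xi_r=\kappa_r\sqrt d\,v_{\src,r}$, with $\kappa_r$ biased toward $+1$. Hence $v_\trg = \tilde\mu v_\src + w$, where $w$ has independent mean-zero coordinates of variance $(1-\tilde\mu^2)/d$. The condition $\tilde\mu\le1-\delta_0$ keeps this randomness nondegenerate and fixes the constant $C_{\delta_0}$. The tested pair has a known-direction component $\tilde\mu\DeltaT$ along $v_\src$, which contributes $\tilde\mu^2\DeltaT^2$. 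This is linear rather than quadratic because the other target rows, through the revealed $\eta_{-j}$, leak the sign of $\eta_j$ only via the random part. The remaining random-direction part is handled by a $\chi^2$ bound over the $n_T$ rows sharing $w$, contributing $\nT\DeltaT^4/d$. I would bound the KL divergence between the two conditional laws by Jensen, splitting it into the $v_\src$-component and the orthogonal component.

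\emph{Route 3.} Compute the second moment directly from Lemma~\ref{lem:sec3_exact_tv_form}. Squaring the density and integrating against $Q$ coordinatewise gives a replica expression,
\[
\Ebb\prod_{r}\exp\bigl\{\tfrac{\DeltaT^2}{d}\xi_r\xi'_r\ip{z}{z'}+\tfrac{\DeltaS^2}{d}\kappa_r\xi_r\kappa'_r\xi'_r\ip{\zsrc}{\zsrc{}'}\bigr\}.
\]
Averaging over $\xi,\xi'$ produces $\prod_r\cosh(\cdot)$, and $\cosh x\le e^{x^2/2}$ reduces everything to Gaussian-type moment generating functions of the overlaps $\ip{z}{z'}^2/d$ and $\ip{\zsrc}{\zsrc{}'}^2/d$. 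These overlaps are sub-Gaussian with variance proxies $\nT$ and $\nS$, so the MGFs stay bounded exactly when $\nT\DeltaT^4+\nS\DeltaS^4\le cd$. For the difference $dP_{+j}-dP_{-j}$, the leading term cancels, leaving only the contribution of pair $j$'s overlap, which is of order $\DeltaT^4/d$. This route is the main obstacle: one has to keep the cross terms between the pair-$j$ contribution and the remaining overlaps from destroying the $1/d$ factor. I would first try isolating the factor $e^{(\DeltaT^2/d)\xi_r\xi'_r(\ldots)}$ from pair $j$, Taylor-expanding it to second order, and bounding the remainder with Hölder together with the same MGF estimates at a slightly smaller constant $c$.

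\emph{Route 4.} Condition on all label variables $(\tau_{-j},\eta,\zsrc,\kappa)$; what remains random is $\xi$. Under this conditioning the data are Gaussian with a covariance that is linear in the label-outer-products. Each row contributes to the KL divergence between the $\tau_j=\pm1$ versions, through Gaussian KL with covariance perturbation of order $\DeltaT^2/d$ times the coupling to the $n_T$ target rows and the $\tilde\mu$-weighted $n_S$ source rows. The smallness assumptions $\nT\DeltaT^2/d\le c^2$ and $\tilde\mu^2\nS\DeltaS^2/d\le c^2$ keep the covariance perturbation at operator norm $<1/2$, so KL is bounded by the squared Frobenius norm. This yields $(\nT\DeltaT^4+\tilde\mu^2\nS\DeltaT^2\DeltaS^2)/d$, and Pinsker gives \eqref{eq:sec3_tv_product}.
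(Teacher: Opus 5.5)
Routes 1 and 3 are sound. Route 1 is the paper's argument. In route 3 you average over the direction replicas before the label replicas; the paper does the reverse and gets the exact formula $2\Ebb_b[(\cosh(2\DeltaT^2\rho_d)-1)\cosh(\DeltaT^2\rho_d)^{\nT-2}\Gamma_d(b)]$, but your second-difference expansion also works, since the first-order term is killed by the symmetry of $\eta_j\eta_j'$.

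\textbf{The genuine gap is route 4.} You condition on $\kappa$ together with the labels, and once $\kappa$ is fixed the alignment $\tilde\mu$ is gone from the experiment. The map $\Xsrc_{i,r}\mapsto\kappa_r\Xsrc_{i,r}$ is a bijection of the sample space. It does not depend on $\tau_j$, and it carries the conditional law onto the law of the perfectly aligned model $v_\src=v_\trg$. Hence every conditional-on-$\kappa$ bound is a bound for $\tilde\mu=1$. The best it can give is $(\nT\DeltaT^4+\nS\DeltaT^2\DeltaS^2)/d$, and averaging over $\kappa$ afterwards by convexity cannot restore the factor $\tilde\mu^2$. You can see this directly: given $\kappa_r$, the cross-covariance of the standardized rows $x_r,y_r$ is $\kappa_r\DeltaT\DeltaS d^{-1}\ztar(\zsrc)^\top$, at full strength rather than $\tilde\mu$-weighted.

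Two further steps of route 4 fail. First, your operator-norm step breaks for the same reason: the source-block perturbation $\nS\DeltaS^2/d$ is not controlled by the hypothesis $\tilde\mu^2\nS\DeltaS^2/d\le c^2$, which allows it to be large when $\tilde\mu$ is small. Second, the conditional law is not Gaussian: with $\xi_r$ still random, each coordinate row is a symmetric two-point mixture, so the Gaussian KL formula is only a heuristic.

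The paper conditions only on $(\eta,\tau_{-j},\zsrc)$ and averages $\kappa_r$ inside each row. This gives the exact row density proportional to $\cosh(a_\pm)\cosh(b)+\tilde\mu\sinh(a_\pm)\sinh(b)$, which is where $\tilde\mu^2$ enters. It then bounds the row KL without any Gaussian surrogate: it rotates $l^{(\trg)}_\pm$ to $e_1$ and $l_\alpha$, and shows $|F''(\alpha)|\le Cr_T^2(r_T^2+\tilde\mu^2r_S^2)$ using only bounded derivatives of $h(a,b)$.

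\textbf{Route 2 lacks its decisive step, and its heuristic is off.} The other rows are independent of $\eta_j$, so they cannot leak its sign. With $\eta_j$ averaged out, the $v_\src$-component would enter only quadratically, as in route 1. More importantly, if $\eta_j$ is not revealed, all $d$ coordinate rows share it, so neither Hellinger nor KL tensorizes. Your $v_\src$/orthogonal split does not decouple the problem either, because $\ip{w}{v_\src}=d^{-1}\sum_r(\kappa_r-\tilde\mu)\neq0$.

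The paper also reveals $\eta_j$, which makes the rows conditionally independent. Given the non-tested statistic $B_r$, the tested pair is then $M_{\tilde m,\gamma}\otimes\phi$ versus $\phi\otimes M_{\tilde m,\gamma}$, with $\tilde m=\eta_j m_r$ and $m_r=\tanh(\operatorname{artanh}(\tilde\mu s_r)+\DeltaT B_r/\sqrt d)$. This gives $H^2\lesssim\DeltaT^4/d^2+m_r^2\DeltaT^2/d$ per row. The linear term is therefore a mean shift coming from the revealed $\eta_j$, not from leakage. The quantitative input your ``$\chi^2$ bound over the rows sharing $w$'' would have to supply is $\Ebb[m_r^2]\le\tilde\mu^2+C\nT\DeltaT^2/d$. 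The paper proves this via Stein's identity and Gronwall, and summing over $r$ then yields \eqref{eq:sec3_tv_oracle}.
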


Combining the total-variation bounds in Theorem~\ref{thm:sec3_tv_three_routes}
with the Assouad reduction in \eqref{eq:sec3_assouad_display}, and then
transferring the resulting random-prior lower bound to the deterministic
alignment class via the conditioning argument of Appendix~\ref{app:proof-necessary-cond},
we obtain the following necessary condition for consistent clustering.

\begin{theorem}[Necessary condition for consistent clustering]
\label{thm:necessary_condition}
Consider the two-community model with a single source dataset.
If the minimax risk satisfies
\[
    \inf_{\hatztar}\sup_{(\thetaT,\thetaS,\ztar,\zsrc)\in\Omega(\DeltaT,\DeltaS,\mu)}
    \Ebb\bigl[\calL(\hatztar,\ztar)\bigr]\to 0
    \qquad\text{as }\nT,\nS,d\to\infty,
\]
then necessarily either
\begin{enumerate}[label=\emph{(\Alph*)}]
    \item $\displaystyle
          \DeltaT \ge c_1 \max\left\{1,\left(\dfrac{d}{\nT}\right)^{1/4}\right\}$;
          \quad\emph{or}
    \item $\displaystyle
          \DeltaS\ge c_2\left(\dfrac{d}{\nS}\right)^{1/4}$,\quad
          $\mu\cdot\DeltaT\ge c_3 $,\quad\emph{and}\quad
          $\displaystyle \mu\cdot\DeltaS\cdot\DeltaT\ge c_4\sqrt{\dfrac{d}{\nS}}$.
\end{enumerate}
for some constants $c_1,\ldots,c_4 >0$.
\end{theorem}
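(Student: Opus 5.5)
We argue by contraposition. Suppose both (A) and (B) fail with constants $c_1,\ldots,c_4$ to be chosen small. We will exhibit a prior on $\Omega(\DeltaT,\DeltaS,\mu)$ under which every estimator has risk bounded below by a universal constant. The prior is the one in \eqref{eq:construct_signal_adv}--\eqref{eq:sec3_eta_param}, with the surrogate $\wt\mu$ chosen as $\mu$ plus a term of order $d^{-1/2}$, capped below $1-\delta_0$ whenever $\mu$ is small enough for route 2 to be needed. This choice makes $\Pbb(\mathcal E_\mu^c)$ a small fixed constant. By Lemma~\ref{lem:sec3_conditioning}, the conditional prior given $\mathcal E_\mu$ lies in $\Omega$, and its Bayes risk differs from the surrogate Bayes risk by at most $\Pbb(\mathcal E_\mu^c)$, because $\calL\le 1$. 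So it suffices to show $\max_j \TV(P_{+j},P_{-j})\le 1-\delta$ for a fixed $\delta>0$. The Assouad display \eqref{eq:sec3_assouad_display} together with Lemma~\ref{lem:sec3_loss_domination} then gives surrogate risk at least $\delta/4$. We take $\Pbb(\mathcal E_\mu^c)<\delta/8$, which keeps the minimax risk above $\delta/8$ and contradicts consistency.

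Failure of (A) means $\DeltaT<c_1$ or $\DeltaT<c_1 (d/\nT)^{1/4}$, i.e.\ $\nT\DeltaT^4<c_1^4 d$. Failure of (B) means at least one of three things: (i) $\nS\DeltaS^4<c_2^4 d$; (ii) $\mu\DeltaT<c_3$; (iii) $\mu^2\nS\DeltaS^2\DeltaT^2<c_4^2 d$. We split into cases and use the routes of Theorem~\ref{thm:sec3_tv_three_routes}.

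\emph{Case $\DeltaT<c_1$.} Route 1 gives $\TV^2\le C c_1^4$, which is small. So we may assume $\DeltaT\ge c_1$ and $\nT\DeltaT^4<c_1^4 d$. In particular $\nT\DeltaT^4/d$ and $\nT\DeltaT^2/d$ are both $\lesssim c_1^2$.

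\emph{Case (i).} We have $\nT\DeltaT^4+\nS\DeltaS^4\le (c_1^4+c_2^4)d\le c\,d$, so route 3 applies and gives $\TV^2\le C\DeltaT^4/d\le C c_1^4/\nT$, which is small.

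\emph{Case (ii).} We need $\wt\mu\DeltaT$ small. Since $\DeltaT\ge c_1$, we get $\mu\le c_3/c_1$, so taking $c_3\ll c_1$ keeps $\wt\mu\le 1-\delta_0$. The surrogate excess $\wt\mu-\mu$ is of order $d^{-1/2}$, and it contributes $\DeltaT^2/d\le c_1^2(\nT d)^{-1/2}$. Route 2 then gives $\TV^2\le C_{\delta_0}\bigl((c_3+O(\DeltaT d^{-1/2}))^2+c_1^4\bigr)$, which is small.

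\emph{Case (iii)} (with (i) and (ii) failing). Route 4 needs $\wt\mu^2\nS\DeltaS^2/d\le c^2$. We have $\wt\mu^2\nS\DeltaS^2/d\lesssim c_4^2/\DeltaT^2+\nS\DeltaS^2/d^2$. The first term is small because $\DeltaT\ge c_1$ and $c_4\ll c_1$. The second term is the surrogate correction; if it is not small, we instead use the bound $\wt\mu^2\nS\DeltaT^2\DeltaS^2/d$ directly. Route 4 then gives $\TV^2\le C(c_1^4+\wt\mu^2\nS\DeltaT^2\DeltaS^2/d)$, which is small.

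\textbf{Main obstacle.} The delicate point is the surrogate bias $\wt\mu-\mu\asymp d^{-1/2}$. It must be small enough that conditioning on $\mathcal E_\mu$ costs only a constant. It must also not inflate $\wt\mu\DeltaT$ or $\wt\mu^2\nS\DeltaS^2\DeltaT^2/d$ beyond the thresholds. I would handle this as follows:
\begin{itemize}
\item Set $\wt\mu=\mu+t\,d^{-1/2}$ with a fixed $t$, so that $\Pbb(\mathcal E_\mu^c)\le e^{-t^2/2}$ by Hoeffding.
\item In case (ii), the extra term is $t^2\DeltaT^2/d\le t^2c_1^2/\sqrt{\nT d}\to 0$.
\item In case (iii), the extra term is $t^2\nS\DeltaS^2\DeltaT^2/d^2$. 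When $\mu\ge t d^{-1/2}$, this is at most a constant times $\mu^2\nS\DeltaS^2\DeltaT^2/d$. When $\mu<t d^{-1/2}$, I fall back on a separate argument: either route 3 applies, or $\nS\DeltaS^2\DeltaT^2/d^2$ is controlled by the failure of (A).
\end{itemize}
If that last sub-case resists, I would instead take $\wt\mu=\max\{\mu,t d^{-1/2}\}$ and argue separately that $\mu\lesssim d^{-1/2}$ makes transfer useless.

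Finally, the constants are fixed in the order $c_1$ first, then $c_3,c_4\ll c_1$, and $c_2$ small, so that each route yields $\TV\le 1-\delta$ with a common $\delta$.
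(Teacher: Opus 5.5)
Your architecture matches the paper's. Both arguments use contraposition, the coordinatewise surrogate prior conditioned on $\mathcal E_\mu$ via Lemma~\ref{lem:sec3_conditioning}, and the Assouad reduction. Both first dispose of $\DeltaT<c_1$ by route~1, then apply routes 3, 2 and 4 to the failures (i), (ii) and (iii) of (B), checking route~4's side conditions through $\DeltaT\ge c_1$.

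\textbf{Where you differ: the surrogate bias.} The paper picks $\tilde\mu\in\{2\mu,\,2\sqrt{c_3/d}\}$ by branches (Lemma~\ref{lem:sec3_conditioning_bad_event}). When $\mu\sqrt d$ is small it uses binomial anti-concentration of $\sum_r\kappa_r$, so no additive bias is needed there. This yields the uniform comparison $\mu\le\tilde\mu\le C_*\mu$, so every route holds with $\mu$ in place of $\tilde\mu$ up to a constant; $\mu>1/4$ is handled via the fully aligned subclass. Your additive choice also works, but it needs a cap at $1$: take $\tilde\mu=\min\{1,\mu+td^{-1/2}\}$, which still gives $\Pbb(\mathcal E_\mu^c)\le e^{-t^2/2}$ by Hoeffding. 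It works only because failure of (A) makes the bias harmless in each route. In case (ii) this is your observation $\DeltaT d^{-1/2}\le c_1(\nT d)^{-1/4}\to0$.

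\textbf{Case (iii) needs one more line.} The sub-case $\mu<td^{-1/2}$ that you flag as the main obstacle cannot occur asymptotically. In case (iii), condition (ii) fails and $c_1\le\DeltaT<c_1(d/\nT)^{1/4}$, so $\mu\ge c_3/\DeltaT>(c_3/c_1)(\nT/d)^{1/4}$. Hence $td^{-1/2}/\mu<(tc_1/c_3)(\nT d)^{-1/4}\to0$, and $\tilde\mu\le2\mu$ eventually. It follows that $\tilde\mu^2\nS\DeltaS^2/d\le 4c_4^2/c_1^2$ and $\nT\DeltaT^2/d<c_1^2$, so route~4 applies and gives $\TV^2\le C(c_1^4+4c_4^2)$.

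The fallbacks you wrote for that sub-case should be deleted, since none is valid as stated:
\begin{itemize}
\item Route~3 is unavailable there, because (i) fails.
\item Failure of (A) gives no control of $\nS\DeltaS^2\DeltaT^2/d^2$.
\item The side condition $\tilde\mu^2\nS\DeltaS^2/d\le c^2$ is a hypothesis of route~4. It cannot be bypassed by using the product bound ``directly''.
\end{itemize}
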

\begin{remark}
The necessary conditions in this section extend to the multi-source setting,
with the single source alignment replaced by the maximum alignment across
sources. We characterize the necessary conditions for consistent clustering
under this criterion in Supplement~\ref{sec:two_clust_mult_source}.
\end{remark}

\section{Extension to multi-cluster Gaussian mixture models with multiple source datasets}
\label{sec:mult_clust_mult_source}
In this section, we extend our transfer-assisted clustering procedure in two community Gaussian mixture models to a multi-community framework. Furthermore, in this section, we assume that we observe $m$ independent source datasets. The extension of the two community analysis to multi-source framework is presented in Supplement~\ref{sec:two_clust_mult_source} and the content of this section can also be viewed as a generalization of the content therein. In particular, we postulate a statistical model for the data generating mechanism, propose an algorithm to recover the cluster labels (up to a global permutation), and characterize conditions for consistent recovery of the partition. 

\paragraph{Statistical Model.}
Define the target cluster mean matrix
$\ThetaT:=[\thetamcT{1}\; \cdots \; \thetamcT{K}] \in \R^{d\times K}$, where $\thetamcT{k} \in \R^d$ for $k=1,\ldots,K$. We assume that the target observations $\calT:=\{\Xtar_1,\ldots,\Xtar_{\nT}\}\subseteq \R^d$ are generated as
\begin{align}
\label{eq:mult_clust_target_sample}
    \Xtar_j &= \ThetaT\Ztar_j+\ept_j,
    \qquad \Ztar_j\in\{0,1\}^K,\qquad (\Ztar_j)^\top \mathbbm 1_K=1,\\
    \ept_j &\simiid \dnorm_d(0,\sigmaT^2 \Id_d),
    \qquad j \in [\nT].
\end{align}
Next, for the $m$ independent source datasets
$\calS_1,\ldots,\calS_m$, define the corresponding source cluster mean matrices
$\ThetamS{i}:=[\thetamSS{i}{1}\; \cdots \; \thetamSS{i}{K}]\in \R^{d\times K}$, where $\thetamSS{i}{k}\in \R^d$ for $k \in [K]$ and $i \in [m]$. For each $i\in [m]$, the observations in
$\calS_i:=\{\Xsrcm{i}_1,\ldots,\Xsrcm{i}_{\nSm{i}}\}$ are generated as
\begin{align}
\label{eq:mult_clust_source_sample}
    \Xsrcm{i}_j &= \ThetamS{i}\Zsrcm{i}_j+\epsm{i}_j,
    \qquad \Zsrcm{i}_j\in\{0,1\}^K,\qquad (\Zsrcm{i}_j)^\top \mathbbm 1_K=1,\\
    \epsm{i}_j &\simiid \dnorm_d(0,\sigmaSm{i}^2 \Id_d),
    \qquad j \in [\nSm{i}].
\end{align}
The datasets $\calT,\calS_1,\ldots,\calS_m$ are assumed to be mutually independent. The target and source signal-to-noise ratios are defined as
\begin{align}
\label{eq:def_delta_multi_snr}
    \DeltaT:=\frac{\min_{a\neq b}\|\thetamcT{a}-\thetamcT{b}\|_2}{\sigmaT},
    \quad \mbox{and} \quad
    \DeltaSm{i}:=\frac{\min_{a\neq b}\|\thetamSS{i}{a}-\thetamSS{i}{b}\|_2}{\sigmaSm{i}},
    \quad \mbox{for $i=1,\ldots,m$.}
\end{align}
Observe that the target and source signal-to-noise ratios in the
two-community setup described in \eqref{eq:def_delta_snr} can be recovered from the foregoing definition, up to a factor of $2$, by taking the two target cluster centers to be $\thetaT$ and $-\thetaT$, and the two source cluster centers in $\calS$ to be $\thetaS$ and $-\thetaS$. 
Since the conclusions in Theorem~\ref{thm:correct_choose} are insensitive to constant multiplicative factors in the signal-to-noise ratios, we ignore this constant-factor difference between the two definitions of signal strength.

To enable source-assisted clustering in the low target-signal regime, we assume that there exists at least one source dataset with strong signal. The alignment of this source dataset with the target is quantified by a scalar parameter $\mu \in [0,1]$. Observe that there can exist settings all source datasets with sufficiently strong signal are not aligned with the target. In such settings, transfer learning does not bring any extra benefit. 
In mathematical terms, our assumption on the source datasets is as follows: there exists $\mu\in[0,1]$ and an $i_\star\in[m]$ such that 
\begin{align}
\label{eq:necessary_condition_d_gg_m_src}
& \min_{a \neq b}
    \frac{|\ip{\thetamcT{a}-\thetamcT{b}}{\thetamSS{i_\star}{a}-\thetamSS{i_\star}{b}}|}
    {\|\thetamcT{a}-\thetamcT{b}\|_2\,\|\thetamSS{i_\star}{a}-\thetamSS{i_\star}{b}\|_2}
    \ge \mu,\quad \DeltaSm{i_\star}\gg \left(\frac{d(\beta K)}{\nSm{i_\star}}\right)^{1/4}\sqrt{(\beta K)\vee \log\nSm{i}},\\
    & \mu\cdot\DeltaT \gg \max\left\{K^3\sqrt{\log K},1\right\},\quad
\text{and}\quad
\mu\cdot\DeltaSm{i_\star}\cdot\DeltaT\gg \max\left\{K^3\sqrt{\log K},1\right\}\cdot
    \sqrt{\frac{K(d+\log K)}{\nSm{i_\star}}}.
\end{align}
Furthermore, we assume that there exist constants $\kappa_{\src_1},\ldots,\kappa_{\src_m}>0$ such that, the source $i_\star\in[m]$ satisfying \eqref{eq:necessary_condition_d_gg_m_src}, also satisfies $\mathrm{rank}\left(\ThetamS{i_\star}\right)=K$ and 
\begin{align}
\label{eq:singular_value_lower}
    \sigma_K\left(\ThetamS{i_\star}\right) \ge \DeltaSm{i_\star}\cdot \sigmaSm{i_\star}.
\end{align}
where $\sigma_K$ denotes the $K$-th largest singular value. 
\begin{remark}
This full rank condition on the source cluster mean matrix mentioned above is slightly stronger than what is necessary for clustering in this setup. Indeed, when only the target dataset is considered, \citet{LofflerZhangZhou2021} develop a more delicate argument to relax such a full-rank condition. However, since our primary focus is on characterizing conditions under which information can be successfully transferred from source to target, we adopt this stronger condition.
\end{remark}

Observe that, $\Ztar$ is identifiable only up to a global permutation of its columns
\citep{LofflerZhangZhou2021} which motivates the following misclustering loss.
\begin{align}
\label{eq:loss_func_2m}
    \mathcal L_{\mathrm{mult}}(\hatZtar,\Ztar)
    := \min_{\pi\in\Pi_K}\frac{1}{\nT}\sum_{j=1}^{\nT}
    \mathbbm 1\bigl\{\hatZtar_j\neq \pi\circ \Ztar_j\bigr\},
\end{align}
where $\hatZtar\in\mathcal Z_{\nT,K}$ and
$\smash{\mathcal Z_{\nT,K}:=\{Z\in\{0,1\}^{\nT\times K}:Z\mathbbm 1_K=\mathbbm 1_{\nT}\}}$.
In the above definition, $\Pi_K$ denotes the set of permutations on $\{1,\ldots,K\}$ and for any $\pi\in\Pi_K$ and any $z\in\{0,1\}^K$, the notation $\pi\circ z$ denotes the vector obtained by permuting the entries of $z$ according to $\pi$. For the two community framework, since the cluster labels can be reduced to $\{-1,+1\}$, the foregoing loss function is equivalent to \eqref{eq:loss_func_2}.

We assume that there exists a constant $\beta\ge 1$ such that
\begin{align}
\label{eq:approximate_balance}
    \frac{\nT}{\beta K}
    \le \min_{c\in[K]}\sum_{j=1}^{\nT} \mathbbm 1\{\Ztar_j=e_c\}
    \le \max_{c\in[K]}\sum_{j=1}^{\nT} \mathbbm 1\{\Ztar_j=e_c\}
    \le \frac{\beta \nT}{K}.
\end{align}
Equivalently, each cluster contains a constant-order fraction of $\nT/K$. When the number of clusters is fixed, this is a mild assumption, requiring only that each community is represented in the sample with an asymptotically non-negligible proportion. When $K$ grows with $\nT$, however, it imposes a balance condition on the cluster sizes. Such conditions are widely adopted in the clustering literature \citep{LofflerZhangZhou2021,gao2022iterative}. Nevertheless, they are not essential in all formulations. Indeed, one can instead follow techniques from \cite{giraud2019partial} and explicitly track the dependence on the minimum cluster size; the same approach can be adapted to our analysis. Similar balance conditions are imposed on the informative source dataset $\calS_{i_\star}$,
\begin{align}
\label{eq:approximate_balance_src}
    \frac{\nSm{i_\star}}{\beta K}
    \le \min_{c\in[K]}\sum_{j=1}^{\nSm{i_\star}} \mathbbm 1\{\Zsrcm{i_\star}_j=e_c\}
    \le \max_{c\in[K]}\sum_{j=1}^{\nSm{i_\star}} \mathbbm 1\{\Zsrcm{i_\star}_j=e_c\}
    \le \frac{\beta \nSm{i_\star}}{K}.
\end{align}
Without loss of generality, the same constant $\beta\ge 1$ may be used in both \eqref{eq:approximate_balance} and \eqref{eq:approximate_balance_src}. We also assume that the number of communities $K^2/\nT \to 0$ and $K^2/\nSm{i_\star} \to 0$.

\paragraph{A procedure for consistent clustering.}
First suppose the target signal strength is strong, i.e.,
\begin{align}
\label{eq:trg_mult_c_1}
    \DeltaT \gg \sqrt{K}\;\max\left\{1,
    \left(\frac{d \cdot (\beta K)}{\nT}\right)^{1/4} \right\}.
\end{align}
Then we construct the estimate $\hatZtar_T$ of $\Ztar$ using the relaxed \texttt{K-means} procedure from \cite{giraud2019partial}.

On the other hand, if \eqref{eq:trg_mult_c_1} is not guaranteed to hold but the number of sources $m$ is fixed, we use the following source-based construction. First, we construct estimates $\hatThetaS{1},\ldots,\hatThetaS{m}$ of the source mean matrices
corresponding to the source datasets $\calS_1,\ldots,\calS_m$, respectively. Let $\Pest_\src=\wh{\mathrm U}_\src\wh{\mathrm U}^\top_\src$ be the orthogonal projector onto
\begin{align}
\label{eq:cal_r_b}
    \mathcal R:=\col(\hatThetaS{1})+\cdots+\col(\hatThetaS{m}),
\end{align}
where $\wh{\mathrm U}_\src \in \R^{d \times r}$ is an orthonormal basis of $\mathcal R$ and $r=\operatorname{rank}(\mathcal R)$. 
We then construct the projected observations
\begin{align}
\label{eq:mult_clust_project_data}
    \hatXtar_j:= \wh{\mathrm U}^\top_\src \Xtar_j, \quad \mbox{for $j=1,\ldots,\nT$,}
\end{align}
and cluster them using Algorithm~\ref{alg:tclust} to obtain the source-based estimator $\hatZtar_\src$. Note that the above construction do not need to know the particular source which satisfies \eqref{eq:necessary_condition_d_gg_m_src}. The pooling of source information through $\mathcal R$ ensures that the relevant information is adaptively transferred to the sufficiently aligned source asymptotically. Thus, for fixed $m$, not knowing which source is aligned introduces no additional asymptotic requirement in our consistency guarantee. This conclusion is specific to our problem setting : in more general multi-source parametric estimation, adapting to an unknown source--target bias configuration can entail an unavoidable statistical cost \citep{chakraborty2026statistical}. However, in finite sample setting, including poorly aligned or noisy sources might impact the performance of the algorithm (cf. Section~\ref{sec:mult_clust_sim}).

When oracle knowledge about the validity of \eqref{eq:trg_mult_c_1} is not available, we adaptively choose between the target-based estimator $\hatZtar_\trg$ and the source-based estimator $\hatZtar_\src$ using the following validation rule. Consider any matrix $\wt Z\in\mathcal Z_{\nT,K}$. For each cluster $a\in[K]$, define
\begin{align}
\label{eq:candiate_cluster_size_cluster_mean}
    \wt n_a(\wt Z):=\sum_{j=1}^{\nT}\wt Z_{ja}, \qquad
    \wt{\theta}^{(\trg)}_a(\wt Z):=\frac{1}{\wt n_a(\wt Z)}
    \sum_{j=1}^{\nT}\wt Z_{ja}\,\Xtar_j.
\end{align}
For any $\wt Z \in \mathcal Z_{\nT,K}$ such that $\wt n_a(\wt Z) \neq 0$ for any $a \in [K]$, define the validation statistic
\begin{align}
\label{eq:validation_stat_mult_clust}
    \wh{\mathsf{S}}(\wt Z):=
    \sum_{1 \le a \neq b \le K}\frac{\wt n_a(\wt Z)\,\wt n_b(\wt Z)}{2\nT^2}\,
    \|\,\wt{\theta}^{(\trg)}_a(\wt Z)-\wt{\theta}^{(\trg)}_b(\wt Z)\,\|^2_2
    -\frac{\sigmaT^2\,d\,(K-1)}{\nT}.
\end{align}
Define the validation threshold
\begin{align}
\label{eq:validation_threshold__mult_clust}
    \mathfrak{t}_n:= D_0\cdot K \cdot\sigmaT^2
    \left(\sqrt{\frac{dK}{\nT}}+1\right),
\end{align}
where $D_0>0$ is a sufficiently large constant. The final adaptive estimator is then defined as
\begin{align}
\label{eq:mult_clust_adaptive}
    \hatZtar:=\begin{cases}
        \hatZtar_\trg, & \mbox{if $|\wh{\mathsf{S}}(\hatZtar_\trg)| > \mathfrak{t}_n$ and $\wt n_a(\hatZtar_\trg) \neq 0$ for all $a \in [K]$,}\\
        \hatZtar_\src, & \mbox{otherwise.}
    \end{cases}
\end{align}
The above construction requires the knowledge $\sigmaT^2$ which can be estimated by $\wh \sigma^2_T$ as in \eqref{eq:consistent_plug_in} with $\wh X$ replaced by $\wh X_K$ where
\begin{align}
\label{eq:estimate_sigma_hat_per_mult}
    \wh X_K:=\argmin_{\mathrm X \in \R^{\nT \times d}:\,
    \operatorname{rank}(\mathrm X) \le K}\|X_\calT-\mathrm X\|^2_F.
\end{align}
One can choose $D_0$ using the bootstrap based heuristics from Supplement~\ref{sec:bootstrap_calibration} to get an estimate of $D_0$.

\begin{algorithm}[tb]
\caption{Adaptive Transfer-Assisted Projected Clustering}
\label{alg:projected_clustering_mult}
\begin{algorithmic}[1]
\Require
Target observations $\calT=\{\Xtar_j:j\in[\nT]\}\subseteq\R^d$;
source datasets $\calS_1,\ldots,\calS_m$;
number of clusters $K$; clustering module $\clustmodule$ (Algorithm~\ref{alg:tclust});
validation constant $D_0>0$.
\State Construct the target-based estimator $\hatZtar_\trg$ using the procedure of \citet{giraud2019partial}.
\For{$i=1,\ldots,m$}
    \If{$\nSm{i}\ll d$}
        \State Construct $\hatThetaS{i}$ as in
        \eqref{eq:construction_mult_src_d_gg_m}.
    \ElsIf{$4\le d\lesssim \nSm{i}$}
        \State Construct $\hatThetaS{i}$ as in
        \eqref{eq:construction_mult_d_ll_m}.
    \EndIf
\EndFor
\State Construct the source subspace $\mathcal R$ as in \eqref{eq:cal_r_b},
and let $\Pest^{(\src)}$ denote the corresponding orthogonal projector.
\State Construct the projected target observations
$\{\hatXtar_j:j\in[\nT]\}$ as in \eqref{eq:mult_clust_project_data}.
\State Apply the clustering module to the projected target observations:
\[
\hatZtar_\src=\clustmodule\bigl(\{\hatXtar_j:j\in[\nT]\},K,T_0=2\log\nT\bigr)\in \mathcal Z_{\nT,K}.
\]
\State Compute the validation statistic $\wh{\mathsf S}(\hatZtar_\trg)$
as in \eqref{eq:validation_stat_mult_clust} and the validation threshold $\mathfrak t_n$ as in
\eqref{eq:validation_threshold__mult_clust}.
\State Construct the final adaptive estimator $\hatZtar$ as in
\eqref{eq:mult_clust_adaptive}.
\Output Estimated target label matrix $\hatZtar\in\mathcal Z_{\nT,K}$.
\end{algorithmic}
\end{algorithm}

\paragraph{Estimation of source directions.}
Next, we describe the construction of $\hatThetaS{1},\ldots,\hatThetaS{m}$. Consider a source dataset $\calS_i$. If $\nSm{i}\ll d$, then we first apply the relaxed \texttt{K-means} procedure of \cite{giraud2019partial} to obtain an estimate $\wh Z^{(\src_i)}$ of $\Zsrcm{i}$ and construct the columns of $\hatThetaS{i}\in\R^{d\times K}$ as the empirical cluster means:
\begin{align}
\label{eq:construction_mult_src_d_gg_m}
    \wh{\theta}_{\src_i,k}
    :=\frac{\sum_{j=1}^{\nSm{i}}\wh Z^{(\src_i)}_{jk}\cdot \Xsrcm{i}_j}
    {\sum_{j=1}^{\nSm{i}}\wh Z^{(\src_i)}_{jk}},
    \quad \mbox{for $k=1,\ldots,K$,}
    \qquad
    \hatThetaS{i}:=\left[\wh{\theta}_{\src_i,1}\;\cdots\;\wh{\theta}_{\src_i,K}\right].
\end{align}

On the other hand, if 
$4\le d\lesssim \nSm{i}$, then we estimate the source signal subspace directly from the source data matrix $X_{\calS_i}\in\R^{\nSm{i}\times d}$ using
\begin{align}
\label{eq:construction_mult_d_ll_m}
    \hatThetaS{i}:=\wh V_{\calS_i}\in\R^{d\times K},
\end{align}
where $\wh V_{\calS_i}$ denotes the matrix whose columns are the top $K$ normalized right singular vectors of $X_{\calS_i}$. The estimators defined in \eqref{eq:construction_mult_src_d_gg_m} and \eqref{eq:construction_mult_d_ll_m} are then used to construct the source projector $\Pest^{(\src)}$, which is used to
project the target observations in \eqref{eq:mult_clust_project_data}. The complete procedure, including the adaptive selection between the target-based
and source-based estimators, is summarized in Algorithm~\ref{alg:projected_clustering_mult}.

\paragraph{Theoretical properties.}
To characterize theoretical properties of the above construction, we first assume that we have an oracle that informs us if \eqref{eq:trg_mult_c_1} holds. If it holds, then we use $\hatZtar_T$ defined in the discussion following \eqref{eq:trg_mult_c_1}. If it does not hold, the we use $\hatZtar_S$ defined in the discussion following \eqref{eq:mult_clust_project_data}. In other words,
\begin{align}
    \label{eq:mult_cluster_oracle}
    \hatZtar_{\mathrm{orc}}=\begin{cases}
        \hatZtar_T & \mbox{if \eqref{eq:trg_mult_c_1} holds,}\\
        \hatZtar_S & \mbox{if \eqref{eq:trg_mult_c_1} does not hold.}
    \end{cases}
\end{align}
In the following theorem, we characterize sufficient conditions for consistent
estimation of $\Ztar$ using this oracle based estimator. 

\begin{theorem}
\label{thm:consistency_mult_clust_final}
Suppose that either \eqref{eq:trg_mult_c_1} holds,
or $d \ge \max\{K+3,2K\}$ and \eqref{eq:necessary_condition_d_gg_m_src},~\eqref{eq:singular_value_lower}, and \eqref{eq:approximate_balance_src} holds for some $i_\star \in [m]$.
Then the oracle based estimator from \eqref{eq:mult_cluster_oracle} satisfies
$\mathbb E\left[\mathcal L_{\mathrm{mult}}(\hatZtar_{\mathrm{orc}},\Ztar)\right]\to 0.$
\end{theorem}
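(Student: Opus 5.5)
The proof splits according to which branch the oracle in \eqref{eq:mult_cluster_oracle} selects.

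\emph{Target branch.} When \eqref{eq:trg_mult_c_1} holds, we invoke the guarantee for the relaxed \texttt{K-means} procedure of \citet{giraud2019partial} under the balance condition \eqref{eq:approximate_balance}. Its separation requirement is $\DeltaT^2\gg K(1+\sqrt{d\beta K/\nT})$, and this is exactly \eqref{eq:trg_mult_c_1}. The procedure gives $\Ebb[\mathcal L_{\mathrm{mult}}(\hatZtar_\trg,\Ztar)]\to 0$, so this case is immediate.

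\emph{Source branch.} Here we work on the event, of probability tending to one, that the informative source $i_\star$ is well estimated. Write $\calS=\calS_{i_\star}$.

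\textbf{Step 1: source subspace estimation.} We show that $\col(\ThetamS{i_\star})$ is nearly contained in $\mathcal R$.
\begin{itemize}
\item If $\nSm{i_\star}\ll d$: the first condition in \eqref{eq:necessary_condition_d_gg_m_src} is strong enough that \citet{giraud2019partial}, with a union bound over the $K$ clusters, gives exact recovery $\wh Z^{(\src_{i_\star})}=\pi\circ \Zsrcm{i_\star}$ with high probability. This mirrors the Remark after Theorem~\ref{thm:correct_choose}. Conditionally on exact recovery, each column $\wh\theta_{\src_{i_\star},k}$ equals $\thetamSS{i_\star}{\pi(k)}$ plus an average of about $\nSm{i_\star}/(\beta K)$ independent Gaussian noise vectors. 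Since the labels are fixed, this noise matrix $W$ is Gaussian with $\|W\|_{\op}\lesssim \sigmaSm{i_\star}\sqrt{\beta K(d+\log K)/\nSm{i_\star}}$.
\item If $d\lesssim\nSm{i_\star}$: we use Wedin's theorem together with \eqref{eq:singular_value_lower}.
\end{itemize}
In both cases we obtain
\[
\|(\Id-\Pest_\src)\ThetamS{i_\star}\|_{\op}\lesssim \sigmaSm{i_\star}\sqrt{K(d+\log K)/\nSm{i_\star}},
\]
using that $\col(\hatThetaS{i_\star})\subseteq\mathcal R$. Adding the other sources only enlarges $\mathcal R$, which cannot hurt this containment. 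Its cost is an increase of the dimension $r\le mK$, which is bounded because $m$ is fixed.

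\textbf{Step 2: the projected target mixture.} The projected observations satisfy $\hatXtar_j=\wh{\mathrm U}_\src^\top\ThetaT\Ztar_j+\wh{\mathrm U}_\src^\top\ept_j$. Here $\wh{\mathrm U}_\src$ is independent of $\calT$, so the projected noise is exactly $\dnorm_r(0,\sigmaT^2\Id_r)$. It remains to lower bound the projected separation $\|\Pest_\src(\thetamcT{a}-\thetamcT{b})\|_2$. Write $\delta_{ab}=\thetamcT{a}-\thetamcT{b}$ and $\gamma_{ab}=\thetamSS{i_\star}{a}-\thetamSS{i_\star}{b}$. Then
\[
\|\Pest_\src\delta_{ab}\|_2\ \ge\ \frac{|\ip{\delta_{ab}}{\Pest_\src\gamma_{ab}}|}{\|\gamma_{ab}\|_2}\ \ge\ \frac{|\ip{\delta_{ab}}{\gamma_{ab}}|-\|\delta_{ab}\|_2\,\|(\Id-\Pest_\src)\gamma_{ab}\|_2}{\|\gamma_{ab}\|_2}.
\]
By the alignment condition and Step 1, the right-hand side is at least
\[
\sigmaT\DeltaT\left(\mu-C\,\frac{\sqrt{K(d+\log K)/\nSm{i_\star}}}{\DeltaSm{i_\star}}\right)\asymp \mu\,\sigmaT\DeltaT,
\]
because of the product condition in \eqref{eq:necessary_condition_d_gg_m_src}. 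This is the main obstacle. The residual bound of Step 1 holds for whole matrices, and converting it to a bound for each pairwise difference must be done without losing more than the $K^3\sqrt{\log K}$ slack that \eqref{eq:necessary_condition_d_gg_m_src} provides. A further difficulty is that with $\DeltaT$ weak, $\|\delta_{ab}\|_2$ can exceed the minimum separation; the plan handles this by working with the ratio $\|\Pest_\src\delta_{ab}\|_2/\|\delta_{ab}\|_2\gtrsim\mu$ rather than absolute separations.

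\textbf{Step 3: clustering in low dimension.} The projected data form a $K$-component Gaussian mixture in dimension $r\le mK$. Its effective signal-to-noise ratio is $\gtrsim\mu\DeltaT\gg K^3\sqrt{\log K}$, and its noise is independent of the labels. We then apply the guarantee for $\clustmodule$ (Algorithm~\ref{alg:tclust}): a relaxed \texttt{K-means} initialization followed by $T_0=2\log\nT$ Lloyd iterations, in the style of \citet{LofflerZhangZhou2021,gao2022iterative}. Under \eqref{eq:approximate_balance} and $K^2/\nT\to0$, this gives a misclustering rate of at most $\exp(-c\mu^2\DeltaT^2)+o(1)\to0$. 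Since the loss is bounded, the convergence in probability established on a high-probability event upgrades to convergence in expectation. Finally, the conditions $d\ge\max\{K+3,2K\}$ are used only to ensure that the singular subspaces and cluster-mean estimators in Step 1 are well defined and that the Gaussian deviation bounds apply.
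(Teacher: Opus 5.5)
The target branch is fine and matches the paper. The source branch has a genuine gap in Step~2, and Step~1 cannot repair it.

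\textbf{The gap.} You bound $\ip{\delta_{ab}}{(\Id-\Pest_\src)\gamma_{ab}}$ by Cauchy--Schwarz, so your lower bound is nontrivial only if $\|(\Id-\Pest_\src)\gamma_{ab}\|_2\ll\mu\|\gamma_{ab}\|_2$. With your Step~1 estimate this means $\mu\DeltaSm{i_\star}\gg\sqrt{K(d+\log K)/\nSm{i_\star}}$. That does not follow from \eqref{eq:necessary_condition_d_gg_m_src}. The product condition carries a factor $\DeltaT$, and $\DeltaT\gg K^3\sqrt{\log K}/\mu$, so it only gives $\sqrt{K(d+\log K)/\nSm{i_\star}}/\DeltaSm{i_\star}\ll\mu\DeltaT/(K^3\sqrt{\log K})$, whose right-hand side is itself $\gg1$.

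\textbf{The intermediate claim is false.} Your goal $\|\Pest_\src\delta_{ab}\|_2\gtrsim\mu\|\delta_{ab}\|_2$ fails in exactly the regime the theorem targets. Take $K=2$, $m=1$, $\mu=1$, $d\gg\nSm{i_\star}(\log\nSm{i_\star})^4$, $\DeltaSm{i_\star}=(d/\nSm{i_\star})^{1/4}\log\nSm{i_\star}$, $\DeltaT=(d/\nSm{i_\star})^{1/4}$ and $\nT\le\nSm{i_\star}$. Let the source means be $\pm\gamma_{12}/2+w$ with $w\perp\gamma_{12}$ and $\|w\|_2\gg\sigmaSm{i_\star}\sqrt{d/\nSm{i_\star}}$. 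Every rate condition in \eqref{eq:necessary_condition_d_gg_m_src} holds and \eqref{eq:trg_mult_c_1} fails. Yet the noise $\xi_{12}$ in the estimated contrast $\wh\delta_{12}=\gamma_{12}+\xi_{12}$ has $\|\xi_{12}\|_2\asymp\sigmaSm{i_\star}\sqrt{d/\nSm{i_\star}}\gg\|\gamma_{12}\|_2$. So your Step~1 bound is vacuous, and $\|\Pest_\src\delta_{12}\|_2/\|\delta_{12}\|_2$ is of order $\DeltaSm{i_\star}\sqrt{\nSm{i_\star}/d}\to0$. The estimator is still consistent, because $\|\Pest_\src\delta_{12}\|_2/\sigmaT\asymp\DeltaT\DeltaSm{i_\star}\sqrt{\nSm{i_\star}/d}=\log\nSm{i_\star}\to\infty$. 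That is what the product condition buys, not an effective SNR of order $\mu\DeltaT$.

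The $d\lesssim\nSm{i_\star}$ branch has the same defect. Wedin plus Cauchy--Schwarz needs the projector error relative to $\col(\ThetamS{i_\star})$ to be $\ll\mu$. The hypotheses give only $o_p(1)$, and $\mu$ may tend to zero.

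\textbf{The missing idea.} The source estimation error is isotropic and independent of the fixed target contrasts, so its inner product with $\delta_{ab}$ is dimension-free. Cauchy--Schwarz discards exactly this $\sqrt d$ gain.

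For $d\gg\nSm{i_\star}$, write $\wh\delta^{(\src)}_{ab}=\gamma_{ab}+\xi_{ab}$. On the exact-recovery event intersected with a bound on $\max_{a\neq b}\|\xi_{ab}\|_2$, use the scalar bound
\[
\|\Pest_\src\delta_{ab}\|_2\ \ge\ \frac{|\ip{\gamma_{ab}}{\delta_{ab}}+G_{ab}|}{\|\gamma_{ab}\|_2+C\sigmaSm{i_\star}\sqrt{K(d+\log K)/\nSm{i_\star}}},\qquad G_{ab}:=\ip{\xi_{ab}}{\delta_{ab}}.
\]
Here $G_{ab}$ is Gaussian with standard deviation $\asymp\sigmaSm{i_\star}\sqrt{K/\nSm{i_\star}}\,\|\delta_{ab}\|_2$. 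Since $G_{ab}$ may dominate the mean, you need Gaussian anti-concentration (Lemma~\ref{lem:gaussian_anti_conc}, with separate mean-dominant and noise-dominant cases), not a deviation bound.

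For $d\lesssim\nSm{i_\star}$, the analogue is the Haar decomposition of $\wh V_{\calS_{i_\star}}$ from \citet{LofflerZhangZhou2021}, combined with anti-concentration of a Haar coordinate. This needs the projector error to be at most $1/2$, not $\ll\mu$.

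\textbf{The correct goal for Step~2.} Aim for $\min_{a\neq b}\|\Pest_\src\delta_{ab}\|_2/(K\sqrt{\log K}\,\sigmaT)\convp\infty$, not a ratio $\gtrsim\mu$. The $K^3\sqrt{\log K}$ factor in the hypotheses pays for a union bound over the $K(K-1)$ pairs. Each pair's small-ball probability is of order $K\sqrt{\log K}\bigl\{1/(\mu\DeltaT)+\sqrt{K(d+\log K)/\nSm{i_\star}}/(\mu\DeltaSm{i_\star}\DeltaT)\bigr\}$. The resulting separation is exactly what Theorem~\ref{thm:tclust_iterates} requires with $p=r\le mK$ and $\varepsilon=O(\log K)$. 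The matrix-to-pairwise conversion you flag as the main obstacle is harmless.
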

In the following theorem, we analysis Algorithm~\ref{alg:projected_clustering_mult} under a stronger inter-cluster separation condition.

\begin{theorem}
\label{thm:consistency_mult_clust_final_adaptive}
If \eqref{eq:trg_mult_c_1} holds
then the adaptive estimator $\hatZtar$ in \eqref{eq:mult_clust_adaptive} satisfies $\mathbb P\left[\hatZtar=\hatZtar_T\right] \to 1.$
Furthermore, if \eqref{eq:trg_mult_c_1} does not hold and there exists $\kappa>0$ such that
\begin{align}
\label{eq:non_diverging_energy_inter_cluster}
\max_{a \neq b}\frac{\|\thetamcT{a}-\thetamcT{b}\|_2}{\sigmaT} \le \kappa\cdot \min_{a \neq b}\frac{\|\thetamcT{a}-\thetamcT{b}\|_2}{\sigmaT} ,
\end{align}
then there exists a $D_0>0$ such that $\mathbb P\left[\hatZtar=\hatZtar_S\right] \to 1.$ In particular, if the conditions of Theorem~\ref{thm:consistency_mult_clust_final} holds along with \eqref{eq:non_diverging_energy_inter_cluster}, then the estimator $\hatZtar$ obtained from Algorithm~\ref{alg:projected_clustering_mult} with validation constant equal to $D_0$ achieves $\mathbb E\left[\mathcal L_{\mathrm{mult}}(\hatZtar,\Ztar)\right]\to 0.$
\end{theorem}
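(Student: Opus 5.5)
The plan is to analyse the two branches separately and then combine them through a union bound on the selection event. The natural tool is a uniform concentration bound for $\wh{\mathsf S}(\wt Z)$ over all $\wt Z\in\mathcal Z_{\nT,K}$ with nonempty clusters, in the spirit of the two-community argument behind Theorem~\ref{thm:choose_correct_adaptive}.

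\textbf{Decomposition of the statistic.} Write $\bar X_a(\wt Z)=\wt\theta^{(\trg)}_a(\wt Z)$. Use the identity
\[
\sum_{a\neq b}\frac{\wt n_a\wt n_b}{2\nT^2}\|\bar X_a-\bar X_b\|_2^2=\sum_a\frac{\wt n_a}{\nT}\|\bar X_a-\bar X\|_2^2 .
\]
This equals $\nT^{-1}$ times the between-cluster sum of squares, $\nT^{-1}\|P_{\wt Z}X_\calT-\tfrac1{\nT}\mathbbm 1\mathbbm 1^\top X_\calT\|_F^2$, where $P_{\wt Z}$ is the projector onto the columns of $\wt Z$. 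Split $X_\calT=\Ztar\ThetaT^\top+E$ into signal, cross and noise terms.

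\begin{itemize}
\item \emph{Noise term.} Since $P_{\wt Z}-\nT^{-1}\mathbbm 1\mathbbm 1^\top$ is a projector of rank $K-1$, the noise term has expectation $\sigmaT^2d(K-1)/\nT$. Hanson--Wright plus a union over the at most $K^{\nT}$ partitions gives fluctuations of order $\sigmaT^2(\sqrt{dK\cdot\nT\log K}+\nT\log K)/\nT$.
\item \emph{Cross term.} It is Gaussian given $\wt Z$, with standard deviation controlled by $\sigmaT\nT^{-1}\|(P_{\wt Z}-\cdot)\Ztar\ThetaT^\top\|_F$. After a union bound it is at most $\sqrt{\text{signal}}\cdot\sigmaT\sqrt{K\log K}$.
\end{itemize}

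The $\nT\log K$ union cost is the delicate point. Instead I would restrict the union to a high-probability set containing $\hatZtar_\trg$, or use the operator-norm bound $\|E\|_{\op}^2\lesssim\sigmaT^2(d+\nT)$ restricted to rank-$K$ projections. That bound gives a noise deviation $\lesssim K\sigmaT^2(\sqrt{d/\nT}+1)$, exactly the shape of $\mathfrak t_n$, and this is what I would try first.

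\textbf{Strong-signal branch.} Assume \eqref{eq:trg_mult_c_1}. By \citet{giraud2019partial}, $\hatZtar_\trg$ is consistent with high probability. In particular every cluster is nonempty, and each estimated cluster is dominated by one true cluster, by the balance condition \eqref{eq:approximate_balance}. The signal part of $\wh{\mathsf S}(\hatZtar_\trg)$ is then at least of order $\min_{a\neq b}\|\thetamcT a-\thetamcT b\|^2/(\beta^2K)\gtrsim\sigmaT^2\DeltaT^2/K$. Under \eqref{eq:trg_mult_c_1} this is $\gg\sigmaT^2(1+\sqrt{dK/\nT})$, up to the $K$ factors, which the $\sqrt K$ in \eqref{eq:trg_mult_c_1} is designed to absorb. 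Hence it exceeds $\mathfrak t_n$ plus the cross and noise fluctuations, and $\Pbb[\hatZtar=\hatZtar_\trg]\to1$.

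\textbf{Weak-signal branch.} Assume \eqref{eq:trg_mult_c_1} fails, so that $\DeltaT\le C\sqrt K\max\{1,(dK\beta/\nT)^{1/4}\}$ along a subsequence. Condition \eqref{eq:non_diverging_energy_inter_cluster} then bounds the largest pairwise separation by $\kappa\DeltaT\sigmaT$. The signal term for any $\wt Z$ is therefore at most $\max_{a,b}\|\thetamcT a-\thetamcT b\|^2\le\kappa^2C^2K\sigmaT^2(1+\sqrt{dK\beta/\nT})$, uniformly in $\wt Z$. Together with the uniform noise and cross bounds, this gives $|\wh{\mathsf S}(\hatZtar_\trg)|\le\mathfrak t_n$ once $D_0$ is large depending on $\kappa,C,\beta$. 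If some estimated cluster is empty, the rule picks $\hatZtar_\src$ anyway. So $\Pbb[\hatZtar=\hatZtar_\src]\to1$.

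\textbf{Final claim.} For the consistency statement, bound
\[
\Ebb\mathcal L_{\mathrm{mult}}(\hatZtar,\Ztar)\le\Ebb\mathcal L_{\mathrm{mult}}(\hatZtar_{\mathrm{orc}},\Ztar)+\Pbb[\hatZtar\neq\hatZtar_{\mathrm{orc}}],
\]
using that the loss is at most $1$. Both terms vanish by Theorem~\ref{thm:consistency_mult_clust_final} and the two branches above. The main obstacle is the uniform noise bound matching $\mathfrak t_n$ without paying a $\nT\log K$ union cost. If the operator-norm route loses a factor, I would localize to partitions within small Hamming distance of a fixed one, or accept a slightly larger threshold constant.
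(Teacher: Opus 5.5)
Your architecture matches the paper's proof, and for bounded $K$ your argument goes through. You rewrite $\wh{\mathsf S}(\wt Z)=\nT^{-1}\|\Pi(\wt Z)X_\calT\|_F^2-\sigmaT^2d(K-1)/\nT$ and control it uniformly over partitions. You lower-bound the signal of $\hatZtar_\trg$ in the strong regime. You bound the signal of every partition by the largest separation in the weak regime (essentially the paper's argument, via $\Pi(\wt Z)M_\calT=\Pi(\wt Z)\Pi(\Ztar)M_\calT$). Finally you combine with Theorem~\ref{thm:consistency_mult_clust_final}.

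\textbf{Gap in the strong branch.} The problem is the $K$-bookkeeping, which matters because $K$ may grow (only $K^2/\nT\to0$ is assumed). Under \eqref{eq:trg_mult_c_1}, your lower bound $\sigmaT^2\DeltaT^2/K$ is only $\gg\sigmaT^2\max\{1,\sqrt{d\beta K/\nT}\}$. The threshold $\mathfrak t_n=D_0K\sigmaT^2(\sqrt{dK/\nT}+1)$ carries an explicit extra factor $K$. The $\sqrt K$ in \eqref{eq:trg_mult_c_1} is exactly what matches that factor, so it cannot also absorb the $1/K$ you lost. As written, your argument needs $\DeltaT\gg K\max\{1,(dK/\nT)^{1/4}\}$. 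The fix is to charge all pairs rather than one:
\[
\frac{1}{\nT}\|\Pi(\Ztar)M_\calT\|_F^2=\sum_{a\neq b}\frac{n_an_b}{2\nT^2}\|\thetamcT{a}-\thetamcT{b}\|_2^2\ge\frac{\sigmaT^2\DeltaT^2}{2}\Bigl(1-\sum_{a}\frac{n_a^2}{\nT^2}\Bigr)\ge\frac{\sigmaT^2\DeltaT^2}{4\beta}.
\]
The last step uses \eqref{eq:approximate_balance} through $\max_a n_a\le\nT\{1-(K-1)/(\beta K)\}$, and the result is $\gg\mathfrak t_n$. Transferring this bound to $\hatZtar_\trg$ needs $K\,\mathcal L_{\mathrm{mult}}(\hatZtar_\trg,\Ztar)\to0$, not mere consistency; this is also what makes every estimated cluster nonempty and dominated by one true cluster. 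The paper obtains this from the exponential rate in \citet{giraud2019partial}. It then uses $\nT^{-1}\|(\Id_{\nT}-\Pi(\hatZtar_\trg))\Pi(\Ztar)M_\calT\|_F^2\le\mathcal L_{\mathrm{mult}}\max_{a\neq b}\|\thetamcT{a}-\thetamcT{b}\|_2^2\lesssim\beta K\,\mathcal L_{\mathrm{mult}}\,\nT^{-1}\|\Pi(\Ztar)M_\calT\|_F^2$.

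\textbf{The step you flag as the main obstacle is not one.} The same factor $K$ in $\mathfrak t_n$ pays for the naive union bound, which is the paper's route. Take $u=\nT\log K+\log\nT$ over the at most $K^{\nT}$ partitions. The quadratic noise deviation is then $\lesssim\sigmaT^2(\sqrt{dK\log K/\nT}+\log K)\le\sigmaT^2K(\sqrt{dK/\nT}+1)$, since $\log K\le K$. The cross term is $\lesssim\sigmaT\sqrt{\log K}\,(\nT^{-1}\|\Pi(\wt Z)M_\calT\|_F^2)^{1/2}$, hence at most $\eta\,\nT^{-1}\|\Pi(\wt Z)M_\calT\|_F^2+C_\eta\sigmaT^2\log K$. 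Your cross-term factor $\sqrt{K\log K}$ overshoots by $\sqrt K$. It would leave a remainder $\sigmaT^2K\log K$ that $\mathfrak t_n$ does not absorb when $K\to\infty$ with, e.g., $d\lesssim\nT/K$.

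\textbf{Your operator-norm alternative.} For the quadratic term it works and avoids the union, but only if the bound is sharp to leading order in $d$. Use $|\operatorname{tr}(\Pi(\wt Z)A)|\le(K-1)\|A\|_{\op}$ with $A=E_\calT E_\calT^\top-\sigmaT^2d\,\Id_{\nT}$ and $\|A\|_{\op}\lesssim\sigmaT^2(\sqrt{d\nT}+\nT)$. A bound $\|E_\calT\|_{\op}^2\lesssim\sigmaT^2(d+\nT)$ with an unspecified constant leaves an uncancelled term of order $K\sigmaT^2d/\nT$. Keep the union bound for the cross term: the operator norm there only gives a factor $\sigmaT\sqrt{K(d+\nT)/\nT}$ in place of $\sigmaT\sqrt{\log K}$.
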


\begin{remark}
Observe that the condition \eqref{eq:non_diverging_energy_inter_cluster} essentially argues that segregating all clusters is almost equally difficult. Without this assumption, the adaptive procedure might choose the target arm when the clusters that are least separated do not satisfy the necessary conditions in Theorem~\ref{thm:consistency_mult_clust_final} yet there exist some directions that are well separated. Nevertheless, our empirical findings in Section~\ref{sec:mult_clust_sim} and the real data analysis in Section~\ref{sec:lung_atlas} that the performance of Algorithm~\ref{alg:projected_clustering_mult} is reasonable under a wide range of scenarios.
\end{remark}

\input{numerical_section.tex}

\input{lung_atlas.tex}

\paragraph{Use of generative AI.} The authors have used ChatGPT 5.5 and 5.6 for proof checking, copyediting text and brainstorming ideas. The major ideas underlying some proofs also developed from discussion with ChatGPT. The authors however verified correctness of all results, re-wrote the arguments and own complete responsibility. The authors have also used Claude code for simulations and data analysis. Once again, all codes were carefully checked by the authors for correctness.

\bibliographystyle{abbrvnat}
\bibliography{clustering_minimax}

\newpage

\appendix

\setcounter{equation}{0}
\setcounter{figure}{0}
\setcounter{table}{0}
\setcounter{algorithm}{0}
\setcounter{lemma}{0}
\setcounter{theorem}{0}

\renewcommand{\theequation}{S\arabic{equation}}
\renewcommand{\thefigure}{S\arabic{figure}}
\renewcommand{\thetable}{S\arabic{table}}
\renewcommand{\thetheorem}{S\arabic{theorem}}
\renewcommand{\theHtheorem}{S\arabic{theorem}}
\renewcommand{\thelemma}{S\arabic{lemma}}
\renewcommand{\theHlemma}{S\arabic{lemma}}
\renewcommand{\thealgorithm}{S\arabic{algorithm}}

\begin{figure}[!b]
    \centering
    \includegraphics[width=\textwidth]
    {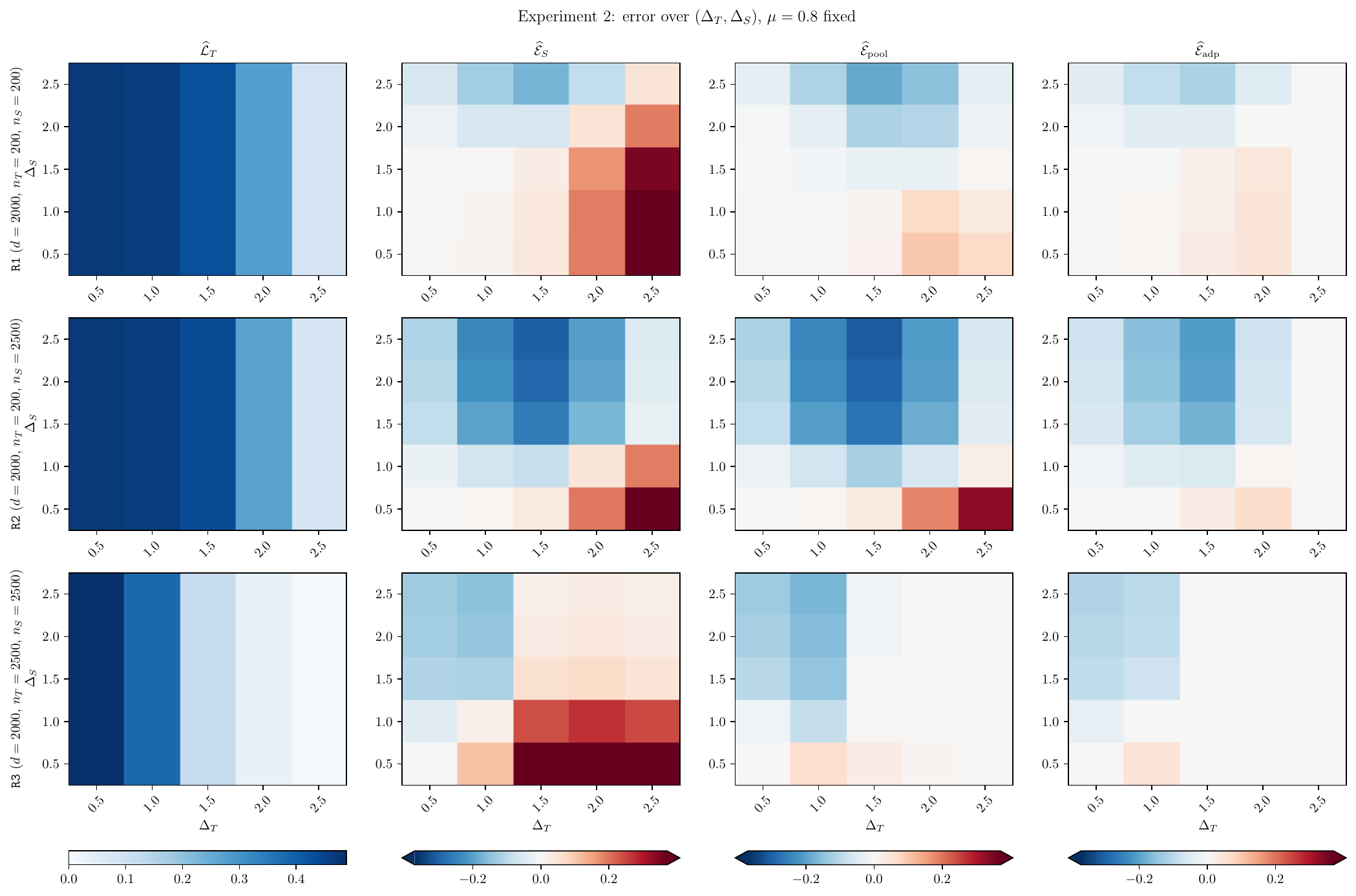}
    \caption{Target misclustering performance over the signal-strength grid
    $(\DeltaT,\DeltaS)$.
    The rows correspond to regimes \texttt{R1}, \texttt{R2}, and
    \texttt{R3}, respectively. The first column reports the target-only
    misclustering error, the second column reports $\wh{\mathcal E}_\src$, the third reports $\wh{\mathcal E}_{\mathrm{pool}}$ and the fourth column reports $\wh{\mathcal E}_{\mathrm{adp}}$.}
    \label{fig:error_heatmaps}
\end{figure}

\section{Additional numerical experiment and details about implementation}
In this section, we provide an additional numerical experiment exploring the dependence of misclustering on the target and source signal strengths. Furthermore, we describe a parametric-bootstrap calibration procedure used to select the absolute constants appearing in
\eqref{eq:validation_threshold} and \eqref{eq:validation_threshold__mult_clust}. When the target sample size is small, the adaptive selection rules in \eqref{eq:grand_def_adaptive} for the two-community setting and
\eqref{eq:grand_def_adaptive_mult_source} for the multi-community setting may be unstable in finite samples. This issue is particularly relevant in applications of transfer learning, where the primary motivation for borrowing information from the source datasets is often the limited size of the target sample. To provide an additional practical alternative in this regime, we also
propose an alternative estimator in this section that avoids the validation route. Finally, we also describe the clustering module $\clustmodule{}$ used in Algorithm~\ref{alg:projected_clustering_mult}.

\subsection{Numerical experiment to explore dependence of clustering performance on target and source signal strengths}
\label{sec:sim_sig_strength}
In this experiment, we study the dependence of the misclustering error on the target and source signal strengths $(\DeltaT,\DeltaS)$ in the regimes \texttt{R1}, \texttt{R2}, and \texttt{R3}. We adopt the same data generating mechanism as Section~\ref{sec:sim_align}, except we fix $\mu=0.8$ and vary the target and source signal strengths as follows:
\begin{align}
\label{eq:sig_strength_grid_sim}
(\DeltaT,\DeltaS) \in \{0.5,\,1.0,\,1.5,\,2.0,\,2.5\} \times \{0.5,\,1.0,\,1.5,\,2.0,\,2.5\}.
\end{align}
For the adaptive procedure the number of bootstrap iterations are reduced to 20 and the rest of the implementation mirrors Section~\ref{sec:sim_align}.
Let the average misclustering losses corresponding to the four methods be denoted by $\wh{\mathcal L}_\trg$ (for the target based estimator), $\wh{\mathcal L}_\src$ (for the source based estimator), $\wh{\mathcal L}_{\mathrm{pool}}$ (for the pooled estimator) and $\wh{\mathcal L}_{\mathrm{adp}}$ (for the adaptive estimator). We use $\wh{\mathcal L}_\trg$ as a baseline, and plot the relative excess errors $\wh{\mathcal E}_\src:=\wh{\mathcal L}_\src-\wh{\mathcal L}_\trg$, $\wh{\mathcal E}_{\mathrm{pool}}:=\wh{\mathcal L}_{\mathrm{pool}}-\wh{\mathcal L}_\trg$, $\wh{\mathcal E}_{\mathrm{pool}}:=\wh{\mathcal L}_{\mathrm{pool}}-\wh{\mathcal L}_\trg$ and $\wh{\mathcal E}_{\mathrm{adp}}:=\wh{\mathcal L}_{\mathrm{adp}}-\wh{\mathcal L}_\trg$ as a heatmap over the grid defined in \eqref{eq:sig_strength_grid_sim} in Figure~\ref{fig:error_heatmaps}.

The blue regions in the last three columns of Figure~\ref{fig:error_heatmaps} indicate an improvement over the target-only estimator, whereas red regions indicate a deterioration in performance.
In the first column we observe that $\wh{\mathcal L}_\trg$ reduces uniformly over all regimes as $\DeltaT$ increases, with faster improvement in regime \texttt{R3}, where $\nT$ is substantially larger. The source-only estimator provides substantial gains when $\DeltaS$ is large and $\DeltaT$ is realtively small (particularly, in \texttt{R2}). 
The regime of negative transfer is widest in \texttt{R3} where target data is more informative because of larger $\nT$.
The pooled estimator exhibits a similar pattern, but its losses relative to the target-only estimator are generally less severe than those of the source-only estimator. 
The adaptive estimator provides the most stable behavior across the parameter grid. It inherits much of the improvement offered by the source when transfer is beneficial, while remaining close to the target-only estimator when the target signal is sufficiently strong. This effect is especially evident in regime \texttt{R3}, where the adaptive procedure largely avoids the substantial negative transfer suffered by the source-only estimator. 

\subsection{Bootstrap calibration of the validation threshold}
\label{sec:bootstrap_calibration}

The validation thresholds in \eqref{eq:validation_threshold} and \eqref{eq:validation_threshold__mult_clust} contain absolute constants $C_0$ and $D_0$ whose values are not specified by the theoretical analysis. We describe a parametric-bootstrap procedure for calibrating these constants. 

For the two-community model, we define the boundary signal strength as
\begin{align}
\label{eq:bootstrap_boundary_signal}
\Delta_{\mathrm{bdry}}:=\max\left\{1,\left(\frac{d}{\nT}\right)^{1/4}\right\}.
\end{align}
Given an estimate of the target noise variance $\wh{\sigma}^2_T$ of $\sigmaT^2$, define the unit-constant threshold scale
\begin{align}
\label{eq:bootstrap_threshold_scale}
\wh q_{\nT,d}:=\wh\sigma^2_T\left(1+\sqrt{\frac{d}{\nT}}\right).
\end{align}
For each bootstrap replication $b=1,\ldots,B$, we independently generate a unit vector $u^{(b)}$ uniformly on the sphere in $\R^d$ and set
\begin{align}
\theta^{(b)}:=\wh\sigma_T\,\Delta_{\mathrm{bdry}}\,u^{(b)}.
\end{align}
We then generate independent labels
$z_1^{(b)},\ldots,z_{\nT}^{(b)}\simiid\operatorname{Unif}\{-1,1\}$ and
synthetic target observations
\begin{align}
\label{eq:bootstrap_synthetic_data}
X_j^{(b)}=z_j^{(b)}\theta^{(b)}+\varepsilon_j^{(b)},\qquad \varepsilon_j^{(b)}\simiid \dnorm_d(0,\wh\sigma^2_T\Id_d).
\end{align}
The random orientation of $\theta^{(b)}$ is immaterial under the isotropic Gaussian model, but avoids privileging any particular coordinate direction. Furthermore, this construction assumes that the cluster labels are approximately balanced with high probability which may not be the case in real data. However, in the absence reliable information about the proportion of different clusters in the sample, this is the best alternative.

Crucially, each bootstrap dataset is processed using exactly the same
target-based estimator and validation statistic as the observed target
dataset. In particular, let $\wh z^{(b)}_{\trg}$
denote the target-based labeling computed from 
$\{X_j^{(b)}:j\in\calT\}$ according to \eqref{eq:oracle_target_estimate}, and define
\begin{align}
\label{eq:bootstrap_statistic}
\mathsf T^{(b)}:=\left\|\frac{1}{\nT}\sum_{j=1}^{\nT}\hat z^{(b)}_{\trg,j}X_j^{(b)}\right\|_2^2-\frac{d}{\nT}\wh\sigma^2_T.
\end{align}
We normalize the resulting statistic by the deterministic scale in
\eqref{eq:bootstrap_threshold_scale} and set
\begin{align}
\label{eq:bootstrap_normalized_statistic}
\mathsf R^{(b)}:=\frac{|\mathsf T^{(b)}|}{\wh q_{\nT,d}}.
\end{align}
For a prescribed upper-tail probability $\alpha\in(0,1)$, the bootstrap
estimate of $C_0$ is
\begin{align}
\label{eq:bootstrap_C0}
\wh C_0:= \wh Q_{1-\alpha}\left(\mathsf R^{(1)},\ldots,\mathsf R^{(B)}\right),
\end{align}
where $\wh Q_{1-\alpha}$ denotes the empirical
$(1-\alpha)$-quantile. The calibrated validation threshold is therefore
\begin{align}
\label{eq:bootstrap_calibrated_threshold}
\wh\tau_n:=\wh C_0\cdot\wh\sigma^2_T\left(1+\sqrt{\frac{d}{\nT}}\right).
\end{align}
The adaptive procedure selects the target-based candidate whenever
\begin{align}
\left|\wh{\rmT}(\hatztar_{\trg})\right|>\widehat\tau_n,
\end{align}
and otherwise selects the source-based candidate. 

The parameters $\alpha$ controls the practical behavior of the selector. When $\alpha=0.5$ both source and target are placed on a equal footing. However, smaller values of $\alpha$ leads to a preference for the source and larger values of $\alpha$ leads to a preference for the target observations. Thus $\alpha$ is a tuning parameter that needs to be chosen by the user depending on domain knowledge about the datasets. In the absence of such domain knowledge, one should choose $\alpha=0.5$. 

For $K>2$, we calibrate $D_0$ using a symmetric $K$-component Gaussian mixture. In each bootstrap replication, the $K$ cluster centers are placed at the vertices of a randomly rotated regular simplex in a $(K-1)$-dimensional subspace of $\R^d$, with equal pairwise squared separation
\begin{align}
\delta_{\mathrm{bdry},K}^2:=\wh\sigma_T^2 \cdot K \cdot\max\left\{1,\,\sqrt{\frac{dK}{\nT}}\right\}.
\end{align}
The bootstrap labels are sampled independently and uniformly from $[K]$, and observations are generated with covariance $\wh\sigma_T^2\Id_d$. Thus, the calibration mixture has equal mixing proportions (and approximately balanced cluster sizes with high probability), affine rank $K-1$, and identical separation between every pair of components.

The same target-only relaxed \texttt{K-means} estimator and validation statistic from \eqref{eq:validation_stat_mult_clust} are applied to each bootstrap dataset. If $\mathsf S^{(b)}$ denotes the resulting statistic in replication $b$, define
\begin{align}
\label{eq:bootstrap_multicluster_scale}
\wh q_{\nT,d,K}:=\wh\sigma_T^2\cdot K \cdot\left\{\sqrt{\frac{dK}{\nT}}+1\right\},
\end{align}
and set
\begin{align}
\label{eq:bootstrap_D0}
\wh D_0:=\wh Q_{1-\alpha}\left(\frac{|\mathsf S^{(1)}|}{\wh q_{\nT,d,K}},\ldots,\frac{|\mathsf S^{(B)}|}{\wh q_{\nT,d,K}}\right).
\end{align}
The calibrated multi-community threshold is then
$\wh D_0\,\wh q_{\nT,d,K}$.

\subsection{Validation-free clustering through a pooled estimator} 
\label{sec:pooled_concat} 
As discussed in the preceding section, when the target sample size is small, the validation-based selection procedure may be unstable in finite samples. Moreover, the adaptive choice between the target- and source-based estimators in Algorithms~\ref{alg:adaptive_transfer_clustering_two_community} and~\ref{alg:projected_clustering_mult} can be sensitive to the tuning parameters $\alpha$ and $B$. We therefore propose the validation-free alternative described in Algorithm~\ref{alg:pooled_transfer_clustering}.

\begin{algorithm}[tb]
\caption{Pooled Transfer-Assisted Clustering}
\label{alg:pooled_transfer_clustering}
\begin{algorithmic}[1]
\Require Target data
$\calT=\{\Xtar_j:j\in\calT\}$; source datasets
$\calS_1,\ldots,\calS_m$; number of communities $K\geq 2$.
\State Form the pooled data matrix
\[
    X_{\mathrm{pool}}
    :=
    \begin{bmatrix}
        X^\top_{\calT}\;\cdots\;X^\top_{\calS_m}
    \end{bmatrix}^\top\in\R^{(\nT+\sum_{i=1}^m\nSm{i})\times d}.
\]
\If{$K=2$}
    \State Apply the target-based two-community clustering construction
    to $X_{\mathrm{pool}}$ and obtain
    \[
        \wh z_{\mathrm{pool}}
        \in
        \{-1,1\}^{\nT+\sum_{i=1}^m\nSm{i}}.
    \]
    \State Set $\wh z_{\calT}:= (\wh z_{\mathrm{pool},1},\ldots,
        \wh z_{\mathrm{pool},\nT})^\top.$
    \State {\bf Output:} $\wh z_{\calT}\in\{-1,1\}^{\nT}$.
\Else
    \State Apply the target-based $K$-community clustering construction
    to $X_{\mathrm{pool}}$ and obtain $\wh Z_{\mathrm{pool}} \in \{e_1,\ldots,e_K\}^{\nT+\sum_{i=1}^m\nSm{i}}.$
    \State Set $\wh Z_{\calT}:=\wh Z_{\mathrm{pool},\,1:\nT}.$
    \State {\bf Output:}
    $\wh Z_{\calT}\in\{e_1,\ldots,e_K\}^{\nT}$.
\EndIf
\end{algorithmic}
\end{algorithm}

In Algorithm~\ref{alg:pooled_transfer_clustering}, we concatenate the target and source data matrices and apply the corresponding target-based clustering procedure to the resulting pooled dataset. The estimated target labels are then obtained by restricting the pooled clustering solution to the observations from the target dataset. Algorithm~\ref{alg:pooled_transfer_clustering}
therefore provides a practical validation-free alternative to
Algorithms~\ref{alg:adaptive_transfer_clustering_two_community}
and~\ref{alg:projected_clustering_mult}. We do not pursue a theoretical
analysis of its misclustering error in the present work.

\subsection{Description of clustering module $\clustmodule{}$ used in Algorithm~\ref{alg:projected_clustering_mult}}
\label{sec:ts_clust_intro}
The clustering module $\clustmodule{}$ used in Algorithm~\ref{alg:projected_clustering_mult} is motivated from \cite{gao2022iterative}. The procedure combines a spectral initialization with Lloyd refinement. Given observations $\{\mathsf X_j:j\in[n]\}\subseteq\R^p$, we first form the data matrix $\mathsf X\in\R^{p\times n}$ and project each observation onto the subspace spanned by its top $K$ left singular vectors. This produces the $K$-dimensional spectral representations $\{\widehat{\mathsf M}_j:j\in[n]\}$. We then apply \texttt{K-Means++} \citep{arthur2007k} with $\varepsilon=O(\log K)$ to these representations to obtain an $\varepsilon$-approximate solution to the $K$-means objective and use the resulting assignments as the initial cluster labels. This approximation can be obtained in polynomial time.

Starting from this spectral initialization, the algorithm performs $T_0$ iterations of Lloyd's method in the original feature space. At each iteration, the cluster centers are recomputed by averaging the original observations currently assigned to each cluster, after which every observation is reassigned to its nearest updated center. The final output is the label vector $\widehat{\mathsf z}^{(T)}\in[K]^n$. Thus, the low-dimensional spectral embedding is used only to construct a suitable initialization, whereas the subsequent refinement operates directly on the original observations.

\begin{algorithm}[tb]
\caption{\(\mathsf{TClust}\): Spectral Initialization Followed by Lloyd Refinement}
\label{alg:tclust}
\begin{algorithmic}[1]
\Require
Data points \(\{\mathsf{X}_j:j\in[n]\}\subseteq\R^p\); number of clusters \(K\); number of Lloyd iterations \(T_0\).
\State Form the data matrix
\(\mathsf{X}=[\mathsf{X}_1,\ldots,\mathsf{X}_n]\in\R^{p\times n}\).
\State Compute the top \(K\) left singular vectors of \(\mathsf{X}\), and let
\(\widehat{\mathsf{U}}\in\R^{p\times K}\) denote the resulting orthonormal matrix.
\For{\(j=1,\ldots,n\)}
    \State Compute the spectral coordinate
    \(\widehat{\mathsf M}_j:=\widehat{\mathsf{U}}^\top \mathsf{X}_j\in\R^K\).
\EndFor
\State Find centers \(\widehat{\mathfrak m}_1,\ldots,\widehat{\mathfrak m}_K\in\R^K\)
and initial labels \(\widehat{\mathsf z}^{(0)}\in[K]^n\) satisfying
\[
    \sum_{j=1}^n
    \|\widehat{\mathsf M}_j-\widehat{\mathfrak m}_{\widehat{\mathsf z}^{(0)}_j}\|_2^2
    \le
    (1+\varepsilon)
    \min_{\substack{\mathfrak m_1,\ldots,\mathfrak m_K\in\R^K\\ z\in[K]^n}}
    \sum_{j=1}^n
    \|\widehat{\mathsf M}_j-\mathfrak m_{z_j}\|_2^2,
\]
where $\varepsilon=O(\log K)$, using the \texttt{K-Means++} algorithm from \citet{arthur2007k}.
\For{\(t=1,\ldots,T\)}
    \For{\(a=1,\ldots,K\)}
        \State Update the \(a\)-th center in the original feature space:
        \[
            \widehat{\mathsf c}_a^{(t)}
            :=
            \frac{
                \sum_{j=1}^n
                \mathbbm 1\{\widehat{\mathsf z}^{(t-1)}_j=a\}\mathsf X_j
            }{
                \sum_{j=1}^n
                \mathbbm 1\{\widehat{\mathsf z}^{(t-1)}_j=a\}
            }.
        \]
    \EndFor
    \For{\(j=1,\ldots,n\)}
        \State Reassign \(\mathsf X_j\) to the nearest updated center:
        \[
            \widehat{\mathsf z}^{(t)}_j
            :=
            \argmin_{a\in[K]}
            \|\mathsf X_j-\widehat{\mathsf c}_a^{(t)}\|_2^2.
        \]
    \EndFor
\EndFor
\Output Estimated labels \(\widehat{\mathsf z}^{(T)}\in[K]^n\).
\end{algorithmic}
\end{algorithm}

\section{Data pre-processing and implementation of benchmark methods in Section~\ref{sec:lung_atlas}}
\subsection{Data acquisition and preprocessing}
\label{sec:data_preprocess}
The lung cell sequencing data of
\citet{Vieira_Braga2019-nl} was deposited at Gene Expression Omnibus
accession
\href{https://www.ncbi.nlm.nih.gov/geo/query/acc.cgi?acc=GSE130148}{GSE130148}
(raw counts and per-cell annotations downloaded from the GEO
series-level supplementary directory,
\url{https://ftp.ncbi.nlm.nih.gov/geo/series/GSE130nnn/GSE130148/suppl/}). The 13 cell types were 
macrophages, Type~1 pneumocytes, Type~2 pneumocytes, B~cell, T~cell, secretory, mast cell, transformed epithelium, ciliated, endothelium, NK~cell, fibroblast, and lymphatic cells. 

The dataset consisted of four patient samples (ASK428, ASK440, ASK452, ASK454), all processed with the same sequencing technology. 
The raw count matrix comprised $10{,}360$ cells and $16{,}327$ genes, resolved into $13$ cell types. At the QC stage, a cell was retained if
its number of expressed genes fell between $\max(Q_{0.02},100)$ and $Q_{0.98}$ (batch-specific 2nd/98th percentiles), its total UMI count was between $200$ and the batch's 98th percentile, and its mitochondrial read fraction was below $25\%$. After the cell QC step, we retained a subset of $9{,}941$ cells. Next, we retained a gene if it was expressed in at least $10$ cells and in at least $2$ of the $4$ batches. Furthermore, all mitochondrial genes were removed. Finally, we retained only those genes that are expressed in at least $2\%$ of the QC-passed cells. Out of the $6{,}063$ QC filtered genes, we considered the $5000$ most highly variable genes. The raw counts corresponding to these $5{,}000$ HVGs were library-size normalized to $10^{4}$ total counts per cell and $\log(1+x)$-transformed. The $\log(1+x)$ reduces skewness typically observed in single cell data and makes it more amenable to modelling using Gaussian mixture models \citep{zhong2022empirical,cao2025modah}. This preprocessing yielded the data matrix $X \in \R^{9941 \times 5000}$ analyzed in Section~\ref{sec:lung_atlas}.

\subsection{Implementation of TL-GMM, NMF and GDEC procedures}
\label{sec:comparator_methods}
For the TL-GMM method of \cite{tian2025robust}, we use isotropic covariance matrices for the Gaussian mixture components, with a common variance across all components, consistent with the models in \eqref{eq:mult_clust_target_sample}--\eqref{eq:mult_clust_source_sample}. For each batch, the variance is estimated using \eqref{eq:estimate_sigma_hat_per_mult} and is held fixed throughout the EM iterations. This modification makes TL-GMM competitive with the four procedures proposed in this paper. We run the EM algorithm for $30$ iterations. To control the shrinkage of the target-based estimates toward the source-based estimates, TL-GMM uses two tuning parameters, $C_{\lambda_0}$ and $\kappa$. In the real-data analysis, we set $\kappa=1/3$, following the original paper, and select $C_{\lambda_0}$ using five-fold cross-validation over the cells.

For the implementation of the NMF procedure in \cite{nmf_paper} an NMF decomposition of rank $K=13$ is fit on the pooled source data with regularization parameters $\alpha = 10$, and $\ell_1 = 0.75$. The target data is reconstructed through this source dictionary, mixed with the target's own raw data, and the resulting representation is clustered. We used a mixing weight of $0.5$ between the source-reconstructed and raw target representations. 

For the GDEC method of \cite{gdec_paper}, we use a stacked denoising autoencoder with hidden-layer dimensions $500$-$500$-$2000$-$20$. The autoencoder is first trained on the pooled source data, and the learned weights are then used to initialize a target autoencoder, which is subsequently fine-tuned on the target data. A deep embedded clustering (DEC) head is attached to the fine-tuned encoder and self-trained on the resulting target embeddings to produce the final cluster assignments. We allow up to $200$ epochs for each of the source pretraining, source fine-tuning, target transfer-training, and DEC self-training stages, consistent with the range considered in the original paper. We use a batch size of $256$ and apply early stopping at each stage.

\section{Transfer assisted clustering in two cluster Gaussian mixture models with multiple source datasets}
\label{sec:two_clust_mult_source}
In this section, we extend the procedure developed in Section~\ref{sec:oracle_two_cluster} to the setting where there are multiple source datasets in addition to the target dataset. Suppose we observe $m=O(1)$ source datasets $\calS_1,\ldots,\calS_m \subseteq \R^d$. Let the data in $\calS_i$ for any $i \in [m]$ are distributed as
\begin{align}
    \label{eq:two_cluster_mult_source}
    \Xsrcm{i}:=\zsrcm{i}_j\thetamS{i}+\epsm{i}_j, \quad j=1,\ldots,\nSm{i},
\end{align}
where $\zsrcm{i}_1,\ldots,\zsrcm{i}_{\nSm{i}} \in \{-1,+1\}$ and $\epsm{i}_j \simiid \dnorm_d(0,\sigmaSm{i}^2\Id_d)$.
The target data points are distributed as described in \eqref{eq:target_sample}. While the target signal strength is defined as in \eqref{eq:def_delta_snr}, the source signal strengths are defined as
\begin{align}
    \label{eq:multi_source_snr}
    \DeltaSm{i}:=\frac{\|\thetamS{i}\|_2}{\sigmaSm{i}}, \quad \mbox{for $i \in [m]$.}
\end{align}
We assume that there exists at least one source that is sufficiently aligned with the target dataset. In other words, we assume that 
\begin{align}
\label{eq:multis_alignment}
    \max_{i=1}^{m} \frac{|\langle \thetamS{i},\thetaT\rangle|}{\|\thetamS{i}\|_2\,\|\thetaT\|_2} \ge \mu, \quad \mbox{where $\mu \in (0,1]$.}
\end{align}

\subsection{Consistent clustering in the multi-source setting}
In this section, we provide an algorithm for clustering the target data points under the oracle set-up where it is known if \eqref{eq:lower_bound_2_clust_delt} holds. We use the estimator $\hatztar$ defined in \eqref{eq:oracle_target_estimate}, if \eqref{eq:lower_bound_2_clust_delt} holds. When \eqref{eq:lower_bound_2_clust_delt} does not hold, we proceed with the following two-stage construction: First, we construct candidate directions $\hatthetaSm{1},\ldots,\hatthetaSm{m}$ from the $m$ sources. Again, the explicit construction of the source directions depend on whether $d \gg \nSm{i}$ or $4 \le d \lesssim \nSm{i}$ for each $i \in [m]$. We adopt the same constructions as in \eqref{eq:hat_q_i_hd} when $d \gg \nSm{i}$ or \eqref{eq:hat_q_i_ld} when $d \lesssim \nSm{i}$. The estimates $\hatthetaSm{1},\ldots,\hatthetaSm{m}$ are used to construct the estimated source signal subspace 
\begin{align}
\label{eq:combined_source_space}
\spclust^{(\src)}:=\sp\{\hatthetaSm{1},\ldots,\hatthetaSm{m}\}\subseteq \R^d.
\end{align}
Let $\widehat Q_{\src}\in\R^{d\times r_{\src}}$ be an orthonormal basis of
$\spclust^{(\src)}$, where
$r_{\src}:=\dim(\spclust^{(\src)})\le m$, and define the corresponding orthogonal projector $\Pest^{(\src)}:=\widehat Q_{\src}\widehat Q_{\src}^{\top}.$
The source-based procedure then proceeds in two steps:
\begin{enumerate}
    \item \textbf{Projection.}
    For each $j\in\calT$, compute the reduced-dimensional projected observation
    \begin{align}
    \label{eq:def_hat_x_tar_q_tar}
        \hatXtar_j:=\widehat Q_{\src}^{\top}\Xtar_j\in\R^{r_{\src}}.
    \end{align}
    \item \textbf{Clustering.}
    Construct the Gram matrix
    \begin{align}
    \label{eq:hat_sigma_t}
        \wh{\Sigma}_{\calT}:=\frac{1}{\nT}\sum_{j=1}^{\nT}\hatXtar_j(\hatXtar_j)^\top\in\R^{r_{\src}\times r_{\src}}.
    \end{align}
    Let $\wh v_{\trg}\in\R^{r_{\src}}$ be a unit-norm leading eigenvector of
    $\wh{\Sigma}_{\calT}$. The estimate $\hatztar \in \{-1,1\}^{\nT}$ is then defined as
    \begin{align}
    \label{eq:oracle_source_estimate_mult_source}
        \hatztar_j:=\sgn\left(\wh v_{\trg}^{\top}\hatXtar_j \right),\qquad j\in\calT.
    \end{align}
\end{enumerate}
The procedure is summarized in Algorithm~\ref{alg:oracle_transfer_clustering}. Observe that the aforementioned procedure reduces to the estimation strategy outlined in Section~\ref{sec:oracle_two_cluster} when only one source dataset is observed. Furthermore, also observe that, the construction does not require the knowledge of which source dataset is aligned with the target. It uses the pooling of source information through $\spclust^{(\src)}$ and in the following theorem we show that asymptotically it suffices to have one source dataset satisfying 
\begin{align}
\label{eq:correct_choose_alignment}
    \frac{|\langle \thetamS{i},\thetaT\rangle|}{\|\thetamS{i}\|_2\,\|\thetaT\|_2} \ge \mu,
\end{align}
which also satisfy the signal strength conditions
\begin{align}
\label{eq:necessary_condition_mult_source}
\DeltaSm{i} \gg \left(\frac{d\;(\log \nSm{i})^2}{\nSm{i}}\right)^{1/4},\quad\mu\cdot\DeltaT \gg 1\quad\text{and}\quad\mu\cdot\DeltaSm{i}\cdot\DeltaT \gg \sqrt{\frac{d}{\nSm{i}}},
\end{align}
for consistent clustering of the observations in $\calT$.

\begin{theorem}
    \label{thm:two_clust_multi_source}
Consider the estimator $\hatztar$ of $\ztar$ produced in Algorithm~\ref{alg:oracle_transfer_clustering}. If \eqref{eq:lower_bound_2_clust_delt} holds, or there exists $i \in [m]$ satisfying \eqref{eq:correct_choose_alignment} and \eqref{eq:necessary_condition_mult_source}, then for any $d \ge 4$, we have $\mathbb E\left[\calL\left(\hatztar,\ztar\right)\right]\to 0,$ where the loss $\calL(\cdot)$ is defined in \eqref{eq:loss_func_2}.
\end{theorem}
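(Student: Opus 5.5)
The target branch needs no new argument. If \eqref{eq:lower_bound_2_clust_delt} holds, Algorithm~\ref{alg:oracle_transfer_clustering} outputs the estimator \eqref{eq:oracle_target_estimate}, and consistency follows from part (A) of Theorem~\ref{thm:correct_choose}. So I concentrate on the source branch. Fix an index $i_\star$ satisfying \eqref{eq:correct_choose_alignment} and \eqref{eq:necessary_condition_mult_source}.

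\emph{Step 1: the informative source direction is well estimated.} I reuse the single-source analysis behind part (B) of Theorem~\ref{thm:correct_choose}. When $d\gg\nSm{i_\star}$, the label-first estimator \eqref{eq:hat_q_i_hd} recovers $\zsrcm{i_\star}$ exactly with probability tending to one. On that event $\hatthetaSm{i_\star}=\pm\thetamS{i_\star}+\nSm{i_\star}^{-1}\sum_j \zsrcm{i_\star}_j\epsm{i_\star}_j$, so the noise part is an isotropic Gaussian of norm about $\sigmaSm{i_\star}\sqrt{d/\nSm{i_\star}}$, independent of the target data. When $d\lesssim\nSm{i_\star}$, Wedin's theorem controls $\sin\angle(\wh v_{\calS_{i_\star}},\thetamS{i_\star})$. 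In either case I aim for
\[
\frac{|\langle \hatthetaSm{i_\star},\thetaT\rangle|}{\|\hatthetaSm{i_\star}\|_2\,\sigmaT}\gtrsim \mu\DeltaT\cdot\frac{\DeltaS}{\DeltaS+\sqrt{d/\nSm{i_\star}}}\;\convp\;\infty,
\]
where $\DeltaS$ abbreviates $\DeltaSm{i_\star}$. This is exactly the quantity that the last two conditions in \eqref{eq:necessary_condition_mult_source} force to diverge. The anti-concentration lemma (Lemma~\ref{lem:gaussian_anti_conc}, or its Haar analogue) only matters when $\DeltaS\lesssim\sqrt{d/\nSm{i_\star}}$. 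There it guarantees that the random noise component does not cancel the signal component $\langle\thetamS{i_\star},\thetaT\rangle$.

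\emph{Step 2: the projected target data have a diverging effective SNR.} Define $w:=\widehat Q_\src^\top\thetaT\in\R^{r_\src}$. Since $\hatthetaSm{i_\star}\in\spclust^{(\src)}$, Step 1 gives $\|w\|_2/\sigmaT\ge|\langle\hatthetaSm{i_\star},\thetaT\rangle|/(\|\hatthetaSm{i_\star}\|_2\sigmaT)\convp\infty$. The basis $\widehat Q_\src$ depends only on the sources, so it is independent of $\calT$. Conditionally on it, the projected data satisfy $\hatXtar_j=\ztar_j w+\wt\varepsilon_j$ with $\wt\varepsilon_j\simiid\dnorm_{r_\src}(0,\sigmaT^2\Id)$. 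This is a two-community GMM in fixed dimension $r_\src\le m=O(1)$ with SNR $\|w\|_2/\sigmaT\to\infty$.

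\emph{Step 3: clustering in low dimension.} The matrix $\wh\Sigma_\calT$ concentrates around $ww^\top+\sigmaT^2\Id$. The error in operator norm is $O_p(\sigmaT^2\sqrt{r_\src/\nT}+\sigmaT\|w\|_2/\sqrt{\nT})$, which is uniform in the labels. Davis--Kahan then gives $\sin\angle(\wh v_\trg,w)=o_p(1)$. To decouple $\wh v_\trg$ from individual noise vectors, I bound the misclustering fraction by
\[
\nT^{-1}\#\{j:|\langle \wh v_\trg,\wt\varepsilon_j\rangle|\ge|\langle\wh v_\trg,w\rangle|\}.
\]
I then control this uniformly over unit vectors in $\R^{r_\src}$ via $\sup_u\nT^{-1}\sum_j\mathbbm 1\{|\langle u,\wt\varepsilon_j\rangle|\ge t\}$, with $t=\tfrac12\|w\|_2\to\infty$ along multiples of $\sigmaT$. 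A VC or $\epsilon$-net argument works here because the dimension is fixed. Since $\calL\le1$, consistency in probability upgrades to $\Ebb[\calL]\to0$.

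The main obstacle is Step 1 in the product-scale regime. There the noise in $\hatthetaSm{i_\star}$ may dominate its signal, and pooling other, possibly uninformative, source directions must not hurt. Pooling is harmless because projecting onto a larger space only increases $\|w\|_2$. The delicate part is the anti-concentration lower bound on $|\langle\hatthetaSm{i_\star},\thetaT\rangle|$, which I would import directly from the proof of Theorem~\ref{thm:correct_choose}(B).
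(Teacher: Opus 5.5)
Your proposal is correct and follows the same route as the paper. The target branch comes from Theorem~\ref{thm:correct_choose}(A). On the source branch, $\|\Pest^{(\src)}\thetaT\|_2/\sigmaT$ is lower bounded by the single-source ratio of Lemma~\ref{lem:projected_diverge_source} for the informative source, and Davis--Kahan is then applied to $\wh v_\trg$ in the fixed-dimensional projected space.

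The only difference is how you handle the dependence of $\wh v_\trg$ on the target noise. The paper splits the error count into a fixed-direction term (Hoeffding) and a perpendicular term bounded through $\|\mathrm H\|_{\op}$. Your uniform-over-directions bound does the same job, and in fixed dimension it can even be replaced by $|\langle u,\wt\varepsilon_j\rangle|\le\|\wt\varepsilon_j\|_2$.
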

One can extend the oracle based procedure in Algorithm~\ref{alg:oracle_transfer_clustering} to a procedure that adaptively chooses between the target and source based estimators following the ideas outlined in \eqref{eq:grand_def_adaptive}. In other words, we consider the same validation statistic $\wh{\rmT}$ defined in \eqref{eq:validation_statistic} and the validation threshold $\tau_n$ defined in \eqref{eq:validation_threshold}. Again, let us denote the target based estimator defined in \eqref{eq:oracle_target_estimate} by $\hatztar_{\trg}$ and the estimator constructed in \eqref{eq:oracle_source_estimate_mult_source} by $\hatztar_{\src}$.
Then the adaptive estimator is defined as 
\begin{align}
\label{eq:grand_def_adaptive_mult_source}
    \hatztar:=\begin{cases}
        \hatztar_{\trg} & \mbox{if $|\wh{\rmT}(\hatztar_{\trg})|>\tau_n$,}\\
        \hatztar_{\src} & \mbox{if $|\wh{\rmT}(\hatztar_{\trg})|\le\tau_n$.}
    \end{cases}
\end{align}
Then the conclusion of Theorem~\ref{thm:choose_correct_adaptive} extends to the estimator defined in \eqref{eq:grand_def_adaptive_mult_source}. This extension to the current setup is possible since the validation principle works based on the property of the target based estimator $\hatztar_{\trg}$ which we do not change between the single source and multi-source setups. 

\subsection{Necessary conditions for consistent clustering}
To characterize the necessary conditions for consistent clustering in the current setting, we 
consider a stratified multi-source parameter space.
For each $i^\star\in[m]$, define
\begin{align}
\label{eq:param_class_mult_i}
    \Omega_{\mathrm{mult}}^{[i^\star]}
    :=
    \left\{
        (\theta_{\mathrm{mult}},z_{\mathrm{mult}})\in\Omega_{\mathrm{mult}}:
        \frac{|\ip{\thetaT}{\thetamS{i^\star}}|}
        {\norm{\thetaT}_2\norm{\thetamS{i^\star}}_2}\ge\mu
    \right\}.
\end{align}
We use the oracle-indexed minimax risk
\begin{align}
\label{eq:minimax_risk_mult_oracle_index}
    \mathfrak R_{\mathrm{or}}^{[i^\star]}
    :=
    \inf_{\hatztar_{i^\star}}
    \sup_{(\theta_{\mathrm{mult}},z_{\mathrm{mult}})
        \in\Omega_{\mathrm{mult}}^{[i^\star]}}
    \Ebb\bigl[\calL(\hatztar_{i^\star},\ztar)\bigr].
\end{align}
Here the estimator is allowed to use all $m$ source datasets and is also
given the index $i^\star$ of the informative and aligned source.  Thus this oracle risk makes the lower-bound
experiment easier than the adaptive problem, in which the identity of an
aligned informative source is unknown. Then the following theorem characterizes the necessary on the problem parameters $(\DeltaT,\DeltaSm{i_\star},\mu)$ for consistent clustering.

\begin{theorem}[Oracle-indexed multi-source necessary condition]
\label{thm:two_clust_multi_source_lb}
Consider the two-community model with $m=O(1)$ source datasets and fix
$i^\star\in[m]$. If $\mathfrak R_{\mathrm{or}}^{[i^\star]}\to0,$ as
$\nT,\nSm{i^\star},d\to\infty$,
then necessarily either
\begin{align}
\label{eq:multi_source_lb_condition}
&\DeltaT\gtrsim\max\!\left\{1,\left(\frac{d}{\nT}\right)^{1/4}\right\}, \quad \text{or}\\
\notag
&\DeltaSm{i^\star}\gtrsim
\left(\frac{d}{\nSm{i^\star}}\right)^{1/4},\qquad
\mu\DeltaT\gtrsim 1,\qquad\text{and}\qquad
\mu\DeltaSm{i^\star}\DeltaT
\gtrsim \sqrt{\frac{d}{\nSm{i^\star}}},
\end{align}
where the constants involved in the $\gtrsim$ statements are derived from Theorem~\ref{thm:necessary_condition}.
\end{theorem}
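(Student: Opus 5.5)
The plan is to reduce Theorem~\ref{thm:two_clust_multi_source_lb} to the single-source lower bound, Theorem~\ref{thm:necessary_condition}. The reduction restricts the supremum over $\Omega_{\mathrm{mult}}^{[i^\star]}$ to a sub-family in which the non-oracle sources carry no information about $\ztar$ or $\thetaT$. Since the infimum over estimators is taken after the supremum, shrinking the parameter family can only lower the risk, so a lower bound on the smaller family transfers to $\mathfrak R_{\mathrm{or}}^{[i^\star]}$.

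The sub-family is built as follows. Fix an arbitrary unit vector $w$ and fixed labels $\zsrcm{i}$ for $i\neq i^\star$. For each $i\ne i^\star$, set $\thetamS{i}=\DeltaSm{i}\sigmaSm{i}\,w$. One might instead try $\DeltaSm{i}=0$, but this is not allowed if the class fixes positive signal strengths, so we keep $w$. The point to check is that $w$ can be chosen independently of the random prior on $(v_{\trg},v_{\src})$ from \eqref{eq:construct_signal_adv}. Nothing in $\Omega_{\mathrm{mult}}^{[i^\star]}$ constrains $\thetamS{i}$ for $i\neq i^\star$ beyond its norm, since alignment is required only for source $i^\star$. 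Hence this sub-family lies inside the class. The pair $(\thetaT,\thetamS{i^\star},\ztar,\zsrcm{i^\star})$ is then drawn from exactly the prior of Section~\ref{sec:necessary_cond}, with $\thetamS{i^\star}$ playing the role of $\thetaS$.

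Under this prior, the data from sources $i\ne i^\star$ have a fixed law that does not depend on $\tau$, $\eta$, $\xi$, $\kappa$ or $\zsrcm{i^\star}$, and that law is independent of $(\calT,\calS_{i^\star})$. Each Assouad marginal $P_{\pm j}$ is therefore a product $P^{(1)}_{\pm j}\otimes R$, where $P^{(1)}_{\pm j}$ is the single-source marginal from \eqref{eq:sec3_assouad_marginals} and $R$ is a common law. Total variation is invariant under tensoring with a common factor, so
\[
\TV(P_{+j},P_{-j})=\TV(P^{(1)}_{+j},P^{(1)}_{-j}).
\]
The four bounds of Theorem~\ref{thm:sec3_tv_three_routes} then apply verbatim with $(\nS,\DeltaS)$ replaced by $(\nSm{i^\star},\DeltaSm{i^\star})$. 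Estimators that depend on the extra sources and on the index $i^\star$ gain nothing: the extra data can be simulated from $R$, and the index is a known constant, so both are absorbed as external randomization in the Assouad bound \eqref{eq:sec3_assouad_display}, which holds for randomized estimators.

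The remaining steps repeat the argument behind Theorem~\ref{thm:necessary_condition}. Apply Lemma~\ref{lem:sec3_loss_domination} and the Assouad display, then use the conditioning Lemma~\ref{lem:sec3_conditioning} to pass from the random prior to the deterministic class. That lemma concerns only the event $\mathcal E_\mu$ on $(v_{\trg},v_{\src})$, which is unaffected by the fixed extra sources. Finally, argue by contrapositive: if both (A) and (B) in \eqref{eq:multi_source_lb_condition} fail with the constants $c_1,\dots,c_4$, some TV route gives $\max_j\TV\le1-\delta$, hence $\mathfrak R_{\mathrm{or}}^{[i^\star]}\ge\delta/4$.

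The main obstacle is verifying that the conditioning step still goes through, namely that restricting to the sub-family and conditioning on $\mathcal E_\mu$ commute with the product structure. I expect this to be routine, because the extra sources are deterministic given $w$. I would also confirm that $m=O(1)$ is not needed for the argument; it is used only so that the statement matches the upper bound.
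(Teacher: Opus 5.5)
Your proposal is correct and follows the paper's proof. The paper likewise fixes every non-designated source at a deterministic direction with the prescribed signal strength and deterministic labels, observes that the neighboring Assouad laws factor as $P^{(i^\star)}_{\pm k}\otimes R_{-i^\star}$ so total variation is unchanged, and then invokes Theorem~\ref{thm:necessary_condition} with $(\nS,\DeltaS)$ replaced by $(\nSm{i^\star},\DeltaSm{i^\star})$; the only cosmetic difference is that you re-run the Assouad and conditioning steps explicitly, whereas the paper cites the theorem directly after noting that the same factorization covers the target-direction-revealed (fully aligned) argument.
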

The proof embeds the corrected one-source hard experiment for the designated
index $i^\star$ and fixes every other source at a common nuisance distribution.
The full argument is given in Appendix~\ref{app:proof-multisource-lb}.

\begin{remark}
Observe that the above is an oracle lower bound which restricts to the class of estimators that is provided the information of the useful source. Therefore, failure even in this enlarged experiment implies failure when $i^\star$ is unknown.
The procedure in
Algorithm~\ref{alg:oracle_transfer_clustering}, however, is not indexed by
$i^\star$; it is consistent whenever there exists an index satisfying the
alignment and the corresponding divergence conditions.  In this sense the
upper procedure adapts over the family
$\{\Omega_{\mathrm{mult}}^{[i^\star]}:i^\star\in[m]\}$ without knowing the
active index.  The thresholds in Theorem~\ref{thm:two_clust_multi_source_lb} are fixed-constant requirements, whereas Theorem~\ref{thm:two_clust_multi_source} is stated in terms of divergence of the signal strength and includes a logarithmic factor for source-direction estimation. When $m=1$, the statement reduces to Theorem~\ref{thm:necessary_condition}.
\end{remark}

\begin{algorithm}[tb]
\caption{Oracle Transfer-Assisted Clustering}
\label{alg:oracle_transfer_clustering}
\begin{algorithmic}[1]
\Require Target data $\{\Xtar_j:j\in\calT\}$; source datasets
$\calS_1,\ldots,\calS_m$; oracle indicator $\mathfrak o\in\{\trg,\src\}$.
\If{$\mathfrak o=\trg$}
    \State Form $B_{\calT}=X_{\calT}X_{\calT}^\top-
    \diag(X_{\calT}X_{\calT}^\top)$ and let $\wh u^{(\trg)}$ be its leading eigenvector.
    \State Set $\wh\eta^{(\trg)}_0=\sgn(\wh u^{(\trg)})$ and iterate
    $\wh\eta^{(\trg)}_{t+1}=\sgn(B_{\calT}\wh\eta^{(\trg)}_t)$ for
    $t<\lfloor 3\log\nT\rfloor$.
    \State Set $\wh z_{\calT}=\wh\eta^{(\trg)}_{\lfloor 3\log\nT\rfloor}$.
\Else
    \State Estimate $\hatthetaSm{1},\ldots,\hatthetaSm{m}$ and let
    $\widehat Q_{\src}$ be an orthonormal basis of their span.
    \State Set $\widehat Y_j=\widehat Q_{\src}^\top\Xtar_j$ and
    $\widehat\Sigma_{\calT}=\nT^{-1}\sum_{j=1}^{\nT}\widehat Y_j\widehat Y_j^\top$.
    \State Let $\widehat v_{\calT}$ be the leading eigenvector of
    $\widehat\Sigma_{\calT}$ and set
    $(\wh z_{\calT})_j=\sgn(\widehat v_{\calT}^\top\widehat Y_j)$.
\EndIf
\Output $\wh z_{\calT}\in\{-1,1\}^{\nT}$.
\end{algorithmic}
\end{algorithm}

\section{Anti-concentration, results from random matrix theory and properties of Gaussian random vectors}
In this section, we present some results about Gaussian anti-concentration and some results from random matrix theory used in the proofs presented in the later sections.

\begin{lemma}[Gaussian anti-concentration]
\label{lem:gaussian_anti_conc}
Let $\mathrm Z \sim \dnorm(\nu,\sigma^2)$. Then we have the following.
\begin{enumerate}
\item For every $t>0$, there exists an absolute constant $C>0$ such that
\[
    \mathbb P(|\mathrm Z|\le t) \le C\frac{t}{\sigma}.
\]

\item If $|\nu|\ge \sigma$ and $0<t\le |\nu|/2$, then there exists an absolute constant $C>0$ such that
\[
    \mathbb P(|\mathrm Z|\le t) \le C\frac{t}{|\nu|}.
\]
\end{enumerate}
\end{lemma}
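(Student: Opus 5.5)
Both parts reduce to bounding the Gaussian density on the interval $[-t,t]$, using $\mathbb P(|\mathrm Z|\le t)=\int_{-t}^{t}\phi_{\nu,\sigma}(x)\,dx$ with $\phi_{\nu,\sigma}(x)=(2\pi\sigma^2)^{-1/2}\exp\{-(x-\nu)^2/(2\sigma^2)\}$.

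\textbf{Part 1.} Bound the density uniformly by its maximum $(2\pi\sigma^2)^{-1/2}$. This gives $\mathbb P(|\mathrm Z|\le t)\le 2t/(\sqrt{2\pi}\,\sigma)$, so $C=\sqrt{2/\pi}$ works for every $t>0$ and every mean $\nu$.

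\textbf{Part 2.} Assume $|\nu|\ge\sigma$ and $0<t\le|\nu|/2$.

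1. Every $x\in[-t,t]$ satisfies $|x-\nu|\ge|\nu|-t\ge|\nu|/2$. Hence on this interval the density is at most $(2\pi\sigma^2)^{-1/2}\exp\{-\nu^2/(8\sigma^2)\}$.

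2. Integrating over the interval of length $2t$ gives
\[
\mathbb P(|\mathrm Z|\le t)\le \frac{2t}{\sqrt{2\pi}\,\sigma}\,e^{-\nu^2/(8\sigma^2)}=\frac{2t}{\sqrt{2\pi}\,|\nu|}\cdot s\,e^{-s^2/8},\qquad s:=|\nu|/\sigma\ge1 .
\]

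3. The function $s\mapsto s e^{-s^2/8}$ is bounded on $[0,\infty)$, with maximum $2e^{-1/2}$ attained at $s=2$. Therefore $\mathbb P(|\mathrm Z|\le t)\le C\,t/|\nu|$ with $C=4e^{-1/2}/\sqrt{2\pi}$.

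The only step that needs any care is step 3, where the factor $1/\sigma$ must be traded for $1/|\nu|$. The Gaussian decay $e^{-s^2/8}$ absorbs the extra factor $s=|\nu|/\sigma$, so no real obstacle arises. The assumption $|\nu|\ge\sigma$ is not needed for the bound itself, since $s e^{-s^2/8}$ is bounded for all $s\ge 0$; it only matches the regime in which the lemma is used. Finally, both parts hold with a single absolute constant, namely the larger of the two constants above.
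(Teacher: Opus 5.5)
Your proof is correct and follows the paper's argument essentially step for step. For Part 1 you bound the density by its maximum. For Part 2 you use $|x-\nu|\ge|\nu|/2$ on $[-t,t]$ and then absorb the factor $|\nu|/\sigma$ into the Gaussian decay through the boundedness of $s e^{-s^2/8}$. Your side remark is also right: since that function is bounded on all of $[0,\infty)$, the hypothesis $|\nu|\ge\sigma$ is superfluous, whereas the paper uses boundedness only on $[1,\infty)$.
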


\begin{proof}
For the first bound, observe that the Gaussian density $\phi(\cdot)$, is bounded by $(2\pi)^{-1/2}$. Therefore
\[
    \mathbb P(|\mathrm Z|\le t)\le \frac{2t}{\sigma} \sup_x \phi((x-\nu)/\sigma)\le \wt C_1\frac{t}{\sigma}, \quad \mbox{where $\wt C_1:=2\cdot(2\pi)^{-1/2}$.}
\]
For the second bound, again observe that 
\[
    \mathbb P(|\mathrm Z|\le t)\le \frac{2t}{\sigma} \sup_{|x| \le t} \phi((x-\nu)/\sigma).
\]
If $|x|\le t\le |\nu|/2$, then $|x-\nu|\ge |\nu|/2$, so
\[
\frac{1}{\sigma} \phi((x-\nu)/\sigma)\le\frac{1}{\sigma\sqrt{2\pi}}\exp\left(-\frac{\nu^2}{8\sigma^2}\right).
\]
Therefore, there exists a constant $\wt C_2>0$
\[
    \mathbb P(|\mathrm Z|\le t)\le
    \wt C_2\frac{t}{\sigma}
    \exp\left(-\frac{\nu^2}{8\sigma^2}\right).
\]
Since the function $x\mapsto x e^{-x^2/8}$ is bounded on $[1,\infty)$, we can use $|\nu|\ge \sigma$ and adjust the constant $\wt C_2$ to obtain 
\[
    \frac{1}{\sigma}\exp\left(-\frac{\nu^2}{8\sigma^2}\right) \le \frac{\wt C_2}{|\nu|}.
\]
Choosing $C:=\max\{\wt C_1,\wt C_2\}$ proves both the claims.
\end{proof}

Next, we consider the following bound on the norm of a $d$-dimensional Gaussian random vector.

\begin{lemma} 
\label{lem:gaussian_vector_norm} 
Suppose $\mathsf Z\sim\dnorm_d(\theta,\sigma^2\Id_d),$ where $\theta\in\R^d$ and $\sigma>0$. Then, for every $t>0$, 
\begin{align} 
\label{eq:gaussian_vector_norm} 
\|\mathsf Z\|_2 \le \|\theta\|_2 + \sigma\left(\sqrt d+\sqrt{2t}\right) 
\end{align} 
with probability at least $1-e^{-t}$. In particular, there exists an absolute constant $C>0$ such that 
\[ 
\|\mathsf Z\|_2 \le \|\theta\|_2+C\sigma\sqrt d 
\] with probability at least $1-e^{-d}$. 
\end{lemma}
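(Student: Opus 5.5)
The plan is to reduce to a standard tail bound for a Lipschitz function of a standard Gaussian vector. Write $\mathsf Z=\theta+\sigma g$ with $g\sim\dnorm_d(0,\Id_d)$. The triangle inequality gives
\[
\|\mathsf Z\|_2\le\|\theta\|_2+\sigma\|g\|_2,
\]
so it suffices to show that $\|g\|_2\le\sqrt d+\sqrt{2t}$ with probability at least $1-e^{-t}$.

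For this, observe that $g\mapsto\|g\|_2$ is $1$-Lipschitz with respect to the Euclidean norm. The Gaussian concentration inequality for Lipschitz functions (Borell--TIS / Tsirelson--Ibragimov--Sudakov) gives
\[
\Pbb\bigl(\|g\|_2\ge\Ebb\|g\|_2+s\bigr)\le e^{-s^2/2}.
\]
By Jensen, $\Ebb\|g\|_2\le(\Ebb\|g\|_2^2)^{1/2}=\sqrt d$. Taking $s=\sqrt{2t}$ then yields \eqref{eq:gaussian_vector_norm}.

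If one prefers to avoid the Lipschitz concentration theorem, an alternative is the Laurent--Massart chi-square bound:
\[
\|g\|_2^2\le d+2\sqrt{dt}+2t\le(\sqrt d+\sqrt{2t})^2
\]
with probability at least $1-e^{-t}$. The last inequality holds because $2\sqrt{dt}\le 2\sqrt2\sqrt{dt}$. Taking square roots gives the same conclusion.

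For the ``in particular'' statement, set $t=d$. This gives $\sqrt d+\sqrt{2d}=(1+\sqrt2)\sqrt d$, so $C=1+\sqrt2$ works with probability at least $1-e^{-d}$.

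There is no real obstacle. The only point needing care is stating the concentration inequality with the correct constant, so that the deviation term is exactly $\sqrt{2t}$.
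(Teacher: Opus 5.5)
Your proposal is correct. Your fallback argument is exactly the paper's proof: the Laurent--Massart bound, the inequality $d+2\sqrt{dt}+2t\le(\sqrt d+\sqrt{2t})^2$, and the choice $t=d$ giving $C=1+\sqrt2$.

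Your primary route is genuinely different. Instead of controlling $\|g\|_2^2\sim\chi^2_d$ and taking square roots, you apply one-sided Gaussian concentration to the $1$-Lipschitz map $g\mapsto\|g\|_2$, and you bound $\Ebb\|g\|_2\le\sqrt d$ by Jensen. This argument works directly at the level of the norm, so it needs no comparison of the chi-square deviation with a perfect square. It also extends verbatim to any $1$-Lipschitz functional of the noise, such as norms of projected noise. The price is invoking the Lipschitz concentration theorem, although the paper uses that theorem elsewhere anyway (in the proof of Lemma~\ref{lem:uniform_conc_r_n}). The chi-square route is more elementary and stays within the Laurent--Massart toolkit the paper relies on repeatedly. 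Both routes give the same deviation term $\sigma\sqrt{2t}$ and the same constant $1+\sqrt2$.
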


\begin{proof}
Write $\mathsf Z=\theta+\sigma \mathsf g,$ where $\mathsf g\sim\dnorm_d(0,\Id_d)$. By the triangle inequality, 
\[ 
\|\mathsf Z\|_2 \le \|\theta\|_2+\sigma\|\mathsf g\|_2. 
\]
Moreover, $\|\mathsf g\|_2^2$ follows a chi-squared distribution with $d$ degrees of freedom. By Lemma~1 of \citet{laurent2000adaptive}, for every $t>0$, 
\[ 
\mathbb P\left( \|\mathsf g\|_2^2 > d+2\sqrt{dt}+2t \right) \le e^{-t}. 
\]
Since $d+2\sqrt{dt}+2t \le \left(\sqrt d+\sqrt{2t}\right)^2,$ it follows that, with probability at least $1-e^{-t}$, 
\[ 
\|\mathsf g\|_2 \le \sqrt d+\sqrt{2t}. 
\] 
Combining the preceding two displays proves \eqref{eq:gaussian_vector_norm}. Taking $t=d$ yields 
\[ 
\|\mathsf Z\|_2 \le \|\theta\|_2+(1+\sqrt 2)\sigma\sqrt d 
\] 
with probability at least $1-e^{-d}$, proving the final claim.
\end{proof}

Finally, we recall the following property of Haar uniformly distributed vectors on $p$-dimensional spheres.

\begin{lemma}
\label{lem:haar_coordinate_density}
Let $\mathsf{V}$ be uniformly distributed on $\mathbb S^{p-1}$, the unit sphere in $\mathbb{R}^p$, with $p \geq 3$. Then for any fixed unit vector $u \in \mathbb{R}^p$, the random variable $u^\top \mathsf{V}$ has density
\begin{equation}
\label{eq:haar_density}
    f_p(x) = \frac{\Gamma\!\left(\frac{p}{2}\right)}{\sqrt{\pi}\,\Gamma\!\left(\frac{p-1}{2}\right)}\,(1-x^2)^{(p-3)/2}, \qquad x \in [-1,1].
\end{equation}
\end{lemma}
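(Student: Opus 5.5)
The plan is to use rotation invariance to reduce to a fixed direction, and then compute the marginal density directly from the Gaussian representation of the uniform distribution on the sphere. Since the law of $\mathsf V$ is invariant under orthogonal transformations, choose an orthogonal $O$ with $Ou=e_1$. Then $u^\top\mathsf V=e_1^\top O\mathsf V\overset{d}{=}\mathsf V_1$, so it suffices to find the density of the first coordinate $\mathsf V_1$.

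Represent $\mathsf V=\mathsf g/\|\mathsf g\|_2$ with $\mathsf g\sim\dnorm_p(0,\Id_p)$. This gives
\[
\mathsf V_1^2=\frac{\mathsf g_1^2}{\mathsf g_1^2+\sum_{i\ge2}\mathsf g_i^2},
\]
the ratio $A/(A+B)$ of independent variables $A\sim\chi^2_1$ and $B\sim\chi^2_{p-1}$. Hence $\mathsf V_1^2\sim\mathrm{Beta}(1/2,(p-1)/2)$, with density
\[
\frac{y^{-1/2}(1-y)^{(p-3)/2}}{B(1/2,(p-1)/2)},\qquad y\in(0,1).
\]

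Next use symmetry. The sign of $\mathsf g_1$ is independent of $\mathsf V_1^2$ and equally likely to be $\pm1$, so $\mathsf V_1$ is symmetric. Changing variables $y=x^2$ and splitting the mass equally between $\pm|x|$ gives
\[
f_p(x)=\frac{1}{2}\cdot 2|x|\cdot\frac{|x|^{-1}(1-x^2)^{(p-3)/2}}{B(1/2,(p-1)/2)}=\frac{(1-x^2)^{(p-3)/2}}{B(1/2,(p-1)/2)}.
\]

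Finally, simplify the normalizing constant. Using $\Gamma(1/2)=\sqrt\pi$,
\[
B\!\left(\tfrac12,\tfrac{p-1}{2}\right)=\frac{\sqrt\pi\,\Gamma(\tfrac{p-1}{2})}{\Gamma(\tfrac p2)},
\]
which yields \eqref{eq:haar_density}.

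There is no real obstacle. The only points needing care are the standard chi-square ratio to Beta fact and the Jacobian bookkeeping in the symmetric change of variables. The assumption $p\ge3$ is not needed for the formula itself, but it guarantees the density is bounded, and that boundedness is what the later anti-concentration arguments use.
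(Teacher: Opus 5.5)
Your proof is correct and follows the same skeleton as the paper: reduce to $\mathsf V_1$ by rotation invariance, then use $\mathsf V=\mathsf g/\|\mathsf g\|_2$. The only difference is the last step. You invoke the standard fact $\chi^2_1/(\chi^2_1+\chi^2_{p-1})\sim\mathrm{Beta}(1/2,(p-1)/2)$ and then symmetrize using the independent sign of $\mathsf g_1$. The paper instead makes the change of variables $\mathsf g_1=x\sqrt r$, $R^2=(1-x^2)r$ (Jacobian $\sqrt r$) and integrates out $r$, which is self-contained and handles the sign of $\mathsf V_1$ automatically.
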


\begin{proof}
By rotational invariance of the Haar measure on \(\mathbb S^{p-1}\), for every
fixed unit vector \(u\in\mathbb R^p\),
\[
    u^\top \mathsf V\stackrel{d}{=}e_1^\top\mathsf V.
\]
It therefore suffices to determine the marginal density of the first
coordinate \(V_1\) of \(\mathsf V\).

Let \(g_1,\ldots,g_p\stackrel{\mathrm{iid}}{\sim}\dnorm_p(0,1)\), write
\(g=(g_1,\ldots,g_p)^\top\), and set
\[
    \mathsf V=\frac{g}{\|g\|_2}.
\]
By rotational invariance of the standard Gaussian distribution,
\(\mathsf V\) is uniformly distributed on \(\mathbb S^{p-1}\). Let
\[
    R^2:=\sum_{j=2}^p g_j^2\sim\chi^2_{p-1},
\]
which is independent of \(g_1\). Then
\[
    V_1=\frac{g_1}{\sqrt{g_1^2+R^2}}.
\]

Consider the change of variables
\[
    g_1=x\sqrt{r},\qquad
    R^2=(1-x^2)r,
\]
where \(x\in(-1,1)\) and \(r>0\). Since
\[
    \left|
    \frac{\partial(g_1,R^2)}{\partial(x,r)}
    \right|
    =
    \left|
    \begin{matrix}
        \sqrt r & \dfrac{x}{2\sqrt r}\\[1ex]
        -2xr & 1-x^2
    \end{matrix}
    \right|
    =\sqrt r,
\]
and the joint density of \((g_1,R^2)\) is
\[
    \frac{e^{-g_1^2/2}}{\sqrt{2\pi}}\,
    \frac{(R^2)^{(p-3)/2}e^{-R^2/2}}
    {2^{(p-1)/2}\Gamma\!\left(\frac{p-1}{2}\right)},
\]
the marginal density of \(V_1\) is
\begin{align*}
    f_{V_1}(x)
    &=
    \frac{(1-x^2)^{(p-3)/2}}
    {\sqrt{2\pi}\,2^{(p-1)/2}
    \Gamma\!\left(\frac{p-1}{2}\right)}
    \int_0^\infty r^{p/2-1}e^{-r/2}\,dr\\
    &=
    \frac{(1-x^2)^{(p-3)/2}}
    {\sqrt{2\pi}\,2^{(p-1)/2}
    \Gamma\!\left(\frac{p-1}{2}\right)}
    \,2^{p/2}\Gamma\!\left(\frac p2\right)\\
    &=
    \frac{\Gamma\!\left(\frac p2\right)}
    {\sqrt{\pi}\,\Gamma\!\left(\frac{p-1}{2}\right)}
    (1-x^2)^{(p-3)/2},
    \qquad x\in(-1,1).
\end{align*}
The values at \(x=\pm1\) are immaterial for the density. This proves
\eqref{eq:haar_density}.
\end{proof}

\section{Consistently estimating the target variance for adaptive estimator selection}
\label{sec:consistency_sigmaT_hat}
In this section, we rigorously justify the use of estimator constructed in \eqref{eq:consistent_plug_in} as a consistent plug-in for the oracle target noise variance $\sigmaT^2$ in \eqref{eq:validation_statistic} and \eqref{eq:validation_threshold}. One can prove similar properties of the estimator based on \eqref{eq:estimate_sigma_hat_per_mult}.

Recall the definition of the target data matrix $X_\calT$ following \eqref{eq:define_b_cal_t}. Let $s_1(X_\calT)\ge s_2(X_\calT)\ge\cdots\ge s_{\nT \wedge d}(X_\calT)$ denote the singular
values of $X_\calT$ in order, then by the Eckart-Young theorem \citep{EckartYoung1936},
\begin{align}
\label{eq:eckhart_young}
\wh\sigma^2_T=\frac{1}{\nT d}\sum_{j=2}^{\nT \wedge d}s_j(X_\calT)^2 .
\end{align}
Consider the following theorem.
\begin{theorem}
\label{thm:sigmaT_hat_consistency}
Assume $\min\{\nT,d\}\ge 4$. Under the target model
\begin{align}
    X_\calT = \ztar \thetaT^\top + E_{\calT},
\end{align}
where the rows of $E_{\calT}$ are iid
$\dnorm_d(0_d,\sigmaT^2\,\Id_d)$, the estimator $\wh\sigma^2_T$ defined in
\eqref{eq:consistent_plug_in} satisfies the following. Then,
with probability at least  $1-3/\nT$,
\begin{align}
\left|\frac{\wh\sigma^2_T}{\sigmaT^2}-1\right|=O\left(\sqrt{\frac{\log \nT}{\nT d}}+\frac{1}{\nT}+\frac{1}{d}\right).
\label{eq:sigmaT_hat_rate}
\end{align}
Consequently, if $\nT,d\to\infty$, then for every fixed constant $\mathrm D_0>0$,
\begin{align}
\lim_{\nT,d \to \infty}\Pbb\left[\left|\frac{\wh\sigma^2_T}{\sigmaT^2}-1\right|\le \mathrm D_0\right]=1.
\end{align}
\end{theorem}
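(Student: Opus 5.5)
The plan is to sandwich $\wh\sigma^2_T$ between two functionals of the pure-noise matrix $E_{\calT}$ that do not involve the signal. By \eqref{eq:eckhart_young},
\[
\nT d\,\wh\sigma^2_T=\|X_\calT\|_F^2-s_1(X_\calT)^2 .
\]
Throughout, write $v:=\thetaT/\|\thetaT\|_2$, and take $v$ to be any fixed unit vector if $\thetaT=0$.

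\emph{Upper bound.} By the variational characterization, $s_1(X_\calT)^2\ge \|X_\calT v\|_2^2$. Since $\|\ztar\|_2^2=\nT$, we have $X_\calT v=\|\thetaT\|_2\ztar+E_\calT v$, so
\[
\|X_\calT v\|_2^2=\nT\|\thetaT\|_2^2+2\|\thetaT\|_2\,\ztar^\top E_\calT v+\|E_\calT v\|_2^2 .
\]
On the other hand,
\[
\|X_\calT\|_F^2=\nT\|\thetaT\|_2^2+2\ip{\ztar\thetaT^\top}{E_\calT}_F+\|E_\calT\|_F^2,
\]
and $\ip{\ztar\thetaT^\top}{E_\calT}_F=\|\thetaT\|_2\,\ztar^\top E_\calT v$. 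The signal and cross terms therefore cancel exactly, giving
\[
\nT d\,\wh\sigma^2_T\le \|E_\calT\|_F^2-\|E_\calT v\|_2^2\le \|E_\calT\|_F^2 .
\]

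\emph{Lower bound.} $X_\calT$ is a rank-one perturbation of $E_\calT$. By Weyl's interlacing inequality for singular values, $s_{j}(X_\calT)\ge s_{j+1}(E_\calT)$ for every $j$. Hence
\[
\sum_{j\ge2}s_j(X_\calT)^2\ge \sum_{j\ge3}s_j(E_\calT)^2\ge \|E_\calT\|_F^2-2\|E_\calT\|_{\op}^2 .
\]
Combining the two bounds yields the deterministic sandwich
\[
\frac{\|E_\calT\|_F^2-2\|E_\calT\|_{\op}^2}{\nT d\,\sigmaT^2}\le\frac{\wh\sigma^2_T}{\sigmaT^2}\le \frac{\|E_\calT\|_F^2}{\nT d\,\sigmaT^2}.
\]
This step is the crux of the argument. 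It makes the bound uniform in $\thetaT$, so no signal strength condition is needed. The point requiring care is the exact cancellation of the cross term in the upper bound; a cruder bound on that term would leave a dependence on $\DeltaT$.

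\emph{Probabilistic step.} First, $\|E_\calT\|_F^2/\sigmaT^2\sim\chi^2_{\nT d}$. Applying Lemma~1 of \citet{laurent2000adaptive} (as in Lemma~\ref{lem:gaussian_vector_norm}) on both tails with $t=\log\nT$ gives, with probability at least $1-2/\nT$,
\[
\bigl|\|E_\calT\|_F^2/(\nT d\sigmaT^2)-1\bigr|\lesssim \sqrt{\log\nT/(\nT d)}+\log\nT/(\nT d).
\]
Second, the Gaussian operator norm bound (Davidson--Szarek / Gordon with Gaussian concentration of the Lipschitz map $E\mapsto\|E\|_{\op}$) gives, with probability at least $1-1/\nT$,
\[
\|E_\calT\|_{\op}\le\sigmaT\bigl(\sqrt{\nT}+\sqrt d+\sqrt{2\log\nT}\bigr).
\]
Consequently
\[
\frac{2\|E_\calT\|_{\op}^2}{\nT d\sigmaT^2}\lesssim \frac1d+\frac1{\nT}+\frac{\log\nT}{\nT d}.
\]

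A union bound over these events gives probability at least $1-3/\nT$. Finally, $\log\nT/(\nT d)\le\sqrt{\log\nT/(\nT d)}$ because $\log\nT\le \nT d$. This produces \eqref{eq:sigmaT_hat_rate}. The second claim then follows immediately: the right side of \eqref{eq:sigmaT_hat_rate} tends to $0$ as $\nT,d\to\infty$, so it is eventually below any fixed $\mathrm D_0$, while the exceptional probability $3/\nT$ tends to $0$.
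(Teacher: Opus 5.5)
Your proof is correct and follows the paper's argument. Both use the Eckart--Young identity, the same lower bound from $s_j(X_\calT)\ge s_{j+1}(E_\calT)$, and the same two-sided $\chi^2_{\nT d}$ plus Gaussian operator-norm concentration at $t=\log\nT$, combined by a union bound over three events. The only difference is in the upper bound: the paper uses the companion interlacing inequality $s_j(X_\calT)\le s_{j-1}(E_\calT)$ for $j\ge 2$ instead of your test-vector cancellation, and it reaches the same signal-free bound $\nT d\,\wh\sigma^2_T\le\|E_\calT\|_F^2$ with no cross term to track.
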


\begin{proof}
Observe that one can re-write the target data matrix $X_\calT$ as 
\begin{align}
X_\calT=A+\sigmaT\,G,\qquad A:=\ztar\thetaT^\top,
\end{align}
where $G\in\R^{\nT\times d}$ has iid $\dnorm(0,1)$ entries. 

Since $A$ has rank at
most one, the singular-value interlacing inequality \citep{Franklin1993} gives
\begin{align}
    s_j(X_\calT)\le \sigmaT s_{j-1}(G),
    \qquad j\ge 2,
    \label{eq:sigmaT_rank_one_interlace_upper}
\end{align}
and
\begin{align}
    s_j(X_\calT)\ge \sigmaT s_{j+1}(G),
    \qquad j\ge 1.
    \label{eq:sigmaT_rank_one_interlace_lower}
\end{align}
Therefore, using \eqref{eq:eckhart_young}, we get
\begin{align}
    \wh\sigma^2_T
    =
    \frac{1}{\nT d}\sum_{j=2}^{\nT \wedge d}s_j(X_\calT)^2
    \le
    \frac{\sigmaT^2}{\nT d}\sum_{j=1}^{\nT \wedge d}s_j(G)^2
    =
    \frac{\sigmaT^2}{\nT d}\|G\|_F^2.
    \label{eq:sigmaT_hat_upper}
\end{align}
Similarly,
\begin{align}
    \wh\sigma^2_T
    =
    \frac{1}{\nT d}\sum_{j\ge 2}s_j(X_\calT)^2
    \ge
    \frac{\sigmaT^2}{\nT d}\sum_{j\ge 3}s_j(G)^2
    =
    \frac{\sigmaT^2}{\nT d}\|G\|_F^2
    -
    \frac{\sigmaT^2}{\nT d}\{s_1(G)^2+s_2(G)^2\}.
\end{align}
Since $s_2(G)\le s_1(G)$, this yields
\begin{align}
    \frac{\sigmaT^2}{\nT d}\|G\|_F^2
    -
    \frac{2\sigmaT^2}{\nT d}s_1(G)^2
    \le
    \wh\sigma^2_T
    \le
    \frac{\sigmaT^2}{\nT d}\|G\|_F^2.
    \label{eq:sigmaT_hat_sandwich}
\end{align}

Next, observe that $\|G\|_F^2\sim \chi^2_{\nT d}.$
By the Lemma~1 of \cite{laurent2000adaptive}, with
probability at least $1-2e^{-t}$,
\begin{align}
    1-2\sqrt{\frac{t}{\nT d}}
    \le
    \frac{\|G\|_F^2}{\nT d}
    \le
    1+2\sqrt{\frac{t}{\nT d}}
    +
    \frac{2t}{\nT d}.
    \label{eq:frob_conc_sigmaT}
\end{align}
Using Corollary 5.35 \cite{vershynin2012introduction}, we get
\begin{align}
    s_1(G)
    \le
    \sqrt{\nT}+\sqrt d+\sqrt{2t}
    \label{eq:op_conc_sigmaT}
\end{align}
with probability at least $1-e^{-t}$. Combining
\eqref{eq:sigmaT_hat_sandwich}, \eqref{eq:frob_conc_sigmaT}, and
\eqref{eq:op_conc_sigmaT}, and applying a union bound, gives probability
at least $1-3e^{-t}$. On this event,
\begin{align}
    \wh\sigma^2_T-\sigmaT^2
    \le
    \sigmaT^2\left\{
        2\sqrt{\frac{t}{\nT d}}
        +
        \frac{2t}{\nT d}
    \right\},
\end{align}
and
\begin{align}
    \wh\sigma^2_T-\sigmaT^2
    \ge
    -\sigmaT^2\left\{
        2\sqrt{\frac{t}{\nT d}}
        +
        \frac{2}{\nT d}
        \left(\sqrt{\nT}+\sqrt d+\sqrt{2t}\right)^2
    \right\}.
\end{align}
Taking $t=\log \nT$ in the foregoing inequality, we obtain
\begin{align}
\left|\frac{\wh\sigma^2_T}{\sigmaT^2}-1\right|
\le 2\sqrt{\frac{\log \nT}{\nT d}}+\frac{2\log \nT}{\nT d}+\frac{2}{\nT d}\left(\sqrt{\nT}+\sqrt d+\sqrt{2\log \nT}\right)^2
\end{align}
with probability at least $1-3/\nT$. 
Therefore,
\begin{align}
\left|\frac{\wh\sigma^2_T}{\sigmaT^2}-1\right|=
O\left(\sqrt{\frac{\log \nT}{\nT d}}+\frac{1}{\nT}+\frac{1}{d}\right)
\end{align}
with probability at least $1-3/\nT$. This automatically implies that for every
fixed $\mathrm D_0>0$,
\begin{align}
\Pbb\left[\left|\frac{\wh\sigma^2_T}{\sigmaT^2}-1\right|\le \mathrm D_0\right]\to 1,
\end{align}
whence the theorem follows.
\end{proof}

\section{Proofs of results in Section~\ref{sec:oracle_two_cluster}}
Let us consider the following lemma that will be used in the subsequent proofs.
\begin{lemma}
\label{lem:projected_diverge_source}
Suppose $d \ge 4$ and \eqref{eq:lower_bound_3_clust_delt} holds. Then the estimators of $\wh{\theta}_{\src}$ constructed in \eqref{eq:hat_q_i_hd} and \eqref{eq:hat_q_i_ld} satisfy
\begin{align}
    \frac{|\langle\wh \theta_{\src},\thetaT\rangle|}{\|\wh \theta_\src\|_2\cdot\sigmaT} \convp \infty, \quad \mbox{as $\nT,\nS \to \infty$}.
\end{align}
\end{lemma}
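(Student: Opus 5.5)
The plan is to split into the two regimes matching the two constructions of $\hatthetaS$. In each regime, decompose $\langle\hatthetaS,\thetaT\rangle$ into a signal part proportional to $\langle\thetaS,\thetaT\rangle$ plus a noise part that is independent of $\thetaT$ (only source noise enters). Then show the signal part dominates, using the alignment $|\langle\thetaS,\thetaT\rangle|\ge\mu\norm{\thetaS}_2\norm{\thetaT}_2$. Anti-concentration is needed only to rule out cancellation between the two parts, where a lower bound in probability is required rather than just an upper bound on noise.

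\emph{Regime $d\gg\nS$.} By the first condition of \eqref{eq:lower_bound_3_clust_delt} and the exact-recovery property of \texttt{GOOD-CLUSTERER} from \cite{Ndaoud2022}, we have $\wh z^{(\src)}=s\zsrc$ for some $s\in\{\pm1\}$ with probability tending to one. On this event,
\[
\hatthetaS=s\Bigl(\thetaS+\nS^{-1}\textstyle\sum_j\zsrc_j\eps_j\Bigr)=s(\thetaS+g),\qquad g\sim\dnorm_d(0,\sigmaS^2\Id_d/\nS).
\]
Since the event depends on the data, I would bound the probability of the bad set for the idealized vector $\thetaS+g$ and add the vanishing failure probability.

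For the numerator, $\langle\thetaS+g,\thetaT\rangle\sim\dnorm(\nu,\sigma^2)$ with $|\nu|\ge\mu\DeltaS\sigmaS\norm{\thetaT}_2$ and $\sigma=\sigmaS\norm{\thetaT}_2/\sqrt{\nS}$. For the denominator, Lemma~\ref{lem:gaussian_vector_norm} gives $\norm{\thetaS+g}_2\lesssim\sigmaS(\DeltaS+\sqrt{d/\nS})$ with high probability. Fix $M$ and set
\[
t=M\sigmaT\sigmaS\bigl(\DeltaS+\sqrt{d/\nS}\bigr).
\]
It suffices to show $\Pbb(|\langle\thetaS+g,\thetaT\rangle|\le t)\to0$. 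Apply Lemma~\ref{lem:gaussian_anti_conc}(2); it needs $|\nu|\ge\sigma$, which holds because $\mu\DeltaS\sqrt{\nS}\ge\mu\DeltaT\DeltaS\sqrt{\nS}/\DeltaT$ diverges in the relevant cases. The lemma gives the bound $Ct/|\nu|$, which after dividing by $\sigmaS\sigmaT$ becomes
\[
\frac{CM\bigl(\DeltaS+\sqrt{d/\nS}\bigr)}{\mu\DeltaS\DeltaT}=CM\Bigl(\frac{1}{\mu\DeltaT}+\frac{\sqrt{d/\nS}}{\mu\DeltaS\DeltaT}\Bigr)\to0.
\]
This uses exactly the second and third conditions of \eqref{eq:lower_bound_3_clust_delt}.

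The condition $t\le|\nu|/2$ is not guaranteed. If it fails, the target event already has vanishing probability, so I would handle it by splitting. Alternatively, use part (1) with the bound $t/\sigma$ when $|\nu|$ is small, though that is not needed here. A direct Chebyshev argument, $|\nu|\gg\sigma$ and $|\nu|\gg t$, also works and avoids the side condition entirely. I would use that.

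\emph{Regime $d\lesssim\nS$.} Here $\hatthetaS=\wh v_\calS$ is a unit vector, so the target quantity is $|\langle\wh v_\calS,\thetaT\rangle|/\sigmaT$. Decompose
\[
\wh v_\calS=\alpha\, v_\src+\sqrt{1-\alpha^2}\,w,\qquad w\perp v_\src,
\]
where $v_\src=\thetaS/\norm{\thetaS}_2$.

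\begin{enumerate}
\item By rotational invariance of $E_\calS$ under rotations fixing $v_\src$, conditional on $\alpha$ the vector $w$ is Haar-uniform on the unit sphere of $v_\src^\perp$, which has dimension $d-1\ge3$.
\item By Wedin's theorem and $\norm{E_\calS}_{\op}\lesssim\sigmaS(\sqrt{\nS}+\sqrt d)$, the overlap satisfies $\alpha^2\ge1-o_p(1)$. This needs $\DeltaS\sqrt{\nS}\gg\sqrt{\nS}+\sqrt d$, i.e.\ $\DeltaS\gg1$, which follows from the first condition when $d\asymp\nS$ up to logs. If $d\ll\nS$, I would instead use $1-\alpha^2\lesssim(1+d/\nS)/\DeltaS^2+d/(\nS\DeltaS^4)$, which is the standard rank-one rate.
\item Write
\[
\langle\wh v_\calS,\thetaT\rangle=\alpha\langle v_\src,\thetaT\rangle+\sqrt{1-\alpha^2}\,\norm{P^\perp\thetaT}_2\,\langle w,u\rangle,
\]
where $P^\perp$ projects onto $v_\src^\perp$ and $u=P^\perp\thetaT/\norm{P^\perp\thetaT}_2$. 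The first term has absolute value at least $|\alpha|\mu\norm{\thetaT}_2$.
\item To prevent cancellation, apply Lemma~\ref{lem:haar_coordinate_density}: the density of $\langle w,u\rangle$ is bounded by $O(\sqrt d)$. Hence, conditional on $\alpha$, the probability that the sum lies in $[-M\sigmaT,M\sigmaT]$ is at most
\[
\frac{O(\sqrt d)\,M\sigmaT}{\sqrt{1-\alpha^2}\,\norm{P^\perp\thetaT}_2}.
\]
This bound is only useful when $\sqrt{1-\alpha^2}$ is not too small.
\item Split into two cases. If $\sqrt{1-\alpha^2}\,\norm{P^\perp\thetaT}_2\le|\alpha|\mu\norm{\thetaT}_2/2$, the signal term dominates and $|\langle\wh v_\calS,\thetaT\rangle|\ge\mu\norm{\thetaT}_2/4\gg\sigmaT$, using $\mu\DeltaT\to\infty$. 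Otherwise the noise coefficient is at least $\mu\norm{\thetaT}_2/4$, and the density bound gives probability at most $O(\sqrt d\,M/(\mu\DeltaT))$.
\end{enumerate}

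The last bound carries a $\sqrt d$ factor, and controlling it is the main obstacle. Since $\langle w,u\rangle$ is typically of order $d^{-1/2}$, the cancellation region for the sum has width $O(\sigmaT)$ relative to a spread of order $\sqrt{1-\alpha^2}\,\norm{\thetaT}_2/\sqrt d$. I would therefore split more finely: either $\sqrt{1-\alpha^2}\,\norm{\thetaT}_2/\sqrt d\lesssim\mu\norm{\thetaT}_2$, so the signal dominates the typical noise and a Chebyshev bound suffices, or the typical noise is itself much larger than $\sigmaT$ and the anti-concentration bound gives $M\sigmaT\sqrt d/(\sqrt{1-\alpha^2}\,\norm{\thetaT}_2)\to0$. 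Checking that these two cases exhaust the possibilities under \eqref{eq:lower_bound_3_clust_delt} is the delicate step I would verify first.
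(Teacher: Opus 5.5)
Your architecture matches the paper's: two regimes, a signal-plus-independent-noise decomposition of $\ip{\wh\theta_\src}{\thetaT}$, and anti-concentration to rule out cancellation. However, two steps do not go through as written.

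\textbf{Regime $d\gg\nS$.} You discard the case $|\nu|<\sigma$ and say you would use Chebyshev. Chebyshev needs $|\nu|\gg\sigma$, i.e.\ $\mu\DeltaS\sqrt{\nS}\to\infty$, and this is not implied by \eqref{eq:lower_bound_3_clust_delt}. Take $\mu=(\DeltaS\sqrt{\nS})^{-1}$ and $\DeltaT\gg\max\{\sqrt d,\DeltaS\sqrt{\nS}\}$: all three conditions hold, yet $|\nu|\asymp\sigma$, so Chebyshev only gives $O(1)$. Your ``relevant cases'' tacitly assume that condition (A) of Theorem~\ref{thm:correct_choose} fails, which is not a hypothesis of the lemma. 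The missing case is exactly the paper's branch $\varrho_\src\ge\mu\norm{\thetaT}_2\norm{\thetaS}_2$, where $\varrho_\src$ is your $\sigma$. Lemma~\ref{lem:gaussian_anti_conc}(1) is needed there, not optional. It gives $Ct/\sigma\le CM\bigl(\frac{1}{\mu\DeltaT}+\frac{\sqrt d}{\DeltaT}\bigr)$. In this case $\mu\DeltaS\sqrt{\nS}<1$ forces $\sqrt d<\sqrt{d/\nS}/(\mu\DeltaS)$, so you recover the same vanishing bound $CM\bigl(\frac{1}{\mu\DeltaT}+\frac{\sqrt{d/\nS}}{\mu\DeltaS\DeltaT}\bigr)$ as in the other case.

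\textbf{Regime $d\lesssim\nS$.} There are two problems.
\begin{enumerate}
\item The rate you quote for $d\ll\nS$ contains a spurious term $1/\DeltaS^2$. Since \eqref{eq:lower_bound_3_clust_delt} permits $\DeltaS\to0$ there (e.g.\ $d$ fixed), your bound does not give $\alpha^2\to1$. The correct right-singular-vector rate from Theorem~3 of \citet{CaiZhang2018} is $\frac{d}{\nS\DeltaS^2}+\frac{d}{\nS\DeltaS^4}$, which is what the paper uses.
\item The obstacle you flag is real: the sup-density bound leaves $\sqrt d\,M/(\mu\DeltaT)$, and your proposal stops there. Your refined dichotomy does close it, but only when quantified with a small fixed constant rather than ``$\lesssim$''; at a constant-order ratio Chebyshev again gives $O(1)$.
\end{enumerate}
Here is the quantified version. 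Set $b:=\sqrt{1-\alpha^2}\,\norm{P^\perp\thetaT}_2$, work on $\{|\alpha|\ge1/2\}$, and take $\nT,\nS$ large enough that $M\sigmaT\le\mu\norm{\thetaT}_2/4$. Fix $\epsilon\in(0,1)$.
\begin{itemize}
\item If $b\le\epsilon\sqrt{d-1}\,\mu\norm{\thetaT}_2$: since $\Ebb[\ip{w}{u}^2\mid\alpha]=1/(d-1)$, Chebyshev bounds the conditional probability by $16\epsilon^2$.
\item Otherwise: the $O(\sqrt d)$ density bound gives at most $CM/(\epsilon\mu\DeltaT)\to0$.
\end{itemize}
The two cases are trivially exhaustive, and letting $\epsilon\downarrow0$ after $\nT,\nS\to\infty$ finishes the argument.

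The paper instead keeps the factor $(1-x^2)^{(d-4)/2}$ of Lemma~\ref{lem:haar_coordinate_density} at the cancellation point. Using $\sup_{t\ge0}\sqrt t\,e^{-t}<\infty$, it gets $\frac{\sqrt d}{\beta_\src}\exp\bigl(-\frac{(d-4)\alpha_\src^2\mu^2}{8\beta_\src^2}\bigr)\lesssim\frac{1}{|\alpha_\src|\mu}$, which covers both of your cases in one inequality. Your version needs only the second moment and the maximum of the Haar density, and is equally sharp.
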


\subsection{Proof of Theorem~\ref{thm:correct_choose}}
First, suppose \eqref{eq:lower_bound_2_clust_delt} holds. Then by Theorem 7 of \cite{Ndaoud2022}, we have
\begin{align}
    \calL\left(\hatztar,\ztar\right) \convp 0, \quad \mbox{as $\nT \to \infty$.}
\end{align}
Since $\calL\left(\hatztar,\ztar\right) \in [0,1]$, therefore, for any $\eta>0$
\begin{align}
\label{eq:bound_expectation_expect}
    \mathbb E\left[\calL\left(\hatztar,\ztar\right)\right] \le \eta + \mathbb P\left(\calL\left(\hatztar,\ztar\right)>\eta\right).
\end{align}
Then taking $\nT \to \infty$ and $\eta \to 0$, we get $\mathbb E\left[\calL\left(\hatztar,\ztar\right)\right] \to 0,$ as $\nT \to \infty$.

Now suppose \eqref{eq:lower_bound_2_clust_delt} does not hold but \eqref{eq:lower_bound_3_clust_delt} holds and we use the source side estimator.
Let $\mathcal F_\src$ denote the sigma-field generated by $\calS$. Choose $\wh s=\operatorname{sgn}(\langle\wh \theta_\src,\theta_\trg\rangle).$ By Lemma~\ref{lem:projected_diverge_source}, clearly,
\begin{align}
\label{eq:no_zero_source}
\limsup_{\nT 
\to \infty}\Pbb\left[\|\wh \theta_\src\|_2=0\right] = 0.
\end{align}
Conditional on $\mathcal F_\src$ and $\|\wh \theta_\src\|_2\neq 0$, for every $i\in \calT$,
\begin{align}
\label{eq:normal_inner_product}
\frac{\langle\wh\theta_\src, \Xtar_j\rangle}{\|\wh\theta_\src\|_2}=\ztar_j \frac{\langle\wh \theta_\src,\theta_\trg\rangle}{{\|\wh\theta_\src\|_2}}+\frac{\langle\wh\theta_\src,\varepsilon_j^{(\trg)}\rangle}{{\|\wh\theta_\src\|_2}},\quad \mbox{where}\; \frac{\langle\wh\theta_\src,\varepsilon_j^{(\trg)}\rangle}{\|\wh\theta_\src\|_2}\sim \dnorm(0,\sigmaT^2).
\end{align}
For the ease of notation, define $\wh w_\src:=\wh\theta_\src/\|\wh\theta_\src\|_2$. Then, using \eqref{eq:normal_inner_product}, we get
\begin{align}
\mathbb P\left[\operatorname{sgn}(\langle\wh w_\src, \Xtar_j\rangle)\ne \wh s \ztar_j\mid \mathcal F_\src\right]
&=\mathbb P\left[\wh s \ztar_j\langle\wh w_\src, \Xtar_j\rangle<0\mid \mathcal F_\src\right] \notag \\
&=\mathbb P\left[|\langle\wh w_\src, \thetaT\rangle|+\wh s \ztar_j\langle\wh w_\src, \varepsilon_j^{(\trg)}\rangle<0\mid \mathcal F_\src\right] \notag \\
&=\Phi\left(-\frac{|\langle\wh w_\src, \thetaT\rangle|}{\sigmaT}\right)=\wh p_\src.
\end{align}
By Lemma~\ref{lem:projected_diverge_source},
\[
\frac{|\langle\wh w_\src,\thetaT\rangle|}{\sigmaT}\convp\infty.
\]
Hence, since $\Phi(-x)\to0$ as $x\to\infty$, $\wh p_\src \convp 0$.

Now define
\[
E_\src:=\frac{1}{\nT}\sum_{j\in \calT}\mathbbm{1}\bigl\{\operatorname{sgn}(\langle\wh w_\src, \Xtar_j\rangle)\ne \wh s \ztar_j\bigr\}.
\]
Conditional on $\mathcal F_\src$, the summands in $E_\src$ are independent Bernoulli random variables with common success probability $\wh p_\src$. Therefore, by Hoeffding's inequality, for every $t>0$,
\[
\mathbb P\left(E_\src-\wh p_\src>t\mid \mathcal F_\src\right)\le \exp(-2\nS t^2).
\]
Taking $t=\sqrt{(\log \nS)/\nS}$ gives
\[
\mathbb P\left[E_\src>\wh p_\src+\sqrt{\frac{\log \nS}{\nS}}~\Big|~\mathcal F_\src\right]\le \nS^{-2}.
\]
Therefore,
\[
E_\src\le \wh p_\src+\sqrt{\frac{\log \nS}{\nS}}, \quad \mbox{with probability at least $1-\nS^{-2}$.}
\]
Combining this with $\wh p_\src\convp 0$ yields $E_\src\convp0,$ conditioned on $\mathcal F_\src$ and $\|\wh \theta_\src\|_2 \neq 0$. Since the random variable in context is bounded, the above inequality and \eqref{eq:no_zero_source} also implies, 
\begin{align}
    \calL(\hatztar,\ztar):=\min_{s\in\{-1,1\}}\frac{1}{\nT}\sum_{j\in \calT}\mathbbm{1}\bigl\{\operatorname{sgn}(\langle\wh w_\src, \Xtar_j\rangle)\ne \wh s \ztar_j\bigr\} \convp 0, \quad \mbox{as $\nT,\nS \to \infty$.}
\end{align}
Proceeding as in \eqref{eq:bound_expectation_expect}, it follows that we get $\mathbb E\left[\calL\left(\hatztar,\ztar\right)\right] \to 0,$ as $\nT,\nS \to \infty$.

\subsection{Proof of Lemma~\ref{lem:projected_diverge_source} when $d \gg \nS$}
\label{sec:proof_s_6_d_gg_n}
When $d \gg \nS$, the estimated direction is constructed using \eqref{eq:hat_q_i_hd}. First, consider the set
\[
\mathcal E_\src:=\{\exists~s\in \{\pm 1\}~\text{such that}~\wh z^{(\src)}=s\zsrc\}.
\]
It can be showed that when $d \gg \nS$ and \eqref{eq:lower_bound_3_clust_delt} holds, using Theorem~8 of \citet{Ndaoud2022}, we can conclude that $\mathbb P(\mathcal E_\src) \to 1,$ as $\nS \to \infty$. 

On $\mathcal E_\src$, we can replace the estimated labels $\wh z^{(\src)}$ by the true labels $\zsrc$. Next, we focus on
\begin{align}
    \lim_{\nS \rightarrow \infty}\mathbb P\left[\|\wh \theta_\src\|_2 \le C_\src\left(\|\theta_\src\|_2+\sigmaS\sqrt{\frac{d}{\nS}}\right)\right].
\end{align}
Observe that on $\mathcal E_\src$, upon replacing the estimated labels $\wh z^{(\src)}$ by $s\,\zsrc$, we get that $\wh \theta_\src \overset{d}{=} s\,q_\src$, where
\[
q_\src \sim \dnorm_d\left(\theta_\src,\frac{\sigmaS^2}{\nS}\Id_d\right).
\]
Using Lemma~1 of \cite{laurent2000adaptive}, we have a constant $C_\src>0$
\begin{align}
\label{eq:laurent_massart}
    \left\|q_\src-\theta_\src\right\|_2  \le C_\src\cdot\sigmaS\cdot\sqrt{\frac{d}{\nS}}, \quad \mbox{with probability greater than $1-e^{-d}$.}
\end{align}
Since $\mathbb P(\mathcal E_\src) \to 1,$ by \eqref{eq:laurent_massart}, this implies using the triangle inequality
\begin{align}
    \lim_{\nS \rightarrow \infty}\mathbb P\left[\|\wh \theta_\src\|_2 \ge C_\src\left(\|\theta_\src\|_2+\sigmaS\sqrt{\frac{d}{\nS}}\right)\right]=0,
\end{align}
by adjusting $C_\src$ appropriately.

Let us also define $\mathcal R_\src:=\{\|\wh \theta_\src\|_2 \le C_\src\left(\|\theta_\src\|_2+\sigmaS\sqrt{\frac{d}{\nS}}\right)\}$. From the foregoing discussion, we have
$\mathbb P(\mathcal R^c_\src \cup \mathcal E^c_\src) \to 0$.
Henceforth, we shall operate on $\mathcal R_\src \cap \mathcal E_\src$, where we have
\[
\wh \theta_\src \overset{d}{=} s\,\thetaS+\sigmaS \cdot \mathrm{G}, \quad \mbox{where $\mathrm{G} \sim \dnorm_d(0,\frac{1}{\nS}\Id_d)$.}
\]

Define 
\begin{align}
\label{eq:tilde_mu}
\wt \mu_\src:=\frac{\langle\thetaT,\thetaS\rangle}{\|\thetaT\|_2 \cdot \|\thetaS\|_2}.
\end{align}
By \eqref{eq:alignment}, the above display implies that 
\[
\langle \wh \theta_\src, \thetaT\rangle \overset{d}{=} s\cdot\wt \mu_\src\cdot \|\thetaT\|_2\cdot\|\thetaS\|_2+\sigmaS \cdot \|\thetaT\|_2\cdot \mathrm{Z}, \quad \mbox{where $\mathrm{Z} \sim \dnorm(0,\nS^{-1})$.}
\]
On $\mathcal R_\src$, for any constant $\mathrm M_\src>0$
\begin{align}
    \frac{|\langle \wh \theta_\src, \thetaT\rangle|}{\sigmaT \cdot \|\wh \theta_\src\|_2} \le \mathrm{M}_{\src} \quad \mbox{implies} \quad |\langle \wh \theta_\src, \thetaT\rangle| \le \sigmaT \cdot \mathrm{M}_{\src} \cdot C_\src\left(\|\theta_\src\|_2+\sigmaS\sqrt{\frac{d}{\nS}}\right).
\end{align}
To show that $\langle \wh \theta_\src, \thetaT\rangle/(\sigmaT \cdot \|\wh \theta_\src\|_2) \convp \infty$, we shall show that
\begin{align}
    \limsup_{\nT,\nS \to \infty}\mathbb P\left[\frac{|\langle \wh \theta_\src, \thetaT\rangle|}{\sigmaT \cdot \|\wh \theta_\src\|_2} \le \mathrm{M}_{\src}\right] = 0.
\end{align}
We shall use Lemma~\ref{lem:gaussian_anti_conc}. Define 
\[
\mathrm B_\src:=\left(C_\src\left\{\|\theta_\src\|_2+\sigmaS\sqrt{\frac{d}{\nS}}\right\}\right)^{-1}, \quad \mbox{and} \quad \varrho_\src:=\frac{\sigmaS \cdot \|\thetaT\|_2}{\sqrt{\nS}}.
\]
Then, for any $\mathrm M_\src>0$,
\begin{align}
    \frac{|\langle \wh \theta_\src, \thetaT\rangle|}{\sigmaT \cdot \|\wh \theta_\src\|_2} \le \mathrm{M}_{\src} \quad \mbox{implies} \quad \left|s\cdot\wt \mu_\src\cdot \|\thetaT\|_2\cdot\|\thetaS\|_2+\sigmaS \cdot \|\thetaT\|_2\cdot \mathrm{Z}\right| \le \frac{\mathrm M_\src \cdot \sigmaT}{\mathrm B_\src}.
\end{align}
Since \eqref{eq:lower_bound_3_clust_delt} holds, therefore 
\begin{align}
    \frac{\mathrm M_\src \cdot \sigmaT}{|\wt \mu_\src|\cdot \|\thetaT\|_2\cdot\|\thetaS\|_2\mathrm B_\src} \le \frac{\mathrm M_\src \cdot \sigmaT}{\mu \|\thetaT\|_2\cdot\|\thetaS\|_2\mathrm B_\src} \to 0, \quad \mbox{as $\nT,\nS \to \infty$.}
\end{align}
Consequently, for sufficiently large $\nT$ and $\nS$, we have
\[
\frac{\mathrm M_\src \cdot \sigmaT}{\mathrm B_\src} \le \frac{1}{2}\cdot|\wt \mu_\src|\cdot \|\thetaT\|_2\cdot\|\thetaS\|_2.
\]
Now, we divide into two cases. When $\varrho_\src \le \mu\; \|\thetaT\|_2\;\|\thetaS\|_2$, then using Lemma~\ref{lem:gaussian_anti_conc} (2) we have
\begin{align}
  \limsup_{\nT,\nS \to \infty}\mathbb P\left[\frac{|\langle \wh \theta_\src, \thetaT\rangle|}{\sigmaT \cdot \|\wh \theta_\src\|_2} \le \mathrm{M}_{\src}\right] & \le  \limsup_{\nT,\nS \to \infty} \frac{\mathrm M_\src \cdot \sigmaT}{\mu\;\|\thetaT\|_2\;\|\thetaS\|_2\;\mathrm B_\src}=0. 
\end{align}
Next, suppose $\varrho_\src \ge \mu\; \|\thetaT\|_2\;\|\thetaS\|_2$. Then 
\begin{align}
\label{eq:jack_0}
  \limsup_{\nT,\nS \to \infty}\mathbb P\left[\frac{|\langle \wh \theta_\src, \thetaT\rangle|}{\sigmaT \cdot \|\wh \theta_\src\|_2} \le \mathrm{M}_{\src}\right] & \le  \limsup_{\nT,\nS \to \infty} \frac{\mathrm M_\src \cdot \sigmaT}{\varrho_\src\;\mathrm B_\src}. 
\end{align}
Now we consider two further subcases: (1) $\|\thetaS\|_2 \ge \sigmaS(d/\nS)^{1/2}$, and (2) $\|\thetaS\|_2 \le \sigmaS(d/\nS)^{1/2}$. 
Now, observe that by definition of the first subcase and \eqref{eq:lower_bound_3_clust_delt}
\begin{align}
    \label{eq:jack_3}
    \frac{\varrho_\src\mathrm B_\src}{\sigmaT} \ge \frac{\varrho_\src}{2C_\src\cdot \sigmaT \cdot \|\thetaS\|_2} \ge \frac{\mu \cdot \|\thetaT\|_2}{2C_\src\cdot\sigmaT} = \frac{\mu \cdot \DeltaT}{2C_\src} \to \infty.
\end{align}
Next, if $\|\thetaS\|_2 \le \sigmaS(d/\nS)^{1/2}$, then again using definition of the subcase and \eqref{eq:lower_bound_3_clust_delt}, we get
\begin{align}
\label{eq:jack_2}
    \frac{\varrho_\src\cdot \mathrm B_\src}{\sigmaT} \gtrsim \frac{\varrho_\src}{\sigmaS \cdot \sigmaT}\cdot \sqrt{\frac{\nS}{d}} \ge \frac{\mu \cdot \|\thetaT\|_2 \cdot \|\thetaS\|_2}{\sigmaS \cdot \sigmaT}\cdot \sqrt{\frac{\nS}{d}} \to \infty.
\end{align}
Plugging in \eqref{eq:jack_3} and \eqref{eq:jack_2} in \eqref{eq:jack_0}, we get 
\begin{align}
\label{eq:jack_0_1}
  \limsup_{\nT,\nS \to \infty}\mathbb P\left[\frac{|\langle \wh \theta_\src, \thetaT\rangle|}{\sigmaT \cdot \|\wh \theta_\src\|_2} \le \mathrm{M}_{\src}\right] =0. 
\end{align}
Since $\mathrm M_\src>0$ is arbitrary, the lemma follows when $d \gg \nS$.

\subsection{Proof of Lemma~\ref{lem:projected_diverge_source} when $4 \le d \lesssim \nS$}
\label{sec:proof_s_6_d_ll_n}
When $d \lesssim \nS$, then the estimator $\wh{\theta}_\src$ is constructed using \eqref{eq:hat_q_i_ld}. First, using Lemma~4.4 of \citet{LofflerZhangZhou2021}, we can decompose $\wh v_\calS$ as follows
\begin{align}
    \label{eq:haar_decomp_ld}
    \wh v_\calS = \alpha_\src \cdot \frac{\thetaS}{\|\thetaS\|_2} + \sqrt{1-\alpha^2_\src} \cdot \wh h_\src,
\end{align}
where $\wh h_\src$ is Haar uniformly distributed on the unit sphere within $\sp\{\thetaS\}^\perp$ and is independent of $\alpha_\src$. Similarly, consider the decomposition
\begin{align}
\label{eq:mu_wise_decomp}
    \thetaT:= \wt\mu_\src\cdot\|\thetaT\|_2\cdot \frac{\thetaS}{\|\thetaS\|_2}+\sqrt{1-\wt\mu^2_\src}\cdot \|\thetaT\|_2 \cdot \mathrm H_\trg,
\end{align}
where $\mathrm H_\trg \in \sp\{\thetaS\}^\perp$ and $\|\mathrm H_\trg\|_2=1$ and $\wt \mu_\src$ is defined in \eqref{eq:tilde_mu}.

Next, we prove that $\alpha^2_\src \ge 1/2$ with probability tending to 1, as $\nS \to \infty$. Recall that, in this regime, the source data matrix can be written as
\[
X_{\calS}=\zsrc\thetaS^\top+E_{\calS}\in \mathbb R^{\nS\times d},
\]
where the entries of $E_{\calS}$ are independent
$\dnorm(0,\sigma_\src^2)$. Since multiplication of the data matrix by a scalar does not change its singular vectors, we apply Theorem~3 of \citet{CaiZhang2018} to
\[
\frac{X_{\calS}}{\sigmaS}=\frac{\zsrc\thetaS^\top}{\sigmaS}
+\frac{E_{\calS}}{\sigmaS}.
\]
The signal matrix $\frac{\zsrc\thetaS^\top}{\sigmaS}$
has rank one. Its leading right singular vector is $\thetaS/\|\thetaS\|_2$, and its only nonzero singular value is
\[
\tau_\src=\frac{\sqrt{\nS}\|\thetaS\|_2}{\sigmaS}.
\]
Therefore, applying Theorem~3 of \citet{CaiZhang2018} with
$p_1=\nS$, $p_2=d$, and $r=1$, we obtain
\begin{align}
\label{eq:bound_on_sin_theta}
\mathbb E\left[\sin^2\Theta\left(\wh v_{\calS},\frac{\thetaS}{\|\thetaS\|_2}\right)\right] &\lesssim  \frac{d\left(\tau_\src^2+\nS\right)}{\tau_\src^4}\wedge 1  \lesssim \frac{\sigmaS^4 d\left(\frac{\nS \cdot \|\thetaS\|^2_2}{\sigmaS^2}+\nS\right)}{\nS^2 \|\thetaS\|^4_2}\wedge 1\\
& \lesssim \left(\frac{\sigmaS^2\,d}{\nS\,\|\thetaS\|^2_2}+\frac{\sigmaS^4\,d}{\nS\,\|\thetaS\|^4_2}\right) \wedge 1\\
& \lesssim \left(\frac{d}{\nS \cdot \DeltaS^2}+\frac{d}{\nS \cdot \DeltaS^4}\right) \wedge 1
\end{align}
If $d \le \nS$, then we have
\[
\DeltaS \gg \left(\frac{d\,(\log \nS)^2}{\nS}\right)^{1/4} \ge \sqrt{\frac{d}{\nS}} \cdot(\log \nS)^{1/2}.
\]
Consequently, using \eqref{eq:bound_on_sin_theta}, we can conclude that
\begin{align}
\label{eq:convergenc_sin_theta}
\mathbb E\left[\sin^2\Theta\left(\wh v_{\calS},\frac{\thetaS}{\|\thetaS\|_2}\right)\right] \to 0.
\end{align}
If $d \asymp \nS$, then using $\DeltaS \gg (\log \nS)^{1/2}$ we can conclude \eqref{eq:convergenc_sin_theta}.
Therefore, if $4 \le d \lesssim \nS$, using \eqref{eq:haar_decomp_ld}, we have that 
\[
\mathbb E\left[1-\alpha^2_\src\right]=\mathbb E\left[\sin^2\Theta\left(\wh v_{\calS},\frac{\thetaS}{\|\thetaS\|_2}\right)\right] \to 0, \quad \mbox{as $\nS \to \infty$.}
\]
By Markov's inequality, this implies $\alpha^2_\src \convp 1$. Therefore, by \eqref{eq:haar_decomp_ld} and \eqref{eq:mu_wise_decomp}, we have
\begin{align}
\label{eq:inner_prod_haar}
    \langle \wh v_\calS,\thetaT\rangle = \alpha_\src \cdot \wt\mu_\src \cdot \|\thetaT\|_2+\beta_\src \cdot \|\thetaT\|_2 \cdot e_\src,
\end{align}
where $e_\src$ is a co-ordinate of a random vector Haar uniformly distributed on the unit sphere within $\sp\{\thetaS\}^\perp$ and 
\begin{align}
    \beta_\src:=\sqrt{1-\alpha^2_\src} \times \sqrt{1-\wt\mu_\src^2}.
\end{align}
First, observe that if $\beta_\src=0$, then the Haar term in \eqref{eq:inner_prod_haar} vanishes. Since $\mathrm M_\src$ is a constant and \eqref{eq:lower_bound_3_clust_delt} holds, this implies
\[
\mathbb P\left[\frac{|\langle \wh v_\calS,\thetaT\rangle|}{\sigmaT}\le \mathrm M_\src\right] \to 0.
\]

Therefore, we assume that $\beta_\src>0$. Next, observe that $e_\src$ is independent of $\alpha_\src$ and $\beta_\src$, and hence for any $\mathrm M_\src>0$
\begin{align}
 \mathbb P\left[|\langle \wh v_\calS,\thetaT\rangle| \le \mathrm M_\src \mid \alpha_\src\right] &= \mathbb P\left[|\alpha_\src \cdot \wt\mu_\src \cdot \|\thetaT\|_2+\beta_\src \cdot \|\thetaT\|_2 \cdot e_\src| \le \mathrm M_\src \mid \alpha_\src\right] = \mathbb P\left[e_\src \in \mathcal I_d \mid \alpha_\src\right],
\end{align}
where 
\[
\mathcal I_d:=\left[\frac{-\alpha_\src \,\wt\mu_\src\,\|\thetaT\|_2-\mathrm M_\src\sigmaT}{\beta_\src\,\|\thetaT\|_2},\frac{-\alpha_\src \,\wt\mu_\src\,\|\thetaT\|_2+\mathrm M_\src\sigmaT}{\beta_\src\,\|\thetaT\|_2}\right] \cap [-1,1].
\]
Using the form of the density in \eqref{eq:haar_density}, we have
\begin{align}
     \mathbb P\left[\frac{|\langle \wh v_\calS,\thetaT\rangle|}{\sigmaT} \le \mathrm M_\src \mid \alpha_\src\right] & \lesssim \frac{2\sigmaT\mathrm M_\src \cdot \sqrt{d}}{\beta_\src\|\thetaT\|_2} \cdot \left(1-\frac{(|\alpha_\src\,\wt\mu_\src\,\|\thetaT\|_2|-\mathrm M_\src\sigmaT)^2_+}{\beta^2_\src\|\thetaT\|^2_2}\right)^{(d-4)/2}_+
\end{align}
Let us consider the set
\[
\mathcal E_{\src,\mathrm{ld}}:=\{|\alpha_\src \mu \|\thetaT\|_2| \ge 2\,\mathrm M_\src \cdot \sigmaT\}.
\]
Clearly by \eqref{eq:lower_bound_3_clust_delt} and $\alpha^2_\src \convp 1$, $\mathbb P(\mathcal E^c_{\src,\mathrm{ld}}) \to 0$, as $\nS \to \infty$. Since, $|\wt \mu_\src| \ge \mu$, therefore, on $\mathcal E_{\src,\mathrm{ld}}$, we also have
\(
|\alpha_\src \wt\mu_\src \|\thetaT\|_2| \ge 2\,\mathrm M_\src \cdot \sigmaT.
\)
Therefore, under $\mathcal E_{\src,\mathrm{ld}}$, we have
\begin{align}
     \mathbb P\left[\frac{|\langle \wh v_\calS,\thetaT\rangle|}{\sigmaT} \le \mathrm M_\src \mid \alpha_\src\right] & \lesssim \frac{\sigmaT\cdot \mathrm M_\src \cdot \sqrt{d}}{\beta_\src\|\thetaT\|_2} \cdot \left(1-\frac{\alpha^2_\src \wt\mu^2_\src \|\thetaT\|^2_2}{4\beta^2_\src\|\thetaT\|^2_2}\right)^{(d-4)/2}_+\\
     & \lesssim \frac{\sigmaT\cdot \mathrm M_\src \cdot \sqrt{d}}{\beta_\src\|\thetaT\|_2} \cdot \left(1-\frac{\alpha^2_\src \mu^2 \|\thetaT\|^2_2}{4\beta^2_\src\|\thetaT\|^2_2}\right)^{(d-4)/2}_+\\
     & \lesssim \frac{\mathrm M_\src \cdot \sqrt{d}}{\beta_\src \DeltaT} \cdot \left(1-\frac{\alpha^2_\src \mu^2}{4\beta^2_\src}\right)^{(d-4)/2}_+.
\end{align}
If $(\alpha^2_\src \mu^2)/(4\beta^2_\src)>1$, then clearly
\begin{align}
    \mathbb P\left[\frac{|\langle \wh v_\calS,\thetaT\rangle|}{\sigmaT} \le \mathrm M_\src \mid \alpha_\src\right]=0.
\end{align}
Otherwise, we approximate
\begin{align}
\label{eq:anti_conc_exp_bound}
     \mathbb P\left[\frac{|\langle \wh v_\calS,\thetaT\rangle|}{\sigmaT} \le \mathrm M_\src \mid \alpha_\src\right] & \lesssim \frac{\mathrm M_\src \cdot \sqrt{d}}{\beta_\src \DeltaT} \cdot \left(1-\frac{\alpha^2_\src \mu^2}{4\beta^2_\src}\right)^{(d-4)/2}_+\\
     & \le \frac{\mathrm M_\src \cdot \sqrt{d}}{\beta_\src \DeltaT} \exp\left(-\frac{(d-4)\,\alpha^2_\src \mu^2}{8\beta^2_\src}\right).
\end{align}
To control the right-hand side, define
\[
T_\src:=\frac{(d-4)\alpha_\src^2\mu^2}{8\beta_\src^2}.
\]
Note that if $d=4$, it trivially follows using $(\alpha^2_\src \mu^2)/(4\beta^2_\src) \le 1$ that
\[
\mathbb P\left[\frac{|\langle \wh v_\calS,\thetaT\rangle|}{\sigmaT} \le \mathrm M_\src \mid \alpha_\src\right] \lesssim \frac{\mathrm M_\src}{|\alpha_\src|\,\mu\,\DeltaT} \to 0,
\]
by \eqref{eq:lower_bound_3_clust_delt}.
For $d \ge 5$, using $(\alpha^2_\src \mu^2)/(4\beta^2_\src) \le 1$, we have
\[
\frac{\sqrt d}{\beta_\src}=\sqrt{\frac{8d}{d-4}}\,\frac{\sqrt{T_\src}}{|\alpha_\src\mu|}.
\]
Hence, for \(d\ge 5\), 
\[
\frac{\sqrt d}{\beta_\src}\exp\left\{-\frac{(d-4)\alpha_\src^2\mu^2}{8\beta_\src^2}\right\}
\lesssim \frac{\sqrt{T_\src}e^{-T_\src}}{|\alpha_\src\mu|}\lesssim\frac{1}{|\alpha_\src\mu|},
\]
where we used $\sup_{t\ge0}\sqrt t e^{-t}<\infty$. Substituting this into
\eqref{eq:anti_conc_exp_bound} gives
\begin{align}
\mathbb P\left[\frac{|\langle \wh v_\calS,\thetaT\rangle|}{\sigmaT}\le \mathrm M_\src\,\middle|\, \alpha_\src\right] &\lesssim\frac{\mathrm M_\src}{|\alpha_\src\mu|\DeltaT}.
\label{eq:anti_conc_final_alpha}
\end{align}
In particular, on the event \(\{|\alpha_\src|\ge 1/2\}\),
\begin{align}
\mathbb P\left[\frac{|\langle \wh v_\calS,\thetaT\rangle|}{\sigmaT}\le \mathrm M_\src\,\middle|\, \alpha_\src\right]
&\lesssim\frac{\mathrm M_\src}{\mu\DeltaT}.
\label{eq:anti_conc_final}
\end{align}
This implies
\begin{align}
  \mathbb P\left[\frac{|\langle \wh v_\calS,\thetaT\rangle|}{\sigmaT}\le \mathrm M_\src,\, |\alpha_\src| \ge 1/2,\,\mathcal E_{\src,\mathrm{ld}}\right]  \lesssim\frac{\mathrm M_\src}{\mu\DeltaT}.
\end{align}
Therefore, since $|\alpha_\src| \convp 1$ as $\nS \to \infty$, we get
\begin{align}
 \limsup_{\nT,\nS \to \infty}\mathbb P\left[\frac{|\langle \wh v_\calS,\thetaT\rangle|}{\sigmaT}\le \mathrm M_\src\right] \lesssim \limsup_{\nT,\nS \to \infty}\mathbb P\left[|\alpha_\src|< 1/2\right]+\limsup_{\nT,\nS \to \infty}\mathbb P\left(\mathcal E^c_{\src,\mathrm{ld}}\right)+\limsup_{\nT,\nS \to \infty}\frac{\mathrm M_\src}{\mu\DeltaT}=0,
\end{align}
whence the lemma follows.

\subsection{Proof of Theorem~\ref{thm:choose_correct_adaptive}}
\label{sec:proof_thm_2}
Define the following terms.
\begin{align}
\label{eq:r_n_g_n}
    \mathrm R(\wt z):=\frac{1}{\nT}\sum_{j =1}^{\nT} \wt z_j\ztar_j, \quad \mbox{and} \quad \mathrm G(\wt z):=\frac{1}{\nT}\sum_{j =1}^{\nT} \wt z_j\varepsilon^{(\trg)}_j.
\end{align}
Observe that $\mathrm G(\wt z) \sim \dnorm_d(0,n^{-1}\,\sigmaT^2\,\Id_d)$. Observe that using the definition of $\wh{\rmT}(\wt z)$ in \eqref{eq:validation_statistic}, we get
\begin{align}
\label{eq:triangle_t_n}
    \wh{\rmT}(\wt z):= (\mathrm R(\wt z))^2\cdot\|\thetaT\|^2_2+2\cdot(\mathrm R(\wt z))\cdot\langle \mathrm G(\wt z),\thetaT \rangle + \left(\|\mathrm G(\wt z)\|^2_2-\frac{d\,\sigmaT^2}{\nT}\right).
\end{align}
Note that $\langle \mathrm G(\wt z),\thetaT \rangle \sim \dnorm(0,n^{-1}\sigmaT^2\,\cdot\|\thetaT\|^2_2)$.

We shall prove the following concentration results for $\mathrm R(\wt z)$ and $\mathrm G(\wt z)$.

\begin{lemma}
    \label{lem:uniform_conc_r_n}
    Consider $\mathrm R(\wt z)$ and $\mathrm G(\wt z)$ defined in \eqref{eq:r_n_g_n}. Then with probability greater than $1-e^{-t}$, 
    \begin{align}
        \sup_{\wt z \in \{-1,+1\}^{\nT}}\left|(\mathrm R(\wt z))\cdot\langle \mathrm G(\wt z),\thetaT \rangle\right| \le \sigmaT\,\|\thetaT\|_2\cdot\left(\sqrt{\frac{2}{\pi}}+\sqrt{2}\,\sqrt{\frac{t}{\nT}}\right) .
    \end{align}
\end{lemma}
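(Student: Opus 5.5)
The plan is to remove the supremum over $\wt z$ in closed form, so that no union bound over the $2^{\nT}$ sign vectors is needed, and then apply Gaussian concentration for a Lipschitz function.

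\emph{Reduction.} Since $|\wt z_j\ztar_j|=1$, we have $|\mathrm R(\wt z)|\le 1$ for every $\wt z\in\{-1,+1\}^{\nT}$, and therefore
\[
\bigl|\mathrm R(\wt z)\,\langle \mathrm G(\wt z),\thetaT\rangle\bigr|\le \bigl|\langle \mathrm G(\wt z),\thetaT\rangle\bigr| .
\]
Put $g_j:=\langle \varepsilon^{(\trg)}_j,\thetaT\rangle$. The $g_j$ are i.i.d.\ $\dnorm(0,\sigmaT^2\|\thetaT\|_2^2)$ because the noise vectors are independent and isotropic. Then
\[
\langle \mathrm G(\wt z),\thetaT\rangle=\frac{1}{\nT}\sum_{j=1}^{\nT}\wt z_j g_j ,
\]
and the supremum of its absolute value over sign vectors is attained at $\wt z_j=\sgn(g_j)$. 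This gives
\[
\sup_{\wt z}\bigl|\mathrm R(\wt z)\langle \mathrm G(\wt z),\thetaT\rangle\bigr|\le F(g):=\frac{1}{\nT}\sum_{j=1}^{\nT}|g_j|.
\]
So it suffices to bound the single random variable $F(g)$.

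\emph{Expectation.} Each summand has $\Ebb|g_j|=\sigmaT\|\thetaT\|_2\sqrt{2/\pi}$, so
\[
\Ebb F(g)=\sigmaT\|\thetaT\|_2\sqrt{2/\pi}.
\]
This is exactly the leading term in the claimed bound.

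\emph{Concentration.} Write $g=\sigmaT\|\thetaT\|_2\,\mathsf g$ with $\mathsf g\sim\dnorm_{\nT}(0,\Id_{\nT})$. The map $\mathsf g\mapsto F$ is Lipschitz in the Euclidean norm with constant $L=\sigmaT\|\thetaT\|_2/\sqrt{\nT}$. Indeed, by the reverse triangle inequality and Cauchy--Schwarz,
\[
\frac{1}{\nT}\sum_{j}\bigl||a_j|-|b_j|\bigr|\le \frac{1}{\nT}\|a-b\|_1\le \frac{1}{\sqrt{\nT}}\|a-b\|_2 .
\]
The Gaussian concentration inequality for Lipschitz functions (Tsirelson--Ibragimov--Sudakov) gives
\[
\Pbb\bigl(F-\Ebb F>s\bigr)\le e^{-s^2/(2L^2)}.
\]
Choosing $s=L\sqrt{2t}=\sigmaT\|\thetaT\|_2\sqrt{2}\sqrt{t/\nT}$ makes the right-hand side $e^{-t}$. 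Combining this with the expectation step yields the stated inequality with probability at least $1-e^{-t}$.

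The only real obstacle is uniformity over the exponentially many data-dependent labelings $\wt z$. A union bound would cost a $\sqrt{\nT}$-type factor in the deviation term. The closed-form supremum $\frac{1}{\nT}\sum_j|g_j|$ avoids this loss entirely, and with it in hand the remaining steps are routine.
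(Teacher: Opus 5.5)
Your proposal is correct and follows essentially the same route as the paper: use $|\mathrm R(\wt z)|\le 1$, dominate the supremum by $\frac{1}{\nT}\sum_j|\langle\thetaT,\varepsilon^{(\trg)}_j\rangle|$, compute its mean $\sigmaT\|\thetaT\|_2\sqrt{2/\pi}$, and apply Gaussian Lipschitz concentration with constant $\sigmaT\|\thetaT\|_2/\sqrt{\nT}$. Your upper-tail-only form of the concentration inequality is the correct one to invoke here. The paper writes a two-sided bound without the factor $2$, so your one-sided version is what gives exactly the stated $1-e^{-t}$ probability.
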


\begin{lemma}
    \label{lem:uniform_conc_g_n}
    Consider $\mathrm G(\wt z)$ defined in \eqref{eq:r_n_g_n}. Then, with probability at least $1-6\,n^{-1}$,
\begin{align}
    \sup_{\wt z\in\{-1,+1\}^{\nT}}\left|\|\mathrm G(\wt z)\|_2^2-\frac{\sigmaT^2d}{\nT}\right| \le\mathrm C_0\,\sigmaT^2\left(1+\sqrt{\frac{d}{\nT}}\right),
\end{align}
where $\mathrm C_0>0$ is an absolute constant.
\end{lemma}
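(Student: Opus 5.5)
The plan rests on one observation: the diagonal part of the quadratic form $\|\mathrm G(\wt z)\|_2^2$ does not depend on $\wt z$. This reduces the uniform statement to a single operator-norm bound on a hollowed Gaussian Gram matrix.

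\textbf{Decomposition.} Write $E_{\calT}\in\R^{\nT\times d}$ for the target noise matrix, so that $\mathrm G(\wt z)=\nT^{-1}E_{\calT}^\top\wt z$ and
\[
\|\mathrm G(\wt z)\|_2^2=\frac{1}{\nT^2}\,\wt z^\top E_{\calT}E_{\calT}^\top\wt z .
\]
Split $E_{\calT}E_{\calT}^\top=D+H$, where $D:=\diag(E_{\calT}E_{\calT}^\top)$ and $H$ is the hollow part. Since $\wt z_j^2=1$, we have $\wt z^\top D\wt z=\sum_{j}\|\varepsilon^{(\trg)}_j\|_2^2$ for every $\wt z$. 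Moreover $|\wt z^\top H\wt z|\le \|\wt z\|_2^2\|H\|_{\op}=\nT\|H\|_{\op}$. Hence, uniformly over $\wt z\in\{-1,+1\}^{\nT}$,
\[
\left|\|\mathrm G(\wt z)\|_2^2-\frac{\sigmaT^2 d}{\nT}\right|\le \frac{1}{\nT^2}\Bigl|\sum_{j=1}^{\nT}\|\varepsilon^{(\trg)}_j\|_2^2-\nT d\sigmaT^2\Bigr|+\frac{\|H\|_{\op}}{\nT}.
\]
It therefore suffices to show that the first term is $O(\sigmaT^2)$ and that $\|H\|_{\op}\lesssim \sigmaT^2(\nT+\sqrt{\nT d})$, each on an event of probability $1-O(1/\nT)$.

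\textbf{Diagonal term.} The sum $\sum_j\|\varepsilon^{(\trg)}_j\|_2^2/\sigmaT^2$ is $\chi^2_{\nT d}$. Lemma~1 of \citet{laurent2000adaptive} with $t=\log\nT$ bounds its deviation by $2\sqrt{\nT d\log\nT}+2\log\nT$ with probability $1-2/\nT$. After dividing by $\nT^2$ this contributes $\sigmaT^2 O(\sqrt{d\log\nT}/\nT^{3/2}+\log \nT/\nT^2)$, which is $\lesssim\sigmaT^2\sqrt{d/\nT}$ because $\log\nT\le \nT$.

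\textbf{Off-diagonal term (the main step).} Use the triangle inequality
\[
\|H\|_{\op}\le\|E_{\calT}E_{\calT}^\top-d\sigmaT^2\Id_{\nT}\|_{\op}+\max_j\bigl|\|\varepsilon^{(\trg)}_j\|_2^2-d\sigmaT^2\bigr|.
\]
For the first piece, use the extreme singular value bounds of Corollary~5.35 of \citet{vershynin2012introduction} with $t=\log\nT$, each holding with probability $1-1/\nT$:
\[
s_{\max}(E_{\calT})\le\sigmaT\bigl(\sqrt{\nT}+\sqrt d+\sqrt{2t}\bigr),\qquad s_{\min}(E_{\calT})\ge\sigmaT\bigl(\sqrt d-\sqrt{\nT}-\sqrt{2t}\bigr)_+ .
\]
Then $s_{\max}^2-d\sigmaT^2\lesssim\sigmaT^2(\nT+\sqrt{\nT d}+t)$. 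For the other side, $d\sigmaT^2-s_{\min}^2$ is bounded by the same quantity when $d\ge 2\nT$. When $d\lesssim\nT$ it is trivially bounded by $d\sigmaT^2\lesssim \nT\sigmaT^2$, since $s_{\min}^2\ge 0$. For the second piece, apply Laurent--Massart to each row with $t=2\log\nT$ and take a union bound over the $\nT$ rows. This gives $\max_j|\|\varepsilon^{(\trg)}_j\|_2^2-d\sigmaT^2|\lesssim\sigmaT^2(\sqrt{d\log\nT}+\log\nT)\lesssim\sigmaT^2(\sqrt{\nT d}+\nT)$ with probability $1-2/\nT$. Dividing by $\nT$ yields $\|H\|_{\op}/\nT\lesssim\sigmaT^2(1+\sqrt{d/\nT})$.

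\textbf{Conclusion and main obstacle.} The union of the failure events has total probability at most $6/\nT$, which gives the stated bound with an absolute constant $\mathrm C_0$. The delicate point is the operator-norm bound on $H$. A crude bound $\|E_{\calT}E_{\calT}^\top\|_{\op}\approx d\sigmaT^2$ would leave a $d/\nT$ term, far too large. The $\sqrt{\nT d}$ rate requires the centering by $d\sigmaT^2\Id_{\nT}$, which is exactly what the hollowing together with the $\wt z$-independent diagonal achieves. The regimes $d\ge 2\nT$ and $d\lesssim\nT$ must be treated separately in the $s_{\min}$ step.
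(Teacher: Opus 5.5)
Your proposal is correct and follows the paper's proof essentially step for step. You handle the $\wt z$-independent diagonal with a $\chi^2_{\nT d}$ Laurent--Massart bound, and you control the hollow part through $\|E_{\calT}E_{\calT}^\top-d\sigmaT^2\Id_{\nT}\|_{\op}$ plus a union-bounded maximum of row-norm deviations, within the same $6/\nT$ probability budget. The only difference is that you derive the Gram-deviation bound from the extreme singular value estimates of Corollary~5.35 in \citet{vershynin2012introduction}, with a harmless case split on $d$ versus $\nT$, whereas the paper cites Remark~4.7.3 of \citet{vershynin2025high} directly.
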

With the two foregoing lemmas we now prove Theorem~\ref{thm:choose_correct_adaptive}.

\begin{proof}[Proof of Theorem~\ref{thm:choose_correct_adaptive}]
    Plugging in $t=\log\nT$ in Lemma~\ref{lem:uniform_conc_r_n} and using Lemma~\ref{lem:uniform_conc_g_n}, we get using \eqref{eq:triangle_t_n} that
    \begin{align}
    \label{eq:uniform_t_n_boom}
        \sup_{\wt z\in\{-1,+1\}^{\nT}}\left|\wh{\rmT}(\wt z)-(\mathrm R(\wt z))^2\cdot\|\thetaT\|^2_2\right| \le \left(\sigmaT\|\thetaT\|_2\,\sqrt{\frac{2}{\pi}}+\mathrm C_0\,\sigmaT^2\left(1+\sqrt{\frac{d}{\nT}}\right)\right)(1+o_{\mathbb P}(1)).
    \end{align}

\paragraph{Case 1: Strong target signal}
Suppose \eqref{eq:lower_bound_2_clust_delt} holds. Then
\[
\mathcal L(\hatztar,\ztar) \convp 0.
\]
After choosing the optimal global sign, let $\wh z^\circ$ denote the sign-corrected version of $\hatztar$ in $\mathcal L(\hatztar,\ztar)$. then
\[
\mathrm R(\wh z^\circ)=1-2\,\mathcal L(\hatztar,\ztar) \convp 1.
\]
Define
\[
    \rmT_{\nT}:=\wh{\rmT}(\hatztar_{\trg})=\wh{\rmT}(\wh z^\circ),
\]
and recall the definition of $\tau_n$ from \eqref{eq:validation_threshold}.
By construction of the selector, $\hatztar=\hatztar_{\trg}$ on the event $\{\rmT_{\nT}>\tau_{n}\}$. Hence
\[
\mathbb P(\hatztar\neq \hatztar_{\trg})\le \mathbb P(|\rmT_{\nT}|\le \tau_{n}).
\]
Therefore, it suffices to show that $\mathbb P(\rmT_{\nT}\le \tau_{n})\to0$.

By \eqref{eq:uniform_t_n_boom}, we have
\[
|\rmT_{\nT}|=\mathrm R(\hatztar_{\trg})^2\|\thetaT\|_2^2+ O_{\mathbb P}\left\{
\sigmaT\|\thetaT\|_2+\sigmaT^2\left(1+\sqrt{\frac{d}{\nT}}\right)\right\}.
\]
This implies $|\rmT_{\nT}|=\|\thetaT\|_2^2+o_{\mathbb P}(\|\thetaT\|_2^2),$
if
\[
\sigmaT\|\thetaT\|_2+\sigmaT^2\left(1+\sqrt{\frac{d}{\nT}}\right)=o(\|\thetaT\|_2^2).
\]
To verify this, recall that $\DeltaT:=\|\thetaT\|_2/\sigmaT$. Under \eqref{eq:lower_bound_2_clust_delt}, we have $\DeltaT\gg (d/\nT)^{1/4}+1$. Therefore
\[
\frac{\sigmaT\|\thetaT\|_2}{\|\thetaT\|_2^2}=\frac1{\DeltaT}\to0,\qquad
\frac{\sigmaT^2}{\|\thetaT\|_2^2}=\frac1{\DeltaT^2}\to0,
\]
and
\[
\frac{\sigmaT^2\sqrt{d/\nT}}{\|\thetaT\|_2^2}=\frac{\sqrt{d/\nT}}{\DeltaT^2}\to0.
\]
Therefore,
\[
\frac{|\rmT_{\nT}|}{\|\thetaT\|_2^2}\convp 1.
\]
On the other hand,
\[
\frac{\tau_{n}}{\|\thetaT\|_2^2}=C_0\frac{1+\sqrt{d/\nT}}{\DeltaT^2}\to0.
\]
Fix any $\varepsilon\in(0,1)$. For all sufficiently large $\nT$,
\[
\frac{\tau_{n}}{\|\thetaT\|_2^2}\le 1-\varepsilon.
\]
Thus
\[
\mathbb P(|\rmT_{\nT}|\le \tau_{n})
=\mathbb P\left(\frac{|\rmT_{\nT}|}{\|\thetaT\|_2^2}\le\frac{\tau_{n}}{\|\thetaT\|_2^2}\right) \le \mathbb P\left(\frac{|\rmT_{\nT}|}{\|\thetaT\|_2^2}\le1-\varepsilon\right)\to0.
\]
Consequently,
\[
\mathbb P(\hatztar\neq \hatztar_{\trg})\le \mathbb P(|\rmT_{\nT}|\le \tau_{n})\to0.
\]

\paragraph{Case 2: Weak target signal}
Now, suppose
\begin{align}
\label{eq:low_target_signal_cond}
    \DeltaT:=\frac{\|\thetaT\|_2}{\sigmaT}\le \mathrm{D}_0\left\{\left(\frac{d}{\nT}\right)^{1/4}+1\right\},
\end{align}
for some $\mathrm{D}_0>0$.
By the definition of the selector, $\hatztar=\hatztar_{\src}$ whenever $|T_{\nT}|\le \tau_{n}$. Hence
\[
    \mathbb P(\hatztar\neq \hatztar_{\src})\le \mathbb P(|\rmT_{\nT}|>\tau_{n}).
\]
It therefore suffices to prove that $\mathbb P(\rmT_{\nT}>\tau_{n})\to0$ for a sufficiently large choice of $C_0$. By \eqref{eq:uniform_t_n_boom}, probability tending to one,
\[
    |\rmT_{\nT}|\le \|\thetaT\|_2^2+C\left\{\sigmaT\|\thetaT\|_2+\sigmaT^2\left(1+\sqrt{\frac{d}{\nT}}\right)\right\}
\]

Using \eqref{eq:low_target_signal_cond}
\[
    \|\thetaT\|_2^2\le \mathrm{D}_0^2\sigmaT^2((d/\nT)^{1/4}+1)^2,
\]
we have
\[
    \|\thetaT\|_2^2\le 2\mathrm{D}_0^2\sigmaT^2\left(\sqrt{\frac{d}{\nT}}+1\right), \quad \mbox{and} \quad \sigmaT\|\thetaT\|_2\le \mathrm{D}_0\sigmaT^2\left(\left(\frac{d}{\nT}\right)^{1/4}+1\right).
\]
Using the elementary inequality $s+1\le \frac32(s^2+1)$ for all $s\ge0$, with $s=(d/\nT)^{1/4}$, we get
\[
\sigmaT\|\thetaT\|_2\le \frac{3\mathrm{D}_0}{2}\sigmaT^2\left(\sqrt{\frac{d}{\nT}}+1\right).
\]
Combining these bounds, we obtain
\[
    |\rmT_{\nT}|\le \check C\,\sigmaT^2\left(\sqrt{\frac{d}{\nT}}+1\right)
\]
with probability tending to one, where
\[
    \check C\:=2\mathrm D_0^2+\frac{3C\mathrm D_0}{2}+C.
\]
Choose $C_0>2\,\check{C}$ in \eqref{eq:validation_threshold}. Then
\[
\mathbb P(|\rmT_{\nT}|>\tau_{n})\le \mathbb P\left(|\rmT_{\nT}|>\check{C}\,\sigmaT^2\left(\sqrt{\frac{d}{\nT}}+1\right)\right)\to0.
\]
Consequently,
\[
    \mathbb P(\hatztar\neq \hatztar_{\src})\le \mathbb P(|\rmT_{\nT}|>\tau_{\nT})\to0.
\]
Thus, in the low-signal regime, the adaptive selector chooses the source candidate with probability tending to one. The last condition is immediate from the construction in \eqref{eq:grand_def_adaptive}.

\end{proof}

\subsection{Proof of Lemma~\ref{lem:uniform_conc_r_n}}
For each \(i\in[\nT]\), define
\[
\xi_j:=\ztar_j\langle \thetaT,\varepsilon^{(\trg)}_j\rangle .
\]
Since $\ztar_j\in\{-1,+1\}$ and $\varepsilon^{(\trg)}_j\sim \dnorm_d(0,\sigmaT^2 \Id_d)$, we have
\(
\xi_j\simiid \dnorm(0,\sigmaT^2\|\thetaT\|_2^2).
\)
For any $\wt z\in\{-1,+1\}^{\nT}$, set
\[
    a_j:=\wt z_j \ztar_j\in\{-1,+1\}.
\]
Then, by the definitions in \eqref{eq:r_n_g_n},
\[
\mathrm R(\wt z)=\frac{1}{\nT}\sum_{j =1}^{\nT}a_j,\qquad
\langle \mathrm G(\wt z),\thetaT\rangle=\frac{1}{\nT}\sum_{j =1}^{\nT}a_j\,\xi_j .
\]
Since $|\mathrm R(\wt z)|\le 1$, we obtain
\[
\left|\mathrm R(\wt z)\cdot\langle \mathrm G(\wt z),\thetaT\rangle\right|
\le \left|\frac1{\nT}\sum_{j =1}^{\nT}a_j\,\xi_j\right|
\le\frac1{\nT}\sum_{j =1}^{\nT}|\xi_j|.
\]
Taking the supremum over $\wt z\in\{-1,+1\}^{\nT}$ gives
\[
\sup_{\wt z\in\{-1,+1\}^{\nT}}\left|\mathrm R(\wt z)\cdot\langle \mathrm G(\wt z),\thetaT\rangle\right|\le\frac1{\nT}\sum_{j =1}^{\nT}|\xi_j|.
\]

To control the right hand side of the above display, we define $\varsigma:=\sigmaT\|\thetaT\|_2$. If $\varsigma=0$, the conclusion of the lemma immediately follows. Otherwise, we write $\xi_j=\varsigma\,g_j$ where $g_j \simiid \dnorm(0,1)$.
Define $f:\R^{\nT} \to \R$ as
\[
f(x_1,\ldots,x_{\nT}):=\frac1{\nT}\sum_{j=1}^{\nT}|x_j|.
\]
For \(x,y\in\mathbb R^{\nT}\),
\[
|f(x)-f(y)|\le\frac1{\nT}\sum_{j=1}^{\nT}\bigl||x_j|-|y_j|\bigr|\le
\frac1{\nT}\sum_{j=1}^{\nT}|x_j-y_j|\le\frac1{\sqrt{\nT}}\|x-y\|_2.
\]
Thus $f$ is $1/\sqrt{\nT}$-Lipschitz. By the Gaussian concentration inequality for Lipschitz functions, \cite[Theorem 2.26]{wainwright2019high}, for every $u>0$,
\[
\mathbb P\left(|f(g)-\mathbb Ef(g)|\ge u\right)
\le \exp\left(-\frac{\nT u^2}{2}\right).
\]
Moreover,
\[
\mathbb Ef(g)=\mathbb E|g_1|=\sqrt{\frac{2}{\pi}}.
\]
Choosing $u=\sqrt{2t/\nT}$, we get
\[
\mathbb P\left(\frac1{\nT}\sum_{j =1}^{\nT}|g_j|\ge
        \sqrt{\frac{2}{\pi}}+\sqrt{2}\sqrt{\frac{t}{\nT}}\right) \le e^{-t}.
\]
Multiplying by $\varsigma=\sigmaT\|\thetaT\|_2$, with probability at least \(1-e^{-t}\),
\[
\frac1{\nT}\sum_{j =1}^{\nT}|\xi_j|\le\sigmaT\|\thetaT\|_2\left(\sqrt{\frac{2}{\pi}}+\sqrt{2}\sqrt{\frac{t}{\nT}}\right).
\]
Combining this with the preceding deterministic bound proves the lemma.

\subsection{Proof of Lemma~\ref{lem:uniform_conc_g_n}}
Let $E_\calT\in\mathbb R^{\nT\times d}$ denote the target-noise matrix obtained by concatenating $\varepsilon_j^{(\trg)}$ along the rows. Thus
\[
\mathrm G(\wt z)=\frac{1}{\nT}E_\calT^\top \wt z, \quad \mbox{and} \quad \|\mathrm G(\wt z)\|_2^2=\frac{1}{\nT^2}\wt z^\top E_\calT E_\calT^\top\wt z.
\]
For any square matrix $A$, let $\mathcal H(A):=A-\operatorname{diag}(A)$ denote the matrix obtained by zeroing out the diagonal of $A$. Since $\wt z_j^2=1$ for every $j\in\calT$,
\begin{align}
\wt z^\top E_\calT E_\calT^\top\wt z&=
    \sum_{j=1}^{\nT}\|\varepsilon_j^{(\trg)}\|_2^2+
    \wt z^\top
    \mathcal H\!\left(E_\calT E_\calT^\top\right)
    \wt z,
\end{align}
and therefore
\begin{align}
\left|\|\mathrm G(\wt z)\|_2^2-\frac{\sigmaT^2d}{\nT}\right|&\le\left|\frac{1}{\nT^2}\sum_{j=1}^{\nT}\|\varepsilon_j^{(\trg)}\|_2^2-\frac{\sigmaT^2d}{\nT}\right|+\left|\frac{1}{\nT^2}\wt z^\top\mathcal H\!\left(E_\calT E_\calT^\top\right)\wt z\right|.
    \label{eq:target_quad_decomp}
\end{align}
For the second term, since $\|\wt z\|_2^2=\nT$,
\begin{align}
    \sup_{\wt z\in\{-1,+1\}^{\nT}}\left|\frac{1}{\nT^2}\wt z^\top\mathcal H\!\left(E_\calT E_\calT^\top\right)\wt z\right|
    \le\frac{1}{\nT}\left\|\mathcal H\!\left(E_\calT E_\calT^\top\right)\right\|_{\op}.
    \label{eq:target_off_diag_op}
\end{align}

We first control the first term in \eqref{eq:target_quad_decomp}. Since
\[
    \frac{1}{\sigmaT^2}\sum_{j=1}^{\nT}\|\varepsilon_j^{(\trg)}\|_2^2\sim \chi^2_{\nT d},
\]
Lemma~1 of \cite{laurent2000adaptive} gives that with probability at least $1-2e^{-t}$,
\[
\left|\sum_{j=1}^{\nT}\|\varepsilon_j^{(\trg)}\|_2^2-\sigmaT^2\nT d\right|
\le 2\sigmaT^2\sqrt{\nT\,d\,t}+2\sigmaT^2\,t .
\]
Dividing by $\nT^2$, we obtain
\begin{align}
\left|\frac{1}{\nT^2}\sum_{j=1}^{\nT}\|\varepsilon_j^{(\trg)}\|_2^2-
\frac{\sigmaT^2d}{\nT}\right|\le2\sigmaT^2\left(\sqrt{\frac{dt}{\nT^3}}+\frac{t}{\nT^2}\right).
    \label{eq:target_diag_bound}
\end{align}

Next we control the second term in \eqref{eq:target_quad_decomp}. By Remark~4.7.3 of \citet{vershynin2025high}, with probability at least $1-2e^{-t}$,
Equivalently,
\begin{align}
\left\|E_\calT E_\calT^\top-\sigmaT^2\,d\,\Id_{\nT}\right\|_{\op}\le\sigmaT^2
\left[\sqrt{d\,(\nT+t)}+(\nT+t)\right].
\label{eq:target_gram_op_bound}
\end{align}
Moreover, for each $j\in\calT$,
\[
    \frac{1}{\sigmaT^2}
    \|\varepsilon_j^{(\trg)}\|_2^2
    \sim \chi^2_d .
\]
Applying Lemma~1 of \cite{laurent2000adaptive} with $x=t+\log\nT$ and taking a union bound over $j\in\calT$, we get, with probability at least $1-2e^{-t}$,
\begin{align}
\max_{j=1}^{\nT}\left|\|\varepsilon_j^{(\trg)}\|_2^2-\sigmaT^2\,d\right|
\le2\sigmaT^2\sqrt{d(t+\log\nT)}+2\sigmaT^2(t+\log\nT).
\label{eq:target_max_diag_bound}
\end{align}
Now observe that using \eqref{eq:target_gram_op_bound} and \eqref{eq:target_max_diag_bound} and plugging in $t=\log \nT$, we get
\begin{align}
\label{eq:target_hollowed_boom}
\frac{1}{\nT}\left\|\mathcal H\!\left(E_\calT E_\calT^\top\right)\right\|_{\op}
&\le\frac{1}{\nT}\left\|E_\calT E_\calT^\top-\sigmaT^2dI_{\nT}\right\|_{\op} +
\frac{1}{\nT}\cdot\max_{j=1}^{\nT}\left|\|\varepsilon_j^{(\trg)}\|_2^2-\sigmaT^2d\right|\\
& \le \mathrm C_0\,\sigmaT^2\left(1+\sqrt{\frac{d}{\nT}}+\frac{\log \nT}{\nT}\right)
\end{align}
with probability greater than $1-4\,\nT^{-1}$.

Combining \eqref{eq:target_hollowed_boom} with \eqref{eq:target_diag_bound} after plugging in $t=\log \nT$ in the latter, we get
\begin{align}
    \sup_{\wt z\in\{-1,+1\}^{\nT}}\left|\|\mathrm G(\wt z)\|_2^2-\frac{\sigmaT^2d}{\nT}\right| \le\mathrm C_0\,\sigmaT^2\left(1+\sqrt{\frac{d}{\nT}}+\frac{\log \nT}{\nT}\right)+2\sigmaT^2\left(\sqrt{\frac{d\log \nT}{\nT^3}}+\frac{\log \nT}{\nT^2}\right),
\end{align}
with probability greater than $1-6\,\nT^{-1}$. Observing that $\log \nT \ll \nT$, we get the theorem after slightly adjusting the absolute constant $\mathrm C_0>0$ from \eqref{eq:target_hollowed_boom}.

\section{Proofs for Section~\ref{sec:necessary_cond}}
\label{app:proof-sec3}

\begin{lemma}[Loss-domination form of Assouad's lemma]
\label{lem:sec3_loss_domination_assouad}
Let $\{P_\tau:\tau\in\{\pm1\}^k\}$ be a family of distributions, and let
$P_{+j}$ and $P_{-j}$ be the neighboring Assouad marginals defined by
\eqref{eq:sec3_assouad_marginals}. Suppose that, for every estimator
$\hat z$, there is a decoder $\hat\tau=\hat\tau(\hat z)\in\{\pm1\}^k$
such that, for every $\tau\in\{\pm1\}^k$,
\[
    L(\hat z,\tau)
    \ge
    a\sum_{j=1}^k \mathbbm{1}\{\hat\tau_j\neq\tau_j\}
\]
for some $a>0$. Then
\[
    \inf_{\hat z}\sup_{\tau\in\{\pm1\}^k}
    \Ebb_{P_\tau}\bigl[L(\hat z,\tau)\bigr]
    \ge
    \frac{a}{2}\sum_{j=1}^k
    \bigl(1-\TV(P_{+j},P_{-j})\bigr).
\]
Consequently,
\[
    \inf_{\hat z}\sup_{\tau\in\{\pm1\}^k}
    \Ebb_{P_\tau}\bigl[L(\hat z,\tau)\bigr]
    \ge
    \frac{ak}{2}
    \left(1-\max_{1\le j\le k}\TV(P_{+j},P_{-j})\right).
\]
\end{lemma}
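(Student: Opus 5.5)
The plan is to run the standard Assouad argument: reduce the worst case over the hypercube to an average over a uniform prior on $\tau$, then split the Hamming sum coordinatewise. Fix an estimator $\hat z$ and let $\hat\tau=\hat\tau(\hat z)$ be the decoder from the hypothesis. Since the supremum dominates any average,
\[
\sup_{\tau}\Ebb_{P_\tau}[L(\hat z,\tau)]\ \ge\ 2^{-k}\sum_{\tau\in\{\pm1\}^k}\Ebb_{P_\tau}[L(\hat z,\tau)]\ \ge\ a\sum_{j=1}^k 2^{-k}\sum_{\tau}P_\tau(\hat\tau_j\neq\tau_j).
\]
The second inequality uses the pointwise loss domination and linearity of expectation.

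For each fixed $j$, I will split the inner average according to the value of $\tau_j$. Using the definition \eqref{eq:sec3_assouad_marginals} of the neighboring marginals,
\[
2^{-k}\sum_{\tau}P_\tau(\hat\tau_j\neq\tau_j)=\tfrac12\bigl[P_{+j}(\hat\tau_j=-1)+P_{-j}(\hat\tau_j=+1)\bigr].
\]
Here I use that $\hat\tau_j$ is a measurable function of the data alone, so its law under the mixture $P_{\pm j}$ is the corresponding average of its laws under the $P_\tau$. Let $A=\{\hat\tau_j=+1\}$. The bracket is $1-\bigl(P_{+j}(A)-P_{-j}(A)\bigr)$, which is at least $1-\TV(P_{+j},P_{-j})$ by the definition of total variation as a supremum over events. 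This is the usual two-point Le Cam testing bound.

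Summing over $j$ gives $\sup_\tau\Ebb_{P_\tau}[L]\ge \tfrac a2\sum_j(1-\TV(P_{+j},P_{-j}))$. Because the right-hand side does not depend on $\hat z$, taking the infimum over estimators preserves it, which yields the first display. Bounding each summand from below by $1-\max_j\TV(P_{+j},P_{-j})$ gives the second display.

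There is no real obstacle here; the only care needed is in two places. First, the decoder inequality must hold for every $\tau$ simultaneously, which the hypothesis grants. Second, the marginals $P_{\pm j}$ must be uniform averages over $\tau_{-j}$, so that the factor $2^{-k}$ matches $2^{-(k-1)}/2$ exactly.
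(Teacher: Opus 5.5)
Your proposal is correct and follows essentially the same route as the paper's proof. Both arguments average the risk over the uniform prior on $\tau$, apply the pointwise loss domination, and rewrite each coordinate's averaged error as the equal-prior testing error between $P_{+j}$ and $P_{-j}$, which is bounded below by $\frac12\bigl(1-\TV(P_{+j},P_{-j})\bigr)$. The only cosmetic difference is that you derive this two-point bound directly from the supremum-over-events definition of total variation, whereas the paper simply quotes the minimal-testing-error identity.
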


\begin{proof}
Fix an estimator $\hat z$ and average the risk uniformly over
$\tau\in\{\pm1\}^k$. The loss-domination assumption gives
\[
    2^{-k}\sum_{\tau}\Ebb_{P_\tau} L(\hat z,\tau)
    \ge
    a\sum_{j=1}^k
    2^{-k}\sum_{\tau}
    P_\tau(\hat\tau_j\neq\tau_j).
\]
For each coordinate $j$,
\[
    2^{-k}\sum_{\tau}
    P_\tau(\hat\tau_j\neq\tau_j)
    =
    \frac12 P_{+j}(\hat\tau_j\neq +1)
    +
    \frac12 P_{-j}(\hat\tau_j\neq -1).
\]
The last display is the testing error of the test $\hat\tau_j$ for
$P_{+j}$ versus $P_{-j}$ with equal priors. Its minimum over all tests is
$\frac12(1-\TV(P_{+j},P_{-j}))$. Hence
\[
    2^{-k}\sum_{\tau}\Ebb_{P_\tau} L(\hat z,\tau)
    \ge
    \frac{a}{2}\sum_{j=1}^k
    \bigl(1-\TV(P_{+j},P_{-j})\bigr).
\]
Since the supremum over $\tau$ is at least the uniform average, taking the
infimum over $\hat z$ proves the first claim. The second follows by lower
bounding each term by
$1-\max_{1\le j\le k}\TV(P_{+j},P_{-j})$.
\end{proof}

\subsection{Proof of Lemma~\ref{lem:sec3_loss_domination}}

\begin{proof}
Let $s^\star\in\{\pm1\}$ attain the minimum in the definition of
$\calL(\hatztar,\ztar)$. If $\hat{\tau}_j\neq\tau_j$, then the two identities
$\hatztar_{2j-1}=s^\star\ztar_{2j-1}$ and $\hatztar_{2j}=s^\star\ztar_{2j}$
cannot both hold, since otherwise multiplying them would give
$\hat{\tau}_j=\tau_j$. Hence each error on $\tau_j$ forces at least one
misclassified target label in the $j$th pair. Summing over $j$ yields
\[
    \nT \calL(\hatztar,\ztar)
    \ge
    \sum_{j=1}^k \mathbbm{1}\{\hat{\tau}_j\neq\tau_j\},
\]
which is the claimed bound after dividing by $\nT$.
\end{proof}

\subsection{Proof of Lemma~\ref{lem:sec3_exact_tv_form}}

\begin{proof}
Fix a pair index $j_0$ and $a\in\{\pm1\}$. By construction,
\[
    P_{aj_0}
    =
    \Ebb_{\tau_{-j_0},\eta,\xi,\kappa,\zsrc}
    \bigl[
        P_{z_a(\tau_{-j_0},\eta),\,\zsrc,\,v_{\trg},\,v_{\src}}
    \bigr].
\]
View the observations as an element of
$\R^{d\times \nT}\times\R^{d\times \nS}$ and write $(x_r,y_r)$ for the $r$th
row pair of the actual observations
\[
    \bigl([\Xtar_1,\ldots,\Xtar_{\nT}],\,[\Xsrc_1,\ldots,\Xsrc_{\nS}]\bigr).
\]
Under the latent configuration
$\bigl(\tau_{-j_0},\eta,\xi,\kappa,\zsrc\bigr)$, the normalized target and source
columns satisfy
\[
    \frac{\Xtar_i}{\sigmaT}=\DeltaT \ztar_i v_{\trg}+g_i,
    \qquad
    \frac{\Xsrc_j}{\sigmaS}=\DeltaS \zsrc_j v_{\src}+h_j,
\]
where $g_i\overset{\mathrm{iid}}{\sim}\dnorm(0,I_d)$ and
$h_j\overset{\mathrm{iid}}{\sim}\dnorm(0,I_d)$, independently. Using
\eqref{eq:construct_signal_adv}, the actual row vectors therefore admit the
representation
\[
\frac{x_r}{\sigmaT}=\frac{\DeltaT}{\sqrt d}\xi_r z_a(\tau_{-j_0},\eta)+\gamma_r,
    \qquad
\frac{y_r}{\sigmaS}=\frac{\DeltaS}{\sqrt d}\kappa_r\xi_r \zsrc+\delta_r,
\]
where $\gamma_r\sim\dnorm(0,I_{\nT})$ and $\delta_r\sim\dnorm(0,I_{\nS})$.
Moreover, across $r\in[d]$ these row pairs are independent.

By definition, $Q$ is the Gaussian measure on
$\R^{d\times \nT}\times\R^{d\times \nS}$ under which all rows are independent
and
\[
    x_r\sim\dnorm(0,\sigmaT^2 I_{\nT}),
    \qquad
    y_r\sim\dnorm(0,\sigmaS^2 I_{\nS}).
\]
For a Gaussian shift $N(m,\sigma^2 I)$ relative to $N(0,\sigma^2 I)$, the
likelihood ratio is
$\exp\{\sigma^{-2}\ip{m}{u}-\|m\|_2^2/(2\sigma^2)\}$. Applying this to the conditional law of
$(x_r,y_r)$ gives the $r$th row likelihood ratio
\[
    \exp\Biggl\{
        \frac{\DeltaT}{\sigmaT\sqrt d}\xi_r
        \ip{z_a(\tau_{-j_0},\eta)}{x_r}
        +
        \frac{\DeltaS}{\sigmaS\sqrt d}\kappa_r\xi_r
        \ip{\zsrc}{y_r}
        -
        \frac{\DeltaT^2\|z_a(\tau_{-j_0},\eta)\|_2^2
        +\DeltaS^2\|\zsrc\|_2^2}{2d}
    \Biggr\}.
\]
Since every target and source label vector has coordinates in $\{\pm1\}$,
\[
    \|z_a(\tau_{-j_0},\eta)\|_2^2=\nT,
    \qquad
    \|\zsrc\|_2^2=\nS.
\]
Multiplying over the independent rows yields the conditional likelihood ratio
\[
    \frac{dP_{z_a(\tau_{-j_0},\eta),\,\zsrc,\,v_{\trg},\,v_{\src}}}{dQ}
    =
    e^{-(\nT\DeltaT^2+\nS\DeltaS^2)/2}
    \prod_{r=1}^d
    \exp\Bigl\{
        \frac{\DeltaT}{\sigmaT\sqrt d}\xi_r
        \ip{z_a(\tau_{-j_0},\eta)}{x_r}
        +
        \frac{\DeltaS}{\sigmaS\sqrt d}\kappa_r\xi_r
        \ip{\zsrc}{y_r}
    \Bigr\}.
\]
Taking expectation over $(\tau_{-j_0},\eta,\xi,\kappa,\zsrc)$ gives the density
of $P_{aj_0}$ with respect to $Q$.

Finally, \eqref{eq:sec3_tv_exact} is the standard identity
\[
    \TV(P_{+j_0},P_{-j_0})
    =
    \frac12 \Ebb_Q\left|
        \frac{dP_{+j_0}}{dQ}-\frac{dP_{-j_0}}{dQ}
    \right|,
\]
valid because both neighboring laws are absolutely continuous with respect to
the common reference measure $Q$.
\end{proof}

\subsection{Proof of Theorem~\ref{thm:sec3_tv_three_routes}}

We prove the four bounds separately. We begin with the target-direction-revealed
route.

\subsubsection{Proof of Item 1: target-direction-revealed route}

\begin{proof}[Proof of Theorem~\ref{thm:sec3_tv_three_routes}, Item 1]
Fix a pair index $j_0$. Let $\widetilde P_{aj_0}$ denote the joint law of the
data together with the revealed target direction $v_{\trg}$ under the
neighboring prior with $\tau_{j_0}=a$. Since $P_{aj_0}$ is the marginal of
$\widetilde P_{aj_0}$ on the data alone, total variation decreases under
marginalization, so
\[
    \TV(P_{+j_0},P_{-j_0})
    \le
    \TV(\widetilde P_{+j_0},\widetilde P_{-j_0}).
\]

Conditional on $v_{\trg}$, only the $j_0$th target pair depends on $\tau_{j_0}$; all
remaining target observations and all source observations have the same law
under the two hypotheses. Therefore
\[
    \TV(\widetilde P_{+j_0},\widetilde P_{-j_0})
    =
    \TV(P_{+j_0,\mathrm{pair}},P_{-j_0,\mathrm{pair}}),
\]
where $P_{\pm k,\mathrm{pair}}$ denotes the law of the target
pair
$\left(
        \Xtar_{2j_0-1},
        \Xtar_{2j_0}
    \right)$
conditional on $v_{\trg}$ under $\tau_{j_0}=\pm1$.

Define
\[
    U_{2j_0-1}:=\ip{v_{\trg}}{\Xtar_{2j_0-1}},
    \qquad
    U_{2j_0}:=\ip{v_{\trg}}{\Xtar_{2j_0}}.
\]
Because $\|v_{\trg}\|_2=1$, under the model one has
\[
    U_i=\sigmaT\DeltaT \ztar_i+\xi_i,
    \qquad
    \xi_i\overset{\mathrm{iid}}{\sim}\dnorm(0,\sigmaT^2).
\]
Recalling $\ztar_{2j_0-1}=\eta_{j_0}$ and $\ztar_{2j_0}=\tau_{j_0}\eta_{j_0}$, and averaging
over $\eta_{j_0}\sim\mathrm{Rad}(1/2)$, the conditional pair laws are
\begin{align*}
    P_{+j_0,\mathrm{pair}}
    &=
    \tfrac12 \dnorm_2\bigl((\sigmaT\DeltaT,\sigmaT\DeltaT),\sigmaT^2 I_2\bigr)
    +
    \tfrac12 \dnorm_2\bigl((-\sigmaT\DeltaT,-\sigmaT\DeltaT),\sigmaT^2 I_2\bigr),\\
    P_{-j_0,\mathrm{pair}}
    &=
    \tfrac12 \dnorm_2\bigl((\sigmaT\DeltaT,-\sigmaT\DeltaT),\sigmaT^2 I_2\bigr)
    +
    \tfrac12 \dnorm_2\bigl((-\sigmaT\DeltaT,\sigmaT\DeltaT),\sigmaT^2 I_2\bigr).
\end{align*}

Introduce the orthogonal change of variables
\[
    S:=\frac{U_{2j_0-1}+U_{2j_0}}{\sqrt2},
    \qquad
    D:=\frac{U_{2j_0-1}-U_{2j_0}}{\sqrt2}.
\]
Under $P_{+j_0,\mathrm{pair}}$, the signal is carried by $S$ and $D$ is
pure noise; under $P_{-j_0,\mathrm{pair}}$, the roles are reversed. Writing
\[
    a_{\trg}:=\sqrt2\,\sigmaT\DeltaT,
    \qquad
    G_{a_{\trg}}:=\tfrac12 \dnorm(a_{\trg},\sigmaT^2)+\tfrac12 \dnorm(-a_{\trg},\sigmaT^2),
    \qquad
    \phi:=\dnorm(0,\sigmaT^2),
\]
the rotated pair laws become
\[
    P_{+j_0,\mathrm{pair}}=G_{a_{\trg}}\otimes \phi,
    \qquad
    P_{-j_0,\mathrm{pair}}=\phi\otimes G_{a_{\trg}}.
\]

By the product formula for Hellinger affinity,
\[
    H^2(G_{a_{\trg}}\otimes\phi,\phi\otimes G_{a_{\trg}})
    =
    1-(1-H^2(G_{a_{\trg}},\phi))^2
    \le
    2H^2(G_{a_{\trg}},\phi).
\]
The density of $G_{a_{\trg}}$ relative to $\phi$ is
\[
    g_{a_{\trg}}(t)=\exp\!\left(-\frac{a_{\trg}^2}{2\sigmaT^2}\right)\cosh\!\left(\frac{a_{\trg}t}{\sigmaT^2}\right).
\]
Since $H^2\le \chi^2$,
\[
    H^2(G_{a_{\trg}},\phi)
    \le
    \chi^2(G_{a_{\trg}}\|\phi)
    =
    \Ebb_\phi[g_{a_{\trg}}^2]-1.
\]
If $Z\sim\dnorm(0,\sigmaT^2)$, then
\begin{align*}
    \Ebb_\phi[g_{a_{\trg}}^2]
    &=
    e^{-a_{\trg}^2/\sigmaT^2}\Ebb\!\left[\cosh^2\!\left(\frac{a_{\trg}Z}{\sigmaT^2}\right)\right]\\
    &=
    e^{-a_{\trg}^2/\sigmaT^2}\Ebb\!\left[\frac{e^{2a_{\trg}Z/\sigmaT^2}+2+e^{-2a_{\trg}Z/\sigmaT^2}}{4}\right]\\
    &=
    e^{-a_{\trg}^2/\sigmaT^2}\frac{e^{2a_{\trg}^2/\sigmaT^2}+1}{2},
\end{align*}
so
\[
    \chi^2(G_{a_{\trg}}\|\phi)=\cosh\!\left(\frac{a_{\trg}^2}{\sigmaT^2}\right)-1.
\]
Therefore
\[
    H^2(P_{+j_0,\mathrm{pair}},P_{-j_0,\mathrm{pair}})
    \le
    2\left(\cosh\!\left(\frac{a_{\trg}^2}{\sigmaT^2}\right)-1\right).
\]
If $a_{\trg}^2/\sigmaT^2\le 1$, then
$\cosh(x)-1\le Cx^2$ on $[0,1]$.  If
$a_{\trg}^2/\sigmaT^2>1$, use instead the trivial bound
$\TV^2\le 1\le a_{\trg}^4/\sigmaT^4$.  Thus, after changing the universal
constant between the two cases,
\[
    \TV(P_{+j_0},P_{-j_0})^2
    \le
    \TV(\widetilde P_{+j_0},\widetilde P_{-j_0})^2
    \le
    H^2(P_{+j_0,\mathrm{pair}},P_{-j_0,\mathrm{pair}})
    \le
    C \frac{a_{\trg}^4}{\sigmaT^4}
    \le
    C'\DeltaT^4,
\]
which is exactly \eqref{eq:sec3_tv_oracle1}.
\end{proof}

\subsubsection{Proof of Item 2: revealed-source-direction route}

\begin{proof}[Proof of Theorem~\ref{thm:sec3_tv_three_routes}, Item 2]
\textit{Proof sketch.}
The idea is to reveal the ``nuisance'' variable
$W:=(v_{\src},\zsrc,\eta_{j_0},\Lambda_{j_0})$ --- comprising the source direction,
all source labels, and every non-tested target sign --- and bound the TV
distance conditional on $W$. Revealing $W$ has two payoffs: (i)~conditional
on $W$ the only remaining difference between the $\tau_{j_0}=+1$ and $\tau_{j_0}=-1$
experiments is the single unresolved bit $\tau_{j_0}$, and (ii)~the rows
$r=1,\ldots,d$ become independent, enabling a row-by-row Hellinger analysis.
The bound proceeds in three steps.
\begin{enumerate}[label=(\roman*)]
\item \emph{Data processing.} $\TV(P_{+j_0},P_{-j_0})\le
      \TV(\widetilde P_{+j_0}^{(\src)},\widetilde P_{-j_0}^{(\src)})$,
      since $P_{\pm j_0}$ are marginals of the joint laws $\widetilde P_{\pm j_0}^{(\src)}$
      that also carry $W$.
\item \emph{Disintegration.} Because $W$ does not involve $\tau_{j_0}$, its
      marginal law is identical under $\tau_{j_0}=+1$ and $\tau_{j_0}=-1$, so the
      disintegration formula gives
      $\TV(\widetilde P_{+j_0}^{(\src)},\widetilde P_{-j_0}^{(\src)})
       =\Ebb\bigl[\TV(P_{+j_0}^{\,W},P_{-j_0}^{\,W})\bigr]$.
\item \emph{Conditional TV bound.}
      Lemma~\ref{lem:sec3_oracle2_conditional_tv} provides a deterministic
      bound on $\TV(P_{+j_0}^{\,W},P_{-j_0}^{\,W})$ for each realization of $W$,
      via Hellinger subadditivity over the now-independent rows together with
      the per-row posterior analysis of
      Lemmas~\ref{lem:sec3_oracle2_posterior}--\ref{lem:sec3_oracle2_conditional_pair}.
\end{enumerate}

\medskip
\noindent\textit{Details.}
Let
\[
    W:=(v_{\src},\zsrc,\eta_{j_0},\Lambda_{j_0}),
    \qquad
    \Lambda_{j_0}:=\bigl(\eta_\ell,\tau_\ell:\ell\neq j_0\bigr),
\]
and let $\widetilde P_{aj_0}^{(\src)}$ denote the joint law of the data together
with the revealed variable $W$ under the neighboring prior with $\tau_{j_0}=a$.
Since $P_{aj_0}$ is the marginal of $\widetilde P_{aj_0}^{(\src)}$ on the data
alone,
\[
    \TV(P_{+j_0},P_{-j_0})
    \le
    \TV(\widetilde P_{+j_0}^{(\src)},\widetilde P_{-j_0}^{(\src)}).
\]

The revealed variable $W$ has the same marginal law under $\tau_{j_0}=+1$ and
$\tau_{j_0}=-1$, because it does not involve the tested sign $\tau_{j_0}$. Therefore,
by the disintegration formula for total variation with a common revealed
marginal,
\[
    \TV(\widetilde P_{+j_0}^{(\src)},\widetilde P_{-j_0}^{(\src)})
    =
    \Ebb\!\left[
        \TV\!\bigl(
            P_{+j_0}^{\,v_{\src},\zsrc,\eta_{j_0},\Lambda_{j_0}},
            P_{-j_0}^{\,v_{\src},\zsrc,\eta_{j_0},\Lambda_{j_0}}
        \bigr)
    \right].
\]
Lemma~\ref{lem:sec3_oracle2_conditional_tv} gives a deterministic bound
for every realization of $W$, hence
\[
    \TV(\widetilde P_{+j_0}^{(\src)},\widetilde P_{-j_0}^{(\src)})
    \le
    C_{\delta_0}^{1/2}\left(
        \tilde\mu^2\DeltaT^2
        +
        \frac{\nT\DeltaT^4}{d}
    \right)^{1/2}.
\]
Combining with the marginalization bound above gives \eqref{eq:sec3_tv_oracle}:
\[
    \TV(P_{+j_0},P_{-j_0})^2
    \le
    C_{\delta_0}\left(
        \tilde\mu^2\DeltaT^2
        +
        \frac{\nT\DeltaT^4}{d}
    \right).\qedhere
\]
\end{proof}

We now prove the supporting lemmas used in the foregoing argument.

\begin{lemma}[One-sided posterior-bias bound away from perfect alignment]
\label{lem:sec3_posterior_bias_one_sided}
Fix $\delta_0\in(0,1)$ and assume $a\in\R$ satisfies
\[
    |\tanh(a)|\le 1-\delta_0.
\]
Let $\varepsilon\in\{\pm1\}$ satisfy
\[
    \Pbb(\varepsilon=+1)=\frac{1+\tanh(a)}{2},
    \qquad
    \Pbb(\varepsilon=-1)=\frac{1-\tanh(a)}{2},
\]
let $G\sim N(0,1)$ be independent of $\varepsilon$, and for $\lambda\ge 0$
define
\[
    m_\lambda:=\tanh(a+\lambda\varepsilon+\sqrt{\lambda}\,G).
\]
Then there is a constant $C_{\delta_0}>0$ such that for all $\lambda\ge 0$,
\[
    \Ebb[m_\lambda^2]
    \le
    \tanh(a)^2
    +
    C_{\delta_0}\bigl(1-\tanh(a)^2\bigr)^2\lambda.
\]
\end{lemma}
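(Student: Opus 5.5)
The plan is to read $m_\lambda$ as a Bayesian posterior mean, so that $\Ebb[m_\lambda^2]-\tanh(a)^2$ becomes a variance. Then a Lipschitz bound handles small $\lambda$, and the trivial bound $m_\lambda^2\le 1$ handles large $\lambda$. The assumption $|\tanh(a)|\le 1-\delta_0$ is used only to convert powers of $(1-\tanh(a)^2)$ into constants.

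\emph{Step 1 (posterior interpretation).} Write $t:=\tanh(a)$ and $Y:=\lambda\varepsilon+\sqrt\lambda\,G$.
\begin{itemize}
\item Conditional on $\varepsilon$, $Y\sim\dnorm(\lambda\varepsilon,\lambda)$, so $\log\{p(Y\mid +1)/p(Y\mid -1)\}=2Y$.
\item The prior log-odds of $\varepsilon$ equal $\log\{(1+t)/(1-t)\}=2a$.
\item Hence the posterior log-odds are $2(a+Y)$, and $\Ebb[\varepsilon\mid Y]=\tanh(a+Y)=m_\lambda$.
\end{itemize}
By the tower property, $\Ebb[m_\lambda]=\Ebb[\varepsilon]=t$. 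Therefore
\[
\Ebb[m_\lambda^2]-t^2=\operatorname{Var}(m_\lambda)=\Ebb\bigl[(m_\lambda-t)^2\bigr].
\]
(For $\lambda=0$ we have $m_0=t$ and the claim is trivial.)

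\emph{Step 2 (small $\lambda$, $0<\lambda\le 1$).} Since $\tanh$ is $1$-Lipschitz, $|m_\lambda-t|=|\tanh(a+Y)-\tanh(a)|\le|Y|$. This gives
\[
\Ebb[(m_\lambda-t)^2]\le\Ebb[Y^2]=\lambda^2+\lambda\le 2\lambda .
\]
Next use $|t|\le 1-\delta_0$: it implies $1-t^2\ge 1-(1-\delta_0)^2\ge\delta_0$, hence $1\le\delta_0^{-2}(1-t^2)^2$. Consequently $2\lambda\le 2\delta_0^{-2}(1-t^2)^2\lambda$.

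\emph{Step 3 (large $\lambda$, $\lambda\ge 1$).} Here we use only $m_\lambda^2\le1$, so $\Ebb[m_\lambda^2]-t^2\le 1-t^2$. Since $1-t^2\ge\delta_0$, we have $1-t^2\le\delta_0^{-1}(1-t^2)^2\le\delta_0^{-1}(1-t^2)^2\lambda$.

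Combining the two cases yields the lemma with $C_{\delta_0}:=2\delta_0^{-2}$. The only nontrivial point is Step 1: the Nishimori-type identity $\Ebb[m_\lambda]=t$ is what makes the Lipschitz bound effective. Without it, the naive bound on $\Ebb[m_\lambda^2]-t^2$ has a first-order cross term $2t\,\Ebb[m_\lambda-t]$, which must be seen to vanish. Once that identity is in place, the remaining steps are routine.
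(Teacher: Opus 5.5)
Your proof is correct, and it takes a different route from the paper's.

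\textbf{The paper's route.} The paper sets $F(\lambda)=\Ebb[m_\lambda^2]$ and differentiates in $\lambda$. It then uses Stein's identity together with $m_\lambda=\Ebb[\varepsilon\mid A_\lambda]$ to get the exact formula $F'(\lambda)=\Ebb[(1-m_\lambda^2)^2]\le 1-F(\lambda)$. Gronwall then gives $F(\lambda)-\tanh(a)^2\le(1-\tanh(a)^2)(1-e^{-\lambda})\le(1-\tanh(a)^2)\lambda$ for every $a$. The hypothesis $|\tanh(a)|\le 1-\delta_0$ is used only at the last step, to trade one factor of $1-\tanh(a)^2$ for a constant. This gives $C_{\delta_0}=1/(\delta_0(2-\delta_0))$.

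\textbf{Your route.} You use the same posterior identity, but only at the level of the first moment ($\Ebb[m_\lambda]=\tanh(a)$). That turns the excess second moment into $\operatorname{Var}(m_\lambda)$. You then split into $\lambda\le 1$, handled by the Lipschitz bound, and $\lambda\ge 1$, handled by the trivial bound.

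\textbf{Comparison.} Your argument is more elementary. It needs no Stein identity, no ODE comparison, and no interchange of derivative and expectation; the latter needs a little care near $\lambda=0$ because of the $G/(2\sqrt{\lambda})$ term.

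The price is that your small-$\lambda$ bound $\operatorname{Var}(m_\lambda)\le 2\lambda$ discards all dependence on $1-\tanh(a)^2$. So $\delta_0$ is needed in both regimes, and the constant worsens to $2\delta_0^{-2}$.

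The paper's argument gives a $\delta_0$-free intermediate bound. It also shows $F'(0)=(1-\tanh(a)^2)^2$, which explains why the squared factor in the statement is the natural one.

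For the application in Lemma~\ref{lem:sec3_oracle2_conditional_tv}, where $\delta_0=1/2$ is fixed, either constant suffices.
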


\begin{proof}
Set
\[
    F(\lambda):=\Ebb[m_\lambda^2],
    \qquad
    h(x):=\tanh^2(x),
    \qquad
    A_\lambda:=a+\lambda\varepsilon+\sqrt{\lambda}\,G.
\]
Then $F(\lambda)=\Ebb[h(A_\lambda)]$.

Differentiating under the expectation gives
\[
    F'(\lambda)
    =
    \Ebb\!\left[
        h'(A_\lambda)\!\left(
            \varepsilon+\frac{G}{2\sqrt{\lambda}}
        \right)
    \right]
    =
    \Ebb[h'(A_\lambda)\varepsilon]
    +
    \frac{1}{2\sqrt{\lambda}}\Ebb[h'(A_\lambda)G].
\]
Apply Stein's identity to the second term. Conditionally on $\varepsilon$,
with $f(G):=h'(a+\lambda\varepsilon+\sqrt{\lambda}\,G)$, we have
$f'(G)=\sqrt{\lambda}\,h''(A_\lambda)$, hence
\[
    \Ebb[h'(A_\lambda)G\mid\varepsilon]
    =
    \sqrt{\lambda}\,\Ebb[h''(A_\lambda)\mid\varepsilon].
\]
Therefore
\[
    F'(\lambda)
    =
    \Ebb[h'(A_\lambda)\varepsilon]
    +
    \frac12 \Ebb[h''(A_\lambda)].
\]
Now $m_\lambda=\tanh(A_\lambda)=\Ebb[\varepsilon\mid A_\lambda]$, so
\[
    \Ebb[h'(A_\lambda)\varepsilon]
    =
    \Ebb[h'(A_\lambda)m_\lambda].
\]
Since
\[
    h'(x)=2\tanh(x)\bigl(1-\tanh^2(x)\bigr),
    \qquad
    h''(x)=2\bigl(1-\tanh^2(x)\bigr)\bigl(1-3\tanh^2(x)\bigr),
\]
we obtain
\begin{align*}
    F'(\lambda)
    &=
    \Ebb\!\left[
        2m_\lambda^2(1-m_\lambda^2)
        +
        (1-m_\lambda^2)(1-3m_\lambda^2)
    \right]\\
    &=
    \Ebb\!\left[(1-m_\lambda^2)^2\right].
\end{align*}
In particular,
\[
    F'(\lambda)\le 1-F(\lambda).
\]
Since $F(0)=\tanh(a)^2$, Gronwall's inequality gives
\[
    F(\lambda)
    \le
    1-\bigl(1-\tanh(a)^2\bigr)e^{-\lambda}.
\]
Thus
\[
    F(\lambda)-\tanh(a)^2
    \le
    \bigl(1-\tanh(a)^2\bigr)(1-e^{-\lambda})
    \le
    \bigl(1-\tanh(a)^2\bigr)\lambda.
\]
Because $|\tanh(a)|\le 1-\delta_0$,
\[
    1-\tanh(a)^2
    \ge
    1-(1-\delta_0)^2
    =:
    \kappa_{\delta_0}>0.
\]
Hence
\[
    1-\tanh(a)^2
    \le
    \kappa_{\delta_0}^{-1}\bigl(1-\tanh(a)^2\bigr)^2,
\]
which proves the claim.
\end{proof}

\begin{lemma}[Exact conditional density under revealed source direction]
\label{lem:sec3_oracle2_row_density}
Fix a pair index $j_0$ and condition on
\[
    (v_{\src},\zsrc,\eta_{j_0},\Lambda_{j_0}),
    \qquad
    \Lambda_{j_0}:=\bigl(\eta_\ell,\tau_\ell:\ell\neq j_0\bigr).
\]
View the observations $(\Xtar,\Xsrc)$ as an element of
$ \R^{\nT\times d}\times \R^{\nS\times d},$
and let $Q$ denote the Gaussian measure under which the row pairs
$(x_r,y_r)\in\R^{\nT}\times\R^{\nS}$ are independent and satisfy
\[
    x_r\sim\dnorm(0,\sigmaT^2 I_{\nT}),
    \qquad
    y_r\sim\dnorm(0,\sigmaS^2 I_{\nS}).
\]
For $r\in[d]$, write
\begin{equation}\label{eq:def-Br}
    U_{\pm,r}:=\frac{X^{(T)}_{2j_0-1,r}}{\sigmaT}
    \pm
    \frac{X^{(T)}_{2j_0,r}}{\sigmaT},
    \qquad
    B_r:=\sum_{\ell\neq j_0}\eta_\ell\left(
        \frac{X^{(T)}_{2\ell-1,r}}{\sigmaT}
        +
        \tau_\ell\frac{X^{(T)}_{2\ell,r}}{\sigmaT}
    \right).
\end{equation}
Then the conditional law
\[
    P_{\pm j_0}^{\,v_{\src},\zsrc,\eta_{j_0},\Lambda_{j_0}}
    =
    \mathcal L\!\left(
        (\Xtar,\Xsrc)\mid
        \tau_{j_0}=\pm1,\ v_{\src},\zsrc,\eta_{j_0},\Lambda_{j_0}
    \right)
\]
has density with respect to $Q$ given by
\begin{align}
\label{eq:sec3_oracle2_row_density}
    \frac{dP_{\pm j_0}^{\,v_{\src},\zsrc,\eta_{j_0},\Lambda_{j_0}}}{dQ}
    &(\Xtar,\Xsrc)
    =
    \prod_{r=1}^d
    e^{-\nS\DeltaS^2/(2d)}
    \exp\!\left(
        \frac{\DeltaS}{\sigmaS}\,v_{\src,r}
        \sum_{j=1}^{\nS}\zsrc_j X^{(S)}_{j,r}
    \right)
    \cdot\\
    &e^{-\nT\DeltaT^2/(2d)}
    \left[
        \cosh\!\left(
            \frac{\DeltaT}{\sqrt d}(B_r+\eta_{j_0} U_{\pm,r})
        \right)
        +
        \tilde\mu \sqrt d\, v_{\src,r} \sinh\!\left(
            \frac{\DeltaT}{\sqrt d}(B_r+\eta_{j_0} U_{\pm,r})
        \right)
    \right].
\end{align}
\end{lemma}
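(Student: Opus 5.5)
The plan is to compute the conditional likelihood ratio with respect to $Q$ in two stages. First I condition additionally on the target direction signs $\xi=(\xi_1,\ldots,\xi_d)$. Given $\xi$, the data law is a Gaussian shift, exactly as in the proof of Lemma~\ref{lem:sec3_exact_tv_form}, and its density factorizes over rows. I then integrate out $\xi$ under its conditional law given $W=(v_{\src},\zsrc,\eta_{j_0},\Lambda_{j_0})$. The point is that, given $W$, the only unrevealed latent quantities are $\xi$ and the tested bit $\tau_{j_0}$. The tested bit is fixed to $\pm1$ in $P^{\,W}_{\pm j_0}$. So the whole task reduces to identifying the conditional law of $\xi$ given $W$ and performing a one-dimensional average in each row.

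\textbf{Step 1 (conditional law of $\xi$).} By \eqref{eq:construct_signal_adv}, $\sqrt d\,v_{\src,r}=\xi_r\kappa_r$. The pairs $(\xi_r,\kappa_r)$ are i.i.d.\ across $r$ and independent of $(\zsrc,\eta,\tau)$. Hence, conditionally on $W$, the $\xi_r$ are independent and depend on $W$ only through $v_{\src,r}$. Fix $s\in\{\pm1\}$. On the event $\{\sqrt d\,v_{\src,r}=s\}$ we have $\xi_r=s\kappa_r$. Since $\xi_r$ is uniform and independent of $\kappa_r$, Bayes' rule gives
\[
\Pbb(\xi_r=s\mid v_{\src,r}=s/\sqrt d)=\Pbb(\kappa_r=+1)=\tfrac{1+\tilde\mu}{2}.
\]
Consequently $\Ebb[\xi_r\mid W]=\tilde\mu\sqrt d\,v_{\src,r}$. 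This is the step I expect to need the most care, because the coupling between $\xi$ and $v_{\src}$ is what produces the $\tilde\mu\sqrt d\,v_{\src,r}\sinh$ term in \eqref{eq:sec3_oracle2_row_density}.

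\textbf{Step 2 (row likelihood ratio given $\xi$ and $W$).} Fix $\tau_{j_0}=\pm1$. Then the target label vector $z_\pm$ is determined by $(\eta_{j_0},\Lambda_{j_0})$. Row $r$ satisfies
\[
x_r/\sigmaT\sim\dnorm\bigl(\DeltaT\xi_r z_\pm/\sqrt d,\ I_{\nT}\bigr),\qquad
y_r/\sigmaS\sim\dnorm\bigl(\DeltaS\sqrt d\,v_{\src,r}\,\zsrc/\sqrt d,\ I_{\nS}\bigr),
\]
independently across $r$. Using the Gaussian shift likelihood ratio together with $\|z_\pm\|_2^2=\nT$, $\|\zsrc\|_2^2=\nS$ and $v_{\src,r}^2=1/d$, the row-$r$ factor is
\[
e^{-\nS\DeltaS^2/(2d)}\exp\bigl(\tfrac{\DeltaS}{\sigmaS}v_{\src,r}\ip{\zsrc}{y_r}\bigr)
\cdot
e^{-\nT\DeltaT^2/(2d)}\exp\bigl(\tfrac{\DeltaT}{\sqrt d}\xi_r\ip{z_\pm}{x_r}/\sigmaT\bigr).
\]
Next I identify the target inner product with the quantities in \eqref{eq:def-Br}. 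Expanding over pairs via \eqref{eq:sec3_eta_param}, pair $\ell\neq j_0$ contributes $\eta_\ell(X_{2\ell-1,r}+\tau_\ell X_{2\ell,r})/\sigmaT$. Pair $j_0$ contributes $\eta_{j_0}U_{\pm,r}$. Therefore $\ip{z_\pm}{x_r}/\sigmaT=B_r+\eta_{j_0}U_{\pm,r}$.

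\textbf{Step 3 (averaging).} By the conditional independence from Step 1, the expectation of the product over $r$ equals the product of the row-wise expectations over $\xi_r$. The source factor does not involve $\xi_r$, so it passes through unchanged. For the target factor, set $c_r=\DeltaT(B_r+\eta_{j_0}U_{\pm,r})/\sqrt d$. Then
\[
\Ebb[e^{c_r\xi_r}\mid W]=\cosh c_r+\Ebb[\xi_r\mid W]\sinh c_r=\cosh c_r+\tilde\mu\sqrt d\,v_{\src,r}\sinh c_r.
\]
Taking the product over $r$ yields exactly \eqref{eq:sec3_oracle2_row_density}. The remaining checks are routine bookkeeping: there is no leftover normalizing constant, because $\|z_\pm\|_2^2$ and $\|\zsrc\|_2^2$ do not depend on the latent variables, and both conditional laws are absolutely continuous with respect to $Q$.
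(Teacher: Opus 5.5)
Your proof is correct and follows essentially the same route as the paper: condition additionally on the target signs $\xi$, write the row-wise Gaussian-shift likelihood ratio, identify $\ip{z_\pm}{x_r}/\sigmaT=B_r+\eta_{j_0}U_{\pm,r}$, and then average each $\xi_r$ over its conditional law given $s_r=\sqrt d\,v_{\src,r}$, namely $\Pbb(\xi_r=\pm1\mid s_r)=(1\pm\tilde\mu s_r)/2$. Your explicit Bayes derivation of that conditional law, and of the conditional independence of the $\xi_r$ across rows given $W$, spells out what the paper asserts in one line.
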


\begin{proof}
Write $v_{\src}=s/\sqrt d$ with $s\in\{\pm1\}^d$. Under the revealed variables,
\[
    \Xtar_i
    =
    \sigmaT\DeltaT \ztar_i v_{\trg}+\sigmaT g_i,
    \qquad
    \Xsrc_j
    =
    \sigmaS\DeltaS \zsrc_j v_{\src}+\sigmaS h_j,
\]
where $g_i,h_j\sim N(0,I_d)$ independently. Since $v_{\src}=s/\sqrt d$, its
$r$th coordinate is $v_{\src,r}=s_r/\sqrt d$. Also $v_{\trg,r}=\xi_r/\sqrt d$
with
\[
    \kappa_r\sim\mathrm{Rad}\!\left(\frac{1+\tilde\mu}{2}\right),
\]
and after conditioning on $v_{\src}$ we have $\kappa_r=s_r\xi_r$.
Hence, row by row,
\[
    X^{(T)}_{i,r}
    =
    \sigmaT\frac{\DeltaT}{\sqrt d}\,\xi_r \ztar_i + \sigmaT g_{i,r},
    \qquad
    X^{(S)}_{j,r}
    =
    \sigmaS\frac{\DeltaS}{\sqrt d}\, s_r \zsrc_j+\sigmaS h_{j,r},
\]
so the full conditional density with respect to $Q$ factors over rows.

For the source part, the row-$r$ Gaussian likelihood ratio is
\[
    e^{-\nS\DeltaS^2/(2d)}
    \exp\!\left(
        \frac{\DeltaS}{\sigmaS\sqrt d}
        s_r \sum_{j=1}^{\nS}\zsrc_j X^{(S)}_{j,r}
    \right),
\]
which does not depend on $\tau_{j_0}$.

For the target part, conditional on $\xi_r=e$, the row-$r$ shift vector
is $\sigmaT\frac{\DeltaT}{\sqrt d} e\,\ztar$, hence its Gaussian likelihood ratio is
\[
    e^{-\nT\DeltaT^2/(2d)}
    \exp\!\left(
        \frac{\DeltaT}{\sigmaT\sqrt d}
        a\sum_{i=1}^{\nT}\ztar_i X^{(T)}_{i,r}
    \right).
\]
Given $(\eta_{j_0},\Lambda_{j_0},\tau_{j_0})$,
\[
    \sum_{i=1}^{\nT}\ztar_i X^{(T)}_{i,r}
    =
    \sigmaT\bigl(\eta_{j_0} U_{\tau_{j_0},r}+B_r\bigr).
\]
Finally, conditional on $s_r$, the sign $\xi_r$ has distribution
\[
    \Pbb(\xi_r=+1\mid s_r)=\frac{1+\tilde\mu s_r}{2},
    \qquad
    \Pbb(\xi_r=-1\mid s_r)=\frac{1-\tilde\mu s_r}{2},
\]
so averaging over $\xi_r$ yields
\[
    \Ebb\!\left[
        \exp\!\left\{
            \frac{\DeltaT}{\sqrt d}\xi_r(\eta_{j_0} U_{\tau_{j_0},r}+B_r)
        \right\}
        \Bigm| s_r
    \right]
    =
    \cosh\!\left(
        \frac{\DeltaT}{\sqrt d}(B_r+\eta_{j_0} U_{\tau_{j_0},r})
    \right)
    +
    \tilde\mu s_r \sinh\!\left(
        \frac{\DeltaT}{\sqrt d}(B_r+\eta_{j_0} U_{\tau_{j_0},r})
    \right).
\]
Multiplying the source and target factors over $r=1,\ldots,d$ proves
\eqref{eq:sec3_oracle2_row_density}.
\end{proof}

\begin{lemma}[Posterior mean from the non-tested target observations]
\label{lem:sec3_oracle2_posterior}
Under the setup of Lemma~\ref{lem:sec3_oracle2_row_density}, fix a row $r$ and
write
$
    x_i:=\frac{X^{(T)}_{i,r}}{\sigmaT}.
$
Define
$
    S_r:=x_{2j_0-1}+x_{2j_0},
      D_r:=x_{2j_0-1}-x_{2j_0}.
$
For $B_r$ as defined in~\eqref{eq:def-Br} conditionally on $\xi_r$,
\[
    B_r
    \;\overset{d}{=}\;
    (\nT-2)\frac{\DeltaT}{\sqrt d}\,\xi_r+\sqrt{\nT-2}\,G_r,
\]
for $G_r\sim N(0,1)$ independent of $(\xi_r,\zsrc,\eta_{j_0},\Lambda_{j_0})$, and
\begin{equation}
\label{eq:sec3_independence_B_SD}
    B_r \perp (S_r,D_r) \mid (\xi_r,\eta_{j_0},\Lambda_{j_0}).
\end{equation}
Under $\tau_{j_0}=+1$,
\[
    S_r\mid (\xi_r,\eta_{j_0}) \sim N\!\left(2\eta_{j_0}\frac{\DeltaT}{\sqrt d}\,\xi_r,2\right),
    \qquad
    D_r\mid \xi_r \sim N(0,2),
\]
while under $\tau_{j_0}=-1$ these two roles are reversed. Finally, setting
$s_r:=v_{\src,r}\sqrt{d}$, define
\[
    m_r(b,s_r):=\Ebb[\xi_r\mid s_r,B_r=b].
\]
Then
\begin{equation}
\label{eq:sec3_oracle2_posterior_mean}
    m_r(b,s_r)
    =
    \tanh\!\left(
        \operatorname{artanh}(\tilde\mu s_r)+\frac{\DeltaT}{\sqrt d}\,b
    \right).
\end{equation}
\end{lemma}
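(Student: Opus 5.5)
The plan is to condition on the revealed variables $W=(v_{\src},\zsrc,\eta_{j_0},\Lambda_{j_0})$ and on $\xi_r$ throughout. Then every target observation is an explicit Gaussian, and the three claims reduce to elementary Gaussian algebra plus one Bayes computation.

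\emph{Step 1: the law of $B_r$.} Condition on $(\xi_r,\eta_{j_0},\Lambda_{j_0})$. Each normalized entry satisfies
\[
x_i = \frac{\DeltaT}{\sqrt d}\,\xi_r \ztar_i + g_{i,r},
\]
with $g_{i,r}$ i.i.d. $\dnorm(0,1)$. For $\ell\neq j_0$ we have $\ztar_{2\ell-1}=\eta_\ell$ and $\ztar_{2\ell}=\tau_\ell\eta_\ell$. Hence
\[
\eta_\ell\bigl(x_{2\ell-1}+\tau_\ell x_{2\ell}\bigr)=\frac{2\DeltaT}{\sqrt d}\,\xi_r+\eta_\ell\bigl(g_{2\ell-1,r}+\tau_\ell g_{2\ell,r}\bigr),
\]
using $\eta_\ell^2=\tau_\ell^2=1$. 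Summing over the $k-1=(\nT-2)/2$ untested pairs gives the mean $(\nT-2)\DeltaT\xi_r/\sqrt d$. The noise is a sum of $\nT-2$ independent signed standard normals, so it is $\dnorm(0,\nT-2)$. Because the signs are fixed by the conditioning, this noise is distributed as $\sqrt{\nT-2}\,G_r$ with $G_r$ standard normal. $G_r$ is a function of the $g_{i,r}$, $i\notin\{2j_0-1,2j_0\}$, so it is independent of $(\xi_r,\zsrc,\eta_{j_0},\Lambda_{j_0})$.

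\emph{Step 2: independence and the laws of $S_r,D_r$.} $B_r$ depends only on the noise outside the tested pair, while $(S_r,D_r)$ depends only on the noise inside it. Given $(\xi_r,\eta_{j_0},\Lambda_{j_0})$ these noise blocks are independent, which gives \eqref{eq:sec3_independence_B_SD}.

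Under $\tau_{j_0}=+1$ both labels in the tested pair equal $\eta_{j_0}$. Then $S_r$ has mean $2\eta_{j_0}\DeltaT\xi_r/\sqrt d$, and $D_r$ has mean zero. The noise parts $g_{2j_0-1,r}\pm g_{2j_0,r}$ are independent $\dnorm(0,2)$ variables. Under $\tau_{j_0}=-1$ the labels are $\eta_{j_0}$ and $-\eta_{j_0}$, so the roles of $S_r$ and $D_r$ swap.

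\emph{Step 3: the posterior mean.} As in the proof of Lemma~\ref{lem:sec3_oracle2_row_density}, the prior of $\xi_r$ given $s_r$ is
\[
\Pbb(\xi_r=\pm1\mid s_r)=\frac{1\pm\tilde\mu s_r}{2}.
\]
Since $s_r\in\{\pm1\}$ and $\tilde\mu<1$, this prior can be written as proportional to $e^{\pm h_0}$ with $h_0=\operatorname{artanh}(\tilde\mu s_r)$. By Step 1, given $\xi_r$, the variable $B_r$ is $\dnorm\bigl((\nT-2)\DeltaT\xi_r/\sqrt d,\ \nT-2\bigr)$.

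The likelihood ratio in $\xi_r=\pm1$ is $\exp\{\pm \DeltaT b/\sqrt d\}$; the quadratic terms cancel because $\xi_r^2=1$. The posterior of $\xi_r$ given $(s_r,B_r=b)$ is therefore proportional to $e^{\pm(h_0+\DeltaT b/\sqrt d)}$. Taking its mean yields \eqref{eq:sec3_oracle2_posterior_mean}.

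This conditioning on $\xi_r$ requires that $s_r$ affect $B_r$ only through $\xi_r$, which holds because the source data are not involved. It is also legitimate to ignore the tested pair here, since the conditioning event is only $B_r=b$.

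\emph{Main obstacle.} The only delicate point is the independence bookkeeping. $G_r$ must be independent not just of $\xi_r$ but also of $\zsrc$ and the source noise. The posterior in Step 3 conditions on $B_r$ alone and not on the tested-pair data. Both facts follow from the disjointness of the underlying noise blocks, so the difficulty is care rather than technique.
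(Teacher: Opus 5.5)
Your proposal is correct and follows essentially the same route as the paper. Both arguments use the per-pair identity $\eta_\ell(x_{2\ell-1}+\tau_\ell x_{2\ell})=2\frac{\DeltaT}{\sqrt d}\xi_r+\eta_\ell(g_{2\ell-1,r}+\tau_\ell g_{2\ell,r})$, independence of disjoint noise blocks for \eqref{eq:sec3_independence_B_SD}, and a direct computation for the tested pair. Both then finish with a Bayes log-odds step that combines the prior $2\operatorname{artanh}(\tilde\mu s_r)$ with the Gaussian likelihood ratio $\exp(2\DeltaT b/\sqrt d)$.

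One small precision: $G_r$ depends on the signs in $\Lambda_{j_0}$ as well as on the noise. Its independence from $(\xi_r,\zsrc,\eta_{j_0},\Lambda_{j_0})$ therefore comes from its conditional law being $N(0,1)$ for every value of those variables, which your remark about the signs being fixed by the conditioning already supplies.
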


\begin{proof}
$B_r$ is a sum of $\nT/2-1$ terms, one per non-tested pair.
For each $\ell\neq j_0$, writing $\delta:=\DeltaT/\sqrt d$,
\[
    x_{2\ell-1}+\tau_\ell x_{2\ell}
    =
    2\eta_\ell\delta\,\xi_r+\zeta_{\ell,r},
\]
with $\zeta_{\ell,r}\sim N(0,2)$ independently of everything else, since
$g_{2\ell-1,r}+\tau_\ell g_{2\ell,r}\sim N(0,2)$ for $\tau_\ell\in\{\pm1\}$.
Multiplying by $\eta_\ell$ (with $\eta_\ell^2=1$) and summing over the
$\nT/2-1$ non-tested pairs gives
\[
    B_r
    =
    (\nT-2)\frac{\DeltaT}{\sqrt d}\,\xi_r+\sqrt{\nT-2}\,G_r,
\]
where $G_r\sim N(0,1)$ is independent of $(\xi_r,\zsrc,\eta_{j_0},\Lambda_{j_0})$
because the noise terms $\{\zeta_{\ell,r}\}$ are independent of all these
variables. The independence \eqref{eq:sec3_independence_B_SD} follows because
$B_r$ and $(S_r,D_r)$ are functions of disjoint sets of independent
(given $\xi_r$) observations.

For the tested pair, if $\tau_{j_0}=+1$ then
\[
    x_{2j_0-1}=\eta_{j_0}\frac{\DeltaT}{\sqrt d}\,\xi_r+g_{2j_0-1,r},
    \qquad
    x_{2j_0}=\eta_{j_0}\frac{\DeltaT}{\sqrt d}\,\xi_r+g_{2j_0,r},
\]
so
\[
    S_r\mid (\xi_r,\eta_{j_0}) \sim N\!\left(2\eta_{j_0}\frac{\DeltaT}{\sqrt d}\,\xi_r,2\right),
    \qquad
    D_r\mid \xi_r \sim N(0,2).
\]
If $\tau_{j_0}=-1$, the sign in the second coordinate flips and the two roles
interchange.

It remains to compute the posterior mean. Since $\xi_r\perp(\zsrc,\eta_{j_0},\Lambda_{j_0})\mid v_{\src}$
in the prior (target labels and source data are independent of the target
direction) and $B_r\mid\xi_r$ does not depend on $(\zsrc,\eta_{j_0},\Lambda_{j_0})$,
the posterior satisfies
$m_r(b,s_r)=\Ebb[\xi_r\mid v_{\src},\zsrc,\eta_{j_0},\Lambda_{j_0},B_r=b]
=\Ebb[\xi_r\mid s_r,B_r=b]$,
where the last step uses that, given $v_{\src}$, only the scalar
$s_r=v_{\src,r}\sqrt d$ is relevant for the distribution of $\xi_r$ and $B_r$.
Conditionally on $\xi_r=e$,
the density of $B_r$ is Gaussian with mean $(\nT-2)\frac{\DeltaT}{\sqrt d} e$ and variance
$\nT-2$, so
\[
    \frac{f_{B_r\mid\xi_r=+1}(b)}
         {f_{B_r\mid\xi_r=-1}(b)}
    =
    \exp\!\left(\frac{2\DeltaT}{\sqrt d} b\right).
\]
Also, conditional on $s_r$,
\[
    \log
    \frac{\Pbb(\xi_r=+1\mid s_r)}
         {\Pbb(\xi_r=-1\mid s_r)}
    =
    2\operatorname{artanh}(\tilde\mu s_r).
\]
Bayes' rule combines the prior log-odds and the log-likelihood ratio:
\[
    \log\frac{\Pbb(\xi_r=+1\mid s_r,B_r=b)}{\Pbb(\xi_r=-1\mid s_r,B_r=b)}
    =
    \underbrace{\log\frac{\Pbb(\xi_r=+1\mid s_r)}{\Pbb(\xi_r=-1\mid s_r)}}_{=\,2\operatorname{artanh}(\tilde\mu s_r)}
    +
    \underbrace{\log\frac{f_{B_r\mid\xi_r=+1}(b)}{f_{B_r\mid\xi_r=-1}(b)}}_{=\,\frac{2\DeltaT}{\sqrt d}\,b}
    =
    2\operatorname{artanh}(\tilde\mu s_r)+\frac{2\DeltaT}{\sqrt d}\,b.
\]
For a $\{\pm1\}$-valued variable $\xi_r$ with posterior log-odds equal to $2A$,
the posterior probabilities are $\Pbb(\xi_r=\pm1\mid\cdot)=\tfrac{1\pm\tanh(A)}{2}$,
so the posterior mean is $\tanh(A)$. With $A=\operatorname{artanh}(\tilde\mu s_r)+\frac{\DeltaT}{\sqrt d}\,b$
this gives exactly \eqref{eq:sec3_oracle2_posterior_mean}.
\end{proof}

\begin{lemma}[Conditional tested-pair comparison given $B_r$]
\label{lem:sec3_oracle2_conditional_pair}
Under the setup of Lemma~\ref{lem:sec3_oracle2_row_density}, let $p_{+,r}$
and $p_{-,r}$ denote the row-$r$ marginals of
$P_{+j_0}^{\,v_{\src},\zsrc,\eta_{j_0},\Lambda_{j_0}}$ and
$P_{-j_0}^{\,v_{\src},\zsrc,\eta_{j_0},\Lambda_{j_0}}$ respectively.
Fix $b\in\R$. Then, without an additional restriction on $\DeltaT^2/d$,
\begin{equation}
\label{eq:sec3_oracle2_one_row_conditional}
    H^2\!\bigl(
        p_{+,r}(\cdot\mid B_r=b),
        p_{-,r}(\cdot\mid B_r=b)
    \bigr)
    \le
    C\left(\frac{\DeltaT^4}{d^2}+m_r(b,s_r)^2\frac{\DeltaT^2}{d}\right).
\end{equation}
\end{lemma}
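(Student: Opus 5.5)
The plan is to reduce the row-$r$ comparison, conditional on $B_r=b$, to a comparison between two product laws on the tested-pair coordinates $(S_r,D_r)$. These are exactly the rotated laws from the proof of Item~1, except that the symmetric sign mixture is replaced by a biased one with mean $m_r(b,s_r)$.

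\emph{Step 1 (what survives the conditioning).} Work under the conditioning on $W=(v_{\src},\zsrc,\eta_{j_0},\Lambda_{j_0})$ and fix a row $r$. The row-$r$ data consist of the source row $y_r$, the non-tested target coordinates, and $(S_r,D_r)$.
\begin{itemize}
\item By Lemma~\ref{lem:sec3_oracle2_row_density}, given $s_r$ the source row has a law that does not involve $\xi_r$ or $\tau_{j_0}$. It therefore factors out identically under both hypotheses.
\item Given $\Lambda_{j_0}$, the log-likelihood of the non-tested target coordinates in $\xi_r$ is linear in $B_r$ alone. Hence $B_r$ is sufficient for $\xi_r$, and the conditional law of the remaining non-tested coordinates given $B_r$ is the same under $\tau_{j_0}=\pm1$.
\item By \eqref{eq:sec3_independence_B_SD} and Bayes' rule, conditional on $B_r=b$ the variable $\xi_r$ is a $\{\pm1\}$ sign with mean $m:=m_r(b,s_r)$, given by \eqref{eq:sec3_oracle2_posterior_mean}.
\end{itemize}
Consequently, the Hellinger distance in \eqref{eq:sec3_oracle2_one_row_conditional} equals the Hellinger distance between the two conditional laws of $(S_r,D_r)$ given $B_r=b$.

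\emph{Step 2 (explicit mixture densities).} Write $\delta:=\DeltaT/\sqrt d$ and $\phi:=\dnorm(0,2)$. By Lemma~\ref{lem:sec3_oracle2_posterior}, under $\tau_{j_0}=+1$ we have $S_r\mid\xi_r\sim \dnorm(2\eta_{j_0}\delta\xi_r,2)$ and $D_r\sim\phi$. Averaging over $\xi_r$ with mean $m$, the density of $S_r$ relative to $\phi$ is
\[
f(x):=e^{-\delta^2}\bigl(\cosh(\delta x)+\eta_{j_0}m\sinh(\delta x)\bigr).
\]
Hence
\[
p_{+,r}(\cdot\mid b)=f\phi\otimes\phi,\qquad p_{-,r}(\cdot\mid b)=\phi\otimes f\phi .
\]

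\emph{Step 3 (Hellinger bound).} As in Item~1, the product formula for Hellinger affinity gives $H^2\le 2H^2(f\phi,\phi)\le 2\chi^2(f\phi\,\|\,\phi)$. For $X\sim\phi$ we have $\Ebb e^{\pm 2\delta X}=e^{4\delta^2}$, which gives
\[
\Ebb\cosh^2(\delta X)=\frac{e^{4\delta^2}+1}{2},\qquad \Ebb\sinh^2(\delta X)=\frac{e^{4\delta^2}-1}{2},\qquad \Ebb\cosh(\delta X)\sinh(\delta X)=0 .
\]
Therefore
\[
\chi^2=\cosh(2\delta^2)-1+m^2\sinh(2\delta^2).
\]
If $\delta^2\le1$, this is at most $C(\delta^4+m^2\delta^2)$, which is exactly $C(\DeltaT^4/d^2+m^2\DeltaT^2/d)$. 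If $\delta^2>1$, use the trivial bound $H^2\le 1\le\delta^4$. This case split is why no restriction on $\DeltaT^2/d$ is needed.

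The main obstacle is Step~1, namely rigorously justifying that conditioning on $B_r=b$ leaves only the $(S_r,D_r)$ comparison with the biased prior $m$. I would verify this by writing the joint row density from Lemma~\ref{lem:sec3_oracle2_row_density} and factoring it as (source factor) $\times$ (ancillary non-tested factor given $B_r$) $\times$ ($B_r$ density) $\times$ (posterior-weighted pair factor). The remaining steps are direct calculation.
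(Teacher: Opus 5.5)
Your proposal is correct and follows essentially the same route as the paper. Conditional on $B_r=b$, you reduce to the product laws $f\phi\otimes\phi$ versus $\phi\otimes f\phi$ on the sum/difference coordinates with the posterior-biased sign mixture, bound $H^2$ by twice the $\chi^2$ of one factor, obtain $\cosh(2\delta^2)-1+m^2\sinh(2\delta^2)$, and split on the size of $\DeltaT^2/d$. The differences are cosmetic: you keep $(S_r,D_r)$ unnormalized against $N(0,2)$ rather than $(U_r,V_r)$ against $N(0,1)$, and your Step~1 spells out, via sufficiency of $B_r$, why the source row and the non-tested coordinates drop out, which the paper only asserts.
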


\begin{proof}
Recall from Lemma~\ref{lem:sec3_oracle2_posterior} that
$S_r:=x_{2j_0-1}+x_{2j_0}$ and $D_r:=x_{2j_0-1}-x_{2j_0}$ where
$x_i:=X^{(T)}_{i,r}/\sigmaT$. Set
\[
    U_r:=\frac{S_r}{\sqrt2},\quad
    V_r:=\frac{D_r}{\sqrt2},\quad
    \gamma:=\sqrt{\frac{2\DeltaT^2}{d}},\quad
    \widetilde m_r(b,s_r):=\eta_{j_0}\,m_r(b,s_r).
\]
For $m\in[-1,1]$, define
$f_{m,\gamma}(t):=e^{-\gamma^2/2}[\cosh(\gamma t)+m\sinh(\gamma t)]$
and let $M_{m,\gamma}$ be the law with density $f_{m,\gamma}$ relative to~$\phi$.

The map $(X^{(T)}_{2j_0-1,r},X^{(T)}_{2j_0,r})\mapsto(U_r,V_r)$ is a bijection
(scale by $1/\sigmaT$, then rotate: $(a,b)\mapsto((a+b)/\sqrt2,(a-b)/\sqrt2)$),
so $p_{\pm,r}(\cdot\mid B_r=b)$ and the conditional law of $(U_r,V_r)$ given
$B_r=b$ carry identical information; we work with the latter. Our goal is to show that this
law factors as $M_{\widetilde m_r(b,s_r),\gamma}\otimes\phi$ (resp.\
$\phi\otimes M_{\widetilde m_r(b,s_r),\gamma}$) under $\tau_{j_0}=+1$
(resp.\ $\tau_{j_0}=-1$). We do this by identifying the marginal density of $U_r$
relative to $\phi$, showing $V_r\sim\phi$ unconditionally, and establishing
that $U_r\perp V_r$ after conditioning on $B_r=b$. The hidden variable
throughout is the row label $\xi_r$, which is not observed.

\medskip
\noindent\textit{Step~1: distributions of $U_r$ and $V_r$ given $\xi_r$.}
Lemma~\ref{lem:sec3_oracle2_posterior} states that under $\tau_{j_0}=+1$,
\[
    S_r\mid(\xi_r=e,\eta_{j_0})\sim N\!\left(2\eta_{j_0}\frac{\DeltaT}{\sqrt d}\,e,\,2\right),
    \qquad
    D_r\mid\xi_r\sim N(0,2).
\]
Dividing by $\sqrt2$ gives
\[
    U_r\mid(\xi_r=e,\eta_{j_0})\sim N(\gamma\eta_{j_0} e,1),
    \qquad
    V_r\mid\xi_r\sim N(0,1)=\phi.
\]
Crucially, $V_r\sim\phi$ for every value of $\xi_r$, so $V_r\sim\phi$
unconditionally. Under $\tau_{j_0}=-1$ the roles of $S_r$ and $D_r$ are reversed:
$D_r$ carries the signal and $S_r\sim N(0,2)$, giving $V_r\sim N(\gamma\eta_{j_0} e,1)$
and $U_r\sim\phi$.

\medskip
\noindent\textit{Step~2: effect of conditioning on $B_r=b$.}
By independence~\eqref{eq:sec3_independence_B_SD}, $(U_r,V_r)$ depends only
on the tested-pair observations while $B_r$ depends only on the non-tested
pairs, so $(U_r,V_r)\perp B_r\mid\xi_r$. Therefore the conditional distributions
of $U_r$ and $V_r$ given $\xi_r=e$ are unchanged when we further condition on
$B_r=b$. In particular, $V_r\sim\phi$ still holds after conditioning on $B_r=b$.
Moreover, since $U_r$ and $V_r$ are uncorrelated Gaussians (sum and difference
of two independent normals), they are independent given $\xi_r$, and this
independence also survives conditioning on $B_r=b$.

\medskip
\noindent\textit{Step~3: density of $U_r$ under $p_{+,r}(\cdot\mid B_r=b)$.}
To identify the marginal law of $U_r$, we average its conditional density
(given $\xi_r$) over the posterior of $\xi_r$. Specifically, the density of
$U_r$ under $p_{+,r}(\cdot\mid B_r=b)$ relative to $\phi$ is
$\Ebb[\,dN(\gamma\eta_{j_0}\xi_r,1)/d\phi\,(u)\mid B_r=b,s_r]$,
where the Radon--Nikodym derivative of $N(\gamma\eta_{j_0} e,1)$ w.r.t.\ $\phi$
at $u$ is $e^{\gamma\eta_{j_0} e\,u-\gamma^2/2}$. Using the posterior mean
$m_r(b,s_r)$ from~\eqref{eq:sec3_oracle2_posterior_mean}:
\begin{align*}
    \frac{dp_{+,r}(\cdot\mid B_r=b)}{d\phi}(u)
    &=
    \Ebb\!\left[e^{\gamma\eta_{j_0}\xi_r u-\gamma^2/2}\Bigm|B_r=b,s_r\right]\\
    &=
    e^{-\gamma^2/2}
    \!\left[
        \frac{1+m_r(b,s_r)}{2}\,e^{\gamma\eta_{j_0} u}
        +
        \frac{1-m_r(b,s_r)}{2}\,e^{-\gamma\eta_{j_0} u}
    \right]\\
    &=
    e^{-\gamma^2/2}\bigl[\cosh(\gamma u)+\eta_{j_0} m_r(b,s_r)\sinh(\gamma u)\bigr]
    \;=\;
    f_{\widetilde m_r(b,s_r),\gamma}(u).
\end{align*}
Hence $U_r\mid(B_r=b)\sim M_{\widetilde m_r(b,s_r),\gamma}$ under $\tau_{j_0}=+1$.

\medskip
\noindent\textit{Step~4: product representation.}
Combining Steps~2 and~3: under $\tau_{j_0}=+1$, conditional on $B_r=b$,
$U_r\sim M_{\widetilde m_r(b,s_r),\gamma}$, $V_r\sim\phi$, and $U_r\perp V_r$.
Under $\tau_{j_0}=-1$ the roles are swapped. Therefore
\[
    p_{+,r}(\cdot\mid B_r=b)=M_{\widetilde m_r(b,s_r),\gamma}\otimes\phi,
    \qquad
    p_{-,r}(\cdot\mid B_r=b)=\phi\otimes M_{\widetilde m_r(b,s_r),\gamma}.
\]

\medskip
\noindent\textit{Step~5: Hellinger bound.}
Write $\widetilde m:=\widetilde m_r(b,s_r)$ for short. The product formula for
Hellinger affinity gives
\[
    H^2(M_{\widetilde m,\gamma}\otimes\phi,\,\phi\otimes M_{\widetilde m,\gamma})
    =
    1-\bigl(1-H^2(M_{\widetilde m,\gamma},\phi)\bigr)^2
    \le
    2H^2(M_{\widetilde m,\gamma},\phi).
\]
Since $H^2\le\chi^2$ and $\Ebb_\phi[f_{\widetilde m,\gamma}]=1$,
\[
    H^2(M_{\widetilde m,\gamma},\phi)
    \le
    \chi^2(M_{\widetilde m,\gamma}\|\phi)
    =
    \Ebb_\phi\!\left[f_{\widetilde m,\gamma}^2\right]-1.
\]
Expanding $f_{\widetilde m,\gamma}^2=e^{-\gamma^2}[\cosh(\gamma G)+\widetilde m\sinh(\gamma G)]^2$
for $G\sim N(0,1)$: the cross-term $\cosh(\gamma G)\sinh(\gamma G)$ has zero
expectation by symmetry ($G\overset{d}{=}-G$), while
\[
    \Ebb[\cosh^2(\gamma G)]=\frac{e^{2\gamma^2}+1}{2},
    \qquad
    \Ebb[\sinh^2(\gamma G)]=\frac{e^{2\gamma^2}-1}{2}.
\]
Hence
\[
    \chi^2(M_{\widetilde m,\gamma}\|\phi)
    =
    \cosh(\gamma^2)-1+\widetilde m^{\,2}\sinh(\gamma^2).
\]
If $\gamma\le 1/2$, the last display is at most
$C(\gamma^4+\widetilde m^{\,2}\gamma^2)$ by the elementary small-argument
bounds for $\cosh$ and $\sinh$.  If $\gamma>1/2$, use instead
$H^2\le 1\le 16\gamma^4$.  Substituting
$\widetilde m^{\,2}=m_r(b,s_r)^2$ and
$\gamma^2=2\DeltaT^2/d$ gives
\eqref{eq:sec3_oracle2_one_row_conditional} in both cases.
\end{proof}

\begin{lemma}[Conditional total-variation bound under the revealed-source experiment]
\label{lem:sec3_oracle2_conditional_tv}
Fix $\delta_0\in(0,1)$ and assume $\tilde\mu\in[0,1-\delta_0]$. Under the setup
of Lemma~\ref{lem:sec3_oracle2_row_density},
\begin{equation}
\label{eq:sec3_oracle2_conditional_tv}
    \TV\!\bigl(
        P_{+j_0}^{\,v_{\src},\zsrc,\eta_{j_0},\Lambda_{j_0}},
        P_{-j_0}^{\,v_{\src},\zsrc,\eta_{j_0},\Lambda_{j_0}}
    \bigr)^2
    \le
    C_{\delta_0}\left(
        \tilde\mu^2\DeltaT^2
        +
        \frac{\nT\DeltaT^4}{d}
    \right).
\end{equation}
\end{lemma}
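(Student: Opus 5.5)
The plan is to exploit the row-wise factorization from Lemma~\ref{lem:sec3_oracle2_row_density} and then pass from total variation to Hellinger. Conditional on $W=(v_{\src},\zsrc,\eta_{j_0},\Lambda_{j_0})$, the signs $\xi_r$ are independent across $r$. Their conditional law is $\Pbb(\xi_r=\pm1\mid s_r)=(1\pm\tilde\mu s_r)/2$, where $s_r=\sqrt d\,v_{\src,r}$. So the conditional laws $P_{\pm j_0}^{\,W}$ are products over $r\in[d]$ of row laws $p_{\pm,r}$. Moreover, the source factor in \eqref{eq:sec3_oracle2_row_density} does not depend on $\tau_{j_0}$. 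Only the target rows $x_r\in\R^{\nT}$ therefore need comparing, and the source rows can be dropped by a product/marginalization argument.

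Writing $H^2=1-\text{affinity}$, I would use $\TV^2\le 1-(1-H^2)^2\le 2H^2$ together with subadditivity of $H^2$ over products. This gives
\[
\TV(P_{+j_0}^{\,W},P_{-j_0}^{\,W})^2\le 2\sum_{r=1}^d H^2(p_{+,r},p_{-,r}).
\]

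Next I treat each row $r$ separately. The target row is in bijection with $(B_r,S_r,D_r,\text{residual coordinates})$. The residual coordinates of the non-tested pairs orthogonal to $B_r$ have the same law under both hypotheses, and are independent of the tested pair given $\xi_r$; they are handled like $B_r$ or absorbed into it. By Lemma~\ref{lem:sec3_oracle2_posterior}, $B_r$ depends only on the non-tested pairs. Its law, obtained by mixing over $\xi_r$ given $s_r$, is therefore identical under $\tau_{j_0}=\pm1$. For two joint laws sharing a common marginal of $B_r$, the Hellinger affinity factorizes through the conditionals. Hence
\[
H^2(p_{+,r},p_{-,r})=\Ebb_{B_r}\bigl[H^2(p_{+,r}(\cdot\mid B_r),p_{-,r}(\cdot\mid B_r))\bigr].
\]
Lemma~\ref{lem:sec3_oracle2_conditional_pair} then bounds the integrand by $C\bigl(\DeltaT^4/d^2+m_r(B_r,s_r)^2\DeltaT^2/d\bigr)$.

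The key step is controlling $\Ebb[m_r(B_r,s_r)^2]$, which I expect to be the main obstacle. By \eqref{eq:sec3_oracle2_posterior_mean} and the representation of $B_r$ in Lemma~\ref{lem:sec3_oracle2_posterior},
\[
\frac{\DeltaT}{\sqrt d}B_r=\lambda\xi_r+\sqrt\lambda\,G_r,\qquad \lambda:=\frac{(\nT-2)\DeltaT^2}{d},
\]
and $m_r=\tanh(a+\lambda\xi_r+\sqrt\lambda G_r)$ with $\tanh(a)=\tilde\mu s_r$. The prior of $\xi_r$ is exactly $\Pbb(\xi_r=+1)=(1+\tanh a)/2$, which matches the setup of Lemma~\ref{lem:sec3_posterior_bias_one_sided}. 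Since $|\tanh a|=\tilde\mu\le1-\delta_0$, that lemma gives
\[
\Ebb[m_r^2]\le\tilde\mu^2+C_{\delta_0}\frac{\nT\DeltaT^2}{d}.
\]
The point to verify carefully is that the expectation over $B_r$ in the Hellinger identity is under the same mixture law used in that lemma, namely the marginal of $B_r$ given $s_r$ only. This holds because $B_r\mid\xi_r$ is free of the other revealed variables.

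Finally I sum over the $d$ rows:
\[
\sum_r H^2(p_{+,r},p_{-,r})\le C d\Bigl(\frac{\DeltaT^4}{d^2}+\frac{\DeltaT^2}{d}\bigl(\tilde\mu^2+C_{\delta_0}\tfrac{\nT\DeltaT^2}{d}\bigr)\Bigr)=C\frac{\DeltaT^4}{d}+C\tilde\mu^2\DeltaT^2+C_{\delta_0}\frac{\nT\DeltaT^4}{d}.
\]
Since $\nT\ge1$, the first term is absorbed into the last. This yields \eqref{eq:sec3_oracle2_conditional_tv}, uniformly in the realization of $W$.
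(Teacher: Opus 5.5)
Your proposal is correct and follows essentially the same route as the paper. It uses the common marginal of $B_r$ to reduce each row to the conditional-pair Hellinger bound, the posterior-bias lemma with $\lambda=(\nT-2)\DeltaT^2/d$ to control $\Ebb[m_r^2]$, and Hellinger subadditivity over the $d$ conditionally independent rows; your extra care with the residual non-tested coordinates, the source rows, and the $\TV^2\le 2H^2$ normalization only tidies details and affects constants.
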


\begin{proof}
Because $B_r$ depends only on the non-tested target observations, its marginal
law is the same under $\tau_{j_0}=+1$ and $\tau_{j_0}=-1$. Therefore
\[
    H^2(p_{+,r},p_{-,r})
    =
    \Ebb\!\left[
        H^2\!\bigl(
            p_{+,r}(\cdot\mid B_r),
            p_{-,r}(\cdot\mid B_r)
        \bigr)
    \right].
\]
Applying Lemma~\ref{lem:sec3_oracle2_conditional_pair},
\[
    H^2(p_{+,r},p_{-,r})
    \le
    C\left(
        \frac{\DeltaT^4}{d^2}+\frac{\DeltaT^2}{d}\,\Ebb[m_r(B_r,s_r)^2]
    \right).
\]

From Lemma~\ref{lem:sec3_oracle2_posterior},
\[
    B_r
    =
    (\nT-2)\frac{\DeltaT}{\sqrt d}\,\xi_r+\sqrt{\nT-2}\,G_r.
\]
Set
\[
    \lambda:=(\nT-2)\frac{\DeltaT^2}{d}.
\]
Then
\[
    \frac{\DeltaT}{\sqrt d}\, B_r
    =
    \lambda\xi_r+\sqrt{\lambda}\,G_r,
\]
and by \eqref{eq:sec3_oracle2_posterior_mean},
\[
    m_r(B_r,s_r)
    =
    \tanh\!\bigl(
        \operatorname{artanh}(\tilde\mu s_r)
        +
        \lambda\xi_r
        +
        \sqrt{\lambda}\,G_r
    \bigr).
\]
Lemma~\ref{lem:sec3_posterior_bias_one_sided} applies with
$a=\operatorname{artanh}(\tilde\mu s_r)$, because $\tanh(a)=\tilde\mu s_r\in[-1+\delta_0,1-\delta_0]$
(using $s_r\in\{\pm1\}$, so $\tanh(a)^2=\tilde\mu^2$, and the bound is symmetric
in the sign of $a$). Since $(1-\tilde\mu^2)^2\le 1$, the Lemma~\ref{lem:sec3_posterior_bias_one_sided}
bound gives
\[
    \Ebb[m_r(B_r,s_r)^2]
    \le
    \tilde\mu^2 + C_{\delta_0}\lambda
    \le
    \tilde\mu^2 + C_{\delta_0}\frac{\nT\DeltaT^2}{d}.
\]
Substituting into the Lemma~\ref{lem:sec3_oracle2_conditional_pair} bound and
using $\DeltaT^4/d^2\le\nT\DeltaT^4/d^2$ gives
\begin{equation}
\label{eq:sec3_oracle2_row_hellinger}
    H^2(p_{+,r},p_{-,r})
    \le
    C_{\delta_0}\left(
        \tilde\mu^2\frac{\DeltaT^2}{d}
        +
        \frac{\nT\DeltaT^4}{d^2}
    \right).
\end{equation}
Conditional on $(v_{\src},\zsrc,\eta_{j_0},\Lambda_{j_0})$, the rows remain
independent, and by Lemma~\ref{lem:sec3_oracle2_row_density} the source row law
is common to both hypotheses. Therefore the full conditional laws are product
measures over $r=1,\ldots,d$, and squared Hellinger distance is subadditive
under products:
\begin{align*}
    H^2\!\bigl(
        P_{+j_0}^{\,v_{\src},\zsrc,\eta_{j_0},\Lambda_{j_0}},
        P_{-j_0}^{\,v_{\src},\zsrc,\eta_{j_0},\Lambda_{j_0}}
    \bigr)
    &\le
    \sum_{r=1}^d H^2(p_{+,r},p_{-,r})\\
    &\le
    C_{\delta_0}\left(
        \tilde\mu^2\DeltaT^2
        +
        \frac{\nT\DeltaT^4}{d}
    \right).
\end{align*}
Finally, $\TV(P,Q)\le H(P,Q)$, so
\eqref{eq:sec3_oracle2_conditional_tv} follows from
\[
    \TV\!\bigl(
        P_{+j_0}^{\,v_{\src},\zsrc,\eta_{j_0},\Lambda_{j_0}},
        P_{-j_0}^{\,v_{\src},\zsrc,\eta_{j_0},\Lambda_{j_0}}
    \bigr)^2
    \le
    H^2\!\bigl(
        P_{+j_0}^{\,v_{\src},\zsrc,\eta_{j_0},\Lambda_{j_0}},
        P_{-j_0}^{\,v_{\src},\zsrc,\eta_{j_0},\Lambda_{j_0}}
    \bigr),
\]
and the bound on the squared Hellinger distance above.
\end{proof}

\subsubsection{Proof of Item 3: doubly subcritical route}

\begin{proof}[Proof of Theorem~\ref{thm:sec3_tv_three_routes}, Item 3]
We establish \eqref{eq:sec3_tv_doubly} via a replica second-moment computation.
The point of the argument is to isolate exactly where the two neighboring
mixtures differ.  After averaging over all nuisance variables, the only
remaining asymmetry between $P_{+j_0}$ and $P_{-j_0}$ comes from the distinguished
target pair $(2j_0-1,2j_0)$; every other target pair and the entire source sample
contribute only background multiplicative factors.  The replica calculation
makes this separation explicit.

\paragraph{Setup.}
Write $\tilde x_{i,r}:=\Xtar_{i,r}/\sigmaT$ and $\tilde y_{j,r}:=\Xsrc_{j,r}/\sigmaS$ for the coordinatewise standardized observations; under $Q$ these are all i.i.d.\ $N(0,1)$. Let $L_a := dP_{aj_0}/dQ$ for $a\in\{+1,-1\}$. By Cauchy--Schwarz,
\[
    \TV(P_{+j_0},P_{-j_0})^2
    \le
    \frac{1}{4}\Ebb_Q[(L_+-L_-)^2].
\]
Under the hypothesis of Item 3, namely
\[
    \nT\DeltaT^4+\nS\DeltaS^4\le c\,d
\]
for a sufficiently small universal constant $c>0$, it therefore suffices to
show
\[
    \Ebb_Q[(L_+-L_-)^2]
    \le
    C\,\frac{\DeltaT^4}{d}.
\]
We will obtain this from the corrected intermediate estimate
\begin{equation}
\label{eq:item3-l2-target}
    \Ebb_Q[(L_+-L_-)^2]
    \le
    C\DeltaT^4
    \Ebb\!\left[
      \rho_d^2\exp\!\left\{
        C\bigl(\nT\DeltaT^4+\nS\tilde\mu^4\DeltaS^4\bigr)\rho_d^2
      \right\}
    \right].
\end{equation}
and then simplify it using the aggregate subcritical condition. We now
derive \eqref{eq:item3-l2-target} in several transparent steps.

\paragraph{Step 1: conditional likelihood ratio.}
Fix $a\in\{+1,-1\}$ and let
\[
    \mathsf{Lat}_a := (\tau_{-j_0},\xi,\kappa,\zsrc,\eta)
\]
with $\tau_{j_0}=a$ fixed. Write $z_{a,i}:= z_a(\tau_{-j_0},\eta)_i$ for the
$i$-th target label under $\tau_{j_0}=a$. Thus the entire target label vector is
determined by $(\tau_{-j_0},\eta)$ together with the forced value $\tau_{j_0}=a$,
while the source labels are $\zsrc$ and the coupled direction variables are
$(\xi,\kappa)$.

Conditional on $\mathsf{Lat}_a$, the standardized observations satisfy
\[
    \tilde x_{i,r}\mid\mathsf{Lat}_a\sim N\!\left(\tfrac{\DeltaT}{\sqrt{d}}\xi_r z_{a,i},1\right),
    \qquad
    \tilde y_{j,r}\mid\mathsf{Lat}_a\sim N\!\left(\tfrac{\DeltaS}{\sqrt{d}}\kappa_r\xi_r\zsrc_j,1\right).
\]
For a single Gaussian shift, the density of $N(m,1)$ relative to $N(0,1)$ is
\[
    \frac{dN(m,1)}{dN(0,1)}(u)
    =
    \exp\!\left(mu-\frac{m^2}{2}\right).
\]
Applying this coordinatewise and multiplying over all target and source entries
gives the conditional likelihood ratio
\[
    L_a(\tilde x,\tilde y\mid\mathsf{Lat}_a)
    =
    \exp\!\Biggl(
        \frac{\DeltaT}{\sqrt{d}}\sum_{r,i}\xi_r z_{a,i}\tilde x_{i,r}-\frac{\nT\DeltaT^2}{2}
    \Biggr)
    \exp\!\Biggl(
        \frac{\DeltaS}{\sqrt{d}}\sum_{r,j}\kappa_r\xi_r\zsrc_j\tilde y_{j,r}-\frac{\nS\DeltaS^2}{2}
    \Biggr).
\]
The constants $\nT\DeltaT^2/2$ and $\nS\DeltaS^2/2$ are the sums of the
coordinatewise $m^2/2$ terms over all target and source observations.

\paragraph{Step 2: replica inner product.}
To compute $\Ebb_Q[L_aL_b]$, draw two independent latent copies
\[
    \mathsf{Lat}_a=(\tau_{-j_0},\xi,\kappa,\zsrc,\eta),
    \qquad
    \mathsf{Lat}_b^*=(\tau_{-j_0}^*,\xi^*,\kappa^*,\zsrc_*,\eta^*),
\]
where $(\zsrc_*)_1,\ldots,(\zsrc_*)_{\nS}$ are the source labels in the second
copy.  Since
$Q$ is a product of standard Gaussians, the expectation over $Q$ factorizes
over all target coordinates $(i,r)$ and source coordinates $(j,r)$.

For each target coordinate $(i,r)$, the Gaussian MGF gives
\[
    \Ebb_Q\!\left[e^{\frac{\DeltaT}{\sqrt{d}}(\xi_r z_{a,i}+\xi_r^* z_{b,i}^*)\tilde x_{i,r}}\right]
    =
    \exp\!\left(\frac{\DeltaT^2}{2d}(\xi_r z_{a,i}+\xi_r^* z_{b,i}^*)^2\right).
\]
Since $(\xi_r)^2=(z_{a,i})^2=1$, expanding the square yields
\[
    (\xi_r z_{a,i}+\xi_r^* z_{b,i}^*)^2
    =
    2+2\xi_r\xi_r^* z_{a,i}z_{b,i}^*.
\]
The constant term $2$ contributes $e^{\DeltaT^2/d}$ per $(i,r)$, which over
all $\nT d$ target coordinates exactly cancels the prefactor
$e^{-\nT\DeltaT^2}$ from the product of the two likelihood ratios, leaving
\[
    \exp\!\left(\frac{\DeltaT^2}{d}\sum_{r=1}^d\xi_r\xi_r^*\sum_{i=1}^{\nT}z_{a,i}z_{b,i}^*\right).
\]
An identical calculation for the source block leaves
\[
    \exp\!\left(\frac{\DeltaS^2}{d}\sum_{r=1}^d\kappa_r\kappa_r^*\xi_r\xi_r^*\sum_{j=1}^{\nS}\zsrc_j (\zsrc_*)_j\right).
\]
Introduce the replica overlap signs
\[
    b_r:=\xi_r\xi_r^*\in\{\pm1\},
    \qquad
    \rho_d:=\frac{1}{d}\sum_{r=1}^d b_r.
\]
Since $\xi_r,\xi_r^*\overset{\mathrm{iid}}{\sim}\mathrm{Rad}(1/2)$, the
products $b_1,\ldots,b_d$ are also i.i.d.\ $\mathrm{Rad}(1/2)$. Combining the
target and source pieces,
\begin{equation}
\label{eq:item3-replica-inner}
    \Ebb_Q[L_aL_b]
    =
    \Ebb\!\left[
        \exp\!\left(\frac{\DeltaT^2}{d}\sum_r b_r\sum_i z_{a,i}z_{b,i}^*\right)
        \exp\!\left(\frac{\DeltaS^2}{d}\sum_r\kappa_r\kappa_r^* b_r\sum_j\zsrc_j (\zsrc_*)_j\right)
    \right].
\end{equation}
We now average the label products in
\eqref{eq:item3-replica-inner} separately over the non-tested target
observations, the source observations, and the distinguished target pair.

\paragraph{Step 3: non-tested target pairs.}
For each pair index $\ell\neq j_0$, the two replicas independently average over
$(\eta_\ell,\tau_\ell)$ and $(\eta_\ell^*,\tau_\ell^*)$. Therefore for every
observation index $i\notin\{2j_0-1,2j_0\}$ the product $z_{a,i}z_{b,i}^*$ is an
independent $\mathrm{Rad}(1/2)$ sign. Equivalently, the non-tested target
observations contribute i.i.d.\ unbiased replica products.

Fixing $(b_r)_{r\le d}$, the corresponding factor in
\eqref{eq:item3-replica-inner} can be written as
\[
    \prod_{i\notin\{2j_0-1,2j_0\}}
    \exp\!\left(\frac{\DeltaT^2}{d}z_{a,i}z_{b,i}^*\sum_{r=1}^d b_r\right)
    =
    \prod_{i\notin\{2j_0-1,2j_0\}}
    \exp\!\left(\DeltaT^2\rho_d\, z_{a,i}z_{b,i}^*\right).
\]
Since each $z_{a,i}z_{b,i}^*$ is $\mathrm{Rad}(1/2)$, averaging one such factor
gives $\cosh(\DeltaT^2\rho_d)$, and independence across the $\nT-2$
non-tested target observations gives
\[
    \Ebb\!\left[\exp\!\left(\frac{\DeltaT^2}{d}\sum_r b_r\sum_{i\notin\{2j_0-1,2j_0\}}z_{a,i}z_{b,i}^*\right)\right]
    =
    \cosh(\DeltaT^2\rho_d)^{\nT-2}.
\]

\paragraph{Step 4: source observations.}
Let $t_r:=\kappa_r\kappa_r^*\in\{\pm1\}$ and
$s_j:=\zsrc_j(\zsrc_*)_j\in\{\pm1\}$. Since
$\kappa_r,\kappa_r^*\overset{\mathrm{iid}}{\sim}\mathrm{Rad}\!\bigl(\frac{1+\tilde\mu}{2}\bigr)$,
we have $\Ebb[t_r]=\tilde\mu^2$. Since $s_j\sim\mathrm{Rad}(1/2)$ and is
independent of $(t_r,b_r)$, averaging over $s_j$ first:
\[
    \Ebb_{s_j}\!\left[\exp\!\left(\frac{\DeltaS^2}{d}s_j\sum_r t_r b_r\right)\right]
    =
    \cosh\!\left(\frac{\DeltaS^2}{d}\sum_r t_r b_r\right).
\]
The signs $t_1,\ldots,t_d$ are shared by all $\nS$ source observations.
They must therefore be averaged only after the product over source
observations is formed.  Conditional on $b=(b_1,\ldots,b_d)$, the correct
source factor is
\begin{equation}
\label{eq:item3-correct-source-factor}
    \Gamma_d(b)
    :=
    \Ebb_t\!\left[
      \cosh\!\left(\frac{\DeltaS^2}{d}\sum_{r=1}^d t_rb_r\right)^{\nS}
    \right],
\end{equation}
not a power of an expectation over $t$.

We next bound this corrected factor.  Put
\[
    \alpha:=\frac{\DeltaS^2}{d},\qquad
    S_b:=\sum_{r=1}^dt_rb_r,\qquad
    m_b:=\Ebb_tS_b=\tilde\mu^2d\rho_d.
\]
The centered variable $U_b:=S_b-m_b$ is conditionally $d$-sub-Gaussian:
\[
    \Ebb_t e^{\lambda U_b}\le e^{d\lambda^2/2}.
\]
Using $\log\cosh x\le x^2/2$, set
$q:=\nS\alpha^2/2$. Then
\[
    \Gamma_d(b)\le \Ebb_t e^{qS_b^2}.
\]
The Gaussian identity $e^{qx^2}=\Ebb_Ge^{\sqrt{2q}Gx}$, with
$G\sim N(0,1)$, and conditional sub-Gaussianity give
\begin{align}
    \Ebb_t e^{qS_b^2}
    &\le
    \Ebb_G\exp\!\left(\sqrt{2q}Gm_b+qdG^2\right)\nonumber\\
    &=
    (1-2qd)^{-1/2}
    \exp\!\left(\frac{qm_b^2}{1-2qd}\right).
    \label{eq:item3-source-gaussian-linearization}
\end{align}
Here $2qd=\nS\DeltaS^4/d$.  Hence, if
$\nS\DeltaS^4/d\le c<1$,
\begin{equation}
\label{eq:item3-correct-source-bound}
    \Gamma_d(b)
    \le
    (1-c)^{-1/2}
    \exp\!\left\{
      \frac{\nS\tilde\mu^4\DeltaS^4}{2(1-c)}\rho_d^2
    \right\}.
\end{equation}

\paragraph{Step 5: distinguished pair.}
Under hypothesis $a$, the labels are $z_{a,2j_0-1}=\eta_{j_0}$ and $z_{a,2j_0}=a\eta_{j_0}$, with the second replica using $z_{b,2j_0-1}^*=\eta_{j_0}^*$ and $z_{b,2j_0}^*=b\eta_{j_0}^*$. The inner product of the two label vectors on this pair is $\eta_{j_0}\eta_{j_0}^*(1+ab)$. If $a=b$, this equals $2\eta_{j_0}\eta_{j_0}^*$, and averaging over $\eta_{j_0}\eta_{j_0}^*\sim\mathrm{Rad}(1/2)$ gives $\cosh(2\DeltaT^2\rho_d)$. If $a\neq b$, this equals $0$, contributing the factor $1$.
This is the only place where the neighboring hypotheses $a=+1$ and $a=-1$
behave differently.  All other target pairs and the source block contribute the
same background factors under both hypotheses.

\paragraph{Step 6: combine.}
Multiplying the contributions from Steps 3--5:
\begin{align*}
    \Ebb_Q[L_+^2] &= \Ebb_b\!\left[\cosh(2\DeltaT^2\rho_d)\cosh(\DeltaT^2\rho_d)^{\nT-2}\Gamma_d(b)\right],\\
    \Ebb_Q[L_+L_-] &= \Ebb_b\!\left[\cosh(\DeltaT^2\rho_d)^{\nT-2}\Gamma_d(b)\right].
\end{align*}
By symmetry $\Ebb_Q[L_-^2]=\Ebb_Q[L_+^2]$, so
\begin{equation}
\label{eq:item3-second-moment}
    \Ebb_Q[(L_+-L_-)^2]
    = 2\Ebb_b\!\left[(\cosh(2\DeltaT^2\rho_d)-1)\cosh(\DeltaT^2\rho_d)^{\nT-2}\Gamma_d(b)\right].
\end{equation}
Equation \eqref{eq:item3-second-moment} is the exact replica formula: the
distinguished pair contributes the hypothesis-sensitive factor
$\cosh(2\DeltaT^2\rho_d)-1$, while all remaining observations only amplify it
through positive multiplicative terms.

\paragraph{Step 7: bound to obtain \eqref{eq:item3-l2-target}.}
Apply $\cosh(2x)-1\le 2x^2e^{x^2}$ and $\log\cosh(x)\le x^2/2$:
\[
    \cosh(2\DeltaT^2\rho_d)-1 \le 2\DeltaT^4\rho_d^2 e^{\DeltaT^4\rho_d^2},
    \qquad
    \cosh(\DeltaT^2\rho_d)^{\nT-2}\le\exp\!\left(\tfrac{\nT-2}{2}\DeltaT^4\rho_d^2\right).
\]
Substituting these estimates and
\eqref{eq:item3-correct-source-bound} into
\eqref{eq:item3-second-moment}, and collecting the exponential factors in
$\rho_d^2$, gives, after changing universal constants,
\[
    \Ebb_Q[(L_+-L_-)^2]
    \le
    C\DeltaT^4
    \Ebb_b\!\left[
      \rho_d^2\exp\!\left\{
        C\bigl(\nT\DeltaT^4+\nS\tilde\mu^4\DeltaS^4\bigr)\rho_d^2
      \right\}
    \right],
\]
which is \eqref{eq:item3-l2-target}.
\paragraph{Step 8: simplify under an aggregate subcritical condition.}
Now assume in addition that
\[
    \nT\DeltaT^4+\nS\DeltaS^4\le c\,d
\]
for a sufficiently small universal constant $c>0$.  Set
\[
    \lambda_d:=C\bigl(\nT\DeltaT^4+\nS\tilde\mu^4\DeltaS^4\bigr),
\]
so $\lambda_d\le Ccd$.  Choosing $c$ sufficiently small ensures
$\lambda_d\le d/4$.  Since $\rho_d=d^{-1}\sum_{r=1}^d b_r$ with
$b_r\overset{\mathrm{iid}}{\sim}\mathrm{Rad}(1/2)$, Hoeffding's inequality gives
\[
    \Pbb(\rho_d^2\ge u)=\Pbb(|\rho_d|\ge \sqrt{u})\le 2e^{-du/2},
    \qquad u\ge 0.
\]
Using the tail-integral identity
\[
    \Ebb[Xe^{\lambda X}]
    =
    \int_0^\infty (1+\lambda u)e^{\lambda u}\,\Pbb(X\ge u)\,du
\]
for a nonnegative random variable $X$ and $\lambda\ge 0$, applied to
$X=\rho_d^2$ and $\lambda=\lambda_d$, we obtain
\begin{align*}
    \Ebb\!\left[\rho_d^2e^{\lambda_d\rho_d^2}\right]
    &\le
    \int_0^\infty (1+\lambda_d u)e^{\lambda_d u}\Pbb(\rho_d^2\ge u)\,du\\
    &\le
    2\int_0^\infty (1+\lambda_d u)e^{-(d/2-\lambda_d)u}\,du
    \;\lesssim\;
    \frac{1}{d},
\end{align*}
because $d/2-\lambda_d\ge d/4$.  Substituting this bound into
\eqref{eq:item3-l2-target} yields
\[
    \Ebb_Q[(L_+-L_-)^2]
    \le
    C\,\frac{\DeltaT^4}{d}.
\]
Dividing by $4$ gives \eqref{eq:sec3_tv_doubly}.
\end{proof}

\subsubsection{Proof of Item 4: product-scale route}

We prove the bound~\eqref{eq:sec3_tv_product}.

\paragraph{Setup.}
Fix a pair index $j_0$ and condition on the full nuisance configuration
\begin{equation}\label{eq:def-Xi_k}
    \Xi_{j_0} := (\eta,\tau_{-j_0},\zsrc),
\end{equation}
where $\eta=(\eta_\ell)_{\ell=1}^k$, $\tau_{-j_0}=(\tau_\ell:\ell\neq j_0)$, and
$\zsrc\in\{\pm1\}^{\nS}$.  Once $\Xi_{j_0}$ and $\tau_{j_0}$ are fixed, the
target label vectors $\ztar_\pm:=\ztar(\tau_{j_0}=\pm1,\tau_{-j_0},\eta)$ are
determined.  Define
\begin{equation}\label{eq:def-l-defs}
    l^{(\trg)}_\pm := \frac{\ztar_\pm}{\sqrt{\nT}}\in\sphere{\nT-1},
    \qquad
    l^{(\src)} := \frac{\zsrc}{\sqrt{\nS}}\in\sphere{\nS-1},
\end{equation}
and set
\begin{equation}\label{eq:def-rT-rS}
    r_T := \DeltaT\sqrt{\frac{\nT}{d}},
    \qquad
    r_S := \DeltaS\sqrt{\frac{\nS}{d}}.
\end{equation}
For each coordinate $r\in[d]$, let
\begin{equation}\label{eq:def-xr-yr}
    x_r := \left(\frac{\Xtar_{i,r}}{\sigmaT}\right)_{i=1}^{\nT}\in\R^{\nT},
    \qquad
    y_r := \left(\frac{\Xsrc_{j,r}}{\sigmaS}\right)_{j=1}^{\nS}\in\R^{\nS},
\end{equation}
and define the inner-product statistics
\begin{equation}\label{eq:def-a-b}
    a_\pm(x_r) := r_T\ip{l^{(\trg)}_\pm}{x_r},
    \qquad
    b(y_r) := r_S\ip{l^{(\src)}}{y_r}.
\end{equation}

\begin{proof}[Proof of Theorem~\ref{thm:sec3_tv_three_routes}, Item~4]
	Lemma~\ref{lem:sec3_product_row_density} gives the row density conditional
	on the shared label configuration $\Xi_{j_0}$.
	Lemmas~\ref{lem:sec3_product_odd_pert}--\ref{lem:sec3_product_approx_error}
	provide the derivative and moment bounds needed for the row-wise KL estimate
	in Lemma~\ref{lem:sec3_product_row_kl}.
	Lemma~\ref{lem:sec3_product_full_kl} then sums over coordinates and converts
	to total variation.
	
	Under the stated assumptions $\nT\DeltaT^2/d\le c^2$ and
	$\tilde\mu^2\nS\DeltaS^2/d\le c^2$, we have $r_T^2\le c^2$ and
	$\tilde\mu^2 r_S^2\le c^2$, so all hypotheses of the preceding lemmas are
	satisfied.  The TV bound~\eqref{eq:sec3_product_tv_cond} is exactly
	\eqref{eq:sec3_tv_product}, with the factor of $2$ absorbed into the
	universal constant $C$.
\end{proof}
\begin{lemma}[Conditional row density]
\label{lem:sec3_product_row_density}
With $\Xi_{j_0}$ as in~\eqref{eq:def-Xi_k}, $l^{(\trg)}_\pm$, $l^{(\src)}$ as in~\eqref{eq:def-l-defs}, $r_T$, $r_S$ as in~\eqref{eq:def-rT-rS}, and $x_r$, $y_r$, $a_\pm$, $b$ as in~\eqref{eq:def-xr-yr}, fix $\Xi_{j_0}$.
The conditional row law of $(x_r,y_r)$ under $\tau_{j_0}=\pm1$
has density relative to the standard Gaussian reference measure given by
\begin{equation}
\label{eq:sec3_product_row_density}
    p_{\pm,r}(x_r,y_r)
    =
    e^{-(r_T^2+r_S^2)/2}\Bigl[
        \cosh\!\bigl(a_\pm(x_r)\bigr)\cosh\!\bigl(b(y_r)\bigr)
        + \tilde\mu\sinh\!\bigl(a_\pm(x_r)\bigr)\sinh\!\bigl(b(y_r)\bigr)
    \Bigr].
\end{equation}
\end{lemma}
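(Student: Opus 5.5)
The plan is to compute the conditional density directly, as in the proof of Lemma~\ref{lem:sec3_exact_tv_form}. Once $\Xi_{j_0}$ and $\tau_{j_0}=\pm1$ are fixed, the labels $\ztar_\pm$ and $\zsrc$ are deterministic. The only remaining latent variables are the direction signs $(\xi_r,\kappa_r)_{r\le d}$, which are independent across coordinates.

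\emph{Step 1: reduce to a single row.} Conditional on $(\xi,\kappa)$ and $\Xi_{j_0}$, the standardized rows are independent Gaussians:
\[
x_r\sim\dnorm\bigl(\tfrac{\DeltaT}{\sqrt d}\xi_r\ztar_\pm,\Id_{\nT}\bigr),
\qquad
y_r\sim\dnorm\bigl(\tfrac{\DeltaS}{\sqrt d}\kappa_r\xi_r\zsrc,\Id_{\nS}\bigr).
\]
Since $(\xi_r,\kappa_r)$ are independent across $r$, integrating them out preserves the product structure. The conditional joint law is therefore a product over $r$, and it suffices to compute the law of a single row pair $(x_r,y_r)$ after averaging over $(\xi_r,\kappa_r)$ alone.

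\emph{Step 2: Gaussian shift likelihood.} For fixed $(\xi_r,\kappa_r)$, the shift likelihood ratio against $\dnorm(0,\Id)$ is
\[
\exp\Bigl\{\xi_r\tfrac{\DeltaT}{\sqrt d}\ip{\ztar_\pm}{x_r}-\tfrac{\nT\DeltaT^2}{2d}\Bigr\}
\exp\Bigl\{\kappa_r\xi_r\tfrac{\DeltaS}{\sqrt d}\ip{\zsrc}{y_r}-\tfrac{\nS\DeltaS^2}{2d}\Bigr\}.
\]
Here we used $\|\ztar_\pm\|_2^2=\nT$ and $\|\zsrc\|_2^2=\nS$. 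Writing $\ztar_\pm=\sqrt{\nT}\,l^{(\trg)}_\pm$ and $\zsrc=\sqrt{\nS}\,l^{(\src)}$ turns the exponents into $\xi_r a_\pm(x_r)$ and $\kappa_r\xi_r b(y_r)$. The normalizations become $r_T^2/2$ and $r_S^2/2$, which gives the prefactor $e^{-(r_T^2+r_S^2)/2}$.

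\emph{Step 3: average over the signs.} First average over $\xi_r\sim\operatorname{Rad}(1/2)$, which is independent of $\kappa_r$. This gives
\[
\tfrac12\bigl[e^{a+\kappa_r b}+e^{-a-\kappa_r b}\bigr]=\cosh(a+\kappa_r b)=\cosh a\cosh b+\kappa_r\sinh a\sinh b,
\]
where $a=a_\pm(x_r)$ and $b=b(y_r)$. The last equality uses $\kappa_r^2=1$ together with the fact that $\cosh$ is even and $\sinh$ is odd. Next average over $\kappa_r\sim\operatorname{Rad}((1+\tilde\mu)/2)$, which has $\Ebb\kappa_r=\tilde\mu$. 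Because the expression is linear in $\kappa_r$, this yields exactly~\eqref{eq:sec3_product_row_density}.

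\emph{Main obstacle.} The only delicate point is the conditioning structure. We must check that, once $\Xi_{j_0}$ is fixed, the pair $(\xi_r,\kappa_r)$ has the same prior law as in the construction~\eqref{eq:construct_signal_adv}. This holds because the labels $(\eta,\tau,\zsrc)$ are drawn independently of the directions. We must also check that the averaging over the pair $(\xi_r,\kappa_r)$ for row $r$ does not interact with the other rows, which is what makes the density factor over $r$. The remaining steps are routine Gaussian and hyperbolic identities.
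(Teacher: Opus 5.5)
Your proposal is correct and matches the paper's proof. You use the row-wise Gaussian shift likelihood ratio, whose quadratic terms give the prefactor $e^{-(r_T^2+r_S^2)/2}$, and then average over the independent signs $\xi_r$ and $\kappa_r$; the only difference is that you average over $\xi_r$ first (via $\cosh(a+\kappa_r b)$) whereas the paper averages over $\kappa_r$ first, which is immaterial.
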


\begin{proof}
Each Gaussian shift contributes $dN(m,1)/dN(0,1)=\exp(mu-m^2/2)$.
Summing the quadratic terms over all $\nT$ target observations in row $r$
gives $-\nT\DeltaT^2/(2d)=-r_T^2/2$, and over all $\nS$ source observations
gives $-\nS\DeltaS^2/(2d)=-r_S^2/2$.  Hence, conditional on
$(\Xi_{j_0},\tau_{j_0},\xi_r=e,\kappa_r=\kappa)$, the row density relative to the
standard Gaussian measure is
\[
    e^{-(r_T^2+r_S^2)/2}\exp\!\bigl(e\,a_{\tau_{j_0}}(x_r)+e\kappa\,b(y_r)\bigr).
\]
The prefactor $e^{-(r_T^2+r_S^2)/2}$ does not depend on $(e,\kappa,\tau_{j_0})$
and factors out.  Averaging first over
$\kappa_r\sim\mathrm{Rad}\!\bigl(\tfrac{1+\tilde\mu}{2}\bigr)$:
\[
    \Ebb_\kappa[e^{e\kappa b}]
    =
    \cosh(eb)+\tilde\mu\sinh(eb)
    =
    \cosh(b)+\tilde\mu e\sinh(b),
\]
using that $\cosh$ is even and $\sinh$ is odd.  Then averaging over
$\xi_r=e\sim\mathrm{Rad}(1/2)$ and using
$\Ebb_e[e^{ea}]=\cosh(a)$ and $\Ebb_e[e\,e^{ea}]=\sinh(a)$,
substituting $a=a_{\tau_{j_0}}(x_r)$ gives \eqref{eq:sec3_product_row_density}.
\end{proof}

\begin{lemma}[Odd perturbation bounds for the row log-density]
\label{lem:sec3_product_odd_pert}
Define
\[
    h(a,b):=\log\!\bigl(\cosh(a)\cosh(b)+\tilde\mu\sinh(a)\sinh(b)\bigr).
\]
Set $\Delta_1:=\partial_a h(a,b)-\tanh(a)$ and
$\Delta_2:=\partial_{aa}h(a,b)-\operatorname{sech}^2(a)$.  Then for all
$a,b\in\R$:
\begin{enumerate}[label=(\roman*)]
\item $\displaystyle\partial_a h(a,b)
      =
      \frac{\tanh(a)+\tilde\mu\tanh(b)}{1+\tilde\mu\tanh(a)\tanh(b)}$,
      and $|\partial_a h(a,b)|\le 1$.
\item $\partial_{aa}h(a,b)=1-(\partial_a h(a,b))^2$,
      and $0\le\partial_{aa}h(a,b)\le 1$.
\item $|\partial_b\Delta_1|\le\tilde\mu$ and $|\Delta_1|\le\tilde\mu|b|$.
\item $\Delta_2=-(2\tanh(a)+\Delta_1)\Delta_1$,
      so $|\Delta_2|\le 2\tilde\mu|b|+\tilde\mu^2 b^2$.
\end{enumerate}
\end{lemma}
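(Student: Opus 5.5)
The plan is to reduce all four items to elementary calculus in the variables $t:=\tanh(a)\in(-1,1)$ and $s:=\tanh(b)\in(-1,1)$, with $\tilde\mu\in[0,1]$ fixed. Write $f(a,b):=\cosh(a)\cosh(b)+\tilde\mu\sinh(a)\sinh(b)=\cosh(a)\cosh(b)\,(1+\tilde\mu ts)$. Since $\tilde\mu|ts|<1$, we have $f>0$, so $h=\log f$ is well defined and smooth on $\R^2$.

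For item (i), differentiate $f$ in $a$ and divide numerator and denominator by $\cosh(a)\cosh(b)$. This gives $\partial_a h=g(t,s):=(t+\tilde\mu s)/(1+\tilde\mu ts)$. For the bound $|g|\le 1$, the denominator is positive, so it suffices to use the identity
\[
(1+\tilde\mu ts)^2-(t+\tilde\mu s)^2=(1-t^2)(1-\tilde\mu^2s^2)\ge 0 .
\]
For item (ii), note that $\partial_{aa}f=f$. Then $\partial_{aa}\log f=f_{aa}/f-(f_a/f)^2=1-(\partial_a h)^2$, and the bounds $0\le\partial_{aa}h\le 1$ follow from (i).

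For item (iii), write $\Delta_1=g(t,s)-t$. Differentiating in $b$ through $s$ gives
\[
\partial_b\Delta_1=\partial_s g\cdot(1-s^2),\qquad \partial_s g=\frac{\tilde\mu(1-t^2)}{(1+\tilde\mu ts)^2}.
\]
It therefore suffices to show $(1-t^2)(1-s^2)\le(1+\tilde\mu ts)^2$. This follows from $1+\tilde\mu ts\ge 1-|ts|>0$ together with $(1-t^2)(1-s^2)\le(1-|ts|)^2$, and the latter is equivalent to $2|ts|\le t^2+s^2$. Hence $|\partial_b\Delta_1|\le\tilde\mu$. Since $\Delta_1=0$ at $b=0$, the mean value theorem in $b$ gives $|\Delta_1|\le\tilde\mu|b|$.

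For item (iv), combine (ii) with $\operatorname{sech}^2(a)=1-t^2$ and $\partial_a h=t+\Delta_1$:
\[
\Delta_2=1-(t+\Delta_1)^2-(1-t^2)=-(2t+\Delta_1)\Delta_1 .
\]
Then $|\Delta_2|\le 2|\Delta_1|+\Delta_1^2\le 2\tilde\mu|b|+\tilde\mu^2b^2$ by (iii).

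The only step needing care is the inequality $(1-t^2)(1-s^2)\le(1+\tilde\mu ts)^2$ in (iii) when $ts<0$. The reduction to $(1-|ts|)^2$ handles that case.
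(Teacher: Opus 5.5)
Your proposal is correct and follows the paper's proof essentially step for step: the substitution $t=\tanh(a)$, $s=\tanh(b)$, the identity $f_{aa}=f$ for (ii), the bound $(1-t^2)(1-s^2)\le(1+\tilde\mu ts)^2$ on $\partial_b\Delta_1$ followed by integration from $b=0$ for (iii), and the algebraic identity for $\Delta_2$ in (iv). There are two small differences. For $|\partial_a h|\le 1$ you use the identity $(1+\tilde\mu ts)^2-(t+\tilde\mu s)^2=(1-t^2)(1-\tilde\mu^2 s^2)$, whereas the paper writes $\partial_a h$ as a convex combination of $\tanh(a\pm b)$. Also, you actually justify the key inequality in (iii), including the case $ts<0$, which the paper only asserts.
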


\begin{proof}
Write $f(a,b):=\cosh(a)\cosh(b)+\tilde\mu\sinh(a)\sinh(b)$.  Since
$f(a,b)=\frac{1+\tilde\mu}{2}\cosh(a+b)+\frac{1-\tilde\mu}{2}\cosh(a-b)\ge1$,
the logarithm is well-defined.

\textit{Part (i).}  Differentiating $f_a=\sinh(a)\cosh(b)+\tilde\mu\cosh(a)\sinh(b)$
and dividing by $f$ gives the formula; $|\partial_a h|\le1$ follows because
$\partial_a h$ is a convex combination of $\tanh(a\pm b)$.

\textit{Part (ii).}  The quotient rule gives
$\partial_{aa}h=f_{aa}/f-(f_a/f)^2=1-(\partial_a h)^2\in[0,1]$,
since $f_{aa}=f$.

\textit{Part (iii).}  Subtracting $\tanh(a)$ from Part~(i):
$\Delta_1=\tilde\mu\tanh(b)\operatorname{sech}^2(a)/(1+\tilde\mu\tanh(a)\tanh(b))$.
Setting $u=\tanh(a)$, $v=\tanh(b)$ and differentiating in $b$ shows
$|\partial_b\Delta_1|\le\tilde\mu$ (the denominator dominates the numerator
since $(1+\tilde\mu uv)^2\ge(1-u^2)(1-v^2)$).  Since $\Delta_1(a,0)=0$,
integrating gives $|\Delta_1|\le\tilde\mu|b|$.

\textit{Part (iv).}  Write $\partial_a h=\tanh(a)+\Delta_1$ and use Part~(ii):
$\partial_{aa}h=\operatorname{sech}^2(a)-2\tanh(a)\Delta_1-\Delta_1^2$,
giving the stated identity for $\Delta_2$.  The bound follows from
$|\tanh(a)|\le1$ and Part~(iii).
\end{proof}

\begin{lemma}[Odd Lipschitz averaging]
\label{lem:sec3_product_odd_lip}
With $r_S$ as in~\eqref{eq:def-rT-rS}, let $\psi:\R\to\R$ be odd and $L$-Lipschitz,
and set $B_t:=ts+r_S h$ where $s\in\{\pm1\}$ with
$\Pbb(s=+1)=\frac{1+\tilde\mu}{2}$ and $h\sim N(0,1)$ is independent of $s$.
Then $\Ebb[\psi(B_t)]=\tilde\mu\,m_\psi(t)$ where $m_\psi$ is odd and
$|m_\psi(t)|\le L|t|$.
\end{lemma}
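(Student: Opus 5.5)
The plan is to compute $\Ebb[\psi(B_t)]$ by conditioning on $s$ and then to use the oddness of $\psi$ together with the symmetry of $h$. Set
\[
    g(t):=\Ebb_h[\psi(t+r_S h)].
\]
Since $h\overset{d}{=}-h$ and $\psi$ is odd, we have $g(-t)=\Ebb[\psi(-t+r_S h)]=\Ebb[\psi(-t-r_S h)]=-g(t)$, so $g$ is odd. Conditioning on $s$ then gives
\[
    \Ebb[\psi(B_t)]=\tfrac{1+\tilde\mu}{2}g(t)+\tfrac{1-\tilde\mu}{2}g(-t)=\tilde\mu\, g(t).
\]
We therefore take $m_\psi:=g$, which is independent of $\tilde\mu$ and odd.

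For the Lipschitz bound, note that $g$ is itself $L$-Lipschitz. Indeed, $|g(t)-g(t')|\le \Ebb|\psi(t+r_Sh)-\psi(t'+r_Sh)|\le L|t-t'|$. Integrability is not an issue, since $|\psi(x)|\le L|x|$ (because $\psi(0)=0$ by oddness), so every expectation is finite. Since $g$ is odd, $g(0)=0$, and hence $|m_\psi(t)|=|g(t)-g(0)|\le L|t|$.

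No step is a real obstacle. The only point needing care is checking that oddness of $g$ uses the symmetry of the Gaussian $h$, which holds for every value of $r_S$ (including $r_S=0$, where $g=\psi$ trivially).
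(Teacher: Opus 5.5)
Your proof is correct and follows the paper's argument exactly: define $g(t)=\Ebb_h[\psi(t+r_S h)]$, use $h\overset{d}{=}-h$ and the oddness of $\psi$ to show $g$ is odd, then condition on $s$ to get $\tilde\mu\, g(t)$. You also spell out two details the paper leaves implicit, namely that $g$ inherits the $L$-Lipschitz property (which gives $|g(t)|\le L|t|$ via $g(0)=0$) and that all expectations are finite.
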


\begin{proof}
Define $g_\psi(t):=\Ebb_h[\psi(t+r_S h)]$.  Since $\psi$ is odd and
$h\overset{d}{=}-h$, the function $g_\psi$ is odd with $g_\psi(0)=0$ and
$|g_\psi(t)|\le L|t|$.  Conditioning on $s$:
\[
    \Ebb[\psi(B_t)]
    =
    \tfrac{1+\tilde\mu}{2}g_\psi(t)+\tfrac{1-\tilde\mu}{2}g_\psi(-t)
    =
    \tilde\mu g_\psi(t).
\]
Set $m_\psi:=g_\psi$.
\end{proof}

\begin{lemma}[Moment bounds for the rotated coordinates]
\label{lem:sec3_product_moment_bds}
With $r_T$, $r_S$ as in~\eqref{eq:def-rT-rS} and $l^{(\src)}$ as in~\eqref{eq:def-l-defs}, fix
$l^{(\trg)}_+=e_1\in\sphere{\nT-1}$ and let $(x,y)\sim p_{+,r}$, where
$p_{+,r}$ is the row density~\eqref{eq:sec3_product_row_density} with
$a(x)=r_T X_1$ ($X_1$ the first coordinate of $x$) and
$b(y)=r_S\ip{l^{(\src)}}{y}$.
Let $\alpha\in\R$ and define
\[
    Y_\alpha:=\cos\alpha\,X_1+\sin\alpha\,X_2, \quad
    Z_\alpha:=-\sin\alpha\,X_1+\cos\alpha\,X_2, \quad
    B:=r_S\ip{l^{(\src)}}{y}.
\]
Assume $r_T\le1$.  Then there is a representation
\begin{align}
    X_1 &= r_T\eta+g_1, & \eta&\sim\mathrm{Rad}(1/2),\ g_1\sim N(0,1),
    \label{eq:sec3_product_X1_decomp}\\
    B   &= r_S^2 s\eta+r_S h, & s&\sim\mathrm{Rad}\!\bigl(\tfrac{1+\tilde\mu}{2}\bigr),\
    h\sim N(0,1), \label{eq:sec3_product_B_decomp}
\end{align}
where $(\eta,s,g_1,h)$ are independent, and $X_2=g_2\sim N(0,1)$ is
independent of $(X_1,B)$.  Uniformly in $\alpha$:
\begin{gather*}
    \Ebb|Y_\alpha|^j\le C\;\text{ for }j=1,2,3,4,6, \qquad
    \Ebb Z_\alpha^4\le C,\\
    \Ebb|B|^4\le C(r_S^8+r_S^4), \qquad
    \Ebb|r_T Y_\alpha|^4\le Cr_T^4, \qquad
    |\Ebb[Y_\alpha B]|\le C\tilde\mu r_T r_S^2.
\end{gather*}
\end{lemma}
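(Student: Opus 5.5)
The plan is to read the row density \eqref{eq:sec3_product_row_density} as a finite Gaussian mixture and then get every moment from the resulting explicit representation. Going back through the proof of Lemma~\ref{lem:sec3_product_row_density}, the density $p_{+,r}$ is the average, over latent $\xi_r=e\sim\mathrm{Rad}(1/2)$ and $\kappa_r=\kappa\sim\mathrm{Rad}(\frac{1+\tilde\mu}{2})$, of the Gaussian shift with density $e^{-(r_T^2+r_S^2)/2}\exp(e\,a(x)+e\kappa\,b(y))$ relative to the standard Gaussian reference. With $l^{(\trg)}_+=e_1$ this is the law
\[
x\sim\dnorm(r_T e\,e_1,\Id_{\nT}),\qquad y\sim\dnorm(r_S e\kappa\,l^{(\src)},\Id_{\nS}),
\]
with $x$ and $y$ conditionally independent. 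Setting $\eta:=e$ and $s:=\kappa$ gives $X_1=r_T\eta+g_1$, which is \eqref{eq:sec3_product_X1_decomp}. Since $\|l^{(\src)}\|_2=1$, we have $\ip{l^{(\src)}}{y}=r_S s\eta+h$ with $h\sim\dnorm(0,1)$, so $B=r_S^2 s\eta+r_S h$, which is \eqref{eq:sec3_product_B_decomp}. The second coordinate $X_2$ carries no shift, so $X_2=g_2$ is independent of everything else. The variables $\eta$, $s$, $g_1$, $g_2$, $h$ are mutually independent by construction.

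With this representation the bounds follow in turn.
\begin{enumerate}
\item \emph{Moments of $Y_\alpha$ and $Z_\alpha$.} Both are bounded in absolute value by $r_T+|g_1|+|g_2|\le 1+|g_1|+|g_2|$, using $r_T\le1$. Gaussian moments of order at most $6$ then give constants uniform in $\alpha$.
\item \emph{Moment of $r_TY_\alpha$.} This is $r_T^4\,\Ebb Y_\alpha^4\le Cr_T^4$ by the previous item.
\item \emph{Moment of $B$.} Using $(u+v)^4\le 8(u^4+v^4)$ together with $|s\eta|=1$ gives $\Ebb B^4\le 8(r_S^8+3r_S^4)$.
\item \emph{Cross moment.} Expand $\Ebb[Y_\alpha B]=\cos\alpha\,\Ebb[X_1B]+\sin\alpha\,\Ebb[g_2B]$. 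The second term vanishes by independence. In the first, every product containing $g_1$ or $h$ has mean zero, and the remaining term is $r_Tr_S^2\,\Ebb[\eta^2s]=\tilde\mu\,r_Tr_S^2$. Hence $|\Ebb[Y_\alpha B]|\le\tilde\mu r_Tr_S^2$.
\end{enumerate}

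The only step needing care is the first: checking that the $\cosh/\sinh$ form is exactly the $(\xi_r,\kappa_r)$-mixture of Gaussian shifts, including the normalizing prefactor. This is a direct reversal of the averaging computation in the proof of Lemma~\ref{lem:sec3_product_row_density}, so no real obstacle is expected. Everything after that is elementary moment bookkeeping.
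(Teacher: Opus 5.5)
Your proposal is correct and follows the paper's proof essentially step for step. Like the paper, you recover the representation by reversing the $(\xi_r,\kappa_r)$-averaging in Lemma~\ref{lem:sec3_product_row_density}, then read off the moment bounds, with the cross moment reducing to $r_T r_S^2\,\Ebb[\eta^2 s]=\tilde\mu r_T r_S^2$. The differences are cosmetic: you bound $|Y_\alpha|$ and $|Z_\alpha|$ pointwise by $1+|g_1|+|g_2|$ and use $(u+v)^4\le 8(u^4+v^4)$, whereas the paper phrases the same steps as a conditional rotation of $(X_1,X_2)$ and the $L^4$ triangle inequality.
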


\begin{proof}
The row density~\eqref{eq:sec3_product_row_density} with $l^{(\trg)}_+=e_1$ is
generated by drawing $\eta\sim\mathrm{Rad}(1/2)$, then
$x\mid\eta\sim N(r_T\eta e_1,I_{\nT})$, and drawing
$s\sim\mathrm{Rad}((1+\tilde\mu)/2)$ independently so that
$B\mid(\eta,s)\sim N(r_S^2 s\eta,r_S^2)$, giving
\eqref{eq:sec3_product_X1_decomp}--\eqref{eq:sec3_product_B_decomp} with
$X_2=g_2\sim N(0,1)$ independent.

Conditional on $\eta$, $(Y_\alpha,Z_\alpha)$ is a rotation of $(X_1,X_2)$
with mean bounded by $r_T\le1$ and identity covariance, so moments through
order $6$ are uniformly bounded.  For $B$: the $L^4$ triangle inequality
applied to $B=r_S^2 s\eta+r_S h$ gives $\Ebb|B|^4\le C(r_S^8+r_S^4)$.
For the cross moment: $\Ebb[X_2 B]=0$ since $X_2\perp(s,\eta,h)$, and
\[
    \Ebb[X_1 B]=r_T r_S^2\Ebb[\eta^2 s]=\tilde\mu r_T r_S^2,
\]
using $\eta^2=1$ and $\Ebb[s]=\tilde\mu$.  Hence
$|\Ebb[Y_\alpha B]|=|\cos\alpha\,\Ebb[X_1 B]|\le\tilde\mu r_T r_S^2$.
\end{proof}

\begin{lemma}[Linear approximation error for the row score]
\label{lem:sec3_product_approx_error}
With $r_T$, $r_S$ as in~\eqref{eq:def-rT-rS} and the notation of
Lemma~\ref{lem:sec3_product_moment_bds}, set $a:=r_T Y_\alpha$, $b:=B$,
and $q(a,b):=\partial_a h(a,b)$.  Assume $r_T\le c$ and
$\tilde\mu^2 r_S^2\le c$.  Then
\begin{equation}
\label{eq:sec3_product_approx_error}
    \bigl|\Ebb\bigl[Y_\alpha\bigl(q(a,b)-a-\tilde\mu b\bigr)\bigr]\bigr|
    \le
    C\bigl(r_T^3+\tilde\mu^2 r_T r_S^2\bigr),
\end{equation}
and consequently
$r_T\bigl|\Ebb[Y_\alpha(q-a-\tilde\mu b)]\bigr|\le C(r_T^4+\tilde\mu^2 r_T^2 r_S^2)$.
\end{lemma}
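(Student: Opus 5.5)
The plan is to drop the explicit subtraction $\tilde\mu b$ and handle $q=\partial_a h$ through its Taylor expansion in $a$ around $a=0$, with $b$ held fixed. The subtracted term is harmless on its own: by Lemma~\ref{lem:sec3_product_moment_bds}, $|\tilde\mu\,\Ebb[Y_\alpha B]|\le C\tilde\mu^2 r_T r_S^2$. It therefore suffices to show
\[
\bigl|\Ebb[Y_\alpha(q(a,b)-a)]\bigr|\le C\bigl(r_T^3+\tilde\mu^2 r_Tr_S^2\bigr),\qquad a=r_TY_\alpha,\ b=B .
\]
Throughout I will use the representation of Lemma~\ref{lem:sec3_product_moment_bds}. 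Conditionally on $\eta$, the variable $Y_\alpha=\cos\alpha(r_T\eta+g_1)+\sin\alpha\, g_2$ is independent of $B=r_S^2 s\eta+r_S h$. The point of the argument is to use oddness in $b$, through Lemma~\ref{lem:sec3_product_odd_lip}, to extract an extra factor $\eta$; pairing that $\eta$ with $Y_\alpha$ then yields a factor $r_T$.

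\emph{Step 1 (expansion).} Write $h_a=\partial_a h$. By Lemma~\ref{lem:sec3_product_odd_pert}(ii), $h_{aa}=1-h_a^2$ and $h_{aaa}=-2h_ah_{aa}$, so every $a$-derivative of $h_a$ up to third order is bounded by a universal constant. I will expand
\[
q(a,b)=q(0,b)+a\,q_a(0,b)+\tfrac{a^2}{2}q_{aa}(0,b)+R,\qquad |R|\le C|a|^3 .
\]
At $a=0$ the derivatives are explicit:
\[
q(0,b)=\tilde\mu\tanh b,\qquad q_a(0,b)=1-\tilde\mu^2\tanh^2 b,\qquad q_{aa}(0,b)=-2\tilde\mu\tanh b\,(1-\tilde\mu^2\tanh^2b).
\]
Notice that $q(0,\cdot)$ and $q_{aa}(0,\cdot)$ are odd, bounded and Lipschitz in $b$.

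\emph{Step 2 (term by term).} I will bound each term of the expansion separately.
\begin{itemize}
\item The remainder gives $|\Ebb[Y_\alpha R]|\le Cr_T^3\,\Ebb|Y_\alpha|^4\le Cr_T^3$.
\item In the linear term, the piece $a\cdot 1$ cancels against $-a$. What remains is $-\tilde\mu^2 r_T\,\Ebb[Y_\alpha^2\tanh^2 B]$. I will bound it by $\tilde\mu^2 r_T\,\Ebb[Y_\alpha^2\min(1,B^2)]$ and then, via Cauchy--Schwarz and the moment bounds, by $C\tilde\mu^2 r_T\min\{1,r_S^2+r_S^4\}\le C\tilde\mu^2 r_Tr_S^2$. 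This uses that $\min\{1,r_S^2+r_S^4\}\le 2r_S^2$.
\item For the zeroth-order term, I condition on $\eta$ and apply Lemma~\ref{lem:sec3_product_odd_lip} with $t=r_S^2\eta$ to the odd $1$-Lipschitz map $\tanh$. This gives $\Ebb[\tanh B\mid\eta]=\tilde\mu\, m(r_S^2)\,\eta$ with $|m(r_S^2)|\le r_S^2$. Combined with $\Ebb[Y_\alpha\eta]=r_T\cos\alpha$, the term is bounded by $\tilde\mu^2 r_Tr_S^2$.
\item The quadratic term is handled the same way. Its $b$-factor is odd and Lipschitz with constant $O(\tilde\mu)$, so conditionally on $\eta$ it averages to $\tilde\mu^2\eta\cdot O(r_S^2)$. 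The remaining quantity is $r_T^2\,\Ebb[Y_\alpha^3\eta]$, which is $O(r_T^3)$ because $\Ebb[(g_1\cos\alpha+g_2\sin\alpha)^3]=0$. This term is therefore at most $C\tilde\mu^2 r_T^3r_S^2\le C\tilde\mu^2 r_Tr_S^2$.
\end{itemize}
Collecting the four bounds gives \eqref{eq:sec3_product_approx_error}. The consequence stated after it follows by multiplying through by $r_T$.

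\emph{Main obstacle.} The delicate point is that only $\tilde\mu^2r_S^2\le c$ is assumed, not $r_S\le 1$. Any bound that takes $|b|$ or $b^2$ out of the expectation directly would therefore produce uncontrolled factors $r_S^4$ or $\tilde\mu r_S$. This is why the expansion is done in $a$ rather than in $b$, and why each $b$-dependence is either kept bounded (through $\tanh$, or $\min(1,b^2)$) or removed by the odd-averaging Lemma~\ref{lem:sec3_product_odd_lip}. The step I would check most carefully is the Lipschitz constant of $q_{aa}(0,\cdot)$ and the uniformity of the remainder constant $C$ in $b$. If the bound $|q_{aaa}|\le C$ turned out to be too lossy, I would instead use the identity of Lemma~\ref{lem:sec3_product_odd_pert}(iv) to write the remainder as $\tilde\mu$ times an odd function of $b$. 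I would then apply the same conditioning on $\eta$.
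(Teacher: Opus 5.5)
Your proof is correct, but it is organized differently from the paper's.

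\textbf{The paper's route.} The paper starts from the exact algebraic identity $q=u+\tilde\mu v-\tilde\mu uvq$, with $u=\tanh a$ and $v=\tanh b$. This splits $q-a-\tilde\mu b$ into three pieces:
\begin{itemize}
\item a target nonlinearity $\tanh a-a$, bounded by $|a|^3$;
\item a source nonlinearity $\tilde\mu(\tanh b-b)$, handled by Lemma~\ref{lem:sec3_product_odd_lip} applied to the odd, $1$-Lipschitz but unbounded map $t\mapsto\tanh t-t$;
\item a cross term $\tilde\mu uvq$, which is expanded once more through the same identity and bounded only in sketch form.
\end{itemize}

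\textbf{Your route.} You remove $\tilde\mu b$ at the outset using the exact covariance $\Ebb[Y_\alpha B]=\tilde\mu r_T r_S^2\cos\alpha$. You then Taylor-expand $q$ in $a$ around $a=0$ to third order, using $\partial_a q=1-q^2$ from Lemma~\ref{lem:sec3_product_odd_pert}(ii). Your concern about the remainder is unfounded: $\partial_a^3 q=(1-q^2)(6q^2-2)$, so $|\partial_a^3q|\le 2$ and $|R|\le |a|^3/3$ uniformly in $b$.

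\textbf{What each buys.} In your approach every $b$-dependence enters only through bounded functions: $\tilde\mu\tanh b$, $\tilde\mu^2\tanh^2 b$, and $\tilde\mu\tanh b\,(1-\tilde\mu^2\tanh^2 b)$. As a result, each of your four terms gets an explicit bound, which is cleaner than the paper's sketched treatment of the cross term. The paper's approach, in exchange, avoids derivative computations altogether, but it must handle the unbounded function $\tanh b-b$.

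\textbf{Shared mechanism.} Both arguments rest on the same three ingredients. First, $Y_\alpha$ and $B$ are conditionally independent given $\eta$. Second, odd averaging turns an odd function of $B$ into $\tilde\mu\eta\cdot O(r_S^2)$. Third, the identity $\Ebb[Y_\alpha\eta]=r_T\cos\alpha$ supplies the extra factor $r_T$.

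\textbf{Two small side remarks.}
\begin{itemize}
\item Your ``main obstacle'' is slightly overstated. A stray factor $\tilde\mu r_T^2 r_S$ is harmless, since $\tilde\mu r_T^2r_S\le\tfrac12(r_T^3+\tilde\mu^2r_Tr_S^2)$ by AM--GM. So for the quadratic term, the crude bound $|\tanh B|\le\min(1,|B|)$ combined with Cauchy--Schwarz already suffices. Odd averaging is genuinely needed only for the zeroth-order term.
\item Neither your argument nor the paper's uses the hypothesis $\tilde\mu^2r_S^2\le c$. Only $r_T\le 1$ is needed, for the moment bounds.
\end{itemize}
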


\begin{proof}
Write
\[
    q-a-\tilde\mu b
    =
    \underbrace{(\tanh(a)-a)}_{=:\,e_1(a)}
    +
    \tilde\mu\underbrace{(\tanh(b)-b)}_{=:\,e_2(b)}
    -
    \tilde\mu uv q,
    \qquad u:=\tanh(a),\ v:=\tanh(b).
\]
Denote the resulting expectations by $E_{21}:=\Ebb[Y_\alpha e_1(a)]$,
$E_{22}:=\Ebb[Y_\alpha e_2(b)]$, and $E_{23}:=\Ebb[Y_\alpha uvq]$.

\textit{Target nonlinearity.}  Since $|\tanh(t)-t|\le|t|^3$,
\[
    |E_{21}|\le r_T^3\Ebb|Y_\alpha|^4\le Cr_T^3.
\]

\textit{Source nonlinearity.}  The function $\psi(t):=\tanh(t)-t$ is odd and
$1$-Lipschitz.  Conditional on $(\eta,g_1,g_2)$, $B$ has the form $B_t$ from
Lemma~\ref{lem:sec3_product_odd_lip} with $t=r_S^2\eta$, so
$\Ebb[\psi(B)\mid\eta,g_1,g_2]=\tilde\mu m_\psi(r_S^2\eta)$ with
$|m_\psi(r_S^2\eta)|\le r_S^2$.  Since $m_\psi$ is odd,
$m_\psi(r_S^2\eta)=\eta m_\psi(r_S^2)$, giving
\[
    |\tilde\mu E_{22}|=\tilde\mu^2|m_\psi(r_S^2)||\Ebb[Y_\alpha\eta]|
    \le\tilde\mu^2 r_S^2\cdot r_T|\cos\alpha|
    \le\tilde\mu^2 r_T r_S^2.
\]

\textit{Cross term.}  Decompose $uvq=u^2v+\tilde\mu uv^2-\tilde\mu u^2v^2 q$.
Using Lemma~\ref{lem:sec3_product_odd_lip} for $\psi=\tanh$, $|q|\le1$,
and the moment bounds from Lemma~\ref{lem:sec3_product_moment_bds}:
$|\tilde\mu E_{23}|\le C\tilde\mu^2 r_T r_S^2$.

Collecting gives \eqref{eq:sec3_product_approx_error}; multiplying by $r_T$
gives the second claim.
\end{proof}

\begin{lemma}[Score-square moment bound]
\label{lem:sec3_product_score_sq}
With $r_T$, $r_S$ as in~\eqref{eq:def-rT-rS} and the notation of
Lemma~\ref{lem:sec3_product_moment_bds}, set $a:=r_T Y_\alpha$, $b:=B$,
and $q(a,b):=\partial_a h(a,b)$.  Assume $r_T\le c$.  Then
\begin{equation}
\label{eq:sec3_product_score_sq}
    r_T^2\,\Ebb\!\bigl[Z_\alpha^2\,q(r_T Y_\alpha,B)^2\bigr]
    \le
    C\bigl(r_T^4+\tilde\mu^2 r_T^2 r_S^2\bigr).
\end{equation}
\end{lemma}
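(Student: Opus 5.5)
The plan is to reduce the left-hand side to bounded moments of $(Y_\alpha,Z_\alpha,B)$, using the structure of the score $q=\partial_a h$ from Lemma~\ref{lem:sec3_product_odd_pert}. First split $q$ as $q(a,b)=\tanh(a)+\Delta_1(a,b)$, so that
\[
q^2\le 2\tanh^2(a)+2\Delta_1^2 .
\]
Then apply the two pointwise bounds $|\tanh(a)|\le |a|=r_T|Y_\alpha|$ and $|\Delta_1|\le \tilde\mu|b|$, the latter from Lemma~\ref{lem:sec3_product_odd_pert}(iii). This gives
\[
r_T^2\,\Ebb[Z_\alpha^2q^2]\le 2r_T^4\,\Ebb[Z_\alpha^2Y_\alpha^2]+2\tilde\mu^2r_T^2\,\Ebb[Z_\alpha^2B^2].
\]

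The first term is handled directly by Cauchy--Schwarz. Together with Lemma~\ref{lem:sec3_product_moment_bds} ($\Ebb Z_\alpha^4\le C$, $\Ebb Y_\alpha^4\le C$), it is $\le Cr_T^4$.

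For the second term, Cauchy--Schwarz would give $\Ebb[Z_\alpha^2B^2]\le C(r_S^4+r_S^2)$. That is too crude: it produces $\tilde\mu^2r_T^2r_S^4$, while only $\tilde\mu^2r_T^2r_S^2$ is allowed. Instead I will use the representation in Lemma~\ref{lem:sec3_product_moment_bds}: $B=r_S^2s\eta+r_Sh$, $X_1=r_T\eta+g_1$, and $X_2=g_2$, with $(\eta,s,g_1,g_2,h)$ independent. Since $h$ is independent of $Z_\alpha$,
\[
\Ebb[Z_\alpha^2B^2]\le 2r_S^4\,\Ebb[Z_\alpha^2]+2r_S^2\,\Ebb[Z_\alpha^2]\,\Ebb[h^2]\le C(r_S^4+r_S^2).
\]
The $r_S^4$ piece is then controlled by the hypothesis $\tilde\mu^2r_S^2\le c$ (the assumption used in Lemma~\ref{lem:sec3_product_approx_error}):
\[
\tilde\mu^2r_T^2r_S^4=(\tilde\mu^2r_S^2)\,r_T^2r_S^2,
\]
and this must be dominated by $r_T^4+\tilde\mu^2r_T^2r_S^2$.

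This step is the main obstacle. The bound $(\tilde\mu^2 r_S^2)r_T^2 r_S^2$ is not obviously of the required form without a $\tilde\mu$ factor. The fix is to use the sharper form of $\Delta_1$,
\[
\Delta_1=\tilde\mu\tanh(b)\operatorname{sech}^2(a)/(1+\tilde\mu\tanh(a)\tanh(b)),
\]
which gives $|\Delta_1|\le C\tilde\mu|\tanh b|\le C\tilde\mu\min(1,|b|)$, since the denominator is bounded below by $1-\tilde\mu\ge\delta$ (or by $\operatorname{sech}^2$-type control). Then $\Ebb[Z_\alpha^2\Delta_1^2]\le C\tilde\mu^2\,\Ebb[Z_\alpha^2\min(1,B^2)]$, and the claim reduces to showing $\Ebb[Z_\alpha^2\min(1,B^2)]\lesssim r_S^2$.

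To prove that, condition on $(\eta,s)$. The contribution of $h$ is $r_S^2$. The mean shift $r_S^2s\eta$ contributes $\min(1,r_S^4)\le r_S^2$ when $r_S\le 1$. The problem is the regime $r_S\ge1$, where $\min(1,B^2)\le 1\le r_S^2$ trivially, so the bound holds there too. Either way $\Ebb[Z_\alpha^2\min(1,B^2)]\le C\,\Ebb[Z_\alpha^2]\,r_S^2$, which gives the $\tilde\mu^2r_T^2r_S^2$ term. Combining the two terms yields \eqref{eq:sec3_product_score_sq}.
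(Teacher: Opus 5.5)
Your final argument is correct. It follows the paper's architecture: a pointwise bound $q^2\lesssim r_T^2Y_\alpha^2+\tilde\mu^2\tanh^2 B$, followed by moment bounds. However, your split of $q$ is the mirror image of the paper's.

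\emph{The split of $q$.} The paper expands around $a=0$. It uses $q(0,b)=\tilde\mu\tanh b$ and the fact that $a\mapsto q(a,b)$ is $1$-Lipschitz (Lemma~\ref{lem:sec3_product_odd_pert}(ii)), so $|q-\tilde\mu\tanh B|\le r_T|Y_\alpha|$ follows directly from an already-proved statement. You expand around $b=0$, writing $q=\tanh a+\Delta_1$. This needs $|\Delta_1|\le 2\tilde\mu|\tanh b|$, which is not in Lemma~\ref{lem:sec3_product_odd_pert}; item (iii) only gives $\tilde\mu|b|$, which you correctly saw is too weak.

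\emph{Justifying that bound.} A lower bound $1-\tilde\mu\ge\delta$ on the denominator is not available, because Item~4 is applied with $\tilde\mu=1$ on the fully aligned subclass in the proof of Theorem~\ref{thm:necessary_condition}. The right one-liner sets $u=\tanh a$, $v=\tanh b$ and uses $1+\tilde\mu uv\ge 1-\tilde\mu|u||v|\ge 1-|u|>0$. This gives $|\Delta_1|=\tilde\mu|v|(1-u^2)/(1+\tilde\mu uv)\le\tilde\mu|v|(1+|u|)\le 2\tilde\mu|\tanh b|$ for every $\tilde\mu\in[0,1]$.

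\emph{The remaining moment.} You condition on $(\eta,s)$, under which $Z_\alpha$ depends only on $(g_1,g_2)$ and $B$ only on $h$. Together with $\Ebb[\min(1,B^2)\mid\eta,s]\le\min(1,r_S^4+r_S^2)\le 2r_S^2$, this is valid and avoids fourth moments of $B$. The paper instead uses Cauchy--Schwarz and $\Ebb\tanh^4 B\le\min(1,\Ebb B^4)\le Cr_S^4$. Both are equally short.

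\emph{The discarded attempt.} Your first try appealed to $\tilde\mu^2r_S^2\le c$, which is not a hypothesis of this lemma. Your final argument correctly uses only $r_T\le c$, through the moment bounds on $(Y_\alpha,Z_\alpha)$.
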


\begin{proof}
Differentiating $h(a,b)=\log(\cosh a\cosh b+\tilde\mu\sinh a\sinh b)$ in $a$
and evaluating at $a=0$ gives $q(0,b)=\tilde\mu\tanh(b)$.
Since $\partial_a q=1-q^2\in[0,1]$, the map $a\mapsto q(a,b)$ is $1$-Lipschitz, so
\[
    |q(r_T Y_\alpha,B)-\tilde\mu\tanh(B)|\le r_T|Y_\alpha|.
\]
Hence $q^2\le 2r_T^2 Y_\alpha^2+2\tilde\mu^2\tanh^2(B)$, giving
\[
    r_T^2\Ebb[Z_\alpha^2 q^2]
    \le 2r_T^4\Ebb[Z_\alpha^2 Y_\alpha^2]
       +2\tilde\mu^2 r_T^2\Ebb[Z_\alpha^2\tanh^2(B)].
\]
By Lemma~\ref{lem:sec3_product_moment_bds} and Cauchy--Schwarz,
$\Ebb[Z_\alpha^2 Y_\alpha^2]\le(\Ebb Z_\alpha^4)^{1/2}(\Ebb Y_\alpha^4)^{1/2}\le C$,
so the first term is $\le Cr_T^4$.

For the second term, apply Cauchy--Schwarz:
$$\Ebb[Z_\alpha^2\tanh^2(B)]\le(\Ebb Z_\alpha^4)^{1/2}(\Ebb\tanh^4(B))^{1/2}\le C(\Ebb\tanh^4(B))^{1/2}.$$
Since $|\tanh t|\le1$ and $|\tanh t|\le|t|$, we have $\Ebb[\tanh^4(B)]\le\min(1,\Ebb[B^4])$.
By Lemma~\ref{lem:sec3_product_moment_bds}, $\Ebb|B|^4\le C(r_S^4+r_S^8)$, so
$\min(1,\Ebb[B^4])\le\min(1,C(r_S^4+r_S^8))\le Cr_S^4$,
where the last bound holds for all $r_S\ge0$
(for $r_S\le1$: $r_S^4+r_S^8\le2r_S^4$; for $r_S>1$: $\min(1,\cdot)=1\le r_S^4$).
Hence $(\Ebb\tanh^4(B))^{1/2}\le Cr_S^2$, giving $\Ebb[Z_\alpha^2\tanh^2(B)]\le Cr_S^2$,
and the second term is $\le C\tilde\mu^2 r_T^2 r_S^2$.
\end{proof}

\begin{lemma}[Conditional row KL bound]
\label{lem:sec3_product_row_kl}
With $\Xi_{j_0}$ as in~\eqref{eq:def-Xi_k}, $r_T$, $r_S$ as in~\eqref{eq:def-rT-rS}, and $l^{(\trg)}_\pm$ as in~\eqref{eq:def-l-defs},
for each fixed $\Xi_{j_0}$ let $p_{\pm,r}$ be the conditional row densities from
Lemma~\ref{lem:sec3_product_row_density}.  Whenever $r_T\le c$ and
$\tilde\mu^2 r_S^2\le c$,
\begin{equation}
\label{eq:sec3_product_row_kl}
    \KL(p_{+,r}\,\|\,p_{-,r})
    \le
    C\,r_T^2(r_T^2+\tilde\mu^2 r_S^2)\,\|l^{(\trg)}_+-l^{(\trg)}_-\|^2.
\end{equation}
\end{lemma}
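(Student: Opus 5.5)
The plan is to reduce to a one-parameter rotation of the target label direction and bound the KL divergence by a second-order Taylor expansion along that rotation path. The second derivative will be controlled using Lemmas~\ref{lem:sec3_product_odd_pert}--\ref{lem:sec3_product_score_sq}.

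\emph{Step 1: reduction to a planar rotation.} Fix $\Xi_{j_0}$. The target reference measure $\dnorm(0,\Id_{\nT})$ is rotation invariant, and the source coordinates are untouched. I therefore apply an orthogonal change of variables in $x_r$ sending $l^{(\trg)}_+\mapsto e_1$ and $l^{(\trg)}_-\mapsto \cos\varphi\,e_1+\sin\varphi\,e_2$, where $\varphi\in[0,\pi]$ is the angle between the two unit vectors. For $\alpha\in[0,\varphi]$, let $p_\alpha$ be the row density \eqref{eq:sec3_product_row_density} with $l^{(\trg)}$ replaced by $l_\alpha:=\cos\alpha\,e_1+\sin\alpha\,e_2$. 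Then $\ip{l_\alpha}{x}=Y_\alpha$ and
\[
\log p_\alpha=-(r_T^2+r_S^2)/2+h(r_TY_\alpha,B),
\]
with $h$ as in Lemma~\ref{lem:sec3_product_odd_pert}, and $p_0=p_{+,r}$, $p_\varphi=p_{-,r}$. Set $f(\alpha):=\KL(p_0\,\|\,p_\alpha)=\Ebb_{p_0}[\log p_0-\log p_\alpha]$. Because $\|l^{(\trg)}_+-l^{(\trg)}_-\|^2=4\sin^2(\varphi/2)\ge (4/\pi^2)\varphi^2$, it suffices to show
\[
\sup_{\alpha}f''(\alpha)\le C r_T^2(r_T^2+\tilde\mu^2r_S^2).
\]
Indeed $f(0)=0$, and $f'(0)=-\Ebb_{p_0}[\partial_\alpha\log p_\alpha|_{\alpha=0}]=0$ since a score has mean zero. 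Taylor's theorem then gives $f(\varphi)\le \tfrac{\varphi^2}{2}\sup f''$. Differentiation under the expectation is justified by $|\partial_a h|\le 1$, $0\le\partial_{aa}h\le 1$, and Gaussian moments.

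\emph{Step 2: the second derivative.} Since $\partial_\alpha Y_\alpha=Z_\alpha$ and $\partial_\alpha Z_\alpha=-Y_\alpha$, writing $q=\partial_ah$ and using Lemma~\ref{lem:sec3_product_odd_pert}(ii) gives
\[
-\partial_\alpha^2\log p_\alpha=r_TY_\alpha\,q-r_T^2Z_\alpha^2(1-q^2).
\]
A naive bound is too crude, because $\Ebb[r_TY_\alpha\cdot r_TY_\alpha]\asymp r_T^2$ is not of order $r_T^4$. The key is a cancellation between the two terms, obtained by inserting the linearization $q=a+\tilde\mu b+(q-a-\tilde\mu b)$ with $a=r_TY_\alpha$ and $b=B$:
\[
-\partial_\alpha^2\log p_\alpha
= r_TY_\alpha(q-a-\tilde\mu b)+r_T^2(Y_\alpha^2-Z_\alpha^2)+\tilde\mu r_TY_\alpha B+r_T^2Z_\alpha^2q^2 .
\]

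\emph{Step 3: bounding each term under $p_0$.} The moments are those given by the representation of Lemma~\ref{lem:sec3_product_moment_bds}, which is exactly the law $p_0$ with $l^{(\trg)}_+=e_1$.
\begin{itemize}
\item The first term is at most $C(r_T^4+\tilde\mu^2r_T^2r_S^2)$ by Lemma~\ref{lem:sec3_product_approx_error}.
\item For the second term, $X_1=r_T\eta+g_1$ and $X_2=g_2$ give $\Ebb[Y_\alpha^2-Z_\alpha^2]=r_T^2\cos 2\alpha$, hence a contribution of at most $r_T^4$.
\item The third term is at most $\tilde\mu r_T\cdot C\tilde\mu r_Tr_S^2$ by the cross-moment bound of Lemma~\ref{lem:sec3_product_moment_bds}.
\item The fourth term is at most $C(r_T^4+\tilde\mu^2r_T^2r_S^2)$ by Lemma~\ref{lem:sec3_product_score_sq}.
\end{itemize}
All bounds are uniform in $\alpha$ and hold under $r_T\le c$ and $\tilde\mu^2r_S^2\le c$. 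Summing gives the required bound on $f''$, and Step 1 then yields \eqref{eq:sec3_product_row_kl}.

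The main obstacle is Step 2. One must recognise that the $O(r_T^2)$ pieces $\Ebb[r_T^2Y_\alpha^2]$ and $\Ebb[r_T^2Z_\alpha^2]$ cancel, and that what survives is exactly the combination controlled by the approximation-error and score-square lemmas.
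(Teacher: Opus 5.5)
Your proposal is correct and follows the paper's proof essentially step for step. The shared steps are the rotation reduction to $F(\alpha)=\KL(p_{e_1}\,\|\,p_{l_\alpha})$ with $F(0)=F'(0)=0$, and the four-term decomposition of $F''$ obtained by inserting $q=a+\tilde\mu b+(q-a-\tilde\mu b)$. Each term is then bounded with Lemmas~\ref{lem:sec3_product_moment_bds}--\ref{lem:sec3_product_score_sq}, and the argument closes with Taylor's theorem and $\alpha^2\le\frac{\pi^2}{4}\|l^{(\trg)}_+-l^{(\trg)}_-\|^2$. The only cosmetic difference is that you get $F'(0)=0$ from the mean-zero score identity, whereas the paper uses independence of $X_2$ from $(X_1,B)$.
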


\begin{proof}
\textit{Reduction by rotation.}
KL divergence is invariant under bijective measurable maps of the sample
space: for any bijection $T$, $\KL(P\|Q)=\KL(T_\#P\|T_\#Q)$.  We exploit
this with the map $T:(x_r,y_r)\mapsto(Ox_r,y_r)$, where $O\in O(\nT)$ is the
orthogonal transformation satisfying
\[
    Ol^{(\trg)}_+ = e_1, \qquad Ol^{(\trg)}_- = l_\alpha := \cos\alpha\,e_1+\sin\alpha\,e_2,
    \qquad \alpha := \arccos\!\ip{l^{(\trg)}_+}{l^{(\trg)}_-}\in[0,\pi].
\]
(Such an $O$ exists because $l^{(\trg)}_\pm$ are unit vectors; $l_\alpha$ is the
unique unit vector in $\mathrm{span}\{e_1,e_2\}$ with $\ip{e_1}{l_\alpha}=\ip{l^{(\trg)}_+}{l^{(\trg)}_-}$.)

\textit{Why only $x_r$ is rotated.}
The two hypotheses $\tau_{j_0}=\pm1$ differ only through the target statistic
$a_\pm(x_r)=r_T\ip{l^{(\trg)}_\pm}{x_r}$.
The source statistic $b(y_r)=r_S\ip{l^{(\src)}}{y_r}$ is the same under
both hypotheses (since $l^{(\src)}=\zsrc/\sqrt{\nS}$ is fixed by $\Xi_{j_0}$),
so no rotation of $y_r$ is needed or helpful.  We therefore act on
$x_r$ alone, leaving $y_r$ unchanged.

\textit{Effect of $T$ on the row distributions.}
The pushforward $T_\#P_{\pm,r}$ is the law of $(Ox_r,y_r)$ when
$(x_r,y_r)\sim P_{\pm,r}$.  Since $p_{\pm,r}$
(Lemma~\ref{lem:sec3_product_row_density}) depends on $x_r$ only through
$a_\pm(x_r)=r_T\ip{l^{(\trg)}_\pm}{x_r}$, and
$\ip{l^{(\trg)}_\pm}{x_r}=\ip{Ol^{(\trg)}_\pm}{Ox_r}$, setting
$\tilde x_r:=Ox_r$ shows that $T_\#P_{+,r}$ has target statistic
$r_T\ip{e_1}{\tilde x_r}$ and $T_\#P_{-,r}$ has target statistic
$r_T\ip{l_\alpha}{\tilde x_r}$, while both retain the same source statistic
$b(y_r)$.  Hence $T_\#P_{+,r}=P_{e_1}$ and $T_\#P_{-,r}=P_{l_\alpha}$,
where the subscripts now denote the (rotated) target label directions.
For the densities to be expressed against the same reference measure
$Q=N(0,I_{\nT})\otimes N(0,I_{\nS})$ after the change of variables, we
need $T_\#Q=Q$.  The source factor $N(0,I_{\nS})$ is unchanged since $T$
does not touch $y_r$.  The target factor $N(0,I_{\nT})$ is $O$-invariant
because $\|Ox_r\|=\|x_r\|$.  Therefore $T_\#Q=Q$, and
\[
    \KL(P_{+,r}\|P_{-,r})
    =
    \KL(T_\#P_{+,r}\|T_\#P_{-,r})
    =
    \KL(P_{e_1}\|P_{l_\alpha})
    =:
    F(\alpha).
\]

\textit{Definitions in rotated coordinates.}
Write $X_i:=\ip{e_i}{x_r}$ for the $i$-th coordinate of the (now rotated)
standardized column, and define (as in Lemma~\ref{lem:sec3_product_moment_bds})
\[
    Y_\alpha := \ip{l_\alpha}{x_r} = \cos\alpha\,X_1+\sin\alpha\,X_2,
    \qquad
    Z_\alpha := \partial_\alpha Y_\alpha = -\sin\alpha\,X_1+\cos\alpha\,X_2,
\]
so that $\partial_\alpha Z_\alpha=-Y_\alpha$.  After the rotation,
$a_+(x_r)=r_T X_1$ and $a_-(x_r)=r_T Y_\alpha$.  Also set
\[
    B := b(y_r) = r_S\ip{l^{(\src)}}{y_r},
\]
the source statistic from~\eqref{eq:def-xr-yr}, which is the same under
both hypotheses.  Because
$h(a,b)=\log(\cosh a\cosh b+\tilde\mu\sinh a\sinh b)$ is the log-density
of each row (Lemma~\ref{lem:sec3_product_row_density}), and the Gaussian
prefactor $e^{-(r_T^2+r_S^2)/2}$ cancels in the KL ratio,
\[
    F(\alpha)
    =
    \Ebb_{p_{e_1}}\!\bigl[h(r_T X_1,B)-h(r_T Y_\alpha,B)\bigr].
\]

\textit{Law under $p_{e_1}$.}
From the model (see proof of Lemma~\ref{lem:sec3_product_row_density}):
conditional on $\Xi_{j_0}$ and $\tau_{j_0}=+1$, the target column (after rotation) is distributed as
$x_r \overset{d}{=} \xi_r\,r_T\,e_1+\varepsilon$, where $\xi_r\sim\mathrm{Rad}(1/2)$
is the coordinate direction sign and $\varepsilon\sim N(0,I_{\nT})$ is independent.
Thus $X_2=\ip{e_2}{x_r}=\varepsilon_2\sim N(0,1)$ is independent of $(\xi_r,X_1,B)$.
More generally, $(Y_\alpha,Z_\alpha)$ is an orthogonal rotation of $(X_1,X_2)$, and
since under $p_{e_1}$ we have $\Ebb[X_1^2]=r_T^2+1$ and $\Ebb[X_2^2]=1$:
\begin{equation}\label{eq:rotated-moments}
    \Ebb[Y_\alpha^2]=r_T^2\cos^2\alpha+1, \qquad \Ebb[Z_\alpha^2]=r_T^2\sin^2\alpha+1.
\end{equation}

\textit{$F(0)=F'(0)=0$.}
$F(0)=0$ since $Y_0=X_1$.  Differentiating in $\alpha$ and writing
$q:=\partial_a h(r_T Y_\alpha,B)$ (Lemma~\ref{lem:sec3_product_odd_pert}(i)):
\[
    F'(\alpha) = -r_T\,\Ebb_{p_{e_1}}\!\bigl[q(r_T Y_\alpha,B)\,Z_\alpha\bigr].
\]
At $\alpha=0$, $Z_0=X_2$ is independent of $(X_1,B)$ and $\Ebb[X_2]=0$,
so $F'(0)=-r_T\,\Ebb[q(r_T X_1,B)]\cdot\Ebb[X_2]=0$.

\textit{Computing $F''$.}
Differentiating once more, using $\partial_\alpha Z_\alpha=-Y_\alpha$ and
$\partial_a q=\partial_{aa}h=1-q^2$ (Lemma~\ref{lem:sec3_product_odd_pert}(ii)):
\begin{align*}
    F''(\alpha)
    &=
    -r_T^2\Ebb\!\bigl[(1-q^2)Z_\alpha^2\bigr]
    +
    r_T\Ebb\!\bigl[q\,Y_\alpha\bigr]\\
    &=
    r_T\Ebb[q\,Y_\alpha]
    -r_T^2\Ebb[Z_\alpha^2]
    +r_T^2\Ebb[Z_\alpha^2 q^2].
\end{align*}
Writing $q = (r_T Y_\alpha+\tilde\mu B)+(q-r_T Y_\alpha-\tilde\mu B)$
in the first term $r_T\Ebb[qY_\alpha]$ and expanding gives the decomposition
\[
    F''(\alpha) = T_0+T_1+T_2+T_3,
\]
where
\begin{align*}
    T_0 &:= r_T^2\Ebb[Y_\alpha^2-Z_\alpha^2], &
    T_1 &:= \tilde\mu r_T\Ebb[Y_\alpha B], \\
    T_2 &:= r_T\Ebb\!\bigl[Y_\alpha(q-r_T Y_\alpha-\tilde\mu B)\bigr], &
    T_3 &:= r_T^2\Ebb[Z_\alpha^2 q^2].
\end{align*}

\textit{Bound on $T_0$.}
By~\eqref{eq:rotated-moments},
$\Ebb[Y_\alpha^2]-\Ebb[Z_\alpha^2]=r_T^2(\cos^2\alpha-\sin^2\alpha)$,
so $|T_0|=r_T^4|\cos^2\alpha-\sin^2\alpha|\le r_T^4$.

\textit{Bound on $T_1$.}
By Lemma~\ref{lem:sec3_product_moment_bds}, $|\Ebb[Y_\alpha B]|\le\tilde\mu r_T r_S^2$,
so $|T_1|=\tilde\mu r_T|\Ebb[Y_\alpha B]|\le\tilde\mu^2 r_T^2 r_S^2$.

\textit{Bound on $T_2$.}
This is the approximation error; Lemma~\ref{lem:sec3_product_approx_error} gives
$|T_2|\le C(r_T^4+\tilde\mu^2 r_T^2 r_S^2)$.

\textit{Bound on $T_3$.}
Lemma~\ref{lem:sec3_product_score_sq} gives
$|T_3|=r_T^2\Ebb[Z_\alpha^2 q^2]\le C(r_T^4+\tilde\mu^2 r_T^2 r_S^2)$.

\textit{Conclusion.}
Collecting the four bounds,
$|F''(\alpha)|\le Cr_T^2(r_T^2+\tilde\mu^2 r_S^2)$ uniformly in $\alpha$.
Since $F(0)=F'(0)=0$, Taylor's theorem in integral form gives
\[
    F(\alpha) = \int_0^\alpha(\alpha-t)F''(t)\,dt
    \le Cr_T^2(r_T^2+\tilde\mu^2 r_S^2)\alpha^2.
\]
Finally, $\|l^{(\trg)}_+-l^{(\trg)}_-\|^2=2(1-\cos\alpha)=4\sin^2(\alpha/2)$,
and $|\sin\theta|\ge 2|\theta|/\pi$ for $\theta\in[-\pi/2,\pi/2]$, so
$\alpha^2\le\frac{\pi^2}{4}\|l^{(\trg)}_+-l^{(\trg)}_-\|^2$,
proving~\eqref{eq:sec3_product_row_kl}.
\end{proof}

\begin{lemma}[Conditional full-sample KL and TV]
\label{lem:sec3_product_full_kl}
With $\Xi_{j_0}$ as in~\eqref{eq:def-Xi_k} and $r_T$, $r_S$ as in~\eqref{eq:def-rT-rS}, for each fixed $\Xi_{j_0}$,
whenever $r_T\le c$ and $\tilde\mu^2 r_S^2\le c$,
\begin{equation}
\label{eq:sec3_product_kl_bound}
    \KL\!\bigl(P_{+j_0}\mid\Xi_{j_0}\,\big\|\,P_{-j_0}\mid\Xi_{j_0}\bigr)
    \le
    \frac{C\bigl(\nT\DeltaT^4+\tilde\mu^2\nS\DeltaT^2\DeltaS^2\bigr)}{d}.
\end{equation}
Consequently,
\begin{equation}
\label{eq:sec3_product_tv_cond}
    \TV(P_{+j_0},P_{-j_0})
    \le
    \sqrt{\frac{C\bigl(\nT\DeltaT^4+\tilde\mu^2\nS\DeltaT^2\DeltaS^2\bigr)}{2d}}.
\end{equation}
\end{lemma}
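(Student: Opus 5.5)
The plan is to apply the KL chain rule across the $d$ coordinates and then pass to total variation through Pinsker's inequality and convexity.

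\textbf{Step 1: the conditional laws are products over coordinates.} Fix $\Xi_{j_0}$. By Lemma~\ref{lem:sec3_product_row_density}, the coordinate pairs $(x_r,y_r)$, $r\in[d]$, are independent under both hypotheses $\tau_{j_0}=\pm1$. This holds because the latent pairs $(\xi_r,\kappa_r)$ are i.i.d. across $r$ and the noise is independent across coordinates. So each conditional law $P_{\pm j_0}\mid\Xi_{j_0}$ is the product $\bigotimes_{r=1}^d p_{\pm,r}$. KL divergence is additive over products, which gives
\[
\KL\bigl(P_{+j_0}\mid\Xi_{j_0}\,\big\|\,P_{-j_0}\mid\Xi_{j_0}\bigr)=\sum_{r=1}^d\KL(p_{+,r}\|p_{-,r}).
\]

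\textbf{Step 2: bound each row and sum.} Apply Lemma~\ref{lem:sec3_product_row_kl} to each row. The two label vectors $\ztar_\pm$ differ only in coordinate $2j_0$, where the sign flips. Hence $\|l^{(\trg)}_+-l^{(\trg)}_-\|^2=4/\nT$, and every row KL is at most
\[
C r_T^2(r_T^2+\tilde\mu^2r_S^2)\cdot\frac{4}{\nT}.
\]
Substitute $r_T^2=\nT\DeltaT^2/d$ and $r_S^2=\nS\DeltaS^2/d$ and multiply by $d$. This yields
\[
\frac{4C}{\nT}\cdot\frac{\nT\DeltaT^2}{d}\left(\frac{\nT\DeltaT^2}{d}+\frac{\tilde\mu^2\nS\DeltaS^2}{d}\right)d
=\frac{4C\bigl(\nT\DeltaT^4+\tilde\mu^2\nS\DeltaT^2\DeltaS^2\bigr)}{d},
\]
which is \eqref{eq:sec3_product_kl_bound}. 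The hypotheses $r_T\le c$ and $\tilde\mu^2 r_S^2\le c$ are exactly those required by Lemma~\ref{lem:sec3_product_row_kl}.

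\textbf{Step 3: remove the conditioning.} The law of $\Xi_{j_0}$ is the same under $\tau_{j_0}=\pm1$. Therefore $P_{\pm j_0}$ is the mixture, with a common mixing law, of the conditional laws $P_{\pm j_0}\mid\Xi_{j_0}$. Joint convexity of total variation (or, equivalently, data processing after revealing $\Xi_{j_0}$) gives
\[
\TV(P_{+j_0},P_{-j_0})\le \Ebb_{\Xi_{j_0}}\TV\bigl(P_{+j_0}\mid\Xi_{j_0},P_{-j_0}\mid\Xi_{j_0}\bigr).
\]
Pinsker's inequality bounds each conditional total variation by $\sqrt{\KL/2}$. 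Since the KL bound from Step 2 holds uniformly in $\Xi_{j_0}$, this gives \eqref{eq:sec3_product_tv_cond}.

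\textbf{Expected obstacle.} The main point to verify is that the product structure in Step 1 holds conditionally on $\Xi_{j_0}$ alone, without also revealing $\xi$ or $\kappa$. This works because $(\xi_r,\kappa_r)$ enter only row $r$ and are averaged out row by row, as in Lemma~\ref{lem:sec3_product_row_density}. Everything else is bookkeeping of constants.
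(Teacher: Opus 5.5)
Your proposal is correct and matches the paper's proof essentially line for line. The paper also uses conditional independence of the rows given $\Xi_{j_0}$, additivity of $\KL$ over the product, and the row bound of Lemma~\ref{lem:sec3_product_row_kl} with $\|l^{(\trg)}_+-l^{(\trg)}_-\|^2=4/\nT$, followed by Pinsker and convexity of $\TV$ under the common mixing law of $\Xi_{j_0}$. One small attribution point: the row independence comes from the prior, namely i.i.d.\ $(\xi_r,\kappa_r)$ and independent noise, not from Lemma~\ref{lem:sec3_product_row_density} itself, though you state this correct reason as well.
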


\begin{proof}
Fix $\Xi_{j_0}$.  The rows $(x_r,y_r)$ (as defined in~\eqref{eq:def-xr-yr}) are conditionally independent because
$(\xi_r,\kappa_r)$ are i.i.d.\ across $r$.  By additivity of KL for product
measures and Lemma~\ref{lem:sec3_product_row_kl},
\[
    \KL\!\bigl(P_{+j_0}\mid\Xi_{j_0}\,\big\|\,P_{-j_0}\mid\Xi_{j_0}\bigr)
    =
    \sum_{r=1}^d \KL(p_{+,r}\|p_{-,r})
    \le
    d\cdot Cr_T^2(r_T^2+\tilde\mu^2 r_S^2)\|l^{(\trg)}_+-l^{(\trg)}_-\|^2.
\]
The target label vectors $\ztar_+$ and $\ztar_-$ differ only at index $2j_0$,
so $\|l^{(\trg)}_+-l^{(\trg)}_-\|^2=\|\ztar_+-\ztar_-\|^2/\nT=4/\nT$.  Substituting
$r_T^2=\nT\DeltaT^2/d$ and $\tilde\mu^2 r_S^2=\tilde\mu^2\nS\DeltaS^2/d$
proves \eqref{eq:sec3_product_kl_bound}.

By Pinsker's inequality,
\[
    \TV(P_{+j_0}\mid\Xi_{j_0},P_{-j_0}\mid\Xi_{j_0})
    \le
    \sqrt{\tfrac12\KL(P_{+j_0}\mid\Xi_{j_0}\|P_{-j_0}\mid\Xi_{j_0})}.
\]
Since the bound \eqref{eq:sec3_product_kl_bound} is deterministic and uniform
in $\Xi_{j_0}$, TV convexity under mixtures yields
\[
    \TV(P_{+j_0},P_{-j_0})
    \le
    \Ebb_{\Xi_{j_0}}\bigl[\TV(P_{+j_0}\mid\Xi_{j_0},P_{-j_0}\mid\Xi_{j_0})\bigr]
    \le
    \sqrt{\frac{C(\nT\DeltaT^4+\tilde\mu^2\nS\DeltaT^2\DeltaS^2)}{2d}}.
    \qedhere
\]
\end{proof}

\subsection{Proof of Theorem~\ref{thm:necessary_condition}}
\label{app:proof-necessary-cond}

\begin{lemma}[Conditioning the surrogate prior onto the exact alignment class]
\label{lem:sec3_conditioning}
Let $\Pi_{\tilde\mu}$ denote the parameter prior induced by
\eqref{eq:construct_signal_adv} together with the pair-product/label
construction from Section~\ref{sec:necessary_cond}, and define
\[
    E_\mu
    :=
    \left\{|\ip{v_{\trg}}{v_{\src}}|\ge\mu\right\}
    =
    \left\{\left|\frac{1}{d}\sum_{r=1}^d \kappa_r\right|\ge\mu\right\}.
\]
Then the conditional prior $\Pi_{\tilde\mu}(\cdot\mid E_\mu)$ is supported on
the parameter class $\Omega(\DeltaT,\DeltaS,\mu)$. Moreover, for the clustering
loss $\calL\in[0,1]$,
\[
    \inf_{\hatztar}\Ebb_{\Pi_{\tilde\mu}(\cdot\mid E_\mu)}
    \bigl[\calL(\hatztar,\ztar)\bigr]
    \ge
    \frac{
        \inf_{\hatztar}\Ebb_{\Pi_{\tilde\mu}}[\calL(\hatztar,\ztar)]
        -
        \Pi_{\tilde\mu}(E_\mu^c)
    }{
        \Pi_{\tilde\mu}(E_\mu)
    }.
\]
\end{lemma}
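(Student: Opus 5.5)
The argument has two parts. First I check that the conditional prior is supported on $\Omega$. Then I derive the risk inequality from the law of total expectation together with the boundedness of $\calL$.

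\emph{Support.} Under $\Pi_{\tilde\mu}$ the directions $v_{\trg}=\xi/\sqrt d$ and $v_{\src}=(\xi_r\kappa_r)_r/\sqrt d$ are unit vectors almost surely. We set $\thetaT=\sigmaT\DeltaT v_{\trg}$ and $\thetaS=\sigmaS\DeltaS v_{\src}$, so the norm constraints $\norm{\thetaT}_2/\sigmaT=\DeltaT$ and $\norm{\thetaS}_2/\sigmaS=\DeltaS$ hold identically. The cosine similarity equals $|\ip{v_{\trg}}{v_{\src}}|=|d^{-1}\sum_r\xi_r^2\kappa_r|=|d^{-1}\sum_r\kappa_r|$. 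On $E_\mu$ this is at least $\mu$. The labels $\ztar$ from \eqref{eq:sec3_eta_param} and $\zsrc$ are always in $\{\pm1\}^{\nT}$ and $\{\pm1\}^{\nS}$. Hence every parameter in the support of $\Pi_{\tilde\mu}(\cdot\mid E_\mu)$ lies in $\Omega(\DeltaT,\DeltaS,\mu)$. (I assume $\Pi_{\tilde\mu}(E_\mu)>0$, which the choice of $\tilde\mu$ guarantees.)

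\emph{Risk inequality.} Fix any estimator $\hatztar$ and write $R(\hatztar):=\Ebb_{\Pi_{\tilde\mu}}[\calL(\hatztar,\ztar)]$, where the expectation is over both the prior and the data. By total expectation,
\[
R(\hatztar)=\Pi_{\tilde\mu}(E_\mu)\,\Ebb_{\Pi_{\tilde\mu}(\cdot\mid E_\mu)}[\calL]+\Pi_{\tilde\mu}(E_\mu^c)\,\Ebb_{\Pi_{\tilde\mu}(\cdot\mid E_\mu^c)}[\calL].
\]
Since $\calL\in[0,1]$, the second term is at most $\Pi_{\tilde\mu}(E_\mu^c)$. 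Also $R(\hatztar)\ge\inf_{\hatztar'}R(\hatztar')$. Combining these gives
\[
\Ebb_{\Pi_{\tilde\mu}(\cdot\mid E_\mu)}[\calL(\hatztar,\ztar)]\ge\bigl(\inf_{\hatztar'}R(\hatztar')-\Pi_{\tilde\mu}(E_\mu^c)\bigr)/\Pi_{\tilde\mu}(E_\mu).
\]
The right-hand side does not depend on $\hatztar$, so taking the infimum over $\hatztar$ on the left yields the claim.

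One point needs care. The estimator is a fixed measurable function of the data $(\calT,\calS)$ and does not depend on which prior is used. So the same $\hatztar$ appears in both the conditional and the unconditional risk, and the decomposition above is legitimate. Beyond this, the lemma is a direct bookkeeping exercise and I expect no real obstacle.
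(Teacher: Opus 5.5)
Your proposal is correct and follows the paper's proof step for step. Support on $\Omega(\DeltaT,\DeltaS,\mu)$ is read off from the definition of $E_\mu$, and you helpfully make explicit that $\thetaT=\sigmaT\DeltaT v_{\trg}$, $\thetaS=\sigmaS\DeltaS v_{\src}$ and $\xi_r^2=1$. The risk bound comes from splitting the Bayes risk over $E_\mu$ and $E_\mu^c$, bounding the second piece by $\Pi_{\tilde\mu}(E_\mu^c)$ using $\calL\le 1$, rearranging, and taking the infimum.

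A minor aside: $\Pi_{\tilde\mu}(E_\mu)>0$ holds for every $\tilde\mu\in[0,1]$, not only the calibrated one, because the configuration $\kappa_1=\cdots=\kappa_d=+1$ has positive prior mass and gives overlap $1\ge\mu$.
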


\begin{proof}
The support claim is immediate from the definition of $E_\mu$: on this event
the realized directions satisfy the deterministic constraint
$|\ip{v_{\trg}}{v_{\src}}|\ge\mu$, while all other coordinates of the parameter
construction are unchanged, so the conditioned prior is supported on
$\Omega(\DeltaT,\DeltaS,\mu)$.

For the risk comparison, fix any estimator $\hatztar$ and decompose the prior
expectation according to whether $E_\mu$ occurs:
\[
    \Ebb_{\Pi_{\tilde\mu}}[\calL]
    =
    \Pi_{\tilde\mu}(E_\mu)\,
    \Ebb_{\Pi_{\tilde\mu}(\cdot\mid E_\mu)}[\calL]
    +
    \Pi_{\tilde\mu}(E_\mu^c)\,
    \Ebb_{\Pi_{\tilde\mu}(\cdot\mid E_\mu^c)}[\calL].
\]
Because $\calL\in[0,1]$, the second term is bounded by
$\Pi_{\tilde\mu}(E_\mu^c)$. Therefore
\[
    \Ebb_{\Pi_{\tilde\mu}}[\calL]
    \le
    \Pi_{\tilde\mu}(E_\mu)\,
    \Ebb_{\Pi_{\tilde\mu}(\cdot\mid E_\mu)}[\calL]
    +
    \Pi_{\tilde\mu}(E_\mu^c).
\]
Rearranging and then taking the infimum over $\hatztar$ yields
\[
    \inf_{\hatztar}\Ebb_{\Pi_{\tilde\mu}(\cdot\mid E_\mu)}[\calL(\hatztar,\ztar)]
    \ge
    \frac{
        \inf_{\hatztar}\Ebb_{\Pi_{\tilde\mu}}[\calL(\hatztar,\ztar)]
        -
        \Pi_{\tilde\mu}(E_\mu^c)
    }{
        \Pi_{\tilde\mu}(E_\mu)
    }.
\]
\end{proof}

\begin{lemma}[Prescribed bad-event bound for exact-alignment conditioning]
\label{lem:sec3_conditioning_bad_event}
Fix $q_0\in(0,1/8)$. There exist constants
$c_2=c_2(q_0),c_3=c_3(q_0),C_*=C_*(q_0)>0$ and
$d_0=d_0(q_0)\in\mathbb N$ such that the following holds for all
$d\ge d_0$. Assume $\mu\in[0,\tfrac14]$, and define
\[
    \tilde\mu
    :=
    \begin{cases}
        2\mu, & \text{if }\mu\sqrt d+d^{-1/2}\le c_2
        \text{ or }\mu^2 d\ge c_3,\\[4pt]
        2\sqrt{c_3/d}, & \text{otherwise.}
    \end{cases}
\]
Then $\mu\le\tilde\mu\le C_*\mu$, $\tilde\mu\le1/2$, and for
\[
    E_\mu
    =
    \left\{\left|\frac{1}{d}\sum_{r=1}^d \kappa_r\right|\ge\mu\right\}
\]
under the surrogate prior $\Pi_{\tilde\mu}$ one has
\[
    \Pi_{\tilde\mu}(E_\mu^c)\le q_0.
\]
More precisely:
\begin{enumerate}[label=\emph{(\alph*)}]
    \item if $\mu\sqrt d+d^{-1/2}\le c_2$, then
    \[
        \Pi_{\tilde\mu}(E_\mu^c)
        \le
        C\!\left(\mu\sqrt d+\frac{1}{\sqrt d}\right);
    \]
    \item if $\mu^2 d\ge c_3$, then
    \[
        \Pi_{\tilde\mu}(E_\mu^c)
        \le
        \exp\!\left(-\frac{\mu^2 d}{2}\right);
    \]
    \item in the intermediate branch
    $\mu\sqrt d+d^{-1/2}>c_2$ and $\mu^2 d<c_3$, one has
    \[
        \Pi_{\tilde\mu}(E_\mu^c)
        \le
        \exp\!\left(-\frac{c_3}{2}\right).
    \]
\end{enumerate}
\end{lemma}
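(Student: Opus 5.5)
The plan is to work directly with the sum $\Sigma:=\sum_{r=1}^d\kappa_r$. Under the surrogate prior the $\kappa_r$ are i.i.d.\ $\operatorname{Rad}((1+\tilde\mu)/2)$, so $\Ebb\Sigma=\tilde\mu d$ and $\operatorname{Var}(\kappa_r)=1-\tilde\mu^2$. The bad event is
\[
E_\mu^c=\{|\Sigma|<\mu d\}\subseteq\{\Sigma<\mu d\}.
\]
We first record the elementary bookkeeping. In the first two branches $\tilde\mu=2\mu\in[\mu,1/2]$ because $\mu\le1/4$. In the intermediate branch we have $\mu^2d<c_3$, so $\tilde\mu=2\sqrt{c_3/d}\ge2\mu\ge\mu$. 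We also have $\mu\sqrt d>c_2-d^{-1/2}\ge c_2/2$ once $d\ge d_0(c_2)$, hence $\tilde\mu\le(4\sqrt{c_3}/c_2)\mu$. Finally $\tilde\mu\le1/2$ as soon as $d\ge16c_3$. This gives $C_*=\max\{2,4\sqrt{c_3}/c_2\}$.

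\textbf{Branches (b) and (c).} Here we use the one-sided Hoeffding inequality for $\pm1$ variables:
\[
\Pbb(\Sigma-\tilde\mu d<-t)\le e^{-t^2/(2d)}.
\]
In branch (b) take $t=(\tilde\mu-\mu)d=\mu d$, which gives $e^{-\mu^2d/2}\le e^{-c_3/2}$. In branch (c), $t=(\tilde\mu-\mu)d\ge\sqrt{c_3 d}$, because $\tilde\mu-\mu\ge\tilde\mu/2=\sqrt{c_3/d}$; this gives $e^{-c_3/2}$. Choosing $c_3=c_3(q_0)=2\log(1/q_0)$ makes both bounds at most $q_0$.

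\textbf{Branch (a)} is the delicate one, because the mean $2\mu d$ may be of order $\sqrt d$ or smaller, so exponential bounds are useless. Instead we use anticoncentration: $\Pbb(|\Sigma|<\mu d)$ is the probability that a sum of $d$ i.i.d.\ $\pm1$ variables with variance $1-\tilde\mu^2\ge3/4$ lands in an interval of length $2\mu d$. By the Berry--Esseen theorem (the normalized third absolute moment is bounded, since the variance is bounded below), this is at most
\[
\Pbb\bigl(|N|\le \mu d/\sqrt{d(1-\tilde\mu^2)}\bigr)+\frac{C}{\sqrt d}\le C\Bigl(\mu\sqrt d+\frac1{\sqrt d}\Bigr),
\]
where the Gaussian term is controlled by the first part of Lemma~\ref{lem:gaussian_anti_conc}. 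Alternatively one can use a local bound on binomial point masses, $\max_k\Pbb(\Sigma=k)\le C/\sqrt d$, and sum over the $\lesssim\mu d+1$ admissible lattice points. Either route gives the stated bound (a). Then choose $c_2=c_2(q_0)$ so that $Cc_2\le q_0$.

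The remaining detail is to fix the constants in the right order: first $c_3$ from $q_0$, then $c_2$ from $q_0$ and the absolute Berry--Esseen constant, and finally $d_0\ge\max\{16c_3,4/c_2^2\}$. I expect the only real obstacle to be making the branch-(a) anticoncentration uniform in $\tilde\mu\le1/2$. This reduces to tracking the dependence of the Berry--Esseen constant on the variance, which stays bounded below by $3/4$.
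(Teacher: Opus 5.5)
Your proposal is correct and follows the paper's proof essentially step for step. It uses the same bookkeeping for $\tilde\mu$, with $C_*=\max\{2,4\sqrt{c_3}/c_2\}$ and $d_0\ge\max\{16c_3,4/c_2^2\}$, one-sided Hoeffding with $c_3\ge 2\log(1/q_0)$ for branches (b) and (c), and anticoncentration for branch (a). The only difference is that the paper takes your second route for (a): it bounds the number of lattice points in $(-\mu d,\mu d)$ times a uniform $C_{\mathrm{bin}}/\sqrt d$ bound on $\mathrm{Bin}(d,p)$ point masses for $p\in[1/2,3/4]$, which avoids Berry--Esseen entirely.
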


\begin{proof}
Let $C_{\mathrm{bin}}$ be a universal constant such that, for every
$d\ge 1$ and every $p\in[1/2,3/4]$,
\[
    \sup_{0\le j\le d}\Pbb(\mathrm{Bin}(d,p)=j)\le \frac{C_{\mathrm{bin}}}{\sqrt d}.
\]
Choose
\[
    c_2\le \frac{q_0}{2C_{\mathrm{bin}}},
    \qquad
    c_3\ge 2\log(1/q_0),
    \qquad
    d_0\ge \max\left\{\frac{4}{c_2^2},16c_3\right\},
    \qquad
    C_*:=\max\!\left\{2,\frac{4\sqrt{c_3}}{c_2}\right\}.
\]
We verify that these choices work.

By construction, $\tilde\mu\ge\mu$ in all branches. In branches (a) and (b) we
have $\tilde\mu=2\mu$, so certainly $\tilde\mu\le C_*\mu$. In the intermediate
branch, since $d\ge d_0\ge 4/c_2^2$, we have $d^{-1/2}\le c_2/2$, and the
condition $\mu\sqrt d+d^{-1/2}>c_2$ therefore implies $\mu\sqrt d>c_2/2$.
Hence
\[
    \frac{\tilde\mu}{\mu}
    =
    \frac{2\sqrt{c_3/d}}{\mu}
    <
    \frac{4\sqrt{c_3}}{c_2}
    \le C_*.
\]
Thus $\mu\le\tilde\mu\le C_*\mu$. Moreover,
$d_0\ge16c_3$ and $\mu\le1/4$ imply $\tilde\mu\le1/2$ in every branch.

\medskip\noindent\textit{Branch (a): small $\mu\sqrt d$.}
Here $\tilde\mu=2\mu$, so
$\kappa_r\sim\mathrm{Rad}((1+2\mu)/2)$ and
$S_d:=\sum_{r=1}^d\kappa_r=2B_d-d$ with
$B_d\sim\mathrm{Bin}(d,(1+2\mu)/2)$. The event $E_\mu^c$ is
\[
    E_\mu^c=\{|S_d|<\mu d\}.
\]
The interval $(-\mu d,\mu d)$ contains at most $2\mu d+2$ integers of the
correct parity. Since $\mu\le 1/4$, the binomial parameter lies in
$[1/2,3/4]$, so by the definition of $C_{\mathrm{bin}}$,
\[
    \Pi_{\tilde\mu}(E_\mu^c)
    \le
    (2\mu d+2)\frac{C_{\mathrm{bin}}}{\sqrt d}
    \le
    2C_{\mathrm{bin}}\!\left(\mu\sqrt d+\frac{1}{\sqrt d}\right).
\]
Under the branch assumption
$\mu\sqrt d+d^{-1/2}\le c_2$, the right-hand side is at most
$2C_{\mathrm{bin}}c_2\le q_0$.

\medskip\noindent\textit{Branch (b): large $\mu^2 d$.}
Again $\tilde\mu=2\mu$, so
\[
    \Ebb\!\left[\frac{1}{d}\sum_{r=1}^d\kappa_r\right]=2\mu.
\]
Hence
\[
    E_\mu^c
    \subseteq
    \left\{\frac{1}{d}\sum_{r=1}^d \kappa_r-2\mu\le-\mu\right\}.
\]
Since each $\kappa_r\in[-1,1]$, Hoeffding's inequality gives
\[
    \Pi_{\tilde\mu}(E_\mu^c)
    \le
    \exp\!\left(-\frac{\mu^2 d}{2}\right).
\]
Since $\mu^2 d\ge c_3$ and $c_3\ge 2\log(1/q_0)$, this is at most $q_0$.

\medskip\noindent\textit{Branch (c): intermediate $\mu$.}
Here $\tilde\mu=2\sqrt{c_3/d}$, so
\[
    \Ebb\!\left[\frac{1}{d}\sum_{r=1}^d\kappa_r\right]
    =
    2\sqrt{\frac{c_3}{d}}.
\]
Because $\mu^2 d<c_3$, we have $\mu<\sqrt{c_3/d}$, hence
\[
    E_\mu^c
    \subseteq
    \left\{\frac{1}{d}\sum_{r=1}^d \kappa_r-2\sqrt{\frac{c_3}{d}}
    \le
    -\sqrt{\frac{c_3}{d}}\right\}.
\]
Hoeffding therefore yields
\[
    \Pi_{\tilde\mu}(E_\mu^c)
    \le
    \exp\!\left(-\frac{c_3}{2}\right).
\]
Since $c_3\ge 2\log(1/q_0)$, this is at most $q_0$.
\end{proof}

\begin{proof}[Proof of Theorem~\ref{thm:necessary_condition}]
Write
\[
    R_T:=\frac{\nT\DeltaT^4}{d},\qquad
    R_S:=\frac{\nS\DeltaS^4}{d},\qquad
    A:=\mu^2\DeltaT^2,\qquad
    P:=\frac{\mu^2\nS\DeltaT^2\DeltaS^2}{d}.
\]
We first record a fixed-constant obstruction for the target signal. Let
$C_{\mathrm T}$ denote the universal constant in Item~1 of
Theorem~\ref{thm:sec3_tv_three_routes}, and choose $c_0>0$ sufficiently small
that $C_{\mathrm T}c_0^4\le 1/4$. Consider the fully aligned subclass, which is
contained in $\Omega(\DeltaT,\DeltaS,\mu)$ for every $\mu\in[0,1]$. If
$\DeltaT\le c_0$, Item~1 gives
\[
    \TV(P_{+j},P_{-j})\le\frac12
\]
for every neighboring pair. The Assouad reduction in
\eqref{eq:sec3_assouad_display} therefore gives a fixed positive lower bound
on the minimax risk. Consequently, vanishing minimax risk requires
\begin{equation}
\label{eq:necessary-deltaT-fixed}
    \DeltaT>c_0
\end{equation}
eventually.

We now show that, for sufficiently small universal constants
$c_T,c_S,c_A,c_P>0$, vanishing minimax risk also requires
\begin{equation}
\label{eq:necessary-fixed-threshold}
    R_T>c_T
    \qquad\text{or}\qquad
    \bigl(R_S>c_S,\ A>c_A,\ P>c_P\bigr)
\end{equation}
eventually. Together with \eqref{eq:necessary-deltaT-fixed}, this will imply
the two alternatives stated in the theorem.

We next fix the constants used below. Set $q_0:=1/32$ in
Lemma~\ref{lem:sec3_conditioning_bad_event}, and let $C_*$ be the resulting
universal comparison constant. If every neighboring TV distance is at most
$1/2$, \eqref{eq:sec3_assouad_display} gives the surrogate Bayes lower bound
\[
    b_0:=\frac14\left(1-\frac12\right)=\frac18.
\]
For $\mu\le1/4$, Lemma~\ref{lem:sec3_conditioning} then transfers this to the
positive lower bound
\begin{equation}
\label{eq:conditioned-positive-bayes-bound}
    \frac{b_0-q_0}{1-q_0}=\frac3{31}
\end{equation}
over $\Omega(\DeltaT,\DeltaS,\mu)$. For $\mu>1/4$, we instead use the fully
aligned subclass, which is contained in that same parameter class.

Let $c_{\mathrm D}$ be the small constant in Item~3 of
Theorem~\ref{thm:sec3_tv_three_routes}, and denote the constants on the
right-hand sides of Items~2 and~4 by $C_{\mathrm R}$ (with
$\delta_0=1/2$) and $C_{\mathrm P}$, respectively. Let $c_{\mathrm{side}}>0$
denote the constant in the two side conditions of Item~4. Choose
$c_T,c_S,c_A,c_P>0$ sufficiently small that
\begin{equation}
\label{eq:necessary-choice-constants}
\begin{gathered}
    c_T+c_S\le c_{\mathrm D},\qquad
    C_{\mathrm R}(c_T+C_*^2c_A)\le\frac14,\qquad
    C_{\mathrm P}(c_T+C_*^2c_P)\le\frac14,\qquad
    C_{\mathrm P}(c_T+16c_P)\le\frac14,\\
    c_A<\frac{c_0^2}{16},\qquad
    \frac{c_T}{c_0^2}\le c_{\mathrm{side}}^2,\qquad
    \frac{C_*^2c_P}{c_0^2}\le c_{\mathrm{side}}^2,\qquad
    \frac{16c_P}{c_0^2}\le c_{\mathrm{side}}^2.
\end{gathered}
\end{equation}

Suppose, toward a contradiction, that the minimax risk tends to zero but
\eqref{eq:necessary-fixed-threshold} fails infinitely often. Along a
subsequence, $R_T\le c_T$ and at least one of
\[
    R_S\le c_S,\qquad A\le c_A,\qquad P\le c_P
\]
holds. Passing to a further subsequence, we may assume that the same one of
these three inequalities holds throughout.

\paragraph{Doubly subcritical case: $R_S\le c_S$.}
The first inequality in \eqref{eq:necessary-choice-constants} verifies the
small-constant hypothesis in Item~3. Hence, uniformly in the tested pair,
\[
    \TV(P_{+j},P_{-j})^2
    \le C\frac{\DeltaT^4}{d}
    =C\frac{R_T}{\nT}\to0.
\]
In particular, all neighboring TV distances are eventually at most $1/2$.

\paragraph{Revealed-source case: $A\le c_A$.}
If $\mu\le1/4$, the conditioning construction gives
$\tilde\mu\le C_*\mu$ and $\tilde\mu\le1/2$. Item~2 and
\eqref{eq:necessary-choice-constants} therefore yield
\[
    \TV(P_{+j},P_{-j})^2
    \le C_{\mathrm R}(\tilde\mu^2\DeltaT^2+R_T)
    \le C_{\mathrm R}(C_*^2c_A+c_T)\le\frac14.
\]
If $\mu>1/4$, this case cannot occur eventually, because
\eqref{eq:necessary-deltaT-fixed} gives
$A=\mu^2\DeltaT^2>c_0^2/16>c_A$.

\paragraph{Product-scale case: $P\le c_P$.}
First suppose $\mu\le1/4$. Then the surrogate product scale is at most
$C_*^2P$. Moreover, using \eqref{eq:necessary-deltaT-fixed} and
\eqref{eq:necessary-choice-constants},
\[
    \frac{\nT\DeltaT^2}{d}=\frac{R_T}{\DeltaT^2}
    \le\frac{c_T}{c_0^2}\le c_{\mathrm{side}}^2,
    \qquad
    \frac{\tilde\mu^2\nS\DeltaS^2}{d}
    =\frac{\tilde\mu^2\nS\DeltaT^2\DeltaS^2/d}{\DeltaT^2}
    \le\frac{C_*^2c_P}{c_0^2}\le c_{\mathrm{side}}^2.
\]
Thus both side conditions in Item~4 hold, and
\[
    \TV(P_{+j},P_{-j})^2
    \le C_{\mathrm P}(R_T+C_*^2P)\le\frac14.
\]
If $\mu>1/4$, use the fully aligned subclass. Its unweighted product scale
satisfies
\[
    \frac{\nS\DeltaT^2\DeltaS^2}{d}=\frac{P}{\mu^2}\le16P,
\]
and \eqref{eq:necessary-deltaT-fixed} and
\eqref{eq:necessary-choice-constants} give
\[
    \frac{\nT\DeltaT^2}{d}
    \le\frac{c_T}{c_0^2}\le c_{\mathrm{side}}^2,
    \qquad
    \frac{\nS\DeltaS^2}{d}
    \le\frac{16c_P}{c_0^2}\le c_{\mathrm{side}}^2.
\]
Item~4, now with perfect alignment, gives
\[
    \TV(P_{+j},P_{-j})^2
    \le C_{\mathrm P}(R_T+16P)\le\frac14.
\]

In every possible case the neighboring TV distances are at most $1/2$.
For $\mu\le1/4$, the conditioned Bayes risk is therefore at least $3/31$ by
\eqref{eq:conditioned-positive-bayes-bound}; for $\mu>1/4$, Assouad applied
directly to the fully aligned subclass gives a positive constant lower bound.
Both conclusions contradict the assumed vanishing minimax risk. Hence
\eqref{eq:necessary-fixed-threshold} holds eventually.

Finally, combine \eqref{eq:necessary-deltaT-fixed} with
\eqref{eq:necessary-fixed-threshold}. If $R_T>c_T$, then
\[
    \DeltaT>c_0,
    \qquad
    \DeltaT>c_T^{1/4}\left(\frac d{\nT}\right)^{1/4},
\]
which gives condition~\emph{(A)} after adjusting a universal constant. If the
second alternative in \eqref{eq:necessary-fixed-threshold} holds, then
\[
    \DeltaS>c_S^{1/4}\left(\frac d{\nS}\right)^{1/4},
    \qquad
    \mu\DeltaT>\sqrt{c_A},
    \qquad
    \mu\DeltaS\DeltaT>\sqrt{c_P}\sqrt{\frac d{\nS}},
\]
which is condition~\emph{(B)}. This proves
Theorem~\ref{thm:necessary_condition}.
\end{proof}

\section{Theoretical properties of Algorithm~\ref{alg:tclust}}
\label{sec:theo_t_clust}
In this section, we study the theoretical properties of Algorithm~\ref{alg:tclust} described in Section~\ref{sec:ts_clust_intro} for clustering observations from Gaussian mixture models with multiple communities. In that direction, assume that the data points \(\{\mathsf{X}_j:j\in[n]\}\subseteq\R^p\), satisfy
\begin{align}
    \mathsf{X}_j = \nu_{\mathsf{z}_j} + \epsilon_j, \quad \mbox{for $j=1,\ldots,n$,}
\end{align}
where $\{\mathsf{z}_1,\ldots,\mathsf{z}_n\} \in \{1,\ldots,K\}, \{\nu_1,\ldots,\nu_j\} \subseteq \R^p$ and $\epsilon_j \simiid \dnorm_p(0_p,\Id_p)$. Define
\begin{align}
\label{eq:global_delta}
\Delta:= \min_{a \neq b \in [K]} \|\nu_a-\nu_b\|_2.
\end{align}
Assume that $n/K \to \infty$ and the following balance condition holds.
\[
\min_{a \in [K]} \sum_{j=1}^n \mathbbm 1\{\mathsf{z}_j=a\} \ge \frac{\alpha n}{K},
\]
for some $\alpha<1$. 

Consider the following theorem characterizing the misclustering loss of the iterates $\widehat{\mathsf{z}}^{(t)} \in \{1,\ldots,K\}^n$ of Algorithm~\ref{alg:tclust}.

\begin{theorem}
\label{thm:tclust_iterates}
Suppose that $n/K\to\infty$ and
\begin{align}
\label{eq:k_means_condition}
    \frac{\Delta^2}{(\varepsilon+1)K^2(Kp/n+1)}\to\infty .
\end{align}
Let $\widehat{\mathsf z}^{(0)},\widehat{\mathsf z}^{(1)},\ldots,
\widehat{\mathsf z}^{(T)}$ be the sequence of labels produced by
Algorithm~\ref{alg:tclust}.
Then, for any $C'_1>0$, there exist constants $C'_2>0$, depending only on
$\alpha$ and $C'_1$, such that with probability at least
\[
    1-2\exp\{-C'_1(n+p)\}-\exp\{-\Delta\}-n^{-1},
\]
there exists a permutation $\pi_0\in\Pi_K$ such that the initializer satisfies
\[
    \sum_{j=1}^n\mathbbm 1\{\pi_0(\widehat{\mathsf z}_j^{(0)})\neq \mathsf z_j\}\le
    \frac{C'_2(\varepsilon+1)K(n+p)}
    {\Delta^2},
\]
and, for every $t=1,\ldots,T_0$,
\[
    \frac{1}{n}\sum_{j=1}^n\mathbbm 1\{\pi_0(\widehat{\mathsf z}_j^{(t)})\neq \mathsf z_j\}\le\exp\left\{-(1+o(1))\frac{\Delta^2}{8}\right\}
    +2^{-t}.
\]
Consequently, if $T_0\ge 3\log n$, then
\begin{align}
\label{eq:init_false}
    \min_{\pi\in\Pi_K}\frac{1}{n}\sum_{j=1}^n
    \mathbbm 1\{\pi(\widehat{\mathsf z}_j^{(T_0)})\neq \mathsf z_j\}\le\exp\left\{-(1+o(1))\frac{\Delta^2}{8}\right\}.
\end{align}
\end{theorem}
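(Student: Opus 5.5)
The proof has two parts: a misclustering bound for the spectral $\varepsilon$-approximate $K$-means initializer, followed by the Lloyd-refinement contraction analysis of \citet{gao2022iterative} (equivalently Theorem~3.1 of \citet{LofflerZhangZhou2021}), adapted to our balance constant $\alpha$. Write $\mathsf X=\mathsf P+\mathsf E$, where $\mathsf P=[\nu_{\mathsf z_1},\ldots,\nu_{\mathsf z_n}]$ has rank at most $K$ and $\mathsf E$ has iid $\dnorm(0,1)$ entries. Corollary~5.35 of \citet{vershynin2012introduction} gives $\|\mathsf E\|_{\op}\le C(\sqrt n+\sqrt p)$ with probability at least $1-2\exp\{-C_1'(n+p)\}$, with $C$ depending on $C_1'$. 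All subsequent steps are carried out on this event.

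\textbf{Initialization.} Let $\widehat{\mathsf U}\widehat{\mathsf U}^\top\mathsf X$ be the best rank-$K$ approximation of $\mathsf X$. Then
\[
\|\widehat{\mathsf U}\widehat{\mathsf U}^\top\mathsf X-\mathsf P\|_F^2\le 2K\|\widehat{\mathsf U}\widehat{\mathsf U}^\top\mathsf X-\mathsf P\|_{\op}^2\le 8K\|\mathsf E\|_{\op}^2\lesssim K(n+p).
\]
The centers $\widehat{\mathfrak m}$ come from a $(1+\varepsilon)$-approximate $K$-means solution on $\widehat{\mathsf M}_j$. Comparing its objective with the objective at the true partition and true (projected) centers, and applying the triangle inequality, gives
\[
\sum_j\|\widehat{\mathfrak m}_{\widehat{\mathsf z}^{(0)}_j}-\widehat{\mathsf U}^\top\nu_{\mathsf z_j}\|^2\lesssim(\varepsilon+1)K(n+p).
\]
Next we apply the standard counting argument. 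Call a point bad if its estimated center is at distance at least $\Delta/2$ from its true projected center. There are then at most $C(\varepsilon+1)K(n+p)/\Delta^2$ bad points. Condition \eqref{eq:k_means_condition}, together with the balance assumption, makes this number $o(\alpha n/K)$. Hence every true cluster contains a majority of good points, and the good points of distinct clusters are sent to distinct estimated centers. This yields a permutation $\pi_0$ and the stated initializer bound with $C_2'$ depending on $\alpha,C_1'$. The factor $K^2$ in \eqref{eq:k_means_condition} guarantees in addition that the initial misclustering proportion is at most $c\,\alpha/K$ times a vanishing factor. This is the form needed as input for the Lloyd analysis.

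\textbf{Lloyd refinement.} This is the main obstacle. Let $\ell_t:=\frac1n\sum_j\Delta^{-2}\|\nu_{\widehat{\mathsf z}^{(t)}_j}-\nu_{\mathsf z_j}\|^2\mathbbm 1\{\cdot\}$ denote the weighted loss (after applying $\pi_0$). The plan is a one-step contraction of the form $\ell_{t+1}\le \exp\{-(1+o(1))\Delta^2/8\}+\tfrac12\ell_t$, valid uniformly over all label vectors whose loss is below the initialization level. Three ingredients are needed:
\begin{itemize}
\item[(i)] When $\widehat{\mathsf z}^{(t)}$ has few errors, the center estimates satisfy $\|\widehat{\mathsf c}_a^{(t)}-\nu_a\|\lesssim \sqrt{K(p+n)/n}+\Delta\,\ell_t$ up to balance constants. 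This uses the operator norm bound on $\mathsf E$ and the lower bound $\alpha n/K$ on cluster sizes.
\item[(ii)] A point $j$ is reassigned incorrectly only if $\langle\epsilon_j,\nu_b-\nu_a\rangle\le-(1-\delta)\|\nu_a-\nu_b\|^2/2$ or a center-error cross term is large.
\item[(iii)] Summing the first event over $j$ gives the ideal term $\exp\{-(1-\delta)^2\Delta^2/8\}$ in expectation. Markov's inequality upgrades this to high probability, at the cost of the $\exp\{-\Delta\}$ failure probability. The cross terms are controlled by $\|\mathsf E\|_{\op}^2/\Delta^2$ times the center error, which gives a contribution of at most $\tfrac12\ell_t$ precisely when \eqref{eq:k_means_condition} holds.
\end{itemize}
Because the bounds in (i)--(iii) are deterministic given the noise event, they hold simultaneously for all iterations.

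Unrolling the recursion gives the per-iterate bound $\exp\{-(1+o(1))\Delta^2/8\}+2^{-t}$, where $\ell_0\le1$ and the loss indicator is dominated by the weighted loss since $\Delta^{-2}\|\nu_b-\nu_a\|^2\ge1$. The $n^{-1}$ term in the probability comes from a union bound over the $\chi^2$/Hoeffding control of $\max_j$-type quantities used in (ii). Finally, take $T_0\ge3\log n$. Then $2^{-T_0}<n^{-2}$, which is below the resolution $1/n$ of the empirical loss. So either this term is absorbed into the exponential term, or the loss is exactly zero, and \eqref{eq:init_false} follows.
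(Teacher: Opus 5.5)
Your two-stage architecture matches the paper's, which simply invokes Proposition~4.1 and Corollary~4.1 of \citet{gao2022iterative}. If you also cite Corollary~4.1 as a black box, the argument closes, and your $T_0\ge3\log n$ step via $2^{-T_0}<n^{-2}$ and the $1/n$ lattice is correct. The initialization sketch needs one repair: do the counting in $\R^p$. That is, compare the approximate objective with the feasible configuration $\mathsf P$ and use your bound $\|\widehat{\mathsf U}\widehat{\mathsf U}^\top\mathsf X-\mathsf P\|_F^2\lesssim K(n+p)$. The projected centres $\widehat{\mathsf U}^\top\nu_a$ are not automatically $\Delta$-separated, so the counting cannot be done with them directly.

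\emph{The gap is in the mechanism you give for the Lloyd step.} The recursion $\ell_{t+1}\le e^{-(1+o(1))\Delta^2/8}+\tfrac12\ell_t$ does not follow from (i)--(iii) as described. The centre error in (i) has a component that does not vanish with $\ell_t$. Even if $\widehat{\mathsf z}^{(t)}=\mathsf z$, so that $\ell_t=0$, one has $\widehat{\mathsf c}_a-\nu_a=\bar\epsilon_a$, the noise average over the true cluster, of norm $\asymp\sqrt{Kp/n}$. The cross term $\langle\epsilon_j,\bar\epsilon_a-\bar\epsilon_b\rangle$ therefore cannot be charged to $\tfrac12\ell_t$. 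Bounding it through $\|\mathsf E\|_{\op}$ only shows that $O(\|\mathsf E\|_{\op}^2\|\bar\epsilon_a-\bar\epsilon_b\|^2/\Delta^4)=O((n+p)Kp/(n\Delta^4))$ points are affected. That is an additive polynomial error, and it swamps $e^{-\Delta^2/8}$. A union bound over $j$ does not rescue it under \eqref{eq:k_means_condition} either. The quantity $\max_j|\langle\epsilon_j,\bar\epsilon_a^{(-j)}\rangle|$ is of order $\sqrt{(Kp/n)\log n}$, which need not be $o(\Delta^2)$ (e.g.\ $K=2$, $p=n$, $\Delta\to\infty$ slowly).

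\emph{The missing idea is to build the oracle centres into the ideal error}, as \citet{gao2022iterative} do.
\begin{itemize}
\item Write $\widehat{\mathsf c}^{(t)}_a-\nu_a=\bar\epsilon_a+\{\widehat{\mathsf c}_a(\widehat{\mathsf z}^{(t)})-\widehat{\mathsf c}_a(\mathsf z)\}$. Take the ideal event to be $\langle\epsilon_j,(\nu_a-\nu_b)+(\bar\epsilon_a-\bar\epsilon_b)\rangle\le-\frac{1-\delta}{2}\|\nu_a-\nu_b\|^2$.
\item Control its probability by leave-one-out: $\langle\epsilon_j,\bar\epsilon_a\rangle=\|\epsilon_j\|^2/n_a+\frac{n_a-1}{n_a}\langle\epsilon_j,\bar\epsilon_a^{(-j)}\rangle$. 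The first term has the favourable sign. The remaining part is conditionally Gaussian with variance at most $(\|\nu_a-\nu_b\|+O(\sqrt{Kp/n}))^2=(1+o(1))\|\nu_a-\nu_b\|^2$, because $Kp/n=o(\Delta^2)$.
\item This gives $\exp\{-(1+o(1))\Delta^2/8\}$ in expectation, with $\log K=o(\Delta^2)$ absorbing the sum over $b$. Markov then supplies the $e^{-\Delta}$ failure probability.
\item Only the label-dependent part should go through $\|\mathsf E\|_{\op}$. Its norm is $O\bigl((K/\alpha)\{\Delta\ell_t+\sqrt{(1+p/n)\ell_t}\}\bigr)$; note the factor $K$ from $n_a\ge\alpha n/K$, which is missing in your (i). It contributes at most $\tfrac12\ell_t$ provided $\ell_t=o(1/K)$ along the iterations and $\Delta^2\gg K^2(1+p/n)$. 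This is where the $K^2$ in \eqref{eq:k_means_condition} is used.
\item The requirement $\ell_t=o(1/K)$ must come from the initializer, not from ``$\ell_0\le1$''. The latter is not automatic, since the weights $\|\nu_b-\nu_a\|^2/\Delta^2$ can exceed $1$. Use instead $\|\nu_{\pi_0(\widehat{\mathsf z}^{(0)}_j)}-\nu_{\mathsf z_j}\|\le2\|\widehat{\mathsf U}\widehat{\mathfrak m}_{\widehat{\mathsf z}^{(0)}_j}-\nu_{\mathsf z_j}\|$ for misclustered $j$.
\end{itemize}
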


\begin{proof}
Using Proposition~4.1 of \citet{gao2022iterative}, we get that for any $C'_1>0$, there exists a constant
$C'_2>0$, depending only on $\alpha$ and $C'_1$, such that with probability
at least $1-\exp\{-C'_1(n+p)\}$, the estimates $\wh{\mathsf{z}}^{(0)}$ obtained in Step 5 of Algorithm~\ref{alg:tclust} applied with $\mathsf{X}_1,\ldots,\mathsf{X}_n$ satisfies
\[
\min_{\pi\in\Pi_K}\sum_{j=1}^n\left\|
\nu_{\pi(\widehat{\mathsf z}_j^{(0)})}
-\nu_{\mathsf z_j}\right\|_2^2\le C'_2(\varepsilon+1)K(n+p).
\]
By \eqref{eq:global_delta}, we get
\begin{align}
\label{eq:first_init}
\min_{\pi\in\Pi_K}\sum_{j=1}^n\mathbbm 1\{\pi(\widehat{\mathsf z}_j^{(0)})& \neq \mathsf z_j^*\}\le \frac{1}{\Delta^2}\sum_{j=1}^n\left\|
\nu_{\pi(\widehat{\mathsf z}_j^{(0)})}
-\nu_{\mathsf z_j}\right\|_2^2\\
& \le \frac{C'_2(\varepsilon+1)K(n+p)}{\Delta^2},
\end{align}
with probability
at least $1-\exp\{-C'_1(n+p)\}$.

Let $\pi_0$ be the minimizing permutation in \eqref{eq:first_init}. By \eqref{eq:k_means_condition}, we also have
\[
(\varepsilon+1)K(n+p)=o\left(\frac{n\Delta^2}{K}\right),
\]
and hence, using Corollary 4.1 of \cite{gao2022iterative}, we get
\[
\frac{1}{n}\sum_{j=1}^n\mathbbm 1\{\pi_0(\widehat{\mathsf z}_j^{(t)})\neq\mathsf z_j\}
\le\exp\left\{-(1+o(1))\frac{\Delta^2}{8}\right\}+2^{-t},
\]
with probability $1-\exp(-\Delta)-2\exp\{-C'_1(n+p)\}-n^{-1}$, where the probability is derived using union bound.
Finally, if $T_0\ge 3\log n$, then $2^{-T_0}=o(n^{-3})$. Since the
misclustering proportion takes values in \(\{0,1/n,\ldots,1\}\), the
algorithmic error term is negligible, and hence
\[
\frac{1}{n}\sum_{j=1}^n\mathbbm 1\{\pi_0(\widehat{\mathsf z}_j^{(T_0)})\neq\mathsf z_j\}
\le\exp\left\{-(1+o(1))\frac{\Delta_{\min}^2}{8}\right\}.
\]
The foregoing inequality automatically implies \eqref{eq:init_false}.
\end{proof}

\section{Proofs of results from Section~\ref{sec:mult_clust_mult_source}}
To prove Theorem~\ref{thm:consistency_mult_clust_final}, we shall use Theorem~\ref{thm:tclust_iterates} and the following lemmas.

\begin{lemma}
\label{lem:high_dim_mult_consistent_cluster_2}
Consider the estimators
$\hatThetaS{1},\ldots,\hatThetaS{m}$
constructed according to \eqref{eq:construction_mult_src_d_gg_m} or \eqref{eq:construction_mult_d_ll_m} depending on the aspect ratios $d/\nSm{i}$ for $i \in [m]$. Let
$\Pest^{(\src)}$ denote the orthogonal projector onto
\[
\mathcal R:=\col(\hatThetaS{1})+\cdots+\col(\hatThetaS{m}).
\]
Suppose that, for some $i_\star\in[m]$
\begin{align}
\label{eq:mult_cluster_source_signal_strength_high}
&\min_{a \neq c}\frac{|\ip{\thetamcT{a}-\thetamcT{c}}{\thetamSS{i_\star}{a}-\thetamSS{i_\star}{c}}|}
    {\|\thetamcT{a}-\thetamcT{c}\|_2\,\|\thetamSS{i_\star}{a}-\thetamSS{i_\star}{c}\|_2}
    \ge \mu, \quad \DeltaSm{i_\star}\gg \left(\frac{d(\beta K)}{\nSm{i_\star}}\right)^{1/4}\sqrt{(\beta K)\vee \log\nSm{i_\star}},\\
&\mu\cdot\DeltaT \gg K^3\sqrt{\log K} \quad \text{and}\quad
\mu\cdot\DeltaSm{i_\star}\cdot\DeltaT\gg K^3\sqrt{\log K}\cdot
\sqrt{\frac{K(d+\log K)}{\nSm{i_\star}}}.
\end{align}
Then
\begin{align}
\label{eq:projected_distance_diverge}
\frac{\displaystyle \min_{a\neq c}\|\Pest^{(\src)}(\thetamcT{a}-\thetamcT{c})\|_2^2}{K^2(\log K)\,\sigmaT^2}\convp \infty.
\end{align}
\end{lemma}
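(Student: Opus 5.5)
The plan is to reduce the multi-source statement to the single informative source $i_\star$ and then mimic, direction by direction, the argument of Lemma~\ref{lem:projected_diverge_source}. Since $\mathcal R\supseteq\col(\hatThetaS{i_\star})$, we have $\|\Pest^{(\src)}w\|_2\ge\|\Pi_{i_\star}w\|_2$ for every $w$, where $\Pi_{i_\star}$ is the orthogonal projector onto $\col(\hatThetaS{i_\star})$. Moreover, for any vector $u\in\col(\hatThetaS{i_\star})$, $\|\Pi_{i_\star}w\|_2\ge|\langle u,w\rangle|/\|u\|_2$. It therefore suffices to exhibit, for each pair $a\neq c$, a vector $\wh u_{ac}\in\col(\hatThetaS{i_\star})$ with
\[
\frac{|\langle \wh u_{ac},\thetamcT{a}-\thetamcT{c}\rangle|}{\|\wh u_{ac}\|_2\,\sigmaT}\gg K\sqrt{\log K},
\]
uniformly over the $K(K-1)/2$ pairs. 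The other sources are irrelevant, so fixed $m$ costs nothing.

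\emph{Regime $\nSm{i_\star}\ll d$.} Take $\wh u_{ac}=\wh\theta_{S_{i_\star},a}-\wh\theta_{S_{i_\star},c}$.
\begin{enumerate}
\item Use the relaxed \texttt{K-means} guarantee of \citet{giraud2019partial} under the first condition in \eqref{eq:mult_cluster_source_signal_strength_high} and the balance condition \eqref{eq:approximate_balance_src}. The aim is exact recovery of $\Zsrcm{i_\star}$ up to a permutation with probability tending to one; the factor $\sqrt{(\beta K)\vee\log\nSm{i_\star}}$ is there to upgrade partial to exact recovery. Since the loss is permutation invariant, align the labels.
\item On this event, $\wh u_{ac}\sim\dnorm_d\bigl(\thetamSS{i_\star}{a}-\thetamSS{i_\star}{c},\,\sigmaSm{i_\star}^2(1/n_a+1/n_c)\Id_d\bigr)$ with $n_a,n_c\ge\nSm{i_\star}/(\beta K)$. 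Lemma~\ref{lem:gaussian_vector_norm} plus a union bound over pairs (costing $\log K$) gives
\[
\|\wh u_{ac}\|_2\lesssim\|\thetamSS{i_\star}{a}-\thetamSS{i_\star}{c}\|_2+\sigmaSm{i_\star}\sqrt{\beta K(d+\log K)/\nSm{i_\star}}.
\]
\item The inner product $\langle \wh u_{ac},\thetamcT{a}-\thetamcT{c}\rangle$ is Gaussian with mean of modulus at least $\mu\|\thetamcT{a}-\thetamcT{c}\|_2\|\thetamSS{i_\star}{a}-\thetamSS{i_\star}{c}\|_2$. Apply the anti-concentration bound Lemma~\ref{lem:gaussian_anti_conc} with the same two-case split as in the proof of Lemma~\ref{lem:projected_diverge_source}, namely whether the noise scale dominates the mean, and within that whether $\DeltaSm{i_\star}\gtrless\sqrt{dK/\nSm{i_\star}}$.
\item The two divergence conditions $\mu\DeltaT\gg K^3\sqrt{\log K}$ and $\mu\DeltaSm{i_\star}\DeltaT\gg K^3\sqrt{\log K}\sqrt{K(d+\log K)/\nSm{i_\star}}$ play the roles of \eqref{eq:jack_3} and \eqref{eq:jack_2}. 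They leave enough room to absorb a union bound over $K^2$ pairs in the anti-concentration probabilities, since each failure probability is $O(\text{threshold}/\text{signal})$. That is why a polynomial-in-$K$ margin such as $K^3$ is needed.
\end{enumerate}

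\emph{Regime $d\lesssim\nSm{i_\star}$.} Here $\hatThetaS{i_\star}=\wh V_{\calS_{i_\star}}$ and I would argue via subspace perturbation instead.
\begin{enumerate}
\item Apply Theorem~3 of \citet{CaiZhang2018} with rank $K$, using $\sigma_K(\ThetamS{i_\star})\ge\DeltaSm{i_\star}\sigmaSm{i_\star}$ from \eqref{eq:singular_value_lower}. Together with balance, the $K$-th singular value of the signal matrix is at least $\sqrt{\nSm{i_\star}/(\beta K)}\,\DeltaSm{i_\star}\sigmaSm{i_\star}$, which gives $\|\sin\Theta(\wh V,V)\|\convp0$.
\item Write $\thetamcT{a}-\thetamcT{c}$ as a component in $\col(\ThetamS{i_\star})$, of norm at least $\mu\|\thetamcT{a}-\thetamcT{c}\|_2$ by alignment, plus an orthogonal remainder.
\item Bound $\|\wh V\wh V^\top w\|_2\ge\|VV^\top w\|_2-\|\sin\Theta\|\,\|w\|_2$.
\end{enumerate}
This crude bound fails when $\mu$ is small relative to $\|\sin\Theta\|$. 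I would then use the Haar-invariance decomposition behind \eqref{eq:haar_decomp_ld}, now for the $K$-frame (rotational invariance of the noise in $\col(\ThetamS{i_\star})^\perp$), together with the Beta-type density of Lemma~\ref{lem:haar_coordinate_density} in dimension $d-K\ge3$. This is why $d\ge\max\{K+3,2K\}$ appears.

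The main obstacle is this low-dimensional case with small $\mu$. The estimated subspace error is random and only its orthogonal part is Haar distributed, so one must control the projection of a fixed vector onto a random $K$-frame rather than a single direction. I would first try conditioning on the $\col(\ThetamS{i_\star})$-component of $\wh V$ and reducing to the one-dimensional Haar anti-concentration computation already performed in \eqref{eq:anti_conc_exp_bound}, applied to the unit vector $u=\wh V\wh V^\top V\bar w/\|\cdot\|$. The polynomial factor $K^3$ in the hypotheses is again used to survive the union bound over pairs.
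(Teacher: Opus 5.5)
Your proposal is correct and essentially matches the paper's proof in both regimes. For $\nSm{i_\star}\ll d$, the paper uses your exact-recovery, empirical-contrast, Gaussian-norm and anti-concentration argument with a union bound over the $K(K-1)$ pairs. For $d\lesssim\nSm{i_\star}$, it uses the Cai--Zhang projector bound together with the L\"offler--Zhang--Zhou Stiefel decomposition $\wh V_{\calS_{i_\star}}=\mathrm V A+\mathrm V_\perp H(\Id_K-A^\top A)^{1/2}$, where $A:=\mathrm V^\top\wh V_{\calS_{i_\star}}$, and its test direction $\wh V_{\calS_{i_\star}}A^\top x_{ac,i_\star}/\|A^\top x_{ac,i_\star}\|_2$ is exactly your normalized $\wh V\wh V^\top\mathrm V\bar w$, after which the one-dimensional Haar density bound in $\R^{d-K}$ finishes the argument as you describe.
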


\begin{lemma}
\label{lem:validation_stat_candidate_partition}
Let $X_\calT=M_\calT+E_\calT\in\R^{\nT\times d}$, where
$M_\calT:=\Ztar\ThetaT^\top$ and the rows of $E_\calT$ are i.i.d.
$\dnorm_d(0,\sigmaT^2\Id_d)$. Fix a candidate partition
$\wt Z\in\mathcal Z_{\nT,K}$ and recall the definition $\wt n_a(\wt Z)$ and $\wt{\theta}^{(\trg)}_a(\wt Z)$ from \eqref{eq:candiate_cluster_size_cluster_mean}. Assume that $\wt n_a(\wt Z)>0$ for $a \in [K]$.
Define
\begin{align}
\label{eq:pi_z_t}
    \mathrm D(\wt Z):=\operatorname{diag}\{\wt n_1(\wt Z),\ldots,\wt n_K(\wt Z)\}, \quad
    \mathrm P(\wt Z):=\wt Z\mathrm D(\wt Z)^{-1}\wt Z^\top, \quad
    \Pi(\wt Z):=\mathrm P(\wt Z)-\frac{1}{\nT}\mathbbm 1_{\nT}\mathbbm 1_{\nT}^\top .
\end{align}
Then $\Pi(\wt Z)$ is an orthogonal projector of rank $K-1$, and
\begin{align}
\label{eq:candidate_partition_projection_identity}
    \sum_{1\le a\neq b\le K}
    \frac{\wt n_a(\wt Z)\wt n_b(\wt Z)}{2\nT^2}
    \|\wt{\theta}^{(\trg)}_a(\wt Z)-\wt{\theta}^{(\trg)}_b(\wt Z)\|_2^2
    =
    \frac{1}{\nT}\|\Pi(\wt Z)X_\calT\|_F^2.
\end{align}
Consequently, the validation statistic in \eqref{eq:validation_stat_mult_clust}
can be written as
\begin{align}
\label{eq:candidate_partition_validation_projection_form}
    \wh{\mathsf S}(\wt Z)
    =
    \frac{1}{\nT}\|\Pi(\wt Z)X_\calT\|_F^2
    -\frac{\sigmaT^2d(K-1)}{\nT}.
\end{align}
Moreover, for every $u>0$, there exists an absolute constant $C>0$ such that
\begin{align}
\label{eq:candidate_partition_validation_concentration}
    \mathbb P\left(
    \left|\wh{\mathsf S}(\wt Z)-\frac{1}{\nT}\|\Pi(\wt Z)M_\calT\|_F^2\right|
    \ge C\left[
    \frac{\sigmaT}{\nT}\|\Pi(\wt Z)M_\calT\|_F\sqrt{u}
    +\frac{\sigmaT^2}{\nT}\{\sqrt{d(K-1)u}+u\}
    \right]\right)\le 4e^{-u}.
\end{align}
\end{lemma}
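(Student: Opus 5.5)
The plan has three parts, one for each claim in Lemma~\ref{lem:validation_stat_candidate_partition}: the projector property, the projection identity, and the concentration bound.

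\emph{Projector property.} Fix $\wt Z$ with all $\wt n_a>0$. The matrix $\mathrm P(\wt Z)=\wt Z\mathrm D^{-1}\wt Z^\top$ is the orthogonal projector onto $\col(\wt Z)$. Indeed, it is symmetric, and $\wt Z^\top\wt Z=\mathrm D$ gives $\mathrm P^2=\mathrm P$. Its rank is $K$ because the columns of $\wt Z$ have disjoint nonempty supports. Since $\mathbbm 1_{\nT}=\wt Z\mathbbm 1_K\in\col(\wt Z)$, the matrix $\frac1{\nT}\mathbbm 1\mathbbm 1^\top$ is a rank-one projector onto a subspace of $\col(\wt Z)$. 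It follows that $\Pi=\mathrm P-\frac1{\nT}\mathbbm 1\mathbbm 1^\top$ projects onto the orthogonal complement of $\mathbbm 1$ within $\col(\wt Z)$, so it has rank $K-1$.

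\emph{Projection identity.} I would note that $\mathrm P X_\calT$ has $j$-th row $\wt\theta_{a(j)}$, where $a(j)$ is the cluster of $j$. Likewise $\frac1{\nT}\mathbbm 1\mathbbm 1^\top X_\calT$ has every row equal to $\bar\theta=\sum_a(\wt n_a/\nT)\wt\theta_a$. Hence $\|\Pi X_\calT\|_F^2=\sum_a \wt n_a\|\wt\theta_a-\bar\theta\|_2^2$. The standard weighted variance identity $\sum_a w_a\|x_a-\bar x\|^2=\frac12\sum_{a,b}w_aw_b\|x_a-x_b\|^2$, with $w_a=\wt n_a/\nT$, then gives \eqref{eq:candidate_partition_projection_identity} after dividing by $\nT$. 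The diagonal terms $a=b$ vanish. Substituting into \eqref{eq:validation_stat_mult_clust} yields \eqref{eq:candidate_partition_validation_projection_form}.

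\emph{Concentration.} Because $\wt Z$ is fixed, $\Pi$ is deterministic. Expanding $X_\calT=M_\calT+E_\calT$ gives
\[
\|\Pi X_\calT\|_F^2=\|\Pi M_\calT\|_F^2+2\langle \Pi M_\calT,\Pi E_\calT\rangle_F+\|\Pi E_\calT\|_F^2 .
\]
For the cross term, $\langle \Pi M_\calT, E_\calT\rangle_F\sim\dnorm(0,\sigmaT^2\|\Pi M_\calT\|_F^2)$, so with probability at least $1-2e^{-u}$ it is at most $\sigmaT\|\Pi M_\calT\|_F\sqrt{2u}$ in absolute value. For the quadratic term, write $\Pi=UU^\top$ with $U\in\R^{\nT\times(K-1)}$ orthonormal. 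Then $\|\Pi E_\calT\|_F^2=\|U^\top E_\calT\|_F^2$, and rotation invariance of the i.i.d.\ Gaussian entries shows $\|U^\top E_\calT\|_F^2/\sigmaT^2\sim\chi^2_{d(K-1)}$. Lemma~1 of \citet{laurent2000adaptive} then gives deviation from $\sigmaT^2 d(K-1)$ of at most $2\sigmaT^2(\sqrt{d(K-1)u}+u)$ with probability at least $1-2e^{-u}$. A union bound combines the two events, and dividing by $\nT$ gives \eqref{eq:candidate_partition_validation_concentration} with probability at least $1-4e^{-u}$.

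None of these steps is a real obstacle. The only points needing care are checking that $\mathbbm 1\in\col(\wt Z)$, so that $\Pi$ is genuinely a projector, and keeping the constant $2$ from the cross term inside $C$.
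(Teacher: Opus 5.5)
Your proposal is correct and matches the paper's proof step for step. The paper also shows that $\mathrm P(\wt Z)$ is the orthogonal projector onto $\col(\wt Z)$ with $\mathbbm 1_{\nT}\in\col(\wt Z)$, uses the same weighted-variance identity for the pairwise form, and splits $\|\Pi(\wt Z)X_\calT\|_F^2$ into a Gaussian linear term and a $\chi^2_{d(K-1)}$ term, controlled by a Gaussian tail bound, the Laurent--Massart bound and a union bound. The paper also states explicitly the identity $\langle \Pi M_\calT,\Pi E_\calT\rangle_F=\langle \Pi M_\calT,E_\calT\rangle_F$ (from symmetry and idempotence of $\Pi$), which you use implicitly.
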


\subsection{Proof of Theorem~\ref{thm:consistency_mult_clust_final}}
Suppose the oracle informs us that \eqref{eq:trg_mult_c_1} holds, that is,
\begin{align}
    \DeltaT \gg \sqrt K\cdot
    \max\left\{1,\left(\frac{d(\beta K)}{\nT}\right)^{1/4}\right\}.
\end{align}
In this case, we estimate $\Ztar$ using the relaxed \texttt{K-means} procedure
of \cite{giraud2019partial}. The above signal-strength condition, together
with Theorem~1 of \cite{giraud2019partial}, implies that
\begin{align}
    \mathcal L_{\mathrm{mult}}(\hatZtar_{\mathrm{orc}},\Ztar)\convp 0.
\end{align}
Since $\mathcal L_{\mathrm{mult}}(\hatZtar_{\mathrm{orc}},\Ztar)$ is bounded, it further
follows that
\begin{align}
    \mathbb E\left[\mathcal L_{\mathrm{mult}}(\hatZtar_{\mathrm{orc}},\Ztar)\right]\to 0.
\end{align}

Next, suppose that \eqref{eq:trg_mult_c_1} does not hold, and we
use the source-based estimator in Algorithm~\ref{alg:projected_clustering_mult}.
Let $\mathcal F_\src$ denote the sigma-field generated by the source
observations in $\calS_1,\ldots,\calS_m$. Then the source subspace
$\mathcal R$ and its orthogonal projector $\Pest^{(\src)}$ are
$\mathcal F_\src$-measurable. Let $\widehat{\mathrm U}_\src\in\R^{d\times r}$
be an orthonormal basis matrix for $\mathcal R$, where
$r=\operatorname{rank}(\mathcal R)$. Since the source and target datasets are
mutually independent, $\widehat{\mathrm U}_\src$ is independent of
$\{\Xtar_j:j\in[\nT]\}$. Therefore, conditionally on $\mathcal F_\src$, the
projected observations satisfy
\begin{align}
\label{eq:conditional_projected_distribution}
    \hatXtar_j
    \overset{\mathrm{indep}}{\sim}
    \dnorm_r\left((\widehat{\mathrm U}_\src)^\top\thetamcT{z_j},
    \sigmaT^2\,\Id_r\right),
    \quad \mbox{where $(\Ztar_j)_{z_j}=1$ for $j\in[\nT]$.}
\end{align}
Moreover, since $\mathcal R=\col(\hatThetaS{1})+\cdots+\col(\hatThetaS{m})$,
we have $r\le mK$. Define
\begin{align}
\label{eq:delta_min_b}
    \Delta_{\min}^2
    &:=
    \frac{\displaystyle \min_{a\neq b}
    \|(\widehat{\mathrm U}_\src)^\top(\thetamcT{a}-\thetamcT{b})\|_2^2}
    {\sigmaT^2}
    =
    \frac{\displaystyle \min_{a\neq b}
    \|\Pest^{(\src)}(\thetamcT{a}-\thetamcT{b})\|_2^2}
    {\sigmaT^2}.
\end{align}
Since the clustering module is invariant under a common positive rescaling of
all observations, applying it to $\{\hatXtar_j:j\in[\nT]\}$ is equivalent to
applying it to $\{\hatXtar_j/\sigmaT:j\in[\nT]\}$. By
\eqref{eq:conditional_projected_distribution}, the latter observations have
identity noise covariance and minimum squared separation $\Delta_{\min}^2$.

Since $m=O(1)$ and $r\le mK$, we have
\begin{align}
\label{eq:rank_term_vanish}
    \frac{Kr}{\nT}\le \frac{mK^2}{\nT}\to 0, \quad \mbox{since $K^2/\nT \to 0$ by assumption.}
\end{align}
Furthermore, by \eqref{eq:trg_mult_c_1} and
Lemma~\ref{lem:high_dim_mult_consistent_cluster_2}, for any
$\varepsilon=O(\log K)$,
\begin{align}
\label{eq:good_projected_separation_event}
    \frac{\Delta_{\min}^2}
    {(\varepsilon+1)K^2\left(K^2m/\nT+1\right)}
    \convp \infty.
\end{align}
Consequently, all assumptions of Theorem~\ref{thm:tclust_iterates} hold
conditionally on $\mathcal F_\src$ with probability tending to one. Hence,
running the Lloyd iterations in Algorithm~\ref{alg:tclust} for
$T_0=3\log\nT$ steps on the projected observations
$\{\hatXtar_1,\ldots,\hatXtar_{\nT}\}$ yields
\begin{align}
\label{eq:foldwise_misclustering_bound}
    \mathcal L_{\mathrm{mult}}(\hatZtar_{\mathrm{orc}},\Ztar)
    \le
    \exp\left\{-(1+o(1))\frac{\Delta_{\min}^2}{8}\right\}
    \convp 0
\end{align}
with conditional probability tending to one. Therefore, the same conclusion
holds unconditionally. Finally, since $\mathcal L_{\mathrm{mult}}$ is bounded,
dominated convergence gives
\begin{align}
    \mathbb E\left[\mathcal L_{\mathrm{mult}}(\hatZtar_{\mathrm{orc}},\Ztar)\right]\to 0.
\end{align}
\subsection{Proof of Theorem~\ref{thm:consistency_mult_clust_final_adaptive}}
In this section, we focus on the proof of the adaptive selection of the target-based and source-based estimators in \eqref{eq:mult_clust_adaptive}. Recall the definition of $\Pi(\wt Z)$ for any candidate partition in $\mathcal Z_{\nT,K}$ from \eqref{eq:pi_z_t} such that $\wt n_a(\wt Z)>0$ for all $a \in [K]$. Define
\begin{align}
\label{eq:target_oracle_energy_mult_re_write}
    \mathfrak D_{\trg,K}^2:=\frac{1}{\nT}\|\Pi(\Ztar)M_\calT\|_F^2,
    \quad \mbox{where} \quad M_\calT:=\Ztar\ThetaT^\top.
\end{align}
and the validation noise floor
\begin{align}
\label{eq:validation_noise_floor_mult}
    \mathfrak F_{\nT,d,K}:=\sigmaT^2\, K\,\left(\sqrt{\frac{dK}{\nT}}+1\right).
\end{align}
We shall use the following consequence of
Lemma~\ref{lem:validation_stat_candidate_partition}. For every fixed $\eta\in(0,1)$ there exists $C_\eta>0$ such that with probability tending to one,
\begin{align}
\label{eq:uniform_validation_bound_mult}
\left|\wh{\mathsf S}(\wt Z)-\frac{1}{\nT}\|\Pi(\wt Z)M_\calT\|_F^2\right|
\le\eta\cdot\frac{1}{\nT}\|\Pi(\wt Z)M_\calT\|_F^2
+C_\eta\mathfrak F_{\nT,d,K},\quad\mbox{uniformly over $\wt Z\in\mathcal Z_{\nT,K}$,}
\end{align}
where $\wt n_a(\wt Z)>0$ for all $a \in [K]$.
To see this, set $u_{\nT}:=\nT\log K+\log\nT.$
Since $|\mathcal Z_{\nT,K}|\le K^{\nT}$, applying
\eqref{eq:candidate_partition_validation_concentration} to each
$\wt Z\in\mathcal Z_{\nT,K}$ and taking a union bound shows that, with probability at least $1-\frac{4}{\nT}$, we have, simultaneously for every $\wt Z\in\mathcal Z_{\nT,K}$,
\begin{align}
\label{eq:uniform_validation_bound_mult_intermediate}
\left|\wh{\mathsf S}(\wt Z)-\frac{1}{\nT}\|\Pi(\wt Z)M_\calT\|_F^2\right|
\le C\left[\frac{\sigmaT}{\nT}\|\Pi(\wt Z)M_\calT\|_F\sqrt{u_{\nT}}+\frac{\sigmaT^2}{\nT}\left\{\sqrt{d(K-1)u_{\nT}}+u_{\nT}\right\}\right].
\end{align}
For the first term on the right-hand side, the inequality $2ab\le\eta a^2+\eta^{-1}b^2$ gives
\begin{align}
\frac{C\sigmaT}{\nT}\|\Pi(\wt Z)M_\calT\|_F\sqrt{u_{\nT}}\le\eta\cdot\frac{1}{\nT}\|\Pi(\wt Z)M_\calT\|_F^2+C_\eta\sigmaT^2\frac{u_{\nT}}{\nT}.
\end{align}
Moreover, $u_{\nT}/\nT=\log K+(\log\nT/\nT) \lesssim \log K$, where we used $K\ge2$ and
$\sup_{x\ge1}(\log x)/x=1/e<\log2$. Consequently,
\begin{align}
&\frac{\sigmaT^2}{\nT}\sqrt{d(K-1)u_{\nT}}
=\sigmaT^2\sqrt{\frac{d(K-1)}{\nT}\left(\log K+\frac{\log\nT}{\nT}\right)}\lesssim\sigmaT^2\sqrt{\frac{d(K-1)\log K}{\nT}}, \\
&\mbox{and} \quad \sigmaT^2\frac{u_{\nT}}{\nT}\lesssim \sigmaT^2\log K.
\end{align}
Substituting these bounds into
\eqref{eq:uniform_validation_bound_mult_intermediate} and recalling the
definition of $\mathfrak F_{\nT,d,K}$ yields
\eqref{eq:uniform_validation_bound_mult}.

Observe that $M_\calT=\Ztar\ThetaT^\top$ is piecewise constant with respect to the true partition $\Ztar$, we have $\mathrm P(\Ztar)M_\calT=M_\calT$. Hence
\begin{align}
\Pi(\Ztar)M_\calT=M_\calT-\frac{1}{\nT}\mathbbm 1_{\nT}\mathbbm 1_{\nT}^\top M_\calT.
\end{align}
Moreover, since $\Pi(\wt Z)\mathbbm 1_{\nT}=0$,
\begin{align}
\Pi(\wt Z)M_\calT=\Pi(\wt Z)\Pi(\Ztar)M_\calT.
\end{align}
Since $\Pi(\wt Z)$ is an orthogonal projector, it is a contraction in Frobenius
norm. Therefore,
\begin{align}
\|\Pi(\wt Z)M_\calT\|_F=\|\Pi(\wt Z)\Pi(\Ztar)M_\calT\|_F\le\|\Pi(\Ztar)M_\calT\|_F,
\end{align}
which implies
\begin{align}
\label{eq:oracle_maximality_validation_mult}
    \frac{1}{\nT}\|\Pi(\wt Z)M_\calT\|_F^2\le \mathfrak D_{\trg,K}^2,
    \quad \mbox{for all $\wt Z\in\mathcal Z_{\nT,K}$.}
\end{align}

\paragraph{Strong target signal.}
Suppose \eqref{eq:trg_mult_c_1} holds. Then observe that, under \eqref{eq:approximate_balance}, using Theorem~1 of \cite{giraud2019partial}, we can conclude 
\[
\limsup_{\nT \to \infty}\mathbb P\left[\mbox{there exists}\; a\in [K]\;\mbox{such that}\;\wt n_a(\hatZtar_T)=0\right] = 0.
\]
Therefore, henceforth we shall assume that $\wt n_a(\hatZtar_T) \neq 0$ for all $a \in [K]$.
Let
$\mathrm C_{\nT}:=\Id_{\nT}-\nT^{-1}\mathbbm 1_{\nT}\mathbbm 1_{\nT}^\top$
and write $Q_\calT:=\mathrm C_{\nT}M_\calT=\Pi(\Ztar)M_\calT$. After applying
the optimal permutation in the definition of $\mathcal L_{\mathrm{mult}}$, let
$z_j$ and $\hat z_j$ denote the true and estimated labels, and define
$\mathcal M:=\{j:\hat z_j\neq z_j\}$. Let $\widetilde M_\calT$ be the matrix
whose $j$-th row is $\thetamcT{\hat z_j}^{\top}$ and set
$\widetilde Q_\calT:=\mathrm C_{\nT}\widetilde M_\calT$. Since
$\widetilde Q_\calT$ is centered and piecewise constant with respect to
$\hatZtar_\trg$, it belongs to the range of $\Pi(\hatZtar_\trg)$. Also,
$\Pi(\hatZtar_\trg)M_\calT=\Pi(\hatZtar_\trg)Q_\calT$, since
$\Pi(\hatZtar_\trg)\mathbbm 1_{\nT}=0$. Therefore, by the Pythagorean identity
and the optimality of orthogonal projection onto the range of
$\Pi(\hatZtar_\trg)$,
\begin{align}
\label{eq:target_energy_loss_projection}
    0\le \mathfrak D_{\trg,K}^2-\frac{1}{\nT}\|\Pi(\hatZtar_\trg)M_\calT\|_F^2
    &=\frac{1}{\nT}\|(\Id_{\nT}-\Pi(\hatZtar_\trg))Q_\calT\|_F^2
    \le \frac{1}{\nT}\|Q_\calT-\widetilde Q_\calT\|_F^2.
\end{align}
Furthermore, since $\mathrm C_{\nT}$ is an orthogonal projector,
$\|Q_\calT-\widetilde Q_\calT\|_F
\le \|M_\calT-\widetilde M_\calT\|_F$. The matrices $M_\calT$ and
$\widetilde M_\calT$ differ only on the misclustered indices in $\mathcal M$.
Hence,
\begin{align}
\label{eq:target_energy_loss_misclassification}
    \frac{1}{\nT}\|Q_\calT-\widetilde Q_\calT\|_F^2
    \le \frac{1}{\nT}\|M_\calT-\widetilde M_\calT\|_F^2
    \le \frac{|\mathcal M|}{\nT}\max_{a\neq b}\|\thetamcT{a}-\thetamcT{b}\|_2^2.
\end{align}
Combining \eqref{eq:target_energy_loss_projection} and
\eqref{eq:target_energy_loss_misclassification}, we obtain
\begin{align}
    0\le \mathfrak D_{\trg,K}^2-\frac{1}{\nT}\|\Pi(\hatZtar_\trg)M_\calT\|_F^2
    \le \frac{|\mathcal M|}{\nT}\max_{a\neq b}\|\thetamcT{a}-\thetamcT{b}\|_2^2.
\end{align}
By \eqref{eq:approximate_balance}, \eqref{eq:candidate_partition_projection_identity} and \eqref{eq:target_oracle_energy_mult_re_write}, we have
\begin{align}
    \max_{a\neq b}\|\thetamcT{a}-\thetamcT{b}\|_2^2
    \le 4\beta K\,\mathfrak D_{\trg,K}^2.
\end{align}
Since $|\mathcal M|/\nT=\mathcal L_{\mathrm{mult}}(\hatZtar_\trg,\Ztar)$ after
the optimal relabeling, it follows that
\begin{align}
    0\le 1-\frac{\|\Pi(\hatZtar_\trg)M_\calT\|_F^2}{\|\Pi(\Ztar)M_\calT\|_F^2}
    \le 4\beta K\,\mathcal L_{\mathrm{mult}}(\hatZtar_\trg,\Ztar).
\end{align}
Together with \eqref{eq:target_k_loss_vanish} and $\beta=O(1)$, this proves
\begin{align}
\label{eq:target_energy_consistency_mult}
\frac{1}{\nT}\|\Pi(\hatZtar_\trg)M_\calT\|_F^2=\mathfrak D_{\trg,K}^2\{1+o_p(1)\}.
\end{align}

Observe that \eqref{eq:trg_mult_c_1} implies
\begin{align}
    \mathfrak F_{\nT,d,K}= \sigmaT^2\,K\cdot\left(\sqrt{\frac{dK}{\nT}}+1\right) \le 2\,\sigmaT^2\,K\cdot\max\left\{\sqrt{\frac{dK}{\nT}},1\right\} \ll \sigmaT^2\,\DeltaT^2,
\end{align}
since $\beta\ge 1$ is a constant. Moreover, using the weighted pairwise representation of
$\mathfrak D_{\trg,K}^2$ and the balance condition \eqref{eq:approximate_balance},
\begin{align}
    \mathfrak D_{\trg,K}^2
    =
    \sum_{1\le a\neq b\le K}\frac{n_a n_b}{2\nT^2}
    \|\thetamcT{a}-\thetamcT{b}\|_2^2
    \gtrsim \sigmaT^2\DeltaT^2,
\end{align}
where $n_a:=\sum_{j=1}^{\nT}\mathbbm 1\{\Ztar_j=e_a\}$. Hence
\eqref{eq:trg_mult_c_1} and \eqref{eq:validation_noise_floor_mult} implies
\begin{align}
\label{eq:target_energy_dominates_noise_floor}
    \frac{\mathfrak D_{\trg,K}^2}{\mathfrak F_{\nT,d,K}}\to\infty.
\end{align}
Combining \eqref{eq:uniform_validation_bound_mult},
\eqref{eq:target_energy_consistency_mult}, and
\eqref{eq:target_energy_dominates_noise_floor}, we obtain
\begin{align}
    |\wh{\mathsf S}(\hatZtar_\trg)|
    \ge (1-\eta)\mathfrak D_{\trg,K}^2\{1+o_p(1)\}-C_\eta\mathfrak F_{\nT,d,K}.
\end{align}
Since $\mathfrak D_{\trg,K}^2/\mathfrak F_{\nT,d,K}\to\infty$, the right-hand
side exceeds $\mathfrak t_n=D_0\mathfrak F_{\nT,d,K}$ with probability tending
to one. Therefore,
\begin{align}
\label{eq:adaptive_select_target_high_signal}
    \mathbb P\left(\hatZtar=\hatZtar_\trg\right)\to1.
\end{align}
By Theorem~1 of \citet{giraud2019partial}, with
$s_{\trg}^2:=\DeltaT^2\wedge m_{\trg}\DeltaT^4/d$ and
$m_{\trg}:=\min_{a\in[K]}\sum_{j=1}^{\nT}\mathbbm 1\{\Ztar_j=e_a\}$, the
relaxed \texttt{K-means} estimator satisfies
$\mathcal L_{\mathrm{mult}}(\hatZtar_\trg,\Ztar)\le \exp\{-c s_{\trg}^2\}$
with probability tending to one whenever $s_{\trg}^2\gtrsim \nT/m_{\trg}$.
By \eqref{eq:approximate_balance} and the target signal condition \eqref{eq:trg_mult_c_1}, we have
$m_{\trg}\ge \nT/(\beta K)$ and
$s_{\trg}^2\ge \DeltaT^2\wedge \nT\DeltaT^4/(\beta Kd)\gg \beta K$; hence
\begin{align}
\label{eq:target_k_loss_vanish}
    K\,\mathcal L_{\mathrm{mult}}(\hatZtar_\trg,\Ztar)\convp0.
\end{align}
Consequently, using \eqref{eq:adaptive_select_target_high_signal} we get 
\begin{align}
    \mathbb E\left[\mathcal L_{\mathrm{mult}}(\hatZtar,\Ztar)\right]
    \le \mathbb P(\hatZtar\neq\hatZtar_\trg)
    +\mathbb E\left[\mathcal L_{\mathrm{mult}}(\hatZtar_\trg,\Ztar)\right]\to0.
\end{align}

\paragraph{Weak target signal.}
Next, consider the source-transfer regime when \eqref{eq:trg_mult_c_1} does not hold. In other words, there exists $\mathrm F_0>0$ such that
\begin{align}
    \label{eq:negation_of_forty}
   \DeltaT \le  \mathrm F_0\sqrt{K}\;\max\left\{1,
    \left(\frac{d \cdot (\beta K)}{\nT}\right)^{1/4} \right\}. 
\end{align}
Observe that by \eqref{eq:approximate_balance} and \eqref{eq:non_diverging_energy_inter_cluster}, we have another constant $\mathrm F_1$
\begin{align}
\label{eq:low_target_energy_condition_mult}
    \mathfrak D_{\trg,K}^2\le \max_{a \neq b}\|\thetamcT{a}-\thetamcT{b}\|^2_2 \le \kappa\,\sigmaT^2\, \DeltaT^2 \le \kappa\,\mathrm{F}_1\,\mathfrak F_{\nT,d,K}.
\end{align}
On the event \eqref{eq:uniform_validation_bound_mult}, using
\eqref{eq:oracle_maximality_validation_mult} and
\eqref{eq:low_target_energy_condition_mult}, we have
\begin{align}
    |\wh{\mathsf S}(\hatZtar_\trg)|
    \le (1+\eta)\frac{1}{\nT}\|\Pi(\hatZtar_\trg)M_\calT\|_F^2
    +C_\eta\mathfrak F_{\nT,d,K}
    \le \{2(1+\eta)\kappa\,\mathrm{F}_1+C_\eta\}\mathfrak F_{\nT,d,K},
\end{align}
with probability tending to one.
Choosing $D_0>4(1+\eta)\kappa\,\mathrm{F}^2_0+2\,C_\eta$ in
\eqref{eq:validation_threshold__mult_clust} gives
\begin{align}
\label{eq:adaptive_select_source_low_signal}
    \mathbb P\left(\hatZtar=\hatZtar_\src\right)\to1.
\end{align}
By the source part of the proof in \eqref{eq:foldwise_misclustering_bound}, we have
\begin{align}
\label{eq:source_estimator_consistency_mult}
    \mathbb E\left[\mathcal L_{\mathrm{mult}}(\hatZtar_\src,\Ztar)\right]\to0.
\end{align}
Therefore,
\begin{align}
    \mathbb E\left[\mathcal L_{\mathrm{mult}}(\hatZtar,\Ztar)\right]
    \le \mathbb P(\hatZtar\neq\hatZtar_\src)
    +\mathbb E\left[\mathcal L_{\mathrm{mult}}(\hatZtar_\src,\Ztar)\right]\to0.
\end{align}
This completes the proof of the adaptive selection step.

\subsection{Proof of Lemma~\ref{lem:high_dim_mult_consistent_cluster_2} when $d\gg \nSm{i_\star}$}

Throughout this subsection, we assume that the conditions in \eqref{eq:mult_cluster_source_signal_strength_high} hold for some $i_\star\in[m]$ and for the same $i_\star$, we also have $d\gg \nSm{i_\star}$. For notational convenience, define
\begin{align}
\label{eq:source_high_dim_aux_quantities}
    L_K:=K\sqrt{\log K}, \quad q_{i_\star}:=\sqrt{\frac{K}{\nSm{i_\star}}}, \quad r_{i_\star}:=\sqrt{\frac{K(d+\log K)}{\nSm{i_\star}}}.
\end{align}
The assumptions in \eqref{eq:mult_cluster_source_signal_strength_high} imply
\begin{align}
\label{eq:source_high_dim_uniform_pair_condition}
    K^2\left\{\frac{L_K}{\mu\cdot\DeltaT}+\frac{L_K\cdot r_{i_\star}}{\mu\cdot\DeltaSm{i_\star}\cdot\DeltaT}\right\}\to 0.
\end{align}

Since $d\gg \nSm{i_\star}$ and the source signal strength condition in \eqref{eq:mult_cluster_source_signal_strength_high} holds, Theorem~2 of \citet{giraud2019partial} implies that the relaxed \texttt{K-means} estimator $\wh Z^{(\src_{i_\star})}$ achieves exact recovery with probability tending to one. More precisely, define
\begin{align}
\label{eq:source_high_dim_exact_recovery_event}
    \mathcal E_{\src_{i_\star}}:=\left\{\mbox{there exists $\pi^{(\src_{i_\star})}\in\mathcal P_K$ such that $\wh Z^{(\src_{i_\star})}_j=\pi^{(\src_{i_\star})}\circ\Zsrcm{i_\star}_j$ for all $j=1,\ldots,\nSm{i_\star}$}\right\}.
\end{align}
Then $\mathbb P(\mathcal E_{\src_{i_\star}})\to 1$. On $\mathcal E_{\src_{i_\star}}$, after applying the corresponding permutation to the columns of $\wh Z^{(\src_{i_\star})}$, we may regard $\wh Z^{(\src_{i_\star})}$ as equal to $\Zsrcm{i_\star}$.

For $a\in[K]$, define
\begin{align}
\label{eq:source_high_dim_cluster_sizes}
    m_a^{(\src_{i_\star})}:=\sum_{j=1}^{\nSm{i_\star}}\mathbbm 1\{\Zsrcm{i_\star}_{ja}=1\}.
\end{align}
By \eqref{eq:approximate_balance_src},
\begin{align}
\label{eq:source_high_dim_balance}
    \frac{\nSm{i_\star}}{\beta K}\le m_a^{(\src_{i_\star})}\le \frac{\beta\nSm{i_\star}}{K}, \quad \mbox{for all $a\in[K]$.}
\end{align}
For every $a\neq c$, define
\begin{align}
\label{eq:source_high_dim_contrasts}
    \delta_{ac}^{(\trg)}:=\thetamcT{a}-\thetamcT{c}, \quad \delta_{ac}^{(\src_{i_\star})}:=\thetamSS{i_\star}{a}-\thetamSS{i_\star}{c}, \quad \widehat\delta_{ac}^{(\src_{i_\star})}:=\wh{\theta}_{\src_{i_\star},a}-\wh{\theta}_{\src_{i_\star},c}.
\end{align}
Also define the oracle source noise contrast
\begin{align}
\label{eq:source_high_dim_noise_contrast}
    \xi_{ac}^{(\src_{i_\star})}:=\frac{1}{m_a^{(\src_{i_\star})}}\sum_{j=1}^{\nSm{i_\star}}\Zsrcm{i_\star}_{ja}\epsm{i_\star}_j-\frac{1}{m_c^{(\src_{i_\star})}}\sum_{j=1}^{\nSm{i_\star}}\Zsrcm{i_\star}_{jc}\epsm{i_\star}_j.
\end{align}
Then
\begin{align}
\label{eq:source_high_dim_noise_contrast_dist}
    \xi_{ac}^{(\src_{i_\star})}\sim\dnorm_d\left(0,\sigmaSm{i_\star}^2\left\{\frac{1}{m_a^{(\src_{i_\star})}}+\frac{1}{m_c^{(\src_{i_\star})}}\right\}\Id_d\right).
\end{align}
On $\mathcal E_{\src_{i_\star}}$, we have
\begin{align}
\label{eq:source_high_dim_empirical_contrast_decomp}
    \widehat\delta_{ac}^{(\src_{i_\star})}=\delta_{ac}^{(\src_{i_\star})}+\xi_{ac}^{(\src_{i_\star})}.
\end{align}

Since $\widehat\delta_{ac}^{(\src_{i_\star})}\in\operatorname{range}(\Pest^{(\src)})$, we have
\begin{align}
    \left\langle\widehat\delta_{ac}^{(\src_{i_\star})},\delta_{ac}^{(\trg)}\right\rangle=\left\langle\widehat\delta_{ac}^{(\src_{i_\star})},\Pest^{(\src)}\delta_{ac}^{(\trg)}\right\rangle.
\end{align}
Consequently,
\begin{align}
\label{eq:source_high_dim_projection_lower_bound_raw}
    \|\Pest^{(\src)}\delta_{ac}^{(\trg)}\|_2\ge \frac{\left|\left\langle\widehat\delta_{ac}^{(\src_{i_\star})},\delta_{ac}^{(\trg)}\right\rangle\right|}{\|\widehat\delta_{ac}^{(\src_{i_\star})}\|_2}.
\end{align}
Define
\begin{align}
\label{eq:source_high_dim_scalar_defs}
    \nu_{ac}^{(\src_{i_\star})}:=\left\langle\delta_{ac}^{(\src_{i_\star})},\delta_{ac}^{(\trg)}\right\rangle, \quad G_{ac}^{(\src_{i_\star})}:=\left\langle\xi_{ac}^{(\src_{i_\star})},\delta_{ac}^{(\trg)}\right\rangle.
\end{align}
Then \eqref{eq:source_high_dim_empirical_contrast_decomp} gives
\begin{align}
\label{eq:source_high_dim_inner_product_decomp}
    \left\langle\widehat\delta_{ac}^{(\src_{i_\star})},\delta_{ac}^{(\trg)}\right\rangle=\nu_{ac}^{(\src_{i_\star})}+G_{ac}^{(\src_{i_\star})}.
\end{align}
By the alignment condition in \eqref{eq:mult_cluster_source_signal_strength_high},
\begin{align}
\label{eq:source_high_dim_pairwise_alignment}
    |\nu_{ac}^{(\src_{i_\star})}|\ge \mu\|\delta_{ac}^{(\src_{i_\star})}\|_2\|\delta_{ac}^{(\trg)}\|_2.
\end{align}
Moreover,
\begin{align}
\label{eq:source_high_dim_scalar_noise_dist}
    G_{ac}^{(\src_{i_\star})}\sim\dnorm\left(0,\sigmaSm{i_\star}^2\left\{\frac{1}{m_a^{(\src_{i_\star})}}+\frac{1}{m_c^{(\src_{i_\star})}}\right\}\|\delta_{ac}^{(\trg)}\|_2^2\right).
\end{align}
Let $s_{ac}^{(\src_{i_\star})}:=\sqrt{\operatorname{Var}(G_{ac}^{(\src_{i_\star})})}$. By \eqref{eq:source_high_dim_balance}, there exist constants $c_{\src,i_\star},C_{\src,i_\star}>0$ such that, uniformly over $a\neq c$,
\begin{align}
\label{eq:source_high_dim_scalar_noise_sd}
    c_{\src,i_\star}\sigmaSm{i_\star}q_{i_\star}\|\delta_{ac}^{(\trg)}\|_2\le s_{ac}^{(\src_{i_\star})}\le C_{\src,i_\star}\sigmaSm{i_\star}q_{i_\star}\|\delta_{ac}^{(\trg)}\|_2.
\end{align}

Using Lemma~1 of \citet{laurent2000adaptive} and a union bound over all pairs $a\neq c$, there exists a constant $C'_{\src,1}>0$ such that the event
\begin{align}
\label{eq:source_high_dim_noise_norm_event}
    \mathcal N_{\src_{i_\star}}:=\left\{\max_{a\neq c}\|\xi_{ac}^{(\src_{i_\star})}\|_2\le C'_{\src,1}\sigmaSm{i_\star}\sqrt{\frac{K(d+\log K)}{\nSm{i_\star}}}\right\}
\end{align}
satisfies $\mathbb P(\mathcal N_{\src_{i_\star}}^c)=o(1)$. Hence, on $\mathcal E_{\src_{i_\star}}\cap\mathcal N_{\src_{i_\star}}$,
\begin{align}
\label{eq:source_high_dim_empirical_contrast_norm_bound}
    \|\widehat\delta_{ac}^{(\src_{i_\star})}\|_2\le \|\delta_{ac}^{(\src_{i_\star})}\|_2+C'_{\src,1}\sigmaSm{i_\star}r_{i_\star}
\end{align}
simultaneously for all $a\neq c$. Combining \eqref{eq:source_high_dim_projection_lower_bound_raw}, \eqref{eq:source_high_dim_inner_product_decomp}, and \eqref{eq:source_high_dim_empirical_contrast_norm_bound}, we obtain, on $\mathcal E_{\src_{i_\star}}\cap\mathcal N_{\src_{i_\star}}$,
\begin{align}
\label{eq:source_high_dim_projection_lower_bound}
    \|\Pest^{(\src)}\delta_{ac}^{(\trg)}\|_2\ge \frac{|\nu_{ac}^{(\src_{i_\star})}+G_{ac}^{(\src_{i_\star})}|}{\|\delta_{ac}^{(\src_{i_\star})}\|_2+C'_{\src,1}\sigmaSm{i_\star}r_{i_\star}}.
\end{align}

Fix $\mathrm M>0$. On $\mathcal E_{\src_{i_\star}}\cap\mathcal N_{\src_{i_\star}}$, the event
\begin{align}
\label{eq:source_high_dim_small_projection_event}
    \frac{\|\Pest^{(\src)}\delta_{ac}^{(\trg)}\|_2}{L_K\sigmaT}\le \mathrm M
\end{align}
implies
\begin{align}
\label{eq:source_high_dim_small_ball_radius}
    |\nu_{ac}^{(\src_{i_\star})}+G_{ac}^{(\src_{i_\star})}|\le D_{ac}^{(\src_{i_\star})}:=\mathrm M\sigmaT L_K\left\{\|\delta_{ac}^{(\src_{i_\star})}\|_2+C'_{\src,1}\sigmaSm{i_\star}r_{i_\star}\right\}.
\end{align}
We now bound this small-ball probability uniformly over $a\neq c$.

\medskip
\noindent
\emph{Case 1.} Suppose that $\mu\|\delta_{ac}^{(\src_{i_\star})}\|_2\ge C_{\src,i_\star}\sigmaSm{i_\star}q_{i_\star}$. Then \eqref{eq:source_high_dim_pairwise_alignment} and \eqref{eq:source_high_dim_scalar_noise_sd} imply $|\nu_{ac}^{(\src_{i_\star})}|\ge s_{ac}^{(\src_{i_\star})}$. Furthermore, for some constant $C_{\mathrm M}>0$,
\begin{align}
\label{eq:source_high_dim_mean_dominant_radius_ratio}
    \frac{D_{ac}^{(\src_{i_\star})}}{|\nu_{ac}^{(\src_{i_\star})}|}\le C_{\mathrm M}\left\{\frac{L_K}{\mu\DeltaT}+\frac{L_Kr_{i_\star}}{\mu\DeltaSm{i_\star}\DeltaT}\right\}=o(1).
\end{align}
Indeed, the first term follows from $|\nu_{ac}^{(\src_{i_\star})}|\ge \mu\sigmaT\DeltaT\|\delta_{ac}^{(\src_{i_\star})}\|_2$, whereas the second follows from $|\nu_{ac}^{(\src_{i_\star})}|\ge \mu\sigmaSm{i_\star}\sigmaT\DeltaSm{i_\star}\DeltaT$. Therefore, for all sufficiently large $\nT$ and $\nSm{i_\star}$, $D_{ac}^{(\src_{i_\star})}\le |\nu_{ac}^{(\src_{i_\star})}|/2$. By Lemma~\ref{lem:gaussian_anti_conc}(2),
\begin{align}
\label{eq:source_high_dim_mean_dominant_small_ball}
    \mathbb P\left(|\nu_{ac}^{(\src_{i_\star})}+G_{ac}^{(\src_{i_\star})}|\le D_{ac}^{(\src_{i_\star})}\right)\lesssim \frac{D_{ac}^{(\src_{i_\star})}}{|\nu_{ac}^{(\src_{i_\star})}|}\le C_{\mathrm M}\left\{\frac{L_K}{\mu\DeltaT}+\frac{L_Kr_{i_\star}}{\mu\DeltaSm{i_\star}\DeltaT}\right\}.
\end{align}

\medskip
\noindent
\emph{Case 2.} Suppose that $\mu\|\delta_{ac}^{(\src_{i_\star})}\|_2<C_{\src,i_\star}\sigmaSm{i_\star}q_{i_\star}$. By Lemma~\ref{lem:gaussian_anti_conc}(1) and \eqref{eq:source_high_dim_scalar_noise_sd},
\begin{align}
\label{eq:source_high_dim_noise_dominant_initial}
    \mathbb P\left(|\nu_{ac}^{(\src_{i_\star})}+G_{ac}^{(\src_{i_\star})}|\le D_{ac}^{(\src_{i_\star})}\right)\lesssim \frac{D_{ac}^{(\src_{i_\star})}}{s_{ac}^{(\src_{i_\star})}}\lesssim \frac{D_{ac}^{(\src_{i_\star})}}{\sigmaSm{i_\star}\sigmaT q_{i_\star}\DeltaT}.
\end{align}
If $\|\delta_{ac}^{(\src_{i_\star})}\|_2\ge C'_{\src,1}\sigmaSm{i_\star}r_{i_\star}$, then
\begin{align}
    D_{ac}^{(\src_{i_\star})}\le 2\mathrm M\sigmaT L_K\|\delta_{ac}^{(\src_{i_\star})}\|_2,
\end{align}
and the defining inequality of Case~2 gives
\begin{align}
\label{eq:source_high_dim_noise_dominant_large_contrast}
    \frac{D_{ac}^{(\src_{i_\star})}}{\sigmaSm{i_\star}\sigmaT q_{i_\star}\DeltaT}\le C_{\mathrm M}\frac{L_K\|\delta_{ac}^{(\src_{i_\star})}\|_2}{\sigmaSm{i_\star}q_{i_\star}\DeltaT}\le C_{\mathrm M}\frac{L_K}{\mu\DeltaT}.
\end{align}
On the other hand, if $\|\delta_{ac}^{(\src_{i_\star})}\|_2<C'_{\src,1}\sigmaSm{i_\star}r_{i_\star}$, then
\begin{align}
    D_{ac}^{(\src_{i_\star})}\le 2\mathrm M C'_{\src,1}\sigmaT\sigmaSm{i_\star}L_Kr_{i_\star},
\end{align}
and hence
\begin{align}
\label{eq:source_high_dim_noise_dominant_small_contrast_initial}
    \frac{D_{ac}^{(\src_{i_\star})}}{\sigmaSm{i_\star}\sigmaT q_{i_\star}\DeltaT}\le C_{\mathrm M}\frac{L_Kr_{i_\star}}{q_{i_\star}\DeltaT}.
\end{align}
Moreover, since $\DeltaSm{i_\star}\le \|\delta_{ac}^{(\src_{i_\star})}\|_2/\sigmaSm{i_\star}$ and we are in Case~2, we have $\mu\DeltaSm{i_\star}\le C_{\src,i_\star}q_{i_\star}$. Therefore,
\begin{align}
\label{eq:source_high_dim_noise_dominant_small_contrast}
    \frac{L_Kr_{i_\star}}{q_{i_\star}\DeltaT}\le C\frac{L_Kr_{i_\star}}{\mu\DeltaSm{i_\star}\DeltaT}.
\end{align}
Combining \eqref{eq:source_high_dim_noise_dominant_initial}, \eqref{eq:source_high_dim_noise_dominant_large_contrast}, and \eqref{eq:source_high_dim_noise_dominant_small_contrast}, we get
\begin{align}
\label{eq:source_high_dim_noise_dominant_small_ball}
    \mathbb P\left(|\nu_{ac}^{(\src_{i_\star})}+G_{ac}^{(\src_{i_\star})}|\le D_{ac}^{(\src_{i_\star})}\right)\le C_{\mathrm M}\left\{\frac{L_K}{\mu\DeltaT}+\frac{L_Kr_{i_\star}}{\mu\DeltaSm{i_\star}\DeltaT}\right\}.
\end{align}

Combining \eqref{eq:source_high_dim_mean_dominant_small_ball} and \eqref{eq:source_high_dim_noise_dominant_small_ball}, we obtain, uniformly over all $a\neq c$,
\begin{align}
\label{eq:source_high_dim_uniform_pair_small_ball}
    \mathbb P\left(|\nu_{ac}^{(\src_{i_\star})}+G_{ac}^{(\src_{i_\star})}|\le D_{ac}^{(\src_{i_\star})}\right)\le C_{\mathrm M}\left\{\frac{L_K}{\mu\DeltaT}+\frac{L_Kr_{i_\star}}{\mu\DeltaSm{i_\star}\DeltaT}\right\}.
\end{align}
Let $\mathcal H_{\src_{i_\star}}:=\mathcal E_{\src_{i_\star}}\cap\mathcal N_{\src_{i_\star}}$. Since $\mathrm M>0$ is a constant, using \eqref{eq:source_high_dim_small_projection_event}, \eqref{eq:source_high_dim_uniform_pair_small_ball}, and a union bound over all pairs $a\neq c$, we have
\begin{align}
\label{eq:source_high_dim_all_pairs_union_bound}
    \mathbb P\left(\min_{a\neq c}\frac{\|\Pest^{(\src)}\delta_{ac}^{(\trg)}\|_2}{L_K\sigmaT}\le \mathrm M\right)\le \mathbb P(\mathcal H_{\src_{i_\star}}^c)+C_{\mathrm M}K(K-1)\left\{\frac{L_K}{\mu\DeltaT}+\frac{L_Kr_{i_\star}}{\mu\DeltaSm{i_\star}\DeltaT}\right\}=o(1),
\end{align}
where the final equality follows from \eqref{eq:source_high_dim_uniform_pair_condition}. Since $\mathrm M>0$ is arbitrary,
\begin{align}
    \min_{a\neq c}\frac{\|\Pest^{(\src)}(\thetamcT{a}-\thetamcT{c})\|_2}{K\sqrt{\log K}\,\sigmaT}\convp\infty.
\end{align}
Equivalently,
\begin{align}
    \frac{\displaystyle \min_{a\neq c}\|\Pest^{(\src)}(\thetamcT{a}-\thetamcT{c})\|_2^2}{K^2(\log K)\,\sigmaT^2}\convp\infty,
\end{align}
which proves Lemma~\ref{lem:high_dim_mult_consistent_cluster_2} in the regime $d\gg \nSm{i_\star}$.

\subsubsection{Proof of Lemma~\ref{lem:high_dim_mult_consistent_cluster_2} when $ \max\{K+3,2K\}\le d\lesssim \nSm{i_\star}$}

Throughout this subsection, assume that $mK+3\le d\lesssim \nSm{i_\star}$ and that $\calS_{i_\star}$ satisfies \eqref{eq:mult_cluster_source_signal_strength_high}. Observe that \eqref{eq:mult_cluster_source_signal_strength_high} implies
\begin{align}
\label{eq:source_low_dim_necessary_condition}
    \DeltaSm{i_\star}\gg\left(\frac{d(\beta K)^2}{\nSm{i_\star}}\right)^{1/4}\sqrt{(\beta K) \vee \log \nSm{i_\star}}, \quad \text{and}\quad \mu\cdot\DeltaT\gg K\sqrt{\log K},
\end{align}
as $\beta K\ge 1$.

Recall the construction of $\hatThetaS{i_\star}$ from \eqref{eq:construction_mult_d_ll_m}. We may write
\begin{align}
X_{\calS_{i_\star}}=Z_{\calS_{i_\star}}\ThetamS{i_\star}^{\top}+E_{\calS_{i_\star}},
\end{align}
where $Z_{\calS_{i_\star}}\in\R^{\nSm{i_\star}\times K}$ is obtained by stacking $\Zsrcm{i_\star}_j$ for $j\in[\nSm{i_\star}]$ along the rows, and $E_{\calS_{i_\star}}\in\R^{\nSm{i_\star}\times d}$ is obtained by stacking the noise vectors $\epsm{i_\star}_j$ along the rows. By \eqref{eq:approximate_balance_src}, the matrix $Z_{\calS_{i_\star}}$ has full column rank $K$. Therefore,
\begin{align}
\row\left(Z_{\calS_{i_\star}}\ThetamS{i_\star}^{\top}\right)=\row\left(\ThetamS{i_\star}^{\top}\right)=\col\left(\ThetamS{i_\star}\right).
\end{align}
Thus, estimating the column space of $\ThetamS{i_\star}$ is equivalent to estimating the row space of $Z_{\calS_{i_\star}}\ThetamS{i_\star}^{\top}$. This row space is estimated by the column space of $\wh V_{\calS_{i_\star}}$. Let $\mathrm P_{\src_{i_\star}}$ denote the orthogonal projector onto $\col(\ThetamS{i_\star})$, and let $\widehat{\mathrm P}_{\src_{i_\star}}$ denote the orthogonal projector onto $\col(\wh V_{\calS_{i_\star}})$. Applying Lemma~1 and Theorem~3 of \citet{CaiZhang2018}, after accounting for the noise variance $\sigmaSm{i_\star}^2$, there exists a constant $\mathrm C'_{\src,i_\star}>0$ such that
\begin{align}
    \mathbb E\left[\|\widehat{\mathrm P}_{\src_{i_\star}}-\mathrm P_{\src_{i_\star}}\|_{\op}^2\right]\le \mathrm C'_{\src,i_\star}\cdot(\mathrm R_{i_\star}\wedge 1),
\end{align}
where
\begin{align}
\label{eq:source_low_dim_risk_term}
    \mathrm R_{i_\star}:=\frac{d\,\sigmaSm{i_\star}^2\left\{\wt\sigma_K^2+\nSm{i_\star}\sigmaSm{i_\star}^2\right\}}{\wt\sigma_K^4},
\end{align}
and $\wt\sigma_K$ is the $K$-th singular value of $Z_{\calS_{i_\star}}\ThetamS{i_\star}^{\top}$. By \eqref{eq:singular_value_lower} and \eqref{eq:approximate_balance_src},
\begin{align}
    \wt\sigma_K\ge \sqrt{\frac{\nSm{i_\star}}{\beta K}}\cdot\DeltaSm{i_\star}\cdot\sigmaSm{i_\star}.
\end{align}
Plugging this bound into \eqref{eq:source_low_dim_risk_term}, we obtain
\begin{align}
\label{eq:source_low_dim_risk_bound}
    \mathrm R_{i_\star}\lesssim \frac{d(\beta K)}{\nSm{i_\star}\DeltaSm{i_\star}^2}+\frac{d(\beta K)^2}{\nSm{i_\star}\DeltaSm{i_\star}^4}.
\end{align}
Using \eqref{eq:source_low_dim_necessary_condition}, \eqref{eq:source_low_dim_risk_bound}, and Markov's inequality we have
\begin{align}
\label{eq:source_low_dim_projector_consistency}
    \|\widehat{\mathrm P}_{\src_{i_\star}}-\mathrm P_{\src_{i_\star}}\|_{\op}=o_p(1).
\end{align}
Let
\begin{align}
\label{eq:source_low_dim_good_projector_event}
    \mathcal E_{\src_{i_\star}}:=\left\{\rho_{\src_{i_\star}}\le \frac12\right\}, \quad \mbox{where}\quad \rho_{\src_{i_\star}}:=\|\widehat{\mathrm P}_{\src_{i_\star}}-\mathrm P_{\src_{i_\star}}\|_{\op}.
\end{align}
By \eqref{eq:source_low_dim_projector_consistency}, $\mathbb P(\mathcal E_{\src_{i_\star}}^c)\to0$.

Let $\mathrm V_{\src_{i_\star}}\in\R^{d\times K}$ be an orthonormal basis matrix for $\col(\ThetamS{i_\star})$, so that $\mathrm P_{\src_{i_\star}}=\mathrm V_{\src_{i_\star}}\mathrm V_{\src_{i_\star}}^\top$. Let $\mathrm V_{\src_{i_\star},\perp}\in\R^{d\times(d-K)}$ be an orthonormal basis matrix for $\col(\ThetamS{i_\star})^\perp$. By Lemma~4.4 of \citet{LofflerZhangZhou2021}, the source singular vector matrix admits the decomposition
\begin{align}
\label{eq:source_low_dim_haar_decomp}
    \wh V_{\calS_{i_\star}}=\mathrm V_{\src_{i_\star}}\mathrm A_{\src_{i_\star}}+\mathrm V_{\src_{i_\star},\perp}\mathrm H_{\src_{i_\star}}\left(\Id_K-\mathrm A_{\src_{i_\star}}^\top\mathrm A_{\src_{i_\star}}\right)^{1/2},
\end{align}
where $\mathrm A_{\src_{i_\star}}:=\mathrm V_{\src_{i_\star}}^\top\wh V_{\calS_{i_\star}}$, and $\mathrm H_{\src_{i_\star}}\in\R^{(d-K)\times K}$ has orthonormal columns and is independent of $\mathrm A_{\src_{i_\star}}$. Moreover, $\mathrm H_{\src_{i_\star}}$ is Haar distributed on the corresponding Stiefel manifold.

Observe that
\begin{align}
    \mathrm A_{\src_{i_\star}}\mathrm A_{\src_{i_\star}}^\top=\mathrm V_{\src_{i_\star}}^\top\widehat{\mathrm P}_{\src_{i_\star}}\mathrm V_{\src_{i_\star}}.
\end{align}
Since $\mathrm V_{\src_{i_\star}}^\top\mathrm P_{\src_{i_\star}}\mathrm V_{\src_{i_\star}}=\Id_K$, on $\mathcal E_{\src_{i_\star}}$ we have
\begin{align}
    \|\mathrm A_{\src_{i_\star}}\mathrm A_{\src_{i_\star}}^\top-\Id_K\|_{\op}=\left\|\mathrm V_{\src_{i_\star}}^\top(\widehat{\mathrm P}_{\src_{i_\star}}-\mathrm P_{\src_{i_\star}})\mathrm V_{\src_{i_\star}}\right\|_{\op}\le \|\widehat{\mathrm P}_{\src_{i_\star}}-\mathrm P_{\src_{i_\star}}\|_{\op}\le \frac12.
\end{align}
Consequently,
\begin{align}
\label{eq:source_low_dim_A_min_singular}
    \sigma_{\min}(\mathrm A_{\src_{i_\star}})\ge \frac{1}{\sqrt2}, \quad \mbox{on $\mathcal E_{\src_{i_\star}}$.}
\end{align}

For every $a\neq c$, define
\begin{align}
    \delta_{ac}^{(\trg)}:=\thetamcT{a}-\thetamcT{c}, \quad \delta_{ac}^{(\src_{i_\star})}:=\thetamSS{i_\star}{a}-\thetamSS{i_\star}{c}.
\end{align}
Decompose the target contrast relative to the source signal subspace as
\begin{align}
\label{eq:source_low_dim_target_decomp}
    \delta_{ac}^{(\trg)}=\mathrm V_{\src_{i_\star}}x_{ac,i_\star}+\mathrm V_{\src_{i_\star},\perp}y_{ac,i_\star},
\end{align}
where $x_{ac,i_\star}:=\mathrm V_{\src_{i_\star}}^\top\delta_{ac}^{(\trg)}$ and $y_{ac,i_\star}:=\mathrm V_{\src_{i_\star},\perp}^\top\delta_{ac}^{(\trg)}$. Since $\delta_{ac}^{(\src_{i_\star})}\in\col(\ThetamS{i_\star})$, the alignment condition in \eqref{eq:mult_cluster_source_signal_strength_high} and Cauchy--Schwarz imply
\begin{align}
\label{eq:source_low_dim_oracle_projection_lower}
    \|x_{ac,i_\star}\|_2=\|\mathrm P_{\src_{i_\star}}\delta_{ac}^{(\trg)}\|_2\ge \frac{|\ip{\delta_{ac}^{(\src_{i_\star})}}{\delta_{ac}^{(\trg)}}|}{\|\delta_{ac}^{(\src_{i_\star})}\|_2}\ge \mu\|\delta_{ac}^{(\trg)}\|_2\ge \mu\sigmaT\DeltaT.
\end{align}

Using \eqref{eq:source_low_dim_haar_decomp} and \eqref{eq:source_low_dim_target_decomp},
\begin{align}
\label{eq:source_low_dim_projected_target_decomp}
    \wh V_{\calS_{i_\star}}^\top\delta_{ac}^{(\trg)}=\mathrm A_{\src_{i_\star}}^\top x_{ac,i_\star}+\left(\Id_K-\mathrm A_{\src_{i_\star}}^\top\mathrm A_{\src_{i_\star}}\right)^{1/2}\mathrm H_{\src_{i_\star}}^\top y_{ac,i_\star}.
\end{align}
Since $\wh V_{\calS_{i_\star}}$ has orthonormal columns,
\begin{align}
    \|\widehat{\mathrm P}_{\src_{i_\star}}\delta_{ac}^{(\trg)}\|_2=\|\wh V_{\calS_{i_\star}}^\top\delta_{ac}^{(\trg)}\|_2.
\end{align}
On $\mathcal E_{\src_{i_\star}}$, \eqref{eq:source_low_dim_A_min_singular} and \eqref{eq:source_low_dim_oracle_projection_lower} give
\begin{align}
\label{eq:source_low_dim_deterministic_component_lower}
    \|\mathrm A_{\src_{i_\star}}^\top x_{ac,i_\star}\|_2\ge \sigma_{\min}(\mathrm A_{\src_{i_\star}})\|x_{ac,i_\star}\|_2\ge \frac{1}{\sqrt2}\mu\sigmaT\DeltaT.
\end{align}
Define
\begin{align}
    e_{ac,i_\star}:=\frac{\mathrm A_{\src_{i_\star}}^\top x_{ac,i_\star}}{\|\mathrm A_{\src_{i_\star}}^\top x_{ac,i_\star}\|_2}.
\end{align}
Then $\|e_{ac,i_\star}\|_2=1$, and hence $\|\wh V_{\calS_{i_\star}}e_{ac,i_\star}\|_2=1$. Therefore, if $\|\widehat{\mathrm P}_{\src_{i_\star}}\delta_{ac}^{(\trg)}\|_2\le t$, then
\begin{align}
    \left|e_{ac,i_\star}^\top\wh V_{\calS_{i_\star}}^\top\delta_{ac}^{(\trg)}\right|\le t.
\end{align}
By \eqref{eq:source_low_dim_projected_target_decomp},
\begin{align}
\label{eq:source_low_dim_scalar_projection_decomp}
    e_{ac,i_\star}^\top\wh V_{\calS_{i_\star}}^\top\delta_{ac}^{(\trg)}=\|\mathrm A_{\src_{i_\star}}^\top x_{ac,i_\star}\|_2+\left\langle \mathrm H_{\src_{i_\star}}\left(\Id_K-\mathrm A_{\src_{i_\star}}^\top\mathrm A_{\src_{i_\star}}\right)^{1/2}e_{ac,i_\star},y_{ac,i_\star}\right\rangle.
\end{align}
Thus,
\begin{align}
\label{eq:source_low_dim_projection_small_ball_reduction}
    \mathbb P\left(\|\widehat{\mathrm P}_{\src_{i_\star}}\delta_{ac}^{(\trg)}\|_2\le t\right)\le \mathbb P\left(\left|\|\mathrm A_{\src_{i_\star}}^\top x_{ac,i_\star}\|_2+\left\langle \mathrm H_{\src_{i_\star}}\left(\Id_K-\mathrm A_{\src_{i_\star}}^\top\mathrm A_{\src_{i_\star}}\right)^{1/2}e_{ac,i_\star},y_{ac,i_\star}\right\rangle\right|\le t\right).
\end{align}

We now apply Haar anti-concentration. Since $\mathrm A_{\src_{i_\star}}$ is independent of $\mathrm H_{\src_{i_\star}}$, whenever $\left(\Id_K-\mathrm A_{\src_{i_\star}}^\top\mathrm A_{\src_{i_\star}}\right)^{1/2}e_{ac,i_\star}\neq0$, the vector
\begin{align}
    \frac{\mathrm H_{\src_{i_\star}}\left(\Id_K-\mathrm A_{\src_{i_\star}}^\top\mathrm A_{\src_{i_\star}}\right)^{1/2}e_{ac,i_\star}}{\left\|\left(\Id_K-\mathrm A_{\src_{i_\star}}^\top\mathrm A_{\src_{i_\star}}\right)^{1/2}e_{ac,i_\star}\right\|_2}
\end{align}
is Haar distributed on the unit sphere in $\R^{d-K}$. Define
\begin{align}
\label{eq:source_low_dim_haar_coordinate}
    \xi_{ac,i_\star}:=\frac{\left\langle \mathrm H_{\src_{i_\star}}\left(\Id_K-\mathrm A_{\src_{i_\star}}^\top\mathrm A_{\src_{i_\star}}\right)^{1/2}e_{ac,i_\star},y_{ac,i_\star}\right\rangle}{\|y_{ac,i_\star}\|_2\cdot\left\|\left(\Id_K-\mathrm A_{\src_{i_\star}}^\top\mathrm A_{\src_{i_\star}}\right)^{1/2}e_{ac,i_\star}\right\|_2}.
\end{align}
Then $\xi_{ac,i_\star}$ is distributed as one coordinate of a Haar vector on the unit sphere in $\R^{d-K}$. If $\left(\Id_K-\mathrm A_{\src_{i_\star}}^\top\mathrm A_{\src_{i_\star}}\right)^{1/2}e_{ac,i_\star}=0$, then the random term in \eqref{eq:source_low_dim_scalar_projection_decomp} is zero and the following bound is trivial whenever $t\le \|\mathrm A_{\src_{i_\star}}^\top x_{ac,i_\star}\|_2/2$.

Let
\begin{align}
    \alpha_{ac,i_\star}:=\|\mathrm A_{\src_{i_\star}}^\top x_{ac,i_\star}\|_2, \quad \beta_{ac,i_\star}:=\|y_{ac,i_\star}\|_2\cdot\left\|\left(\Id_K-\mathrm A_{\src_{i_\star}}^\top\mathrm A_{\src_{i_\star}}\right)^{1/2}e_{ac,i_\star}\right\|_2.
\end{align}
Conditionally on $(\alpha_{ac,i_\star},\beta_{ac,i_\star})$, the variable $\xi_{ac,i_\star}$ remains distributed as one coordinate of a Haar vector on the unit sphere in $\R^{d-K}$. Its density on $[-1,1]$ is
\begin{align}
    f_{d-K}(x)=c_{d-K}(1-x^2)_+^{(d-K-3)/2}, \quad \mbox{where}\quad c_{d-K}:=\frac{\Gamma((d-K)/2)}{\sqrt{\pi}\,\Gamma((d-K-1)/2)}\le C\sqrt{d-K}.
\end{align}
Therefore, for $\beta_{ac,i_\star}>0$,
\begin{align}
\label{eq:source_low_dim_conditional_small_ball}
    &\mathbb P\left(\left|\alpha_{ac,i_\star}+\left\langle \mathrm H_{\src_{i_\star}}\left(\Id_K-\mathrm A_{\src_{i_\star}}^\top\mathrm A_{\src_{i_\star}}\right)^{1/2}e_{ac,i_\star},y_{ac,i_\star}\right\rangle\right|\le t\;\middle|\;\alpha_{ac,i_\star},\beta_{ac,i_\star}\right)\\
    &\qquad\le \frac{2t\sqrt{d-K}}{\beta_{ac,i_\star}}\left(1-\left(\frac{(\alpha_{ac,i_\star}-t)_+}{\beta_{ac,i_\star}}\right)^2\right)_+^{(d-K-3)/2}.
\end{align}

Fix $\mathrm M>0$ and set
\begin{align}
    t:=\sigmaT K\sqrt{\log K}\,\mathrm M.
\end{align}
By \eqref{eq:source_low_dim_deterministic_component_lower} and \eqref{eq:source_low_dim_necessary_condition}, on $\mathcal E_{\src_{i_\star}}$ we have $t\le \alpha_{ac,i_\star}/2$ for all sufficiently large $\nT$ and $\nSm{i_\star}$. Hence, on $\mathcal E_{\src_{i_\star}}$,
\begin{align}
\label{eq:source_low_dim_conditional_bound_after_t}
    &\mathbb P\left(\left|\alpha_{ac,i_\star}+\left\langle \mathrm H_{\src_{i_\star}}\left(\Id_K-\mathrm A_{\src_{i_\star}}^\top\mathrm A_{\src_{i_\star}}\right)^{1/2}e_{ac,i_\star},y_{ac,i_\star}\right\rangle\right|\le \sigmaT K\sqrt{\log K}\,\mathrm M\;\middle|\;\alpha_{ac,i_\star},\beta_{ac,i_\star}\right)\\
    &\qquad\le \frac{2\sigmaT\mathrm M K\sqrt{\log K}\sqrt{d-K}}{\beta_{ac,i_\star}}\left(1-\frac{\alpha_{ac,i_\star}^2}{4\beta_{ac,i_\star}^2}\right)_+^{(d-K-3)/2}.
\end{align}
If $\alpha_{ac,i_\star}^2\ge 4\beta_{ac,i_\star}^2$, the right-hand side of \eqref{eq:source_low_dim_conditional_bound_after_t} is zero. On the other hand, if $\alpha_{ac,i_\star}^2\le 4\beta_{ac,i_\star}^2$, then \eqref{eq:source_low_dim_deterministic_component_lower} implies $\mu^2\DeltaT^2\sigmaT^2\le 8\beta_{ac,i_\star}^2$. Therefore,
\begin{align}
\label{eq:source_low_dim_conditional_exp_bound}
    &\mathbb P\left(\left|\alpha_{ac,i_\star}+\left\langle \mathrm H_{\src_{i_\star}}\left(\Id_K-\mathrm A_{\src_{i_\star}}^\top\mathrm A_{\src_{i_\star}}\right)^{1/2}e_{ac,i_\star},y_{ac,i_\star}\right\rangle\right|\le \sigmaT K\sqrt{\log K}\,\mathrm M\;\middle|\;\alpha_{ac,i_\star},\beta_{ac,i_\star}\right)\\
    &\qquad\le \frac{2\sigmaT\mathrm M K\sqrt{\log K}\sqrt{d-K}}{\beta_{ac,i_\star}}\exp\left(-\frac{(d-K-3)\mu^2\sigmaT^2\DeltaT^2}{16\beta_{ac,i_\star}^2}\right).
\end{align}
Define
\begin{align}
    s_{ac,i_\star}:=\frac{\sqrt{d-K}\,\mu\DeltaT\sigmaT}{\beta_{ac,i_\star}}.
\end{align}
Then, for an absolute constant $c>0$, \eqref{eq:source_low_dim_conditional_exp_bound} gives
\begin{align}
\label{eq:source_low_dim_pair_small_ball_final}
    \mathbb P\left(\left|\alpha_{ac,i_\star}+\left\langle \mathrm H_{\src_{i_\star}}\left(\Id_K-\mathrm A_{\src_{i_\star}}^\top\mathrm A_{\src_{i_\star}}\right)^{1/2}e_{ac,i_\star},y_{ac,i_\star}\right\rangle\right|\le \sigmaT K\sqrt{\log K}\,\mathrm M\;\middle|\;\alpha_{ac,i_\star},\beta_{ac,i_\star}\right)\lesssim \frac{\mathrm M K\sqrt{\log K}}{\mu\DeltaT}.
\end{align}
Combining \eqref{eq:source_low_dim_projection_small_ball_reduction} and \eqref{eq:source_low_dim_pair_small_ball_final}, and then taking a union bound over all ordered pairs $a\neq c$, we obtain
\begin{align}
\label{eq:source_low_dim_all_pairs_union_bound}
    \mathbb P\left(\frac{\min_{a\neq c}\|\widehat{\mathrm P}_{\src_{i_\star}}\delta_{ac}^{(\trg)}\|_2}{K\sqrt{\log K}\,\sigmaT}\le \mathrm M,\;\mathcal E_{\src_{i_\star}}\right)\le C\,\frac{\mathrm M K^3\sqrt{\log K}}{\mu\DeltaT}\to 0,
\end{align}
where the convergence follows from \eqref{eq:source_low_dim_necessary_condition}. Finally, since $\col(\wh V_{\calS_{i_\star}})\subseteq\mathcal R$, we have
\begin{align}
    \|\Pest^{(\src)}\delta_{ac}^{(\trg)}\|_2\ge \|\widehat{\mathrm P}_{\src_{i_\star}}\delta_{ac}^{(\trg)}\|_2, \quad \mbox{for all $a\neq c$.}
\end{align}
Combining this observation with \eqref{eq:source_low_dim_all_pairs_union_bound} and $\mathbb P(\mathcal E_{\src_{i_\star}}^c)\to0$, we get, for every fixed $\mathrm M>0$,
\begin{align}
    \mathbb P\left(\frac{\min_{a\neq c}\|\Pest^{(\src)}\delta_{ac}^{(\trg)}\|_2}{K\sqrt{\log K}\,\sigmaT}\le \mathrm M\right)\to0.
\end{align}
Since $\mathrm M>0$ is arbitrary, this implies
\begin{align}
    \frac{\displaystyle \min_{a\neq c}\|\Pest^{(\src)}(\thetamcT{a}-\thetamcT{c})\|_2^2}{K^2(\log K)\,\sigmaT^2}\convp\infty,
\end{align}
which proves Lemma~\ref{lem:high_dim_mult_consistent_cluster_2} in the regime $\max\{K+3,2K\}\le d\lesssim \nSm{i_\star}$.

\subsection{Proof of Lemma~\ref{lem:validation_stat_candidate_partition}}
To prove Lemma~\ref{lem:validation_stat_candidate_partition}, we consider the following concentration lemmas.

\begin{lemma}
\label{lem:validation_linear_gaussian_concentration}
Let $E_\calT\in\R^{\nT\times d}$ have independent rows distributed as
$\dnorm_d(0,\sigmaT^2\Id_d)$. Then, for any fixed matrix
$A\in\R^{\nT\times d}$ and any $u>0$,
\begin{align}
    \mathbb P\left(\left|\frac{2}{\nT}\langle A,E_\calT\rangle_F\right|
    \ge \frac{2\sigmaT}{\nT}\|A\|_F\sqrt{2u}\right)\le 2e^{-u},
\end{align}
where $\langle A,E_\calT\rangle_F:=\operatorname{Tr}(AE^\top_\calT)$.
\end{lemma}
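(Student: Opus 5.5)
The statement is Lemma~\ref{lem:validation_linear_gaussian_concentration}. The plan is to reduce it to a one-dimensional Gaussian tail bound. Because $A$ is fixed (deterministic) and the entries of $E_\calT$ are i.i.d.\ $\dnorm(0,\sigmaT^2)$, we can write
\[
\langle A,E_\calT\rangle_F=\sum_{j=1}^{\nT}\sum_{r=1}^{d}A_{jr}(E_\calT)_{jr}.
\]
This is a linear combination of independent centered Gaussians, so it is exactly Gaussian. Its distribution is
\[
\langle A,E_\calT\rangle_F\sim\dnorm\!\left(0,\sigmaT^2\|A\|_F^2\right).
\]

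We then dispose of the degenerate case. If $A=0$, the statistic is identically zero. The threshold is also zero, so the event is almost surely trivial. The intended reading is then a non-strict inequality on a null set, or one simply notes that the bound holds trivially by interpreting the event as having zero probability for a strict comparison. We will state that we take $\|A\|_F>0$ henceforth.

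For $\|A\|_F>0$, we write $\langle A,E_\calT\rangle_F=\sigmaT\|A\|_F\,g$ with $g\sim\dnorm(0,1)$. The event in question is
\[
\left|\tfrac{2}{\nT}\langle A,E_\calT\rangle_F\right|\ge\tfrac{2\sigmaT}{\nT}\|A\|_F\sqrt{2u}.
\]
After cancelling the common positive factor $2\sigmaT\|A\|_F/\nT$, this becomes $|g|\ge\sqrt{2u}$.

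The standard Gaussian tail (Chernoff) bound gives $\Pbb(g\ge s)\le e^{-s^2/2}$. Applying it to both tails at $s=\sqrt{2u}$ gives
\[
\Pbb(|g|\ge\sqrt{2u})\le 2e^{-u}.
\]
This is precisely the claimed inequality.

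There is no real obstacle here. The only point needing care is that $A$ must be independent of $E_\calT$, which holds since $A$ is fixed, so that the exact Gaussian law applies. In the later application, $A=\Pi(\wt Z)M_\calT$ is deterministic for each fixed candidate partition $\wt Z$, and the uniformity over $\wt Z$ is handled separately by the union bound already used in the proof of Theorem~\ref{thm:consistency_mult_clust_final_adaptive}.
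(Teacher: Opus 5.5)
Your proof is correct and is the paper's argument: $\langle A,E_\calT\rangle_F\sim\dnorm(0,\sigmaT^2\|A\|_F^2)$, followed by the two-sided Gaussian tail bound $\Pbb(|g|\ge\sqrt{2u})\le 2e^{-u}$. Your caveat about $A=0$ is warranted, and in fact sharper than you state it: with the non-strict inequality the event $\{0\ge 0\}$ is the sure event, so the bound fails for $u>\log 2$ unless one assumes $\|A\|_F>0$ or uses a strict inequality. The paper's one-line proof passes over this, which is harmless in its later use since a vanishing linear term can simply be dropped.
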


\begin{proof}
Since the entries of $E_\calT$ are independent centered Gaussians with variance
$\sigmaT^2$, we have
\begin{align}
    \langle A,E_\calT\rangle_F\sim \dnorm\left(0,\sigmaT^2\|A\|_F^2\right).
\end{align}
The claim follows from the standard Gaussian tail bound.
\end{proof}

\begin{lemma}
\label{lem:validation_projected_chisq_concentration}
Let $E_\calT\in\R^{\nT\times d}$ have independent rows distributed as
$\dnorm_d(0,\sigmaT^2\Id_d)$, and let $\Pi\in\R^{\nT\times \nT}$ be a fixed
orthogonal projector of rank $r$. Then, for every $u>0$, there exists an
absolute constant $C>0$ such that
\begin{align}
    \mathbb P\left(\left|\frac{1}{\nT}\left\{\|\Pi E_\calT\|_F^2-\sigmaT^2dr\right\}\right|
    \ge \frac{C\sigmaT^2}{\nT}\left\{\sqrt{dru}+u\right\}\right)\le 2e^{-u}.
\end{align}
\end{lemma}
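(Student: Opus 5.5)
The plan is to reduce $\|\Pi E_\calT\|_F^2$ to a chi-squared variable with $dr$ degrees of freedom. Then the bound follows directly from Lemma~1 of \citet{laurent2000adaptive}, the same tool used repeatedly in the proofs above.

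\emph{Step 1 (rotation to a chi-squared).} Since $\Pi$ is an orthogonal projector of rank $r$, write $\Pi=UU^\top$ with $U\in\R^{\nT\times r}$ having orthonormal columns. Then
\[
\|\Pi E_\calT\|_F^2=\operatorname{Tr}(E_\calT^\top UU^\top UU^\top E_\calT)=\|U^\top E_\calT\|_F^2 .
\]
The entries of $E_\calT$ are i.i.d.\ $\dnorm(0,\sigmaT^2)$, so each column $E_{\cdot,k}\in\R^{\nT}$, $k\in[d]$, is $\dnorm_{\nT}(0,\sigmaT^2\Id_{\nT})$, and the columns are independent. Hence $U^\top E_{\cdot,k}\sim\dnorm_r(0,\sigmaT^2\Id_r)$ because $U^\top U=\Id_r$. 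These vectors are independent across $k$. It follows that $\sigmaT^{-2}\|\Pi E_\calT\|_F^2\sim\chi^2_{dr}$.

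\emph{Step 2 (tail bound).} Apply Lemma~1 of \citet{laurent2000adaptive} to $Y\sim\chi^2_{dr}$: with probability at least $1-2e^{-u}$,
\[
|Y-dr|\le 2\sqrt{dr\,u}+2u .
\]
Multiplying by $\sigmaT^2/\nT$ gives the claim with $C=2$, and the failure probability is $2e^{-u}$ as stated.

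No step is a genuine obstacle. The only point requiring care is that $\Pi$ acts on the left, on the sample index. One must therefore use independence across the $d$ columns of $E_\calT$ together with rotational invariance within each column. This holds because the rows are isotropic, so all $\nT d$ entries are i.i.d. If $r=0$, the statement is trivial.
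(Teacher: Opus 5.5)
Your proposal is correct and matches the paper's proof. The paper also notes that each column of $\Pi E_\calT$ gives $\sigmaT^{-2}\|\Pi (E_\calT)_{\cdot \ell}\|_2^2\sim\chi^2_r$, sums over the $d$ independent columns to obtain a $\chi^2_{dr}$ variable, and then invokes Lemma~1 of \citet{laurent2000adaptive}. Your factorization $\Pi=UU^\top$ and the explicit constant $C=2$ simply make the same argument more concrete.
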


\begin{proof}
Since $\Pi$ is an orthogonal projector of rank $r$, for each column
$(E_\calT)_{*\ell}$ we have
$\|\Pi(E_\calT)_{\cdot \ell}\|_2^2/\sigmaT^2\sim\chi_r^2$. Summing over the
$d$ independent columns gives
\begin{align}
    \frac{\|\Pi E_\calT\|_F^2}{\sigmaT^2}\sim \chi^2_{dr}.
\end{align}
The result follows from the standard centered chi-square concentration inequality (see, Lemma~1 \cite{laurent2000adaptive}).
\end{proof}

\begin{proof}[Proof of Lemma~\ref{lem:validation_stat_candidate_partition}]
Since $\wt n_a(\wt Z)>0$ for all $a\in[K]$, the matrix $\wt Z$ has full column rank and $\wt Z^\top\wt Z=\mathrm D(\wt Z)$. Hence
\begin{align}
    \mathrm P(\wt Z)^2=\wt Z\mathrm D(\wt Z)^{-1}\wt Z^\top\wt Z\mathrm D(\wt Z)^{-1}\wt Z^\top=\wt Z\mathrm D(\wt Z)^{-1}\wt Z^\top=\mathrm P(\wt Z),
    \qquad
    \mathrm P(\wt Z)^\top=\mathrm P(\wt Z).
\end{align}
Thus $\mathrm P(\wt Z)$ is the orthogonal projector onto $\col(\wt Z)$ and has
rank $K$. Also $\mathbbm 1_{\nT}\in\col(\wt Z)$, and therefore
$\mathrm P(\wt Z)\mathbbm 1_{\nT}=\mathbbm 1_{\nT}$. It follows that
\begin{align}
    \Pi(\wt Z):=\mathrm P(\wt Z)-\frac{1}{\nT}\mathbbm 1_{\nT}\mathbbm 1_{\nT}^\top
\end{align}
is an orthogonal projector onto the centered subspace of $\col(\wt Z)$. Consequently, $\operatorname{rank}\{\Pi(\wt Z)\}=K-1$.

Next, observe that the $j$-th row of $\mathrm P(\wt Z)X_\calT$ is equal to $\wt{\theta}^{(\trg)}_a(\wt Z)^\top$ whenever $\wt Z_{ja}=1$. Therefore,
\begin{align}
    \frac{1}{\nT}\|\Pi(\wt Z)X_\calT\|_F^2
    =
    \frac{1}{\nT}\sum_{a=1}^K \wt n_a(\wt Z)
    \|\wt{\theta}^{(\trg)}_a(\wt Z)-\bar X_\calT\|_2^2,
    \quad
    \bar X_\calT:=\frac{1}{\nT}\sum_{j=1}^{\nT}\Xtar_j.
\end{align}
Using the weighted variance identity,
\begin{align}
    \sum_{a=1}^K \wt n_a(\wt Z)\|\wt{\theta}^{(\trg)}_a(\wt Z)-\bar X_\calT\|_2^2
    =
    \frac{1}{2\nT}\sum_{1\le a\neq b\le K}\wt n_a(\wt Z)\wt n_b(\wt Z)
    \|\wt{\theta}^{(\trg)}_a(\wt Z)-\wt{\theta}^{(\trg)}_b(\wt Z)\|_2^2.
\end{align}
Dividing by $\nT$ proves \eqref{eq:candidate_partition_projection_identity}.
The representation \eqref{eq:candidate_partition_validation_projection_form}
then follows immediately from the definition of
$\wh{\mathsf S}(\wt Z)$ in \eqref{eq:validation_stat_mult_clust}.

It remains to prove the concentration bound. Since $X_\calT=M_\calT+E_\calT$,
we have
\begin{align}
    \|\Pi(\wt Z)X_\calT\|_F^2
    =
    \|\Pi(\wt Z)M_\calT\|_F^2
    +2\langle \Pi(\wt Z)M_\calT,\Pi(\wt Z)E_\calT\rangle_F
    +\|\Pi(\wt Z)E_\calT\|_F^2.
\end{align}
Since $\Pi(\wt Z)$ is symmetric and idempotent,
\begin{align}
    \langle \Pi(\wt Z)M_\calT,\Pi(\wt Z)E_\calT\rangle_F
    =
    \langle \Pi(\wt Z)M_\calT,E_\calT\rangle_F.
\end{align}
Therefore,
\begin{align}
\label{eq:candidate_partition_validation_decomp}
    \wh{\mathsf S}(\wt Z)-\frac{1}{\nT}\|\Pi(\wt Z)M_\calT\|_F^2
    =
    \frac{2}{\nT}\langle \Pi(\wt Z)M_\calT,E_\calT\rangle_F
    +\frac{1}{\nT}\left\{\|\Pi(\wt Z)E_\calT\|_F^2-\sigmaT^2d(K-1)\right\}.
\end{align}
Applying Lemma~\ref{lem:validation_linear_gaussian_concentration} with
$A=\Pi(\wt Z)M_\calT$, and applying
Lemma~\ref{lem:validation_projected_chisq_concentration} with
$\Pi=\Pi(\wt Z)$ and $r=K-1$, we obtain that with probability at least
$1-4e^{-u}$,
\begin{align}
    \left|\wh{\mathsf S}(\wt Z)-\frac{1}{\nT}\|\Pi(\wt Z)M_\calT\|_F^2\right|
    \le
    C\left[
    \frac{\sigmaT}{\nT}\|\Pi(\wt Z)M_\calT\|_F\sqrt{u}
    +\frac{\sigmaT^2}{\nT}\{\sqrt{d(K-1)u}+u\}
    \right],
\end{align}
after increasing the absolute constant $C>0$ if necessary. This proves
\eqref{eq:candidate_partition_validation_concentration}.

\end{proof}

\section{Proofs of results in Section~\ref{sec:two_clust_mult_source}}
To prove Theorem~\ref{thm:two_clust_multi_source}, we shall use the following lemma characterizing the alignment of $\wh v_{\trg}\in\R^{r_{\src}}$, a unit-norm leading eigenvector of $\wh{\Sigma}_{\calT}$ defined in \eqref{eq:hat_sigma_t} with $\rmqv:=\wh Q^\top_\src\thetaT \in \R^{r_\src}$ where $\wh Q_\src$ is a basis matrix of $\spclust^{(\src)}$ defined in \eqref{eq:combined_source_space}.

\begin{lemma}
    \label{lem:alignment_v_trg}
    Consider $\wh v_{\trg}\in\R^{r_{\src}}$ and $\rmqv$ introduced above. If there exists $i \in [m]$ satisfying \eqref{eq:correct_choose_alignment} and \eqref{eq:necessary_condition_mult_source},
then
    \begin{align}
\label{eq:relative_perturbation}
    \frac{\|\wh\Sigma_\calT-\Ebb[\wh\Sigma_{\calT}\mid\wh Q_\src]\|_{\op}}{\|\rmqv\|_2^2}&\lesssim\frac{1}{\rho^{(\src)}}\cdot\left(\sqrt{\frac{m}{\nT}}+\sqrt{\frac{\log\nT}{\nT}}\right)+\frac{1}{(\rho^{(\src)})^2}\cdot\left(\sqrt{\frac{m}{\nT}}+\sqrt{\frac{\log\nT}{\nT}}\right)\\
    &~~~+\frac{1}{(\rho^{(\src)})^2}\cdot\left(\frac{m}{\nT}+\frac{\log\nT}{\nT}\right),
\end{align}
with probability greater than $1-4n^{-1}_T$, where
\begin{align}
\label{eq:def_rho_s}
\rho^{(\src)}:=\frac{\|\wh{\mathrm P}^{(\src)}\thetaT\|_2}{\sigmaT}.
\end{align}
In particular, if $\rho^{(\src)} \to \infty$,
\[
\limsup_{\nT,\nSm{i} \to \infty} \Pbb\left[\sin\Theta\left(\wh v_\trg,\frac{\rmqv}{\|\rmqv\|_2}\right)>1/2\right]=0.
\]
\end{lemma}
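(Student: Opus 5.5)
We condition on the source $\sigma$-field $\mathcal F_\src$ throughout. Since the sources are independent of the target, $\wh Q_\src$ is fixed given $\mathcal F_\src$, and the projected observations are independent Gaussians,
\[
\hatXtar_j=\ztar_j\rmqv+\sigmaT g_j,\qquad g_j\simiid\dnorm_{r_\src}(0,\Id_{r_\src}).
\]
Note that $\|\rmqv\|_2=\|\wh{\mathrm P}^{(\src)}\thetaT\|_2=\sigmaT\rho^{(\src)}$. Averaging gives
\[
\Ebb[\wh\Sigma_\calT\mid\wh Q_\src]=\rmqv(\rmqv)^\top+\sigmaT^2\Id_{r_\src},
\]
because $(\ztar_j)^2=1$. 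We then split
\[
\wh\Sigma_\calT-\Ebb[\wh\Sigma_\calT\mid\wh Q_\src]
=\Bigl\{\rmqv\bar g^\top+\bar g(\rmqv)^\top\Bigr\}\sigmaT
+\sigmaT^2\Bigl\{\frac1{\nT}\sum_j g_jg_j^\top-\Id_{r_\src}\Bigr\},
\]
where $\bar g:=\nT^{-1}\sum_j\ztar_jg_j\sim\dnorm(0,\nT^{-1}\Id_{r_\src})$.

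The cross term has operator norm at most $2\sigmaT\|\rmqv\|_2\|\bar g\|_2$. Lemma~\ref{lem:gaussian_vector_norm} with $t=\log\nT$ and $r_\src\le m$ gives
\[
\|\bar g\|_2\lesssim\sqrt{m/\nT}+\sqrt{\log\nT/\nT}
\]
with probability at least $1-\nT^{-1}$. The Wishart term is handled by Corollary~5.35 of \citet{vershynin2012introduction}, or equivalently by Remark~4.7.3 of \citet{vershynin2025high} as in the proof of Lemma~\ref{lem:uniform_conc_g_n}. This bounds $\|\nT^{-1}\sum_jg_jg_j^\top-\Id\|_{\op}$ by $C(\sqrt{(m+t)/\nT}+(m+t)/\nT)$ with probability at least $1-2e^{-t}$; we take $t=\log\nT$. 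Dividing both pieces by $\|\rmqv\|_2^2=\sigmaT^2(\rho^{(\src)})^2$ gives exactly the three terms in \eqref{eq:relative_perturbation}: the cross term contributes the $1/\rho^{(\src)}$ term, and the Wishart term contributes the two $1/(\rho^{(\src)})^2$ terms. A union bound over these events leaves probability at least $1-4\nT^{-1}$, and since the bound holds conditionally for every $\mathcal F_\src$, it holds unconditionally.

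For the second claim we apply the Davis--Kahan $\sin\Theta$ theorem to $\Ebb[\wh\Sigma_\calT\mid\wh Q_\src]$. Its leading eigenvector is $\rmqv/\|\rmqv\|_2$ and its eigengap is $\|\rmqv\|_2^2$, since the identity shift does not affect the gap. This yields
\[
\sin\Theta\Bigl(\wh v_\trg,\frac{\rmqv}{\|\rmqv\|_2}\Bigr)\le\frac{2\|\wh\Sigma_\calT-\Ebb[\wh\Sigma_\calT\mid\wh Q_\src]\|_{\op}}{\|\rmqv\|_2^2}.
\]
If $\rho^{(\src)}\to\infty$ in probability, the right side of \eqref{eq:relative_perturbation} tends to $0$ in probability, because $m=O(1)$ and $\log\nT/\nT\to0$. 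In fact the terms vanish even without $\rho^{(\src)}$ diverging, once $\rho^{(\src)}$ is bounded below. Hence the probability that $\sin\Theta>1/2$ tends to zero.

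The main obstacle is that $\rho^{(\src)}$ is random and $\mathcal F_\src$-measurable. We deal with it by first obtaining the bound conditionally, then intersecting with the event $\{\rho^{(\src)}\ge M\}$. By $\col(\hatthetaSm{i})\subseteq\spclust^{(\src)}$ we have
\[
\rho^{(\src)}\ge\frac{|\langle\hatthetaSm{i},\thetaT\rangle|}{\|\hatthetaSm{i}\|_2\,\sigmaT},
\]
so Lemma~\ref{lem:projected_diverge_source} applied to the aligned source $i$ shows that this event has probability tending to one. We also need to check that $r_\src\ge1$ and that the denominator in Davis--Kahan is nondegenerate; both follow from $\rho^{(\src)}>0$ on that event.
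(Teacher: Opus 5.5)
Your proposal is correct and follows essentially the same route as the paper: condition on the source data, split $\wh\Sigma_\calT-\Ebb[\wh\Sigma_\calT\mid\wh Q_\src]$ into the rank-two cross term (controlled by a $\chi^2_{r_\src}$ bound on $\bar g$) and the centered sample-covariance term (controlled by Corollary~5.35 of \citet{vershynin2012introduction}), divide by $\|\rmqv\|_2^2=\sigmaT^2(\rho^{(\src)})^2$, and finish with the Davis--Kahan bound of \citet{yu2015useful} using the eigengap $\|\rmqv\|_2^2$. The only difference is organizational: you verify $\rho^{(\src)}\convp\infty$ inside the lemma, via the span inclusion and Lemma~\ref{lem:projected_diverge_source}, whereas the paper defers that step to the proof of Theorem~\ref{thm:two_clust_multi_source}.
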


\subsection{Proof of Theorem~\ref{thm:two_clust_multi_source}}
First, observe that if \eqref{eq:lower_bound_2_clust_delt} holds, then using Theorem 7 of \cite{Ndaoud2022}, it holds that $\hatztar$ constructed using the target data $\calT$ in \eqref{eq:oracle_target_estimate} satisfies
\begin{align}
    \mathbb E[\calL(\hatztar,\ztar)] \to 0, \quad \mbox{as $\nT \to \infty$.}
\end{align}

Now suppose \eqref{eq:lower_bound_2_clust_delt} does not hold but \eqref{eq:necessary_condition_mult_source} holds for some $\calS_i$ for $i \in [m]$ satisfying \eqref{eq:correct_choose_alignment}. 
In this setup, we use $\hatztar$ defined using \eqref{eq:oracle_source_estimate_mult_source}. Assume that $(\rmqv)^\top \wh v_\trg \ge 0$. We shall show that
\begin{align}
\label{eq:consistent_nec_cond}
    \frac{1}{\nT}\sum_{j =1}^{\nT}\mathbbm 1\{\sgn(\wh v^\top_\trg\wh{X}^{(\trg)}_j \neq \ztar\} \convp 0.
\end{align}
If $(\rmqv)^\top \wh v_\trg < 0$, then we repeat the same analysis with $-\wh v_\trg$. The resulting estimated cluster labels is then $-\hatztar \in \{-1,+1\}^{\nT}$. By definition of the loss in \eqref{eq:loss_func_2}, we have
\begin{align}
    \mathcal L(\hatztar,\ztar) \le \frac{1}{\nT}\sum_{j =1}^{\nT}\mathbbm 1\{\sgn(-\wh v^\top_\trg\wh{X}^{(\trg)}_j) \neq \ztar\}.
\end{align}
Therefore, we can still conclude that
\[
\mathcal L(\hatztar,\ztar) \convp 0, \quad \mbox{if}\quad \frac{1}{\nT}\sum_{j =1}^{\nT}\mathbbm 1\{\sgn(-\wh v^\top_\trg\wh{X}^{(\trg)}_j) \neq \ztar\} \convp 0,
\]
and one can show the above conclusion using the same analysis.

Let us define
\[
\delta:=\sin\Theta\left(\wh v_\trg,\frac{\rmqv}{\|\rmqv\|_2}\right).
\]
Consider the orthogonal decomposition
\begin{equation}
\label{eq:eigenvector_decomposition}
\wh v_\trg=c\cdot\frac{\rmqv}{\|\rmqv\|_2}+w, \qquad w\perp \rmqv, \qquad \|w\|_2=\delta, \qquad c=\sqrt{1-\delta^2}.
\end{equation}
By Lemma~\ref{lem:alignment_v_trg}, we have $\delta\le1/2$, with probability tending to 1. This implies we have $|c| \ge 1/2$ with probability tending to 1, if the projected cluster center $\rho^{(\src)}$ defined in \eqref{eq:def_rho_s} satisfies $\rho^{(\src)} \to \infty$, with probability tending to 1.

Since $\ztar_j \wh{X}^{(\trg)}_j=\rmqv+\sigmaT h_j$, misclustering occurs for observation $j \in [\nT]$ if
\begin{equation}
\label{eq:margin_decomposition}
    \ztar_j \wh v^\top_\trg\wh{X}^{(\trg)}_j
    =\wh v^\top_\trg \rmqv+\sigmaT\wh v^\top_\trg h_j
    =c\,\|\rmqv\|_2+\sigmaT \cdot c \cdot \frac{(\rmqv)^\top h_j}{\|\rmqv\|_2}+\sigmaT w^\top h_j \le 0.
\end{equation}
Recall the definition of $\rho^{(\src)}$ from \eqref{eq:def_rho_s} to observe that $\|\rmqv\|_2=\sigmaT\cdot \rho^{(\src)}$.
Therefore, for any $\gamma \in (0,1)$, the foregoing event can be decomposed as
\begin{align}
\label{eq:error_split}
\mathbbm 1\{\ztar_j \wh v^\top_\trg\wh{X}^{(\trg)}_j \le 0\} \le \mathbbm 1\left\{\frac{(\rmqv)^\top h_j}{\|\rmqv\|_2}\le-(1-\gamma)\rho^{(\src)}\right\} + \mathbbm 1\left\{|w^\top h_j| \ge c\,\gamma\,\rho^{(\src)}\right\}
\end{align}
Using $c\ge1/2$, define
\[
A_n:=\frac{1}{\nT}\sum_{j =1}^{\nT}\mathbbm 1\left\{\frac{(\rmqv)^\top h_j}{\|\rmqv\|_2}\le-(1-\gamma)\rho^{(\src)}\right\},
\]
and
\[
B_n:=\frac{1}{\nT}\sum_{j =1}^{\nT}\mathbbm 1\left\{|w^\top h_j| \ge c\,\gamma\,\rho^{(\src)}\right\}.
\]
Then \eqref{eq:loss_func_2} and \eqref{eq:error_split} imply
\begin{equation}
\label{eq:loss_split}
    \mathcal L(\hatztar,\ztar)\le A_n+B_n.
\end{equation}

We begin with providing a high probability upper bound on $A_n$. Observe that $((\rmqv)^\top h_j)/\|\rmqv\|_2 \simiid \dnorm(0,1)$. Therefore, conditioned on the source datasets, the indicator variables 
\[
\mathbbm 1\left\{\frac{(\rmqv)^\top h_j}{\|\rmqv\|_2}\le-(1-\gamma)\rho^{(\src)}\right\} \simiid \mathrm{Bernoulli}(\Phi(-(1-\gamma)\rho^{(\src)})).
\]
Therefore, by Hoeffding's inequality, we have
\begin{equation}
\label{eq:hello_bound_1_a_n}
A_n \le \Phi\bigl(-(1-\gamma)\rho^{(\src)}\bigr)+\sqrt{\frac{\log \nT}{2\nT}},
\end{equation}
with probability greater than $1-n^{-1}$.

Now, consider the second term $B_n$. By the elementary inequality $\mathbbm 1\{|x|\ge a\}\le x^2/a^2$ and the definition of $\mathrm H$ from \eqref{eq:def_h_bar_rm_h},
\begin{align}
B_n\le\frac4{\gamma^2\rho^2_\src}\frac{1}{\nT}\sum_{j =1}^{\nT}(w^\top h_j)^2& \le \frac{4}{\gamma^2\rho^2_\src}w^\top \mathrm Hw \le\frac{4\|w\|_2^2}{\gamma^2\rho^2_\src}\|\mathrm H\|_{\op}\lesssim\frac{4\delta^2(1+q_{\nT})^2}{\gamma^2\rho^2_\src},
\label{eq:dependent_bound}
\end{align}
where 
\[
q_{\nT}:=\sqrt{\frac{m}{\nT}}+\sqrt{\frac{\log\nT}{\nT}},
\]
since the dimension $r_{\src} \lesssim m$.
By Lemma~\ref{lem:alignment_v_trg}, we have $\delta \le 1/2$ with probability converging to 1 if $\rho^{(\src)} \to \infty$. Therefore, if $\rho^{(\src)} \to \infty$, we have $B_n \to 0$ with probability tending to 1. Define
\begin{align}
    \mathcal C_{\mathrm{mult}}:=\{\delta \le 1/2,\;\mbox{and}\;\eqref{eq:hello_bound_1_a_n}\; \mbox{hold}\}.
\end{align}
Observe that for any $\eta>0$, 
\begin{align}
\label{eq:write_out_in_full_mult_src}
    &\Pbb\left[\frac{1}{\nT}\sum_{j =1}^{\nT}\mathbbm 1\{\sgn(\wh v^\top_\trg\wh{X}^{(\trg)}_j) \neq \ztar\} > \eta ~\Big|~ \rho^{(\src)}\right] \\
    & \le \Pbb\left[A_n > \frac{\eta}{4},\mathcal C_{\mathrm{mult}}~\Big|~ \rho^{(\src)}\right]+\Pbb\left[B_n > \frac{\eta}{4},\mathcal C_{\mathrm{mult}}~\Big|~ \rho^{(\src)}\right]+\Pbb[\mathcal C^c_{\mathrm{mult}} \mid \rho^{(\src)}].
\end{align}
Taking $\nT, \rho^{(\src)} \to \infty$, the three terms on the right hand side of the above display converges to 0. Now, observe that since all the terms in \eqref{eq:write_out_in_full_mult_src} are bounded, therefore 
\begin{align}
\label{eq:consistency_mult_source_spectral}
\frac{1}{\nT}\sum_{j =1}^{\nT}\mathbbm 1\{\sgn(\wh v^\top_\trg\wh{X}^{(\trg)}_j) \neq \ztar\} \convp 0,
\end{align}
unconditionally if $\rho^{(\src)} \convp \infty$. 

Again, from the definition of $\rho^{(\src)}$ in \eqref{eq:def_rho_s}, we get that 
\begin{align}
    \rho^{(\src)}:=\frac{\|\wh{\mathrm P}^{(\src)}\thetaT\|_2}{\sigmaT} \ge \frac{|\langle \wh \theta_{\src_i},\thetaT\rangle|}{\|\wh \theta_{\src_i}\|_2\,\sigmaT}.
\end{align}
Now, since \eqref{eq:correct_choose_alignment} and \eqref{eq:necessary_condition_mult_source} holds for $\calS_i$, if $d \gg \nSm{i}$, we can retrace the arguments presented in Section~\ref{sec:proof_s_6_d_gg_n} to show that
\begin{align}
    \frac{\|\wh{\mathrm P}^{(\src)}\thetaT\|_2}{\sigmaT} \ge \frac{|\langle \wh \theta_{\src_i},\thetaT\rangle|}{\|\wh \theta_{\src_i}\|_2\,\sigmaT} \convp \infty.
\end{align}
Similarly, if $4 \le d \lesssim \nSm{i}$, we can retrace the arguments presented in Section~\ref{sec:proof_s_6_d_ll_n} to show that
\begin{align}
    \frac{\|\wh{\mathrm P}^{(\src)}\thetaT\|_2}{\sigmaT} \ge \frac{|\langle \wh \theta_{\src_i},\thetaT\rangle|}{\|\wh \theta_{\src_i}\|_2\cdot\sigmaT} \convp \infty.
\end{align}
Therefore, we have $\rho^{(\src)} \convp \infty$ which implies \eqref{eq:consistency_mult_source_spectral}. This further implies the theorem.

\subsection{Proof of Theorem~\ref{thm:two_clust_multi_source_lb}}
\label{app:proof-multisource-lb}
\begin{proof}
Fix $i^\star\in[m]$ and suppose
$\mathfrak R_{\mathrm{or}}^{[i^\star]}\to0$.  We reduce this oracle-indexed
experiment to the one-source model involving the designated aligned source
$i^\star$.

For every $j\neq i^\star$, choose once and for all a deterministic unit vector
$w_j\in\R^d$ and deterministic labels
$\bar z^{(j)}\in\{-1,1\}^{\nSm{j}}$, and define
\[
    \bar{\theta}_{S,j}:=\sigmaSm{j}\DeltaSm{j}\,w_j.
\]
Consider the hard subclass
\begin{align*}
    \mathcal H_{i^\star}
    :=
    \Bigl\{
    \bigl(\thetaT,\{\thetamS{\ell}\}_{\ell=1}^m,\ztar,\{\zsrcm{\ell}\}_{\ell=1}^m\bigr):
    &\,(\thetaT,\thetamS{i^\star},\ztar,\zsrcm{i^\star})
    \in \Omega(\DeltaT,\DeltaSm{i^\star},\mu),\\
    &\,\thetamS{j}=\bar{\theta}_{S,j},\;
    \zsrcm{j}=\bar z^{(j)}
    \text{ for every } j\neq i^\star
    \Bigr\}.
\end{align*}
The signal-to-noise constraints for every fixed nuisance source follow from
$\|\bar\theta_{S,j}\|_2/\sigmaSm{j}=\DeltaSm{j}$. The varying one-source
tuple satisfies the target and $i^\star$-th source constraints and has alignment at
least $\mu$. Therefore
\begin{equation}
\label{eq:hard-mult-subclass-inclusion}
    \mathcal H_{i^\star}\subseteq\Omega_{\mathrm{mult}}^{[i^\star]}.
\end{equation}

On $\mathcal H_{i^\star}$, the joint law factors as
\[
    P_{\vartheta}^{(i^\star)}\otimes R_{-i^\star},
\]
where $P_{\vartheta}^{(i^\star)}$ is exactly the one-source law corresponding
to the varying parameters
$\vartheta=(\thetaT,\thetamS{i^\star},\ztar,\zsrcm{i^\star})\in
\Omega(\DeltaT,\DeltaSm{i^\star},\mu)$, and $R_{-i^\star}$ is the common law
of the remaining fixed nuisance sources.  Revealing $i^\star$ gives no
additional information about the varying one-source parameter on this
subclass.  Moreover, in the Assouad construction for the hard one-source
subclass, the neighboring laws satisfy
\[
    P_{+k}^{\mathrm{mult}} = P_{+k}^{(i^\star)}\otimes R_{-i^\star},
    \qquad
    P_{-k}^{\mathrm{mult}} = P_{-k}^{(i^\star)}\otimes R_{-i^\star}.
\]
Since total variation is unchanged by tensoring with a common independent law,
\[
    \TV(P_{+k}^{\mathrm{mult}},P_{-k}^{\mathrm{mult}})
    =
    \TV(P_{+k}^{(i^\star)},P_{-k}^{(i^\star)}).
\]
Thus the additional sources do not alter any neighboring TV distance in the
one-source lower-bound experiment.  The same observation applies to the
target-direction oracle argument in the proof of
Theorem~\ref{thm:necessary_condition}.  Consequently, by
\eqref{eq:hard-mult-subclass-inclusion}, vanishing oracle-indexed minimax risk
would imply vanishing risk in the embedded one-source experiment.
Theorem~\ref{thm:necessary_condition}, applied with
$\nS=\nSm{i^\star}$ and $\DeltaS=\DeltaSm{i^\star}$, therefore gives either
\[
    \DeltaT\gtrsim
    \max\!\left\{1,\left(\frac d{\nT}\right)^{1/4}\right\},
\]
or
\[
    \DeltaSm{i^\star}\gtrsim
    \left(\frac d{\nSm{i^\star}}\right)^{1/4},
    \qquad
    \mu\DeltaT\gtrsim1,
    \qquad
    \mu\DeltaSm{i^\star}\DeltaT
    \gtrsim\sqrt{\frac d{\nSm{i^\star}}}.
\]
These are precisely the alternatives in
\eqref{eq:multi_source_lb_condition}, which proves the theorem.
\end{proof}

\subsection{Proof of Lemma~\ref{lem:alignment_v_trg}}
Recall the definition of $\wh{\Sigma}_{\calT}$ from \eqref{eq:hat_sigma_t} and $\wh Q_\src$ be an orthonormal basis of $\spclust^{(\src)}$ defined in \eqref{eq:combined_source_space}. If \eqref{eq:necessary_condition_mult_source} holds for some $i \in [m]$ and \eqref{eq:correct_choose_alignment} holds, then by definition in \eqref{eq:def_hat_x_tar_q_tar}
\begin{align}
    \wh X^{(\trg)}_j = \wh Q^\top_\src \Xtar_j= \ztar_j\,\rmqv+\sigmaT\cdot\rmqg_j \in \R^{r_\src}, \quad \mbox{for all $j \in \calT$,}
\end{align}
where $\rmqv:=\wh Q^\top_\src\thetaT \in \R^{r_\src}$ and $\rmqg_j:=\wh Q^\top_\src\varepsilon^{(\trg)}_j/\sigmaT \in \R^{r_\src}$.
Observe that conditioned on the source datasets 
\[
\rmqg_j \simiid N_{r_\src}(0,\Id_{r_\src}), \quad \mbox{and} \quad \|\rmqv\|_2=\|\wh{\mathrm P}^{(\src)}\thetaT\|_2.
\]
Recall the definition of $\rho^{(\src)}$ from \eqref{eq:def_rho_s}. Let $\wh v_\trg$ be a unit leading eigenvector of $\wh\Sigma_{\calT}$. The estimated labels are
\[
\hatztar_j:=\operatorname{sgn}(\wh v^\top_\trg \wh X^{(\trg)}_j), \qquad i=1,\ldots,\nT.
\]
Conditionally on the source data, the matrix
\[
\Ebb[\wh\Sigma_{\calT}\mid\wh Q_\src]:= \rmqv(\rmqv)^\top+\sigmaT^2\Id_{r_\src}.
\]
Observe that $\rmqv$ is the leading eigenvector of $\Ebb[\wh\Sigma_{\calT}\mid\wh Q_\src]$ with eigenvalue $(\sigmaT^2+\|\rmqv\|^2_2)$. The eigen gap between the leading eigenvalue of $\Ebb[\wh\Sigma_{\calT}\mid\wh Q_\src]$ and the second largest eigenvalue is $\|\rmqv\|^2_2$.

Define $h_j:=\ztar_j\cdot\rmqg_j$ for $j \in [\nT]$ and define
\begin{align}
\label{eq:def_h_bar_rm_h}
    \bar{h}:=\frac{1}{\nT}\sum_{j =1}^{\nT}h_j \quad \mbox{and} \quad \mathrm H:=\frac{1}{\nT}\sum_{j =1}^{\nT}h_jh_j^\top.
\end{align}
Moreover, $h_jh_j^\top=\rmqg_j(\rmqg_j)^\top$. Expanding $\wh\Sigma_{\calT}$, we get
\begin{align}
    \wh\Sigma_{\calT}
    &=\frac1n\sum_{i \in \calT}(\ztar_j\rmqv+\sigmaT \rmqg_j)(\ztar_j\rmqv+\sigmaT \rmqg_j)^\top\\
    &=\rmqv(\rmqv)^\top+\sigmaT\bigl(\rmqv\bar{ h}^\top+\bar{h} (\rmqv)^\top\bigr)+\sigmaT^2\,\mathrm H,
\end{align}
Therefore
\begin{align}
   \wh\Sigma_{\calT}- \Ebb[\wh\Sigma_{\calT}\mid\wh Q_\src]:=\sigmaT\bigl(\rmqv\bar{ h}^\top+\bar{h} (\rmqv)^\top\bigr)+\sigmaT^2\,(\mathrm H-\Id_{r_\src}).
\end{align}
Observe that since $\sqrt \nT\,\bar h\sim N_{r_\src}(0,\Id_{r_\src})$, we have
\[
    \nT\|\bar h\|_2^2 \sim \chi_{r_\src}^2 .
\]
By the Lemma~1 of \cite{laurent2000adaptive}, for every $t>0$,
\[
\mathbb P\left(\nT\|\bar h\|_2^2\ge r_\src+2\sqrt{r_\src t}+2t\right) \le e^{-t}.
\]
Consequently, with probability at least $1-\nT^{-1}$,
\begin{equation}
\label{eq:hbar_bound}
\|\bar h\|_2 \lesssim \sqrt{\frac{r_\src+\log \nT}{\nT}}.
\end{equation}
In particular, using $r_\src\le 2m$, with probability at least $1-\nT^{-1}$,
\[
\|\bar h\|_2 \le \frac{\sqrt{m}+\sqrt{\log \nT}}{\sqrt{\nT}}.
\]
Consequently, with probability at least $1-2\nT^{-1}$, we have
\[
\left\|\sigmaT\bigl(\rmqv\bar{ h}^\top+\bar{h} (\rmqv)^\top\bigr)\right\|_{\op} \lesssim \frac{\sigmaT \cdot \|\rmqv\|_2}{\sqrt{\nT}}\left(\sqrt{m}+\sqrt{\log \nT}\right).
\]
Next, define
\[
G=[h_1,\ldots,h_{\nT}]\in\R^{r_\src\times \nT},
\]
By Corollary 5.35 \cite{vershynin2012introduction}, applied to $G^\top\in\R^{\nT\times r}$, with probability at least
$1-2\nT^{-1}$,
\[
    s_{\max}(G)\le \sqrt{\nT}+\sqrt r_\src+\sqrt{2t},
    \qquad
    s_{\min}(G)\ge \bigl(\sqrt{\nT}-\sqrt r_\src-\sqrt{2t}\bigr)_+ ,
\]
where $s_{\max}$ and $s_{\min}$ reflect the maximum and minimum singular values.
Consequently, we have
\begin{align}
\label{eq:covariance_bounds}
    \|\mathrm H-\Id_{r_\src}\|_{\op}\lesssim \left(\sqrt{\frac{r_\src}{\nT}}+\sqrt{\frac{\log\nT}{\nT}}\right)+\left(\frac{r_\src}{\nT}+\frac{\log\nT}{\nT}\right).
\end{align}

On the intersection of the events \eqref{eq:hbar_bound} and \eqref{eq:covariance_bounds}, we have
\begin{align}
    \frac{\|\wh\Sigma_\calT-\Ebb[\wh\Sigma_{\calT}\mid\wh Q_\src]\|_{\op}}{\|\rmqv\|_2^2}&\le\frac{1}{\rho^{(\src)}}\cdot\left(\sqrt{\frac{r_\src}{\nT}}+\sqrt{\frac{\log\nT}{\nT}}\right)+\frac{1}{(\rho^{(\src)})^2}\cdot\left(\sqrt{\frac{r_\src}{\nT}}+\sqrt{\frac{\log\nT}{\nT}}\right)\\
    &~~~+\frac{1}{(\rho^{(\src)})^2}\cdot\left(\frac{r_\src}{\nT}+\frac{\log\nT}{\nT}\right).
\end{align}

Using Theorem~1 of \cite{yu2015useful} and the foregoing display, we get
\begin{align}
\label{eq:davis_kahan}
\sin\Theta\left(\wh v_\trg,\frac{\rmqv}{\|\rmqv\|_2}\right)&\lesssim \frac{\|\wh\Sigma_\calT-\Ebb[\wh\Sigma_{\calT}\mid\wh Q_\src]\|_{\op}}{\|\rmqv\|_2^2}\\
&\le\frac{1}{\rho^{(\src)}}\cdot\left(\sqrt{\frac{r_\src}{\nT}}+\sqrt{\frac{\log\nT}{\nT}}\right)+\frac{1}{(\rho^{(\src)})^2}\cdot\left(\sqrt{\frac{r_\src}{\nT}}+\sqrt{\frac{\log\nT}{\nT}}\right)\\
    &~~~+\frac{1}{(\rho^{(\src)})^2}\cdot\left(\frac{r_\src}{\nT}+\frac{\log\nT}{\nT}\right).
\end{align}
Since, $r_\src \le m$, we can conclude \eqref{eq:relative_perturbation} using \eqref{eq:hbar_bound} and \eqref{eq:covariance_bounds}. This automatically implies $\delta \le 1/2$ with probability tending to 1, for all large values of $\nT$ and $\nSm{i}$ tends to infinity, as long as $\rho^{(\src)} \to \infty$.

\end{document}

Let us define
\[
\mathcal E:=\left\{\mbox{either \eqref{eq:perfect_recovery_trg} holds and \eqref{eq:perfect_recovery_src} holds for some source $\calS_i, i \in \{1,\ldots,m\}$}\right\}.
\]
It can be shown that if \eqref{eq:trg_mult_c_1} holds,
then one can use the construction of \cite{giraud2019partial} to construct $\wt Z^{(1)}, \wt Z^{(2)}$ such that \eqref{eq:perfect_recovery_trg} holds, with probability tending to 1. Similarly, if \eqref{eq:src_mult_c_1} holds for some source, say $\calS_j$, then one can use the construction of \cite{giraud2019partial} to get $\check{Z}^{\src_i}$ such that \eqref{eq:perfect_recovery_src} holds for that source. Therefore, if \eqref{eq:trg_mult_c_1} holds, and $d \gg \nS$, then $\mathbb P(\mathcal E) \to 1$.

First, let the condition on $\DeltaT$ in \eqref{eq:necessary_condition} hold. This in turn implies that one can construct $\wt Z^{(1)}, \wt Z^{(2)}$ satisfying \eqref{eq:perfect_recovery_trg}. Therefore, after relabeling, we can replace $\wt Z^{(b)}$ for $b=1,2$ by the corresponding entries of $\Ztar$. This eliminates the randomness in the estimated labels.

Next, since $|T_b| \asymp |C_b|$ for both the folds $b=1,2$, we can recall the random construction of the folds and retrace the arguments in \eqref{eq:lower_bound_n_a_b} to show that
\begin{align}
    \min_{a\in[K]}m_a^{(b)} \ge \frac{\alpha |T_b|}{K}, \qquad \alpha:=\frac{1}{2\beta}, \qquad b\in\{1,2\},
\end{align}
where $m^{(b)}_a:=\sum_{j \in T_b} \mathbbm 1\{\Ztar_j=e_a\}$. In fact, one can use the same argument to also show that, with probability tending to one,
\begin{align}
\label{eq:balance_in_training_trg}
    \frac{|T_b|}{2\beta K} \le \min_{a\in[K]}m_a^{(b)} \le \max_{a\in[K]}m_a^{(b)} \le \frac{2\beta|T_b|}{K}.
\end{align}
For $a\neq c \in [K]$, define the population and empirical target contrasts
\begin{equation}
    \delta_{ac}^{(\trg)}:= \theta_{\trg,a}-\theta_{\trg,c},\qquad \widehat\delta_{ac}^{(b)}:=
    \widehat\theta_{\trg,a}^{(b)}-\widehat\theta_{\trg,c}^{(b)}.
\end{equation}
On $\mathcal E$, after relabeling
after relabeling, we can conclude using the definition of $\Xtar_j$ for $j \in T_b$, that
\begin{equation}
\label{eq:trg_decomp_mult}
    \widehat\delta_{ac}^{(b)}=\delta_{ac}^{(\trg)}+
    \xi_{ac}^{(b)},
\end{equation}
where
\begin{equation}
    \xi_{ac}^{(b)}:=\frac{1}{m_a^{(b)}}\sum_{j\in T_b:\Ztar_j=e_a}
    \varepsilon_j^{(\trg)} -\frac{1}{m_c^{(b)}}\sum_{j\in T_b:\Ztar_j=e_c}\varepsilon_j^{(\trg)}.
\end{equation}
Thus
\begin{equation}
    \xi_{ac}^{(b)}\sim \dnorm_d\left(0,\sigma_{\trg}^2
    \left\{\frac{1}{m_a^{(b)}}+\frac{1}{m_c^{(b)}}
    \right\}\Id_d\right).
\end{equation}

Since $\widehat\delta_{ac}^{(b)}$ belongs to the range of $\Pest^{(b)}$, we have
\begin{equation}
\|\Pest^{(b)}\delta_{ac}^{(\trg)}\|_2\ge\frac{|\langle\widehat\delta_{ac}^{(b)},\delta_{ac}^{(\trg)}\rangle|}{\|\widehat\delta_{ac}^{(b)}\|_2}.
    \label{eq:projection_lower_empirical_contrast}
\end{equation}
Using \eqref{eq:trg_decomp_mult}, we get
\begin{equation}
\langle\widehat\delta_{ac}^{(b)},\delta_{ac}^{(\trg)}\rangle=\|\delta^{(\trg)}_{ac}\|^2_2+\langle \xi_{ac}^{(b)},\delta_{ac}^{(\trg)}\rangle,
\end{equation}
where
\begin{equation}
    \langle \xi_{ac}^{(b)},\delta_{ac}^{(\trg)}\rangle
    \sim
    \dnorm\left(0,\sigma_{\trg}^2\left\{
    \frac{1}{m_a^{(b)}}+\frac{1}{m_c^{(b)}}\right\}\|\delta^{(\trg)}_{ac}\|^2_2\right).
\end{equation}
Since, on $\mathcal E$, $m_a^{(b)}\asymp \nT/K$, this variance is bounded by $(C'_\trg)^2\cdot\sigma_{\trg}^2 \cdot (K\|\delta^{(\trg)}_{ac}\|^2_2)/\nT,$ where $C'_\trg>0$ is a constant.

Next, using Lemma~\ref{lem:gaussian_vector_norm}, and a union bound give 
\begin{align} 
\label{eq:uniform_xi_norm_probability} 
&\mathbb P\left( \max_{b\in\{1,2\}} \max_{a\neq c} \|\xi_{ac}^{(b)}\|_2 > C'_\trg\sigma_{\trg} \sqrt{\frac{K}{\nT}} \left(\sqrt d+\sqrt{2t}\right) \right)  \le \mathbb P(\mathcal E^c) + 3K(K-1)e^{-t}. \end{align} 
In particular, for any fixed $\gamma>0$, taking $t=(2+\gamma)\log K+\log d$ in the foregoing display, we can get a constant $C'_{\trg,1}>0$ such that 
\begin{align} 
\label{eq:bound_on_norm} 
\max_{b\in\{1,2\}} \max_{a\neq c} \|\xi_{ac}^{(b)}\|_2 &\le C'_{\trg,1}\sigma_{\trg} \sqrt{\frac{K(d+\log K)}{\nT}} 
\end{align}
with probability at least $1-\mathbb P(\mathcal E^c)-3K^{-\gamma}\cdot d^{-1}.$ Consequently, if $\mathbb P(\mathcal E^c)\to0$, then
\[
\max_{b\in\{1,2\}} \max_{a\neq c} \|\xi_{ac}^{(b)}\|_2 \le C'_{\trg,1}\sigma_{\trg} \sqrt{\frac{K(d+\log K)}{\nT}},
\]
with probability tending to 1. Using triangle inequality, this implies that with probability tending to 1, we have for all $a \neq c$ and $b \in \{1,2\}$, we have
\begin{align}
    \label{eq:bound_on_norm}
    \|\wh{\delta}_{ac}^{(b)}\|_2 &\le \|\delta_{ac}^{(\trg)}\|_2+C'_{\trg,1}\sigma_{\trg} \sqrt{\frac{K(d+\log K)}{\nT}}.
\end{align}
Combining this with \eqref{eq:projection_lower_empirical_contrast}, we obtain
\begin{equation}
    \|\Pest^{(b)}\delta_{ac}^{(\trg)}\|_2
    \ge
    \frac{|\|\delta_{ac}^{(\trg)}\|^2_2+G_{ac}^{(b)}|}
    {\|\delta_{ac}^{(\trg)}\|_2+C'_{\trg,1}\sigma_{\trg} \sqrt{\frac{K(d+\log K)}{\nT}}},
    \label{eq:target_projector_lower_bound}
\end{equation}
where $G_{ac}^{(b)}:=\langle \xi_{ac}^{(b)},\delta_{ac}^{(\trg)}\rangle$
is a centered Gaussian random variable with standard deviation at most $(C'_\trg)^2\cdot\sigma_{\trg}^2 \cdot (K\|\delta^{(\trg)}_{ac}\|^2_2)/\nT$. 
Fix $M>0$. On the event where $\mathcal E$, \eqref{eq:balance_in_training_trg} and \eqref{eq:bound_on_norm} hold, the event
\begin{equation}
\label{eq:small_ball_mult}
    \frac{\|\Pest^{(b)}\delta_{ac}^{(\trg)}\|_2}{K\sqrt{\log K}\,\sigma_{\trg}}\le M, 
\end{equation}
implies 
\begin{align}
\label{eq:hello_1_dd_m}
\left|\|\delta_{ac}^{(\trg)}\|^2_2+G_{ac}^{(b)}\right|\le M\sigma_{\trg}\,K\sqrt{\log K}\left\{\|\delta_{ac}^{(\trg)}\|_2+C'_{\trg,1}\sigma_{\trg} \sqrt{\frac{K(d+\log K)}{\nT}}\right\}.
\end{align}

Next, observe that $\|\delta_{ac}^{(\trg)}\|_2 \ge \sigmaT\DeltaT$, and \eqref{eq:trg_mult_c_1} implies 
\begin{equation}
    \Delta_\trg \gg K\sqrt{\log K}\max\left\{1,\left(\frac{d}{\nT}\right)^{1/4}\right\}.
\end{equation}
Now, define
\begin{align}
    R_{ac} &:= \frac{M\sigmaT K\sqrt{\log K}\left\{\|\delta_{ac}^{(\trg)}\|_2+C'_{\trg,1}\sigmaT \sqrt{\frac{K(d+\log K)}{\nT}}\right\}}{\|\delta_{ac}^{(\trg)}\|_2^2} \notag \\
    &= \frac{M\sigmaT K\sqrt{\log K}}{\|\delta_{ac}^{(\trg)}\|_2} + \frac{M C'_{\trg,1}\sigmaT^2 K\sqrt{\log K}\sqrt{\frac{K(d+\log K)}{\nT}}}{\|\delta_{ac}^{(\trg)}\|_2^2}=: R_{ac}^{(1)} + R_{ac}^{(2)}. 
    \label{eq:ratio_split}
\end{align}
For the first term, $R_{ac}^{(1)}$, substituting the uniform lower bound $\|\delta_{ac}^{(\trg)}\|_2 \ge \sigmaT\DeltaT$ yields
\begin{equation}\label{eq:term1_bound}
    R_{ac}^{(1)} \le \frac{M\sigmaT K\sqrt{\log K}}{\sigmaT\DeltaT} = \frac{M K\sqrt{\log K}}{\DeltaT}.
\end{equation}
When $d \gg \nT$, from \eqref{eq:trg_mult_c_1}, we have $\DeltaT \gg K\sqrt{\log K}$. Therefore $R_{ac}^{(1)} = o(1)$ uniformly for all pairs $a \neq c$.
For the second term, applying $\|\delta_{ac}^{(\trg)}\|_2^2 \ge \sigmaT^2\DeltaT^2$, we get
\begin{equation}\label{eq:term2_bound}
    R_{ac}^{(2)} \le \frac{M C'_{\trg,1} K\sqrt{\log K}\sqrt{\frac{K(d+\log K)}{\nT}}}{\DeltaT^2}.
\end{equation}
By the subadditivity of the square root, $\sqrt{d+\log K} \le \sqrt{d} + \sqrt{\log K}$. Therefore $R_{ac}^{(2)} 
\le I_1+I_2$, uniformly over $a \neq c$, where
\begin{align*}
    I_1:= \frac{M C'_{\trg,1} K^{3/2}\sqrt{\log K}\sqrt{\frac{d}{\nT}}}{\DeltaT^2}, \quad
    I_2:= \frac{M C'_{\trg,1} K^{3/2}\log K}{\sqrt{\nT}\DeltaT^2}.
\end{align*}
Squaring the signal condition \eqref{eq:trg_mult_c_1} implies that $\DeltaT^2 \gg K\log(K) \cdot K^{1/2}(d/\nT)^{1/2} = K^{3/2}\log(K)\sqrt{d/\nT}$. Substituting this lower bound into the denominator of $I_1$ yields
\begin{equation*}
    I_1 \ll \frac{M C'_{\trg,1} K^{3/2}\sqrt{\log K}\sqrt{\frac{d}{\nT}}}{K^{3/2}\log(K)\sqrt{\frac{d}{\nT}}} = \frac{M C'_{\trg,1}}{\sqrt{\log K}} = o(1),
\end{equation*}
irrespective of whether $K=O(1)$ or $K \to \infty$.
Similarly, since \eqref{eq:trg_mult_c_1} also guarantees that $\DeltaT^2 \gg K\log K$, the second sub-component satisfies
\begin{equation*}
    I_2 \ll \frac{M C'_{\trg,1} K^{3/2}\log K}{\sqrt{\nT} \cdot K^2\log K} = \frac{M C'_{\trg,1}}{\sqrt{\nT K}} = o(1),
\end{equation*}
which yields
\begin{equation}
    M\sigma_{\trg}\,K\sqrt{\log K}\left\{\|\delta_{ac}^{(\trg)}\|_2+C'_{\trg,1}\sigma_{\trg} \sqrt{\frac{K(d+\log K)}{\nT}}\right\}
    =o(\|\delta_{ac}^{(\trg)}\|_2^2).
\end{equation}
uniformly over pairs $a \neq c$.

Consequently, for all sufficiently large $k$, the event in \eqref{eq:small_ball_mult} implies $|\|\delta_{ac}^{(\trg)}\|^2_2+G_{ac}^{(b)}|\le\frac{1}{2}\|\delta_{ac}^{(\trg)}\|^2_2.$
This in turn implies that $G_{ac}^{(b)}\in\left[-\frac{3}{2}\|\delta_{ac}^{(\trg)}\|^2_2,-\frac{1}{2}\|\delta_{ac}^{(\trg)}\|^2_2\right],$ and hence $|G_{ac}^{(b)}|\ge\frac{1}{2}\|\delta_{ac}^{(\trg)}\|^2_2.$

Since $G_{ac}^{(b)}$ is Gaussian with standard deviation at most $(C'_\trg)^2\cdot\sigma_{\trg}^2 \cdot (K\|\delta^{(\trg)}_{ac}\|^2_2)/\nT$,
we get a constant $c'_\trg>0$ such that
\begin{equation}
    \limsup_{\nT \to \infty}\mathbb P\left(|G_{ac}^{(b)}|\ge\frac{1}{2}\|\delta^{(\trg)}_{ac}\|^2_2\right)
    \le
    2\limsup_{\nT \to \infty}\exp\left(-c'_\trg\frac{\nT \|\delta^{(\trg)}_{ac}\|^2_2}{K\sigma_{\trg}^2}\right)\le 2\limsup_{\nT \to \infty}\exp\left(-c'_\trg\frac{\nT \Delta^2_\trg}{K}\right).
\end{equation}
Combining the above inequality with \eqref{eq:small_ball_mult} and \eqref{eq:hello_1_dd_m}, we get
\begin{equation}
    \limsup_{\nT \to \infty}\mathbb P\left(\min_{a\neq c}
    \frac{\|\Pest^{(b)}(\theta_{\trg,a}-\theta_{\trg,c})\|_2}{K\sqrt{\log K}\sigma_{\trg}}\le M\right) \le 2\limsup_{\nT \to \infty}K^2\exp\left(-c'_\trg\frac{\nT \Delta^2_\trg}{K}\right),
\end{equation}
by applying an union bound over all pairs $a \neq c \in [K]$. Using \eqref{eq:trg_mult_c_1}, the limit on the right-hand side converges to zero. Since $M>0$ is arbitrary, we have 
\begin{align}
    \frac{\|\Pest^{(b)}(\theta_{\trg,a}-\theta_{\trg,c})\|_2}{K\sqrt{\log K}\sigma_{\trg}} \convp \infty.
\end{align}


Now we shall prove the theorem when one of the source side conditions in \eqref{eq:trg_mult_c_1} holds. Without loss of generality, let us assume that the source side condition holds for source $i \in \{1,\ldots,m\}$, i.e.,
\begin{align}
    &\DeltaSm{i} \gg \left(\frac{d(\beta K)}{\nSm{i}} \cdot \{(\beta K) \vee \log \nSm{i}\}\right)^{1/4},\quad\frac{\mu_i\cdot\Delta_\trg}{K\sqrt{\log K}} \gg 1\\
    &\text{and}\;\;\mu_i\cdot\DeltaSm{i}\cdot\DeltaT \gg K\sqrt{\log K}\cdot\sqrt{\frac{K(d+\log K)}{\nSm{i}}}.
\end{align}
Define 
\[ 
\mathcal E_{\src_{i}} := \left\{ \eqref{eq:perfect_recovery_src} \text{ holds for $\calS_i$}\right\}. 
\]
Since $d \gg \nSm{i}$, we can use the construction of \citet{giraud2019partial} to show that 
\begin{equation} \label{eq:src_exact_recovery_probability} 
\mathbb P(\mathcal E_{\src_{i}})\to1. 
\end{equation}
On $\mathcal E_{\src_{i}}$, we may apply a permutation to the columns of $\check Z^{\src_i}$, and henceforth regard $\check Z^{\src_i}$ as equal to the corresponding restriction of $\Zsrcm{i}$.

For $a\in[K]$, define 
\[ 
m_a^{\src_i} := \sum_{j\in \calS_i} \mathbbm 1\{\Zsrcm{i}=e_a\}. 
\] 
Using \eqref{eq:approximate_balance_src} we get
\begin{align} 
\label{eq:balance_in_training_src} 
\frac{\nSm{i}}{2\beta K} \le m_a^{\src_i} \le \frac{2\beta\nSm{i}}{K}, \qquad a\in[K], \quad \mbox{with probability tending to one.}
\end{align} 
 More precisely, defining 
\[ 
\mathcal B_{\src_i} := \left\{ \eqref{eq:balance_in_training_src} \text{ holds for every }a\in[K]\right\}, 
\] 
there exists a constant $c_\src>0$ such that \begin{equation} \label{eq:target_balance_probability} 
\mathbb P(\mathcal B_{\src_i}^c) \le 4K\exp\left(-c_\src\frac{\nSm{i}}{K}\right) = o(1). 
\end{equation}

For every $a\neq c$, define the population and empirical contrasts for $\calS_i$
\[ 
\delta_{ac}^{(\src_i)} := \thetamSS{i}{a}-\thetamSS{i}{c}, \qquad \wh\delta_{ac}^{(\src_i)} := \wh\theta_{\src_i,a} - \wh\theta_{\src_i,c}. 
\]
Define the corresponding oracle noise contrast for $\calS_k$ by 
\begin{equation} 
\label{eq:def_target_noise_contrast_src} \xi_{ac}^{(\src_i)} := \frac{1}{m_a^{(\src_i)}} \sum_{\substack{j\in \calS_i\,, \Zsrcm{i}_j=e_a}} \varepsilon_j^{(\src_i)} - \frac{1}{m_c^{(\src_i)}} \sum_{\substack{j\in \calS_i\,, \Zsrcm{i}_j=e_c}} \varepsilon_j^{(\src_i)}. 
\end{equation}
By the data generating mechanism, 
\begin{equation} \label{eq:source_noise_contrast_distribution} \xi_{ac}^{(\src_i)} \sim \dnorm_d\left( 0, \sigmaSm{i}^2 \left\{ \frac{1}{m_a^{(\src_i)}} + \frac{1}{m_c^{(\src_i)}} \right\} \Id_d \right). 
\end{equation}
On $\mathcal E_{\src_i}$, after the relabeling and replacinng $\check Z^{(\src_i)}$ by $\Zsrcm{i}$, the estimated contrast satisfies 
\begin{equation} 
\label{eq:src_decomp_mult} \wh\delta_{ac}^{(\src_i)} = \delta_{ac}^{(\src_i)} + \xi_{ac}^{(\src_i)}. 
\end{equation}

Since $\wh\delta_{ac}^{(\src_i)}$ belongs to the range of $\Pest^{(b)}$ for both $b=1,2$, we have 
\[ 
\langle \wh\delta_{ac}^{(\src_i)}, \delta_{ac}^{(\trg)} \rangle = \langle \wh\delta_{ac}^{(\src_i)}, \Pest^{(b)}\delta_{ac}^{(\trg)} \rangle. 
\] 
Therefore, we have for both $b=1,2,$
\begin{equation} \label{eq:projection_lower_empirical_contrast_src} \| \Pest^{(b)}\delta_{ac}^{(\trg)}\|_2 \ge \frac{ |\langle \wh\delta_{ac}^{(\src_i)}, \delta_{ac}^{(\trg)}\rangle| }{\| \wh\delta_{ac}^{(\src_i)}\|_2 }. 
\end{equation}
Define $\mathrm A^{(\src_i)}_{ac}:=\langle \delta_{ac}^{(\src_i)}, \delta_{ac}^{(\trg)}\rangle$. By \eqref{eq:src_decomp_mult}, 
\begin{align} \label{eq:source_inner_product_decomposition} \langle \wh\delta_{ac}^{(\src_i)}, \delta_{ac}^{(\trg)}\rangle &= \mathrm A^{(\src_i)}_{ac}\| \delta_{ac}^{(\trg)}\|_2\cdot\| \delta_{ac}^{(\src_i)}\|_2 + G_{ac}^{(\src_i)}, \end{align} 
where 
\begin{equation} 
\label{eq:src_scalar_noise_distribution} G_{ac}^{(\src_i)} := \langle \xi_{ac}^{(\src_i)}, \delta_{ac}^{(\trg)}\rangle \sim \dnorm\left( 0, \sigmaSm{i}^2 \left\{ \frac{1}{m_a^{(\src_i)}} + \frac{1}{m_c^{(\src_i)}} \right\}\| \delta_{ac}^{(\trg)}\|_2^2 \right). \end{equation} 
On $\mathcal B_{\src_i}$, there exists a constants $c_{\src,k},C_{\src,k}>0$ such that 
\begin{align} 
\label{eq:target_scalar_noise_variance} c_{\src,k}^2 \sigmaSm{i}^2 \frac{K}{\nSm{i}}\| \delta_{ac}^{(\trg)} \|_2^2 \le \operatorname{Var}\left(G_{ac}^{(\src_i)}\right) \le C_{\src,i}^2 \sigmaSm{i}^2 \frac{K}{\nSm{i}}\| \delta_{ac}^{(\trg)} \|_2^2 
\end{align} 
uniformly over $a\neq c$.

Proceeding as in the proof of \eqref{eq:bound_on_xi_norm}, there exists a constant $C'_{\src,1}>0$ such that 
\begin{align} 
\label{eq:bound_on_xi_norm_src} 
\max_{a\neq c}\| \xi_{ac}^{(\src_i)}\|_2 \le C'_{\src,1} \sigmaSm{i} \sqrt{ \frac{K(d+\log K)}{\nSm{i}} } 
\end{align} 
with probability at least 
\begin{equation} \label{eq:src_noise_norm_probability} 
1 - \mathbb P(\mathcal B_{\src_i}^c) - 2K^{-\gamma}e^{-d}. 
\end{equation} 
Let $\mathcal N_{\src_i}$ denote the event in \eqref{eq:bound_on_xi_norm_src}.
On $\mathcal E_{\src_i}\cap\mathcal N_{\src_i}$, the triangle inequality and \eqref{eq:src_decomp_mult} give 
\begin{align} \label{eq:bound_on_empirical_src_contrast} 
\| \wh\delta_{ac}^{(\src_i)}\|_2 \le \| \delta_{ac}^{(\src_i)}\|_2 + C'_{\src,1} \sigmaSm{i} \sqrt{ \frac{K(d+\log K)}{\nSm{i}} } 
\end{align} 
simultaneously for all $a\neq c$. Combining \eqref{eq:projection_lower_empirical_contrast_src}, \eqref{eq:source_inner_product_decomposition}, and \eqref{eq:bound_on_empirical_src_contrast}, we obtain 
\begin{align} 
\label{eq:src_projector_lower_bound} 
\| \Pest^{(b)} \delta_{ac}^{(\trg)}\|_2 \ge \frac{ \left|\mathrm A^{(\src_i)}_{ac}\| \delta_{ac}^{(\trg)}\|_2\| \delta_{ac}^{(\src_i)}\|_2 + G_{ac}^{(\src_i)} \right| }{\| \delta_{ac}^{(\src_i)} \|_2 + C'_{\src,1} \sigmaSm{i} \sqrt{ \frac{K(d+\log K)}{\nSm{i}} } }
\end{align}
Fix $M>0$. On $\mathcal E_{\src_i}\cap\mathcal B_{\src_i}\cap\mathcal N_{\src_i}$, the event \begin{equation} 
\label{eq:small_ball_mult} 
\frac{ \|\Pest^{(b)} \delta_{ac}^{(\trg)}\|_2 }{ K\sqrt{\log K}\,\sigmaT } \le M \end{equation} 
implies 
\begin{align} 
\label{eq:small_ball_numerator_bound_src} 
\left|\mathrm A^{(\src_i)}_{ac}\| \delta_{ac}^{(\trg)}\|_2\| \delta_{ac}^{(\src_i)}\|_2 + G_{ac}^{(\src_i)}\right| &\le M\sigmaT K\sqrt{\log K} \left\{\| \delta_{ac}^{(\src_i)}\|_2 + C'_{\src,1} \sigmaSm{i} \sqrt{ \frac{K(d+\log K)}{\nSm{i}} } \right\}. 
\end{align}
Define
\[
D^{(\src_i)}_{ac}:=M\sigmaT K\sqrt{\log K} \left\{\| \delta_{ac}^{(\src_i)}\|_2 + C'_{\src,1} \sigmaSm{i} \sqrt{ \frac{K(d+\log K)}{\nSm{i}} } \right\}.
\]

First, suppose $\mu_i\|\delta^{\src_i}_{ac}\|_2 \ge C_{\src,i} \sigmaSm{i} \sqrt{ \frac{K}{\nSm{i}} }.$ Observe that by the source side \eqref{eq:trg_mult_c_1}, for sufficiently large $\nSm{i}$ and $\nT$, we have
\begin{align}
  |\mathrm A^{(\src_i)}_{ac}\| \delta_{ac}^{(\trg)}\|_2\| \delta_{ac}^{(\src_i)}\|_2| & \ge \mu_i\| \delta_{ac}^{(\trg)}\|_2\| \delta_{ac}^{(\src_i)}\|_2\\
  & \ge \mu_i\cdot\sigmaT \cdot \DeltaT\,\| \delta_{ac}^{(\src_i)}\|_2\\
  & \ge 2M\cdot \sigma_T\cdot\frac{\| \delta_{ac}^{(\src_i)}\|_2}{\DeltaSm{i}}\cdot K\sqrt{\log K} \ge 2M\sigma_T\| \delta_{ac}^{(\src_i)}\|_2.
\end{align}
Also, observe that 
\begin{align}
    |\mathrm A^{(\src_i)}_{ac}\| \delta_{ac}^{(\trg)}\|_2\| \delta_{ac}^{(\src_i)}\|_2| & \ge \mu_i \sigmaT\sigmaSm{i}\DeltaT\DeltaSm{i} \ge 2M\,\sigmaT\sigmaSm{i}\cdot K\sqrt{\log K}\cdot\sqrt{\frac{K(d+\log K)}{\nSm{i}}},
\end{align}
and
\[
|\mathrm A^{(\src_i)}_{ac}\| \delta_{ac}^{(\trg)}\|_2\| \delta_{ac}^{(\src_i)}\|_2|\ge \mu_i\|\delta^{\src_i}_{ac}\|_2\|\delta^\trg_{ac}\|_2 \ge \sqrt{\operatorname{Var}\left(G_{ac}^{(\src_i)}\right)}.
\]
Therefore, applying Lemma~\ref{lem:gaussian_anti_conc}(2),
\[
    \mathbb P\left(|\mathrm A^{(\src_i)}_{ac}\|\delta^{\src_i}_{ac}\|_2\|\delta^\trg_{ac}\|_2 +G^{(\src_i)}_{ac}|\le D^{(\src_i)}_{ac}\right)
    \le \frac{D^{(\src_i)}_{ac}}{|\mathrm A^{(\src_i)}_{ac}|\|\delta^{\src_i}_{ac}\|_2\|\delta^\trg_{ac}\|_2 } \le \frac{D^{(\src_i)}_{ac}}{\mu_i\|\delta^{\src_i}_{ac}\|_2\|\delta^\trg_{ac}\|_2 }.
\]
By the source side necessary condition in \eqref{eq:trg_mult_c_1} for $\calS_i$, the right-hand side of the foregoing display converges to zero as $\nT,\nSm{i} \to \infty$.

Next, suppose $\mu_i\|\delta^{(\src_i)}_{ac}\|_2 \le C_{\src,i} \sigmaSm{i} \sqrt{ \frac{K}{\nSm{i}} }.$ We apply Lemma~\ref{lem:gaussian_anti_conc} (1) to get
\begin{align}
\label{eq:hello_mult_0}
    \mathbb P\left(|\mu_i\|\delta^{\src_i}_{ac}\|_2\|\delta^\trg_{ac}\|_2 +G^{(\src_i)}_{ac}|\le D^{(\src_i)}_{ac}\right)
    \le \frac{D^{(\src_i)}_{ac}}{c_{\src,i} \sigmaSm{i} \sqrt{\frac{K}{\nSm{i}}}\| \delta_{ac}^{(\trg)} \|_2} \le \frac{D^{(\src_i)}_{ac}}{c_{\src,i} \sigmaSm{i}\sigmaT \sqrt{\frac{K}{\nSm{i}}}\DeltaT}.
\end{align}
Thus, it is enough to show that the right-hand side of the above display converges to zero.

First suppose
\[
\|\delta_{ac}^{(\src_i)}\|_2 \ge C'_{\src,1} \sigmaSm{i} \sqrt{ \frac{K(d+\log K)}{\nSm{i}}}.
\]
Then 
\begin{align}
\label{eq:hello_mult_1}
 \frac{c_{\src,i} \sigmaSm{i}\sigmaT \sqrt{\frac{K}{\nSm{i}}}\DeltaT}{D^{(\src_i)}_{ac}} \gtrsim \frac{\sigmaSm{i}\sqrt{\frac{K}{\nSm{i}}}\DeltaT}{K\sqrt{\log K}\|\delta_{ac}^{(\src_i)}\|_2}
  \gtrsim \frac{\mu_i \|\delta_{ac}^{(\src_i)}\|_2\DeltaT}{K\sqrt{\log K}\|\delta_{ac}^{(\src_i)}\|_2}=\frac{\mu_i\DeltaT}{K\sqrt{\log K}} \to \infty,
\end{align}
by the necessary condition \eqref{eq:trg_mult_c_1} for $\calS_i$.

It remains to consider the subcase 
\[
\|\delta_{ac}^{(\src_i)}\|_2 \le C'_{\src,1} \sigmaSm{i} \sqrt{ \frac{K(d+\log K)}{\nSm{i}}}.
\]
Under this condition
\begin{align}
\label{eq:hello_mult_2}
\frac{\sigmaSm{i}\sigmaT \sqrt{\frac{K}{\nSm{i}}}\DeltaT}{D^{(\src_i)}_{ac}} \gtrsim \frac{\sigmaSm{i}\sqrt{\frac{K}{\nSm{i}}}\DeltaT}{K\sqrt{\log K}\sigmaSm{i} \sqrt{ \frac{K(d+\log K)}{\nSm{i}} }}
  \gtrsim \frac{\DeltaT}{K\sqrt{\log K}\sqrt{d+\log K}}.
\end{align}
We need to show that the right hand side of the forgoing display diverges to infinity. Since $\mu_k\|\delta^{\src_i}_{ac}\|_2 \le C_{\src,i} \sigmaSm{i} \sqrt{ \frac{K}{\nSm{i}} }$, under necessary  condition\eqref{eq:trg_mult_c_1} on $\calS_i$, we get  
\begin{align}
    \DeltaT \cdot \DeltaSm{i} &\gg \frac{1}{\mu_i} \cdot K\sqrt{\log K}\cdot\sqrt{\frac{K(d+\log K)}{\nSm{i}}}\\
    & \gtrsim \frac{\|\delta^{(\src_i)}_{ac}\|_2}{\sigmaSm{i}}K\sqrt{\log K}\cdot\sqrt{d+\log K}\\
    & \gtrsim \DeltaSm{i} \cdot K\sqrt{\log K}\cdot\sqrt{d+\log K}.
\end{align}
This implies that the right hand side of \eqref{eq:hello_mult_2} diverges to infinity. Therefore, plugging in \eqref{eq:hello_mult_1} and \eqref{eq:hello_mult_2} in \eqref{eq:hello_mult_0}, we get that
\begin{align}
    \limsup_{\nT,\nSm{i} \to \infty}\mathbb P\left(|\mu_i\|\delta^{\src_i}_{ac}\|_2\|\delta^\trg_{ac}\|_2 +G^{(\src_i)}_{ac}|\le D^{(\src_i)}_{ac}\right)=0.
\end{align}
Since, $M>0$ is arbitrary, the above implies using \eqref{eq:small_ball_numerator_bound_src}, we get
\[
\min_{b\in\{1,2\}} \min_{a\neq c} \frac{\| \Pest^{(b)} \bigl( \thetamcT{a}-\thetamcT{c} \bigr)\|^2_2 }{ (K^2\log K)\,\sigmaT^2} \convp\infty. 
\]


We shall show that under either the target side condition or one of the source side conditions in \eqref{eq:trg_mult_c_1}, the conclusion of the theorem holds. 

\paragraph{Conclusion under target side condition of \eqref{eq:trg_mult_c_1}.}
We first verify the conclusion under the target-side condition of \eqref{eq:trg_mult_c_1}. Define 
\[ 
\mathcal E_\trg := \left\{ \eqref{eq:perfect_recovery_trg} \text{ holds for both }b\in\{1,2\} \right\}. 
\] 
Under the target-side signal condition in \eqref{eq:trg_mult_c_1}, the construction of \citet{giraud2019partial} gives 
\begin{equation} \label{eq:target_exact_recovery_probability} 
\mathbb P(\mathcal E_\trg)\to1. 
\end{equation}
On $\mathcal E_\trg$, we may apply a possibly fold-dependent permutation to the columns of $\wt Z^{(b)}$, and henceforth regard $\wt Z^{(b)}$ as equal to the corresponding restriction of $\Ztar$.

We next establish approximate balance within the training folds. For $a\in[K]$ and $b\in\{1,2\}$, define 
\[ 
m_a^{(b)} := \sum_{j\in T_b} \mathbbm 1\{\Ztar_j=e_a\}. 
\] 
Since $|T_b|\asymp\nT$, using \eqref{eq:approximate_balance} and the same hypergeometric concentration argument used in the proof of \eqref{eq:lower_bound_n_a_b} gives 
\begin{align} 
\label{eq:balance_in_training_trg} 
\frac{|T_b|}{2\beta K} \le m_a^{(b)} \le \frac{2\beta|T_b|}{K}, \qquad a\in[K],\quad b\in\{1,2\}, 
\end{align} 
with probability tending to one. More precisely, defining 
\[ 
\mathcal B_\trg := \left\{ \eqref{eq:balance_in_training_trg} \text{ holds for every }a\in[K],\ b\in\{1,2\} \right\}, 
\] 
there exists a constant $c_\trg>0$ such that \begin{equation} \label{eq:target_balance_probability} 
\mathbb P(\mathcal B_\trg^c) \le 4K\exp\left(-c_\trg\frac{\nT}{K}\right) = o(1). 
\end{equation}

For every $a\neq c$, define the population and empirical target contrasts 
\begin{align}
\label{eq:def_target_contrast}
\delta_{ac}^{(\trg)} := \thetamcT{a}-\thetamcT{c}, \qquad \wh\delta_{ac}^{(b)} := \wh\theta_{\trg,a}^{(b)} - \wh\theta_{\trg,c}^{(b)}. 
\end{align}
Define the corresponding oracle noise contrast by \begin{equation} 
\label{eq:def_target_noise_contrast} \xi_{ac}^{(b)} := \frac{1}{m_a^{(b)}} \sum_{\substack{j\in T_b\\ \Ztar_j=e_a}} \varepsilon_j^{(\trg)} - \frac{1}{m_c^{(b)}} \sum_{\substack{j\in T_b\\ \Ztar_j=e_c}} \varepsilon_j^{(\trg)}. 
\end{equation}
Since, the fold is assigned independently of the data, conditional on the fold assignment, 
\begin{equation} \label{eq:target_noise_contrast_distribution} \xi_{ac}^{(b)} \sim \dnorm_d\left( 0, \sigmaT^2 \left\{ \frac{1}{m_a^{(b)}} + \frac{1}{m_c^{(b)}} \right\} \Id_d \right). 
\end{equation}
On $\mathcal E_\trg$, after the foregoing relabeling, the estimated contrast satisfies 
\begin{equation} 
\label{eq:trg_decomp_mult} \wh\delta_{ac}^{(b)} = \delta_{ac}^{(\trg)} + \xi_{ac}^{(b)}. 
\end{equation}

Since $\wh\delta_{ac}^{(b)}$ belongs to the range of $\Pest^{(b)}$, we have 
\[ 
\langle \wh\delta_{ac}^{(b)}, \delta_{ac}^{(\trg)} \rangle = \langle \wh\delta_{ac}^{(b)}, \Pest^{(b)}\delta_{ac}^{(\trg)} \rangle. 
\] 
Therefore, we have
\begin{equation} \label{eq:projection_lower_empirical_contrast} \| \Pest^{(b)}\delta_{ac}^{(\trg)}\|_2 \ge \frac{ |\langle \wh\delta_{ac}^{(b)}, \delta_{ac}^{(\trg)}\rangle| }{\| \wh\delta_{ac}^{(b)}\|_2 }. 
\end{equation}

By \eqref{eq:trg_decomp_mult}, 
\begin{align} \label{eq:target_inner_product_decomposition} \langle \wh\delta_{ac}^{(b)}, \delta_{ac}^{(\trg)}\rangle &= \| \delta_{ac}^{(\trg)}\|_2^2 + G_{ac}^{(b)}, \end{align} 
where 
\( 
G_{ac}^{(b)} := \langle \xi_{ac}^{(b)}, \delta_{ac}^{(\trg)}\rangle. 
\) 
Conditional on the fold assignment, 
\begin{equation} \label{eq:target_scalar_noise_distribution} G_{ac}^{(b)} \sim \dnorm\left( 0, \sigmaT^2 \left\{ \frac{1}{m_a^{(b)}} + \frac{1}{m_c^{(b)}} \right\}\| \delta_{ac}^{(\trg)}\|_2^2 \right). \end{equation} 
On $\mathcal B_\trg$, since $|T_b|\asymp\nT$, there exists a constant $C_\trg>0$ such that 
\begin{align} 
\label{eq:target_scalar_noise_variance} \operatorname{Var}\left( G_{ac}^{(b)} \,\middle|\,\{T_1,T_2\} \right) \le C_\trg^2 \sigmaT^2 \frac{K}{\nT}\| \delta_{ac}^{(\trg)} \|_2^2 
\end{align} 
uniformly over $a\neq c$ and $b\in\{1,2\}$.

We next control the Euclidean norms of the noise contrasts. On $\mathcal B_\trg$, there exists a constant $C'_\trg>0$ such that 
\[ 
\sigmaT \sqrt{ \frac{1}{m_a^{(b)}} + \frac{1}{m_c^{(b)}} } \le C'_\trg \sigmaT \sqrt{\frac{K}{\nT}} 
\] 
uniformly over $a\neq c$ and $b\in\{1,2\}$. Hence, by Lemma~\ref{lem:gaussian_vector_norm}, for every $t>0$, 
\begin{align} 
\label{eq:uniform_xi_norm_probability} &\mathbb P\left( \max_{b\in\{1,2\}} \max_{a\neq c} \| \xi_{ac}^{(b)}\|_2 > C'_\trg \sigmaT \sqrt{\frac{K}{\nT}} \left( \sqrt d+\sqrt{2t} \right) \right) \notag\\ &\hspace{2cm} \le \mathbb P(\mathcal B_\trg^c) + 2K(K-1)e^{-t}. 
\end{align} 
Fix any $\gamma>0$ and take \[ t=d+(2+\gamma)\log K. \] Since $\log d\le d$ for all sufficiently large $d$, there exists a constant $C'_{\trg,1}>0$ such that 
\begin{align} 
\label{eq:bound_on_xi_norm} 
\max_{b\in\{1,2\}} \max_{a\neq c}\| \xi_{ac}^{(b)}\|_2 \le C'_{\trg,1} \sigmaT \sqrt{ \frac{K(d+\log K)}{\nT} } 
\end{align} 
with probability at least 
\begin{equation} \label{eq:target_noise_norm_probability} 
1 - \mathbb P(\mathcal B_\trg^c) - 2K^{-\gamma}e^{-d}. 
\end{equation} 
Let $\mathcal N_\trg$ denote the event in \eqref{eq:bound_on_xi_norm}.
On $\mathcal E_\trg\cap\mathcal N_\trg$, the triangle inequality and \eqref{eq:trg_decomp_mult} give \begin{align} \label{eq:bound_on_empirical_target_contrast} 
\| \wh\delta_{ac}^{(b)}\|_2 \le \| \delta_{ac}^{(\trg)}\|_2 + C'_{\trg,1} \sigmaT \sqrt{ \frac{K(d+\log K)}{\nT} } 
\end{align} 
simultaneously for all $a\neq c$ and $b\in\{1,2\}$. Combining \eqref{eq:projection_lower_empirical_contrast}, \eqref{eq:target_inner_product_decomposition}, and \eqref{eq:bound_on_empirical_target_contrast}, we obtain 
\begin{align} 
\label{eq:target_projector_lower_bound} 
\| \Pest^{(b)} \delta_{ac}^{(\trg)}\|_2 \ge \frac{ \left|\| \delta_{ac}^{(\trg)}\|_2^2 + G_{ac}^{(b)} \right| }{\| \delta_{ac}^{(\trg)} \|_2 + C'_{\trg,1} \sigmaT \sqrt{ \frac{K(d+\log K)}{\nT} } }. 
\end{align}
Fix $M>0$. On $\mathcal E_\trg\cap\mathcal B_\trg\cap\mathcal N_\trg$, the event \begin{equation} 
\label{eq:small_ball_mult} 
\frac{ \|\Pest^{(b)} \delta_{ac}^{(\trg)}\|_2 }{ K\sqrt{\log K}\,\sigmaT } \le M \end{equation} 
implies 
\begin{align} 
\label{eq:small_ball_numerator_bound} 
\left|\| \delta_{ac}^{(\trg)}\|_2^2 + G_{ac}^{(b)} \right| &\le M\sigmaT K\sqrt{\log K} \left\{\| \delta_{ac}^{(\trg)}\|_2 + C'_{\trg,1} \sigmaT \sqrt{ \frac{K(d+\log K)}{\nT} } \right\}. 
\end{align} 
By the definition of $\DeltaT$, 
\begin{equation} 
\label{eq:target_minimum_separation} 
\|\delta_{ac}^{(\trg)}\|_2 \ge \sigmaT\DeltaT \end{equation} 
uniformly over $a\neq c$. Divide the right-hand side of \eqref{eq:small_ball_numerator_bound} by $\|\delta_{ac}^{(\trg)}\|_2^2$. Using \eqref{eq:target_minimum_separation}, we obtain \begin{align} 
\label{eq:target_remainder_ratio} &\frac{ M\sigmaT K\sqrt{\log K} \left\{ \|\delta_{ac}^{(\trg)}\|_2 + C'_{\trg,1} \sigmaT \sqrt{ \frac{K(d+\log K)}{\nT} } \right\} }{ \|\delta_{ac}^{(\trg)}\|_2^2 } \notag\\ &\qquad\le \frac{ MK\sqrt{\log K} }{ \DeltaT } + \frac{ MC'_{\trg,1} K^{3/2}\sqrt{\log K} \sqrt{(d+\log K)/\nT} }{ \DeltaT^2 }. 
\end{align}

The target-side condition in \eqref{eq:trg_mult_c_1} gives \begin{equation} \label{eq:target_signal_condition_used} \DeltaT \gg K\sqrt{\log K} \max\left\{ 1, \left(\frac{d}{\nT}\right)^{1/4} \right\}. 
\end{equation} 
The first term on the right-hand side of \eqref{eq:target_remainder_ratio} is therefore $o(1)$. To control the second term, write 
\[ 
L_{\nT} := \frac{ \DeltaT }{ K\sqrt{\log K} \max\left\{ 1, (d/\nT)^{1/4} \right\} }, 
\] 
so that $L_{\nT}\to\infty$. Then 
\begin{align*} 
\frac{ K^{3/2}\sqrt{\log K} \sqrt{(d+\log K)/\nT} }{ \DeltaT^2 }= \frac{1}{ L_{\nT}^2\sqrt{K\log K} } \frac{ \sqrt{(d+\log K)/\nT} }{ \max\left\{ 1, \sqrt{d/\nT} \right\} }. 
\end{align*} 
Moreover, 
\[ 
\frac{ \sqrt{(d+\log K)/\nT} }{ \max\left\{ 1, \sqrt{d/\nT} \right\} } \le 1+\sqrt{\frac{\log K}{\nT}}. 
\] 
Since $\nT/K^2\to\infty$, we have $\log K/\nT\to0$. Consequently, the second term on the right-hand side of \eqref{eq:target_remainder_ratio} is also $o(1)$. Therefore, uniformly over $a\neq c$, 
\begin{align} 
\label{eq:target_remainder_little_o} 
M\sigmaT K\sqrt{\log K} \left\{\| \delta_{ac}^{(\trg)}\|_2 + C'_{\trg,1} \sigmaT \sqrt{ \frac{K(d+\log K)}{\nT} } \right\}= o\left(\|\delta_{ac}^{(\trg)}\|_2^2 \right). 
\end{align}

It follows that, sufficiently far along the asymptotic sequence, the event in \eqref{eq:small_ball_mult} implies 
\[ 
\left|\| \delta_{ac}^{(\trg)}\|_2^2 + G_{ac}^{(b)} \right| \le \frac12 \| \delta_{ac}^{(\trg)}\|_2^2. 
\] 
Consequently, 
\[ 
G_{ac}^{(b)} \in \left[ -\frac32 \| \delta_{ac}^{(\trg)}\|_2^2, - \frac12 \| \delta_{ac}^{(\trg)}\|_2^2 \right], 
\] 
and hence 
\begin{equation} \label{eq:target_large_gaussian_event} 
\left| G_{ac}^{(b)} \right| \ge \frac12 \| \delta_{ac}^{(\trg)}\|_2^2. 
\end{equation}

Using the Gaussian tail bound together with \eqref{eq:target_scalar_noise_variance}, there exists a constant $c'_\trg>0$ such that 
\begin{align} 
\label{eq:target_gaussian_tail} 
\mathbb P\left( \left| G_{ac}^{(b)} \right| \ge \frac12 \|\delta_{ac}^{(\trg)}\|_2^2, \ \mathcal B_\trg \right) &\le 2\exp\left\{ -c'_\trg \frac{ \nT \|\delta_{ac}^{(\trg)} \|_2^2 }{ K\sigmaT^2 } \right\} \notag\\ &\le 2\exp\left\{ -c'_\trg \frac{ \nT\DeltaT^2 }{ K } \right\}.
\end{align}
Taking a union bound over all ordered pairs $a\neq c$ and both folds, we conclude that 
\begin{align} \label{eq:target_final_small_ball_probability} &\mathbb P\left( \min_{b\in\{1,2\}} \min_{a\neq c} \frac{\| \Pest^{(b)} \bigl( \thetamcT{a}-\thetamcT{c} \bigr)\|_2 }{ K\sqrt{\log K}\,\sigmaT } \le M \right) \notag\\ 
&\quad\le \mathbb P(\mathcal E_\trg^c) + \mathbb P(\mathcal B_\trg^c) + 2K^{-\gamma}e^{-d} + 4K(K-1) \exp\left\{ -c'_\trg \frac{\nT\DeltaT^2}{K} \right\}. \end{align} 
By \eqref{eq:target_exact_recovery_probability}, \eqref{eq:target_balance_probability}, \eqref{eq:target_signal_condition_used}, and $\nT/K^2\to\infty$, the right-hand side of \eqref{eq:target_final_small_ball_probability} converges to zero. Since $M>0$ is arbitrary, we obtain 
\begin{align} \label{eq:target_projected_separation_conclusion}
\min_{b\in\{1,2\}} \min_{a\neq c} \frac{\| \Pest^{(b)} \bigl( \thetamcT{a}-\thetamcT{c} \bigr)\|^2_2 }{ (K^2\log K)\,\sigmaT^2} \convp\infty. 
\end{align} 
This proves the target-side condition.

\paragraph{Conclusion under source side condition of \eqref{eq:trg_mult_c_1}.}
We next prove the theorem when the source-side conditions in \eqref{eq:trg_mult_c_1} hold for at least one source.

%% file: numerical_section.tex

\section{Numerical Experiments}
\label{sec:numerical_methods}
In this section, we benchmark the empirical performance of Algorithms~\ref{alg:meta_oracle_transfer_clustering_two_community},~\ref{alg:adaptive_transfer_clustering_two_community} and~\ref{alg:projected_clustering_mult} in synthetic data in terms of recovering the target cluster labels in a finite sample framework.

\begin{figure}[!t]
    \centering
    \includegraphics[width=\textwidth]
    {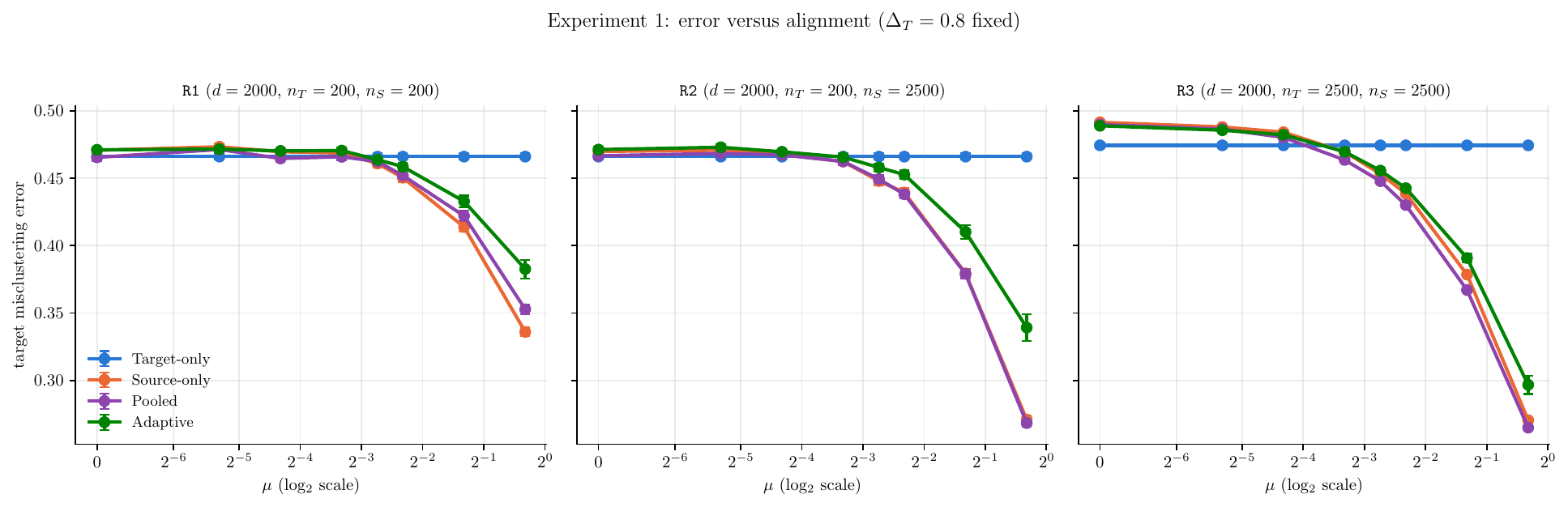}
    \caption{Target misclustering error as a function of the alignment parameter $\mu$ under three aspect-ratio regimes. The left, middle, and right panels report the results for regimes \texttt{R1}, \texttt{R2}, and \texttt{R3}, respectively. Error bars represent Monte Carlo standard errors.}
    \label{fig:error_vs_alignment}
\end{figure}

\subsection{Dependence of performance on alignment strength}
\label{sec:sim_align}
In this experiment, we consider the two-community Gaussian mixture model described in \eqref{eq:target_sample} and \eqref{eq:source_sample}, and study the performances of the target and source branches of Algorithm~\ref{alg:meta_oracle_transfer_clustering_two_community}, along with the adaptive clustering procedure in Algorithm~\ref{alg:adaptive_transfer_clustering_two_community} as a function of the alignment parameter $\mu$ between the target and source directions $\thetaT$ and $\thetaS$, respectively.

We consider three regimes of ambient dimension and sample sizes: (1) \texttt{R1}: $d \gg \max\{\nS,\nT\}$, (2) \texttt{R2}: $d \gg \nT$ but $d \le \nS$, and (3) \texttt{R3}: $d \le \min\{\nT,\nS\}$. Across all three regimes, we set $d=2000$. In \texttt{R1}, we choose $(\nT,\nS)=(200,200)$; in \texttt{R2}, we choose $(\nT,\nS)=(200,2500)$; and in \texttt{R3}, we choose $(\nT,\nS)=(2500,2500)$.

In each of the three regimes, we generate the target labels by first fixing the cluster sizes to be as balanced as possible, with $\lfloor \nT/2\rfloor$ labels equal to $+1$ and $\lceil \nT/2\rceil$ labels equal to $-1$, and then assigning these labels to the observations through a uniformly random permutation. The source labels were similarly generated ensuring $\lfloor \nS/2\rfloor$ labels equal to $+1$ and $\lceil \nS/2\rceil$ labels equal to $-1$.
Next, to generate the target and source directions, we draw $u,v\in\R^d$ as a Haar-random orthonormal pair. For a prescribed triple $(\DeltaT,\DeltaS,\mu)$, the direction vectors are then defined as $\thetaT := \DeltaT\,u$ and $\thetaS := \DeltaS\bigl(\mu\,u+\sqrt{1-\mu^2}\,v\bigr)$,
which ensures that $\|\thetaT\|_2=\DeltaT$, $\|\thetaS\|_2=\DeltaS$, and $\langle\thetaT,\thetaS\rangle/(\|\thetaT\|_2\|\thetaS\|_2)=\mu$. In this experiment, we set $\DeltaT=0.8$ and $\DeltaS=3$. Thus, the target-only branch is not sufficiently informative to recover the cluster labels reliably, whereas the source becomes useful only when $\mu$ is sufficiently large. 
We vary $\mu\in\{0.000,\ 0.025,\ 0.05,\ 0.10,\ 0.15,\ 0.20,\ 0.40,\ 0.80\}$.

For each configuration, the target and source datasets are generated independently according to \eqref{eq:target_sample} and \eqref{eq:source_sample}. Since $\thetaT$ depends only on $u$, the target dataset $\calT$ does not depend on $\mu$. It is therefore generated once within each Monte Carlo replication and reused across the entire $\mu$-grid. By contrast, since $\thetaS$ depends on $\mu$, a fresh source dataset $\calS$ is generated at each grid point.

We compare four methods in each regime and for each value of $\mu$: the target-only estimator, obtained by applying the target branch of Algorithm~\ref{alg:meta_oracle_transfer_clustering_two_community}; the source-only estimator, obtained by applying the source branch of Algorithm~\ref{alg:meta_oracle_transfer_clustering_two_community}; the adaptive estimator proposed in Algorithm~\ref{alg:adaptive_transfer_clustering_two_community}; and the pooled estimator proposed in Algorithm~\ref{alg:pooled_transfer_clustering}. In the adaptive procedure, the absolute constant $C_0$ in \eqref{eq:validation_threshold} is obtained using the bootstrap procedure from Section~\ref{sec:bootstrap_calibration}, with $50$ bootstrap replications and $\alpha=0.5$. Performance is measured in terms of the misclustering loss $\mathcal L$ defined in \eqref{eq:loss_func_2}.

We repeat the entire experiment over $100$ independent Monte Carlo replications and report the average target misclustering error as a function of $\mu$ for all three regimes in Figure~\ref{fig:error_vs_alignment}. In regime \texttt{R1}, the source-only estimator performs best when the source is sufficiently well aligned with the target. The adaptive and pooled estimators improve upon the target-only estimator, although the adaptive estimator exhibits slightly worse performance as a result of the small target sample size. 
In regimes \texttt{R2} and \texttt{R3}, the pooled estimator almost matches the source-only estimator.
In the small-$\nT$, large-$d$ regimes the target-only estimator and the estimators using source information are comparable for small $\mu$.
By contrast, in regime \texttt{R3}, 
the target-only estimator substantially outperforms the estimators using source information for small values of $\mu$. This illustrates the phenomenon of \emph{negative transfer}. 

An numerical experiment exploring the dependence of misclustering on the target and source signal strengths $(\DeltaT,\DeltaS)$ is provided in Supplement~\ref{sec:sim_sig_strength}.

\begin{figure}[tb]
    \centering
    \includegraphics[scale=0.6]
    {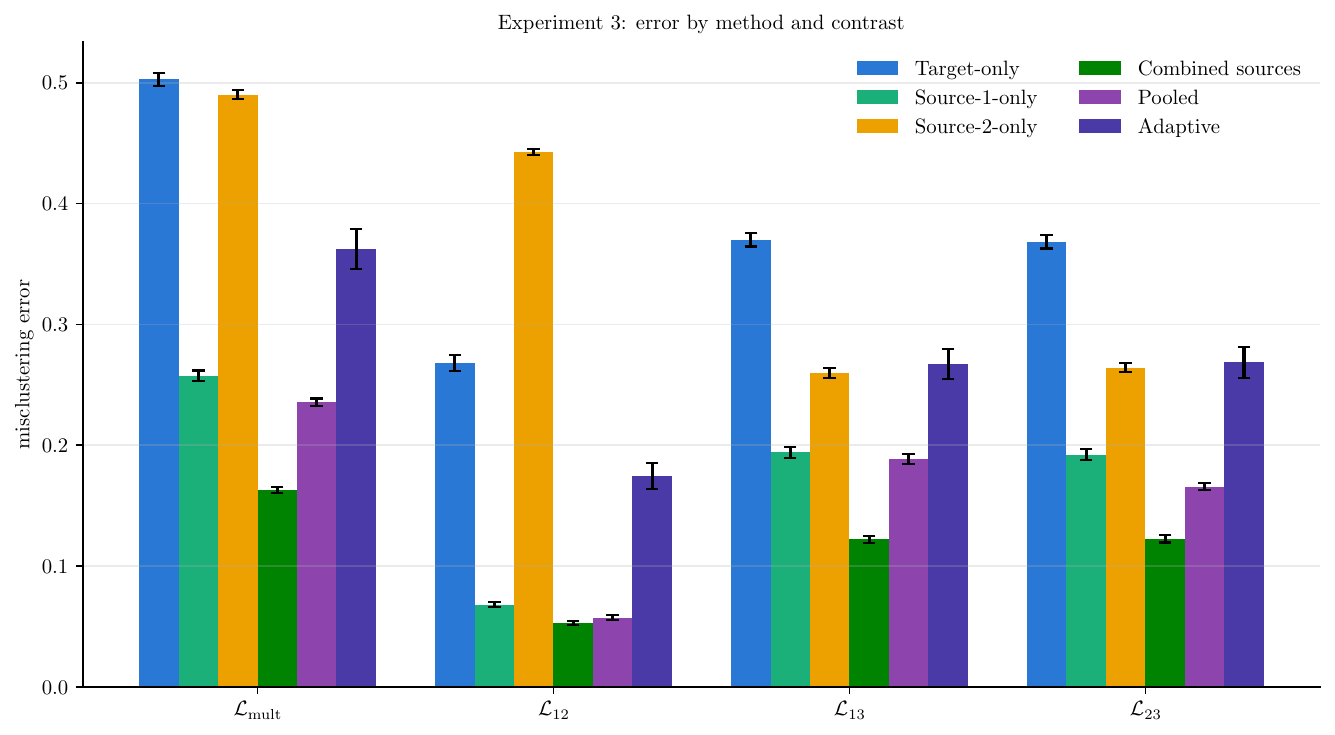}
    \caption{Overall and pairwise target misclustering errors for the
    target-only, source-1-only, source-2-only, combined-source, pooled, and
    adaptive estimators. The quantities $\calL_{\mathrm{mult}}$,
    $\calL_{12}$, $\calL_{13}$, and $\calL_{23}$ denote the overall
    permutation-invariant misclustering error and the pairwise
    misclustering errors for the $(1,2)$, $(1,3)$, and $(2,3)$ contrasts,
    respectively. Error bars represent Monte Carlo standard errors over
    $100$ independent replications.}
    \label{fig:complementary_sources}
\end{figure}

\subsection{Performance in multi-source and multi-cluster settings}
\label{sec:mult_clust_sim}

In this experiment, we study the finite-sample performance of Algorithm~\ref{alg:projected_clustering_mult}. This setting is more complicated than the two-community frameworks studied in Sections~\ref{sec:sim_align}--\ref{sec:sim_sig_strength}, because, when multiple signal directions are present, the difficulty of the problem is determined by the ability of the target and source datasets to separate all pairwise contrasts between the cluster means.

In this direction, we consider a setting with $K=3$ clusters and $m=2$ source datasets. For each dataset, namely, the target and the two sources, the cluster sizes are fixed to be as balanced as possible (each cluster receiving either $\lfloor n/K\rfloor$ or $\lceil n/K\rceil$ observations for both target and the two source datasets).
To generate the cluster centers, three Haar-random orthonormal directions $u,v,r\in\R^d$ are drawn. The target cluster centers are defined as $\theta_{T,1} = 2u$, $\theta_{T,2} = -2u$ and $\theta_{T,3} = 2v$.
The $(1,2)$ contrast is the strongest for the target-only procedure, while the contrasts involving cluster $3$ are comparatively weaker. The dataset $\calS_1$ is designed to separate cluster $1$ from clusters $2$ and $3$, while providing only weak information for separating clusters $2$ and $3$. Its cluster centers are defined as $\theta_{S_1,1} = 2.75\,u$, $\theta_{S_1,2} = -2.75\,u$ and $\theta_{S_1,3} = -2.75\,u + 0.25\,r$.
The dataset $\calS_2$ is designed to recover the missing $(2,3)$ contrast, with cluster centers given by $\theta_{S_2,1} = 0$, $\theta_{S_2,2} = 1.6\,v$, and $\theta_{S_2,3} = -1.6\,v$.
Using these cluster centers, the datasets $\calT$, $\calS_1$, and $\calS_2$ are generated independently according to \eqref{eq:mult_clust_target_sample}--\eqref{eq:mult_clust_source_sample}, with $\nT=200$, $\nSm{1}=\nSm{2}=1500$, and $d=1000$. Observe that the setting considered in this experiment provides weaker signal in the source datasets than those required by our theoretical results in Theorem~\ref{thm:consistency_mult_clust_final} and hence provides a stress test for our source based and pooled estimators in the multi-cluster setting.

We compare six procedures: the target-only clustering procedure obtained by applying the target-side estimator of Algorithm~\ref{alg:projected_clustering_mult}; two source-only procedures obtained by applying the source-side estimator using $\calS_1$ and $\calS_2$, respectively; the combined-source estimator obtained by applying the source-side estimator using both $\calS_1$ and $\calS_2$; the pooled estimator from Algorithm~\ref{alg:pooled_transfer_clustering}; and the full adaptive procedure in Algorithm~\ref{alg:projected_clustering_mult}. For the adaptive procedure, the constant $D_0$ is estimated using the bootstrap calibration described in Section~\ref{sec:bootstrap_calibration}, with $10$ bootstrap samples and $\alpha=0.5$. We measure the overall permutation-invariant target misclustering error $\mathcal L_{\mathrm{mult}}$ defined in \eqref{eq:loss_func_2m}, together with the pairwise misclustering errors obtained after restricting attention to the $(1,2)$, $(1,3)$, and $(2,3)$ contrasts, denoted by $\mathcal L_{12}$, $\mathcal L_{13}$, and $\mathcal L_{23}$, respectively, computed using \eqref{eq:loss_func_2}. Each loss, averaged over 100 independent replicates is presented in Figure~\ref{fig:complementary_sources}. Our findings illustrate the benefit of combining sources that contain complementary information about the target cluster contrasts. The target-only estimator has the largest overall misclustering error, while source-$1$-only estimator substantially improves performance, especially for the $(1,2)$ contrast. In the $(1,2)$, using source-$2$-only estimator leads to \emph{negative transfer} since it has no information about $u$. The combined-source estimator achieves the smallest overall and pairwise errors because the two sources jointly recover the directions $u$ and $v$, thereby preserving all pairwise target contrasts. The pooled estimator and the adaptive also substantially improves upon the target-only estimator.
Overall, the experiment demonstrates that multiple sources can provide genuinely complementary information and that combining their estimated signal subspaces can markedly improve target clustering even when no single source is sufficiently informative.

\begin{figure}[tb]
    \centering
    \includegraphics[width=\textwidth]
    {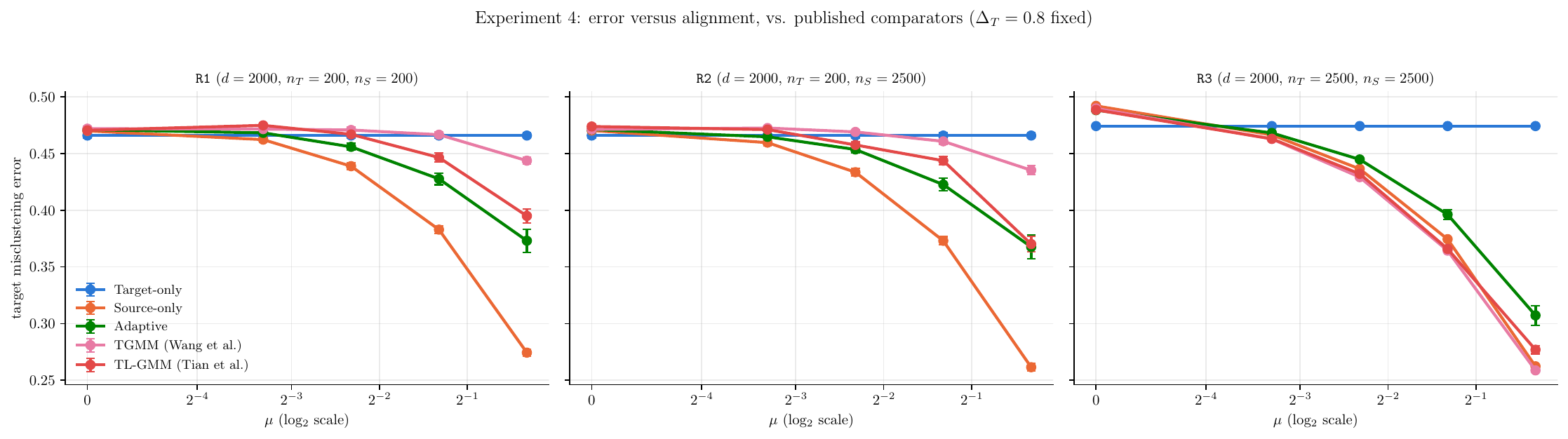}
    \caption{Target misclustering error as a function of the alignment
    parameter $\mu$ for the proposed estimators and the published
    comparators TGMM \citep{wang_zhou} and TL-GMM \citep{tian2025robust}. The left, middle, and right panels correspond to regimes
    \texttt{R1}, \texttt{R2}, and \texttt{R3}, respectively. The target
    signal strength is fixed at $\DeltaT=0.8$. Error bars represent Monte
    Carlo standard errors.}
    \label{fig:comparison_vs_alignment_external}
\end{figure}

\subsection{Comparison with benchmark methods}
\label{sec:comparator_benchmarks}

In this experiment, we compare the performance of our procedure with two benchmark methods from the literature, namely, the TGMM procedure of \cite[Algorithm~1]{wang_zhou} and the TL-GMM procedure of \cite[Algorithm~7]{tian2025robust}. The simulation setup is the same as that in Section~\ref{sec:sim_align}, except that we vary $\mu\in\{0.0,0.1,0.2,0.4,0.8\}$. We compare the three estimators considered in Section~\ref{sec:sim_align}, namely, the target-only, source-only, and adaptive estimators from Algorithm~\ref{alg:adaptive_transfer_clustering_two_community}, with TGMM and TL-GMM across the specified values of $\mu$. For the two comparator procedures we needed to fix some hyperparameters. In TGMM, we set the shrinkage parameter to $\lambda=0.5$. For TL-GMM, we used isotropic component covariances with batch-specific variances estimated using \eqref{eq:consistent_plug_in}. The isotropic assumption aligns with the data generating process and helps to stabilize variance estimation particularly for small sample sizes. Furthermore, we set the hyperparameters set $\kappa=1/3$ and $C_{\lambda_0}=1.7$. For both methods, we used $50$ EM iterations.

The results are presented in Figure~\ref{fig:comparison_vs_alignment_external}. We observe that, when $\nT\ll d$, both TGMM and TL-GMM are outperformed by the source-based and adaptive estimators. In contrast, when $\nT\gtrsim d$, the two benchmark procedures become competitive with the source-based estimator. Overall, this experiment illustrates that our procedure provides a tangible improvement in transfer-assisted clustering when the ambient dimension is substantially larger than the target sample size.

%% file: lung_atlas.tex
\section{Transfer-assisted clustering of human lung scRNA-seq data}
\label{sec:lung_atlas}
In this section, we analyzed the single cell RNA sequencing data from \cite{Vieira_Braga2019-nl} discussed in the introduction. The original data was produced by profiling the whole transcriptome of freshly resected human lung tissue using \emph{Dropseq}, a droplet-based sequencing platform. 
The dataset consists of four patient samples (\texttt{ASK428}, \texttt{ASK440}, \texttt{ASK452}, \texttt{ASK454}), all processed with the same sequencing technology. 
The raw count matrix comprised $10{,}360$ cells and $16{,}327$ genes, resolved into $13$ cell types. After applying standard QC and pre-processing steps outlined in Supplement~\ref{sec:data_preprocess}, we obtained a data matrix
$X \in \R^{9941 \times 5000}$ that could be approximately modeled using \eqref{eq:mult_clust_target_sample}-\eqref{eq:mult_clust_source_sample} (similar models were adopted in \cite{zhong2022empirical}, and \cite{cao2025modah}). In this data, the number of cells in \texttt{ASK428}, \texttt{ASK440}, \texttt{ASK452} and \texttt{ASK454} were $2{,}098$, $3{,}183$, $2{,}387$, and $2{,}273$, respectively. Every patient's batch contained all $13$ cell types.

Our objective was to use each patient in turn as the target and the other three as sources, apply Algorithm~\ref{alg:projected_clustering_mult} (and its non-adaptive special cases) to cluster the target's cells into $K=13$ groups, and compare the recovered clusters against the annotated cell types. 
We compared the target-only estimator, the combined source-only estimator, the adaptive estimator (with $D_0$ chosen using bootstrap calibration with $\alpha=0.5$ and 30 bootstrap replicates), and the pooled estimator (Algorithm~\ref{alg:pooled_transfer_clustering}). Furthermore, we also compared against the TL-GMM \citep{tian2025robust}, and two single-cell specific transfer methods, namely GDEC \citep{gdec_paper}, an autoencoder based pipeline with a a deep embedded clustering (DEC) head, and the non-negative matrix factorization (NMF) based pipeline from \cite{nmf_paper} \footnote{Further details about the implementation of the comparator methods are provided in Supplement~\ref{sec:comparator_methods}}. 

\begin{table}[t]
\centering
\resizebox{\textwidth}{!}{%
\begin{tabular}{lcccccccccccc}
\toprule
 & \multicolumn{3}{c}{\texttt{ASK428}} & \multicolumn{3}{c}{\texttt{ASK440}} & \multicolumn{3}{c}{\texttt{ASK452}} & \multicolumn{3}{c}{\texttt{ASK454}} \\
\cmidrule(lr){2-4} \cmidrule(lr){5-7} \cmidrule(lr){8-10} \cmidrule(lr){11-13}
Method & ARI & V-measure & $\mathcal{L}_{\mathrm{mult}}$ & ARI & V-measure & $\mathcal{L}_{\mathrm{mult}}$ & ARI & V-measure & $\mathcal{L}_{\mathrm{mult}}$ & ARI & V-measure & $\mathcal{L}_{\mathrm{mult}}$ \\
\midrule
Target-only & \textbf{0.456} & \textbf{0.681} & 0.459 & 0.363 & 0.618 & 0.509 & \textbf{0.440} & \textbf{0.661} & \textbf{0.395} & 0.208 & \textbf{0.469} & 0.624 \\
Multi-source pooled & 0.432 & 0.653 & 0.507 & 0.346 & 0.604 & 0.534 & 0.354 & 0.606 & 0.504 & 0.159 & 0.418 & 0.675 \\
Pooled  & 0.451 & 0.667 & 0.450 & \textbf{0.440} & \textbf{0.630} & \textbf{0.431} & 0.362 & 0.615 & 0.504 & \textbf{0.242} & 0.454 & 0.509 \\
Adaptive multi-source & \textbf{0.456} & \textbf{0.681} & 0.459 & 0.363 & 0.618 & 0.509 & \textbf{0.440} & \textbf{0.661} & \textbf{0.395} & 0.208 & \textbf{0.469} & 0.624 \\
TL-GMM & 0.443 & 0.655 & \textbf{0.445} & 0.421 & 0.618 & 0.498 & 0.298 & 0.573 & 0.509 & 0.208 & 0.418 & 0.613 \\
NMF & 0.000 & 0.000 & 0.660 & 0.000 & 0.000 & 0.609 & 0.000 & 0.000 & 0.627 & 0.000 & 0.000 & \textbf{0.494} \\
GDEC & 0.285 & 0.480 & 0.642 & 0.288 & 0.398 & 0.602 & 0.242 & 0.495 & 0.610 & 0.098 & 0.286 & 0.723 \\
\bottomrule
\end{tabular}%
}
\caption{ARI, V-measure, and misclustering error ($\mathcal{L}_{\mathrm{mult}}$) on the lung atlas (13 fine-grained cell types, K=13), leave-one-batch-out. ARI/V-measure: higher is better; $\mathcal{L}_{\mathrm{mult}}$: lower is better. Best value per subcolumn in bold.}
\label{tab:lung_k13_combined}
\end{table}

The performance of each method was evaluated using the permutation-invariant misclustering loss $\mathcal L_{\mathrm{mult}}$, the Adjusted Rand Index \citep{hubert1985comparing}, and the V-measure \citep{rosenberg-hirschberg-2007-v}. The results are reported in Table~\ref{tab:lung_k13_combined}. First, the adaptive estimator matches the target only estimator across all batches. This behavior is consistent with the performance of the multi-source pooled estimator which is dominated by the target-only estimator across all three metrics and all four batches.
Second, no single method dominates uniformly across all target batches. The target-only and adaptive method achieve the best ARI and V-measure on \texttt{ASK428} and \texttt{ASK452}.
The pooled estimator performs best across all three metrics on
\texttt{ASK440} and is competitive on \texttt{ASK428} where TL-GMM achieves the best $\mathcal L_{\mathrm{mult}}$.
Third, the performance of NMF in \texttt{ASK454} demonstrates that $\mathcal L_{\mathrm{mult}}$ might sometimes provide misleading assessment. In \texttt{ASK454}, NMF achieves smallest $\mathcal L_{\mathrm{mult}}$ but its ARI is equal to 0. This pattern emerges because NMF favored a highly degenerate and uninformative partition in a imbalanced data. Overall, the target-only, pooled, adaptive multi-source, and TL-GMM procedures were substantially more effective at recovering the fine-grained $13$-cell-type structure of the lung atlas than the NMF and GDEC baselines.

\section{Discussion}
\label{sec:discussion}

In this paper, we have characterized the regimes when an auxiliary source dataset can be used to improve clustering in a target dataset generated from
high-dimensional Gaussian mixture models. The key finding is that transfer
utility is governed by the target signal strength $\DeltaT$, the source
signal strength $\DeltaS$, the source-target alignment $\mu$, and the aspect ratio
$d/\nS$. We provide sufficient conditions for consistent clustering when either the target is sufficiently informative on its own or the source signal is sufficiently strong and well aligned with the target, as summarized in \eqref{eq:overview_transfer_regime}, up to logarithmic factors. We also establish complementary information-theoretic lower bounds. In the two-community setting, these results identify the same qualitative transfer regimes, although a logarithmic gap remains between the sufficient and necessary signal-strength conditions. The theoretical conclusions are supported by simulation studies and an application to an scRNA-seq dataset.

%
The logarithmic gap in the upper and lower bounds traces to the requirement of exactly
recovering the source labels when $d \gg \nS$ (via Theorem~8 of \citet{Ndaoud2022}).
We believe it is a proof artifact rather than a genuine statistical cost, and expect that a sharper source-label recovery argument could close it. However, we relegate such closure to future research since in most applications such a logarithmic gap is negligible. 

The natural extensions of our analysis are the extensions to the settings where the number of sources $m$ grows with $\nT$. Furthermore, pinning down the precise dependence of the number of communities $K$ in the multi cluster setting by proving a lower bound to the misclustering risk also remains an open problem. Other potentially interesting areas of extension are incorporating spatial relations between the subjects as in spatial transcriptomics or such analysis when the observed features are sparse. We hope to explore such areas in future research.

